
\documentclass[a4paper]{article}

\usepackage{amsmath,amssymb,mathtools,amsthm}
\usepackage{xcolor,graphicx,float,enumitem}
\usepackage{stackengine}
\usepackage{comment}

\usepackage{bm}
\usepackage[all]{xy}
\graphicspath{{figures/}}

\usepackage[default,scale=1]{opensans}
\usepackage[T1]{fontenc}

\usepackage[utf8]{inputenc}
\DeclareUnicodeCharacter{00A0}{ }
\usepackage{geometry}
 \geometry{
	 	a4paper,
	 	left=25mm,
	 	right=25mm,
	 	top=25mm,
	 	bottom=35mm,
	 }

\parskip 4pt
\newcommand{\ee}{\varepsilon}

\usepackage{bm}

\newcommand{\R}{{\mathbb R}}
\newcommand{\Rd}{{\R^d}}

\newcommand{\Ev}{\Upsilon}

\newcommand{\changevariables}{\mathcal{I}}
\newcommand{\diffEv}{\mathfrak{E}}

\DeclareMathOperator{\diver}{div}
\DeclareMathOperator{\trace}{trace}

\DeclareMathOperator{\supp}{supp}

\newcommand*\diff{\mathop{}\!\mathrm{d}}

\usepackage[hidelinks]{hyperref}
\usepackage[nameinlink]{cleveref}

\newtheorem{theorem}{Theorem}[section]
\newtheorem{proposition}[theorem]{Proposition}%
\newtheorem{corollary}[theorem]{Corollary}%
\newtheorem{lemma}[theorem]{Lemma}%

\theoremstyle{definition}
\newtheorem{definition}[theorem]{Definition}%
\newtheorem{remark}[theorem]{Remark}%

\numberwithin{equation}{section}

\makeatletter
\renewcommand*{\@fnsymbol}[1]{\ensuremath{\ifcase#1\or \star \or \dagger\or \ddagger\or
		\mathsection\or \mathparagraph\or \|\or **\or \dagger\dagger
		\or \ddagger\ddagger \else\@ctrerr\fi}}
\makeatother

\usepackage{todonotes}
\DeclareUnicodeCharacter{2212}{-}

\let\OLDthebibliography\thebibliography
\renewcommand\thebibliography[1]{
  \OLDthebibliography{#1}
  \setlength{\parskip}{0.5pt}
  \setlength{\itemsep}{2.5pt plus 0.5ex}
}

\begin{document}
\title
{
    Partial mass concentration
	for \\ 
    fast-diffusions with non-local aggregation terms
}
\author{
    Jos\'e A. Carrillo%
     \thanks{Mathematical Institute, University of Oxford, Oxford OX2 6GG, UK.  \href{mailto:carrillo@maths.ox.ac.uk}{carrillo@maths.ox.ac.uk}} %
    \and 
    Alejandro Fernández-Jiménez%
     \thanks{Mathematical Institute, University of Oxford, Oxford OX2 6GG, UK.  \href{mailto:alejandro.fernandezjimenez@maths.ox.ac.uk}{alejandro.fernandezjimenez@maths.ox.ac.uk}} %
    \and 
    David Gómez-Castro%
     \thanks{Inst. de Matemática Interdisciplinar \& Fac. de Matemáticas, U. Complutense de Madrid.  \href{mailto:dgcastro@ucm.es}{dgcastro@ucm.es}}
}
\maketitle

\begin{abstract}
    We study well-posedness and long-time behaviour of  aggregation–diffusion  equations of the form $\frac{\partial \rho}{\partial t} = \Delta \rho^m + \nabla \cdot (\rho ( \nabla V+  \nabla W \ast \rho ))$ in the fast-diffusion range, $0<m<1$, and $V$ and $W$ regular enough. We develop a well-posedness theory, first in a ball and then in $\Rd$, and characterise
    the long-time asymptotics in the space $W^{-1,1}$ for radial initial data. In the radial setting and for the mass equation,
    viscosity solutions are used to prove partial mass concentration asymptotically as $t\to \infty$, i.e. the limit as $t\to \infty$ is of the form $\alpha \delta_0 + \widehat \rho \, dx$ with $\alpha \geq 0$ and $\widehat\rho\in L^1$. Finally, we give instances of $W \ne 0$ showing that partial mass concentration does happen in infinite time, i.e. $\alpha > 0$. 
    \smallskip 

    \noindent\textbf{Keywords:}
    Nonlinear parabolic equations, Nonlinear diffusion, 
    Dirac delta formation, Blow-up in infinite time, Viscosity solutions
    \smallskip

    \noindent \textbf{MSC:}
    35K55, 35K65, 35B40, 35D40, 35Q84
\end{abstract}

\section{Introduction}

Nonlinear aggregation-diffusion equations of the form 
\begin{equation}\label{ec:The_equation}
	\frac{\partial \rho}{\partial t} = \Delta \rho^m + \nabla \cdot (\rho \nabla V ) + \nabla \cdot ( \rho \nabla W \ast \rho ),
\end{equation}
are frequent in continuous descriptions of density populations. Here, $\rho_t$ corresponds to a time-dependent probability measure over the whole space $\mathbb{R}^d$. The constant $m>0$ is the diffusion exponent leading to three cases: slow diffusion if $m > 1$, linear diffusion if $m=1$, or fast diffusion if $0<m<1$. The nonlinear diffusion equation corresponding to $V=W=0$ is well-known, see \cite{Vaz06}. The potentials $V(x)$ and $W(x)$ respectively model the confinement behaviour and the interaction kernel describing attraction/repulsion between the individuals of the population. In this work, $V$ and $W$ are assumed to be bounded below, so we can restrict without lost of generality to $V, W \geq 0$.

This family of equations with linear diffusion first appeared to explain a biological phenomenon known as chemotaxis cell movement \cite{KS70,Pat53}. Chemotaxis refers to the natural phenomenon under which cells diffuse in space while secreting some chemical substances attracting other/same cells. The well-known Keller-Segel equation models an aggregation-diffusion behaviour of cell populations. Nonlinear slow diffusions were introduced to cope with overcrowding effects \cite{CC06,HP09}.  Surveys about the classical Keller-Segel model and more general aggregation-diffusion equations can be found in \cite{Hor03,CCY19}. A lot of work has been devoted to this family of equations since the 90s to classify the different cases in which diffusion dominates over aggregation or viceversa \cite{JL92,HV96, BDP06, BCM08, BCL09, CD14, CCV15,CHMV18,CHVY19}. Apart from applications in mathematical biology \cite{TBL06,HP09}, this family of equations also has applications in gravitational collapse or statistical mechanics among others \cite{SC02,SC08}.

The time-dependent equation \eqref{ec:The_equation} is the (formal) $2$-Wasserstein gradient flow of a free-energy functional \cite{O01,CJMTU01,CMV03,AMS08,San15} defined for probability densities $\rho \in L^1 \cap\mathcal{P} ( \mathbb{R}^d )$ by
\begin{equation}\label{ec:The_Free_Energy}
	\mathcal{F} [ \rho ] := - \tfrac{1}{1 - m} \int_{\mathbb{R}^d} \rho (x)^m \, dx + \int_{\mathbb{R}^d} V(x)  \rho(x) dx + \frac{1}{2} \int \int_{\mathbb{R}^d \times \mathbb{R}^d} W (x-y) \, \rho (x) \rho (y) \, dx \, dy\,,
\end{equation}
for $m \in (0,1)$ with $W$ assumed to be symmetric, $W(x)=W(-x)$.
Notice that the suitable extension to probability measures $\mu \in \mathcal{P} ( \mathbb{R}^d )$ is done by taking 
\begin{equation*}
    \tilde{\mathcal{F}} [ \mu ] := - \tfrac{1}{1 - m} \int_{\mathbb{R}^d} \mu_{\mathrm{ac}} (x)^m \, dx + \int_{\mathbb{R}^d} V(x)   d \mu (x) + \frac{1}{2} \int \int_{\mathbb{R}^d \times \mathbb{R}^d} W (x-y) \, d\mu (x) \, d\mu (y) 
\end{equation*}
where $\mu_{\mathrm{ac}}$ is the absolutely continuous part of $\mu$ with respect to the Lebesgue measure \cite{DT84}.

Nonlinear equations of the form \eqref{ec:The_equation} show challenging behaviours with regard to their long-time behaviour and the properties of their steady states. By now, the case of the porous medium diffusion $m > 1$ is well understood \cite{CJMTU01,BCL09,KL10,CCV15,CHMV18,CHVY19}, and so is the case of linear diffusion $m=1$ \cite{AMTU01,MV00,OV00,DP04,BCM08,BCC12}. However, for the case $0 <m < 1$ much less work has been done. Recently, a work by Carrillo, Hittmeir, Volzone, and Yao \cite{CHVY19} studies the equation 
\begin{equation}
	\label{eq:ADE V = 0}
	\frac{\partial \rho}{\partial t} = \Delta \rho^m + \nabla \cdot ( \rho \nabla W \ast \rho )
\end{equation}
and show that all its stationary solutions, with no restriction on $m > 0$, are radially decreasing up to translation. Another work by Carrillo, Delgadino, Dolbeault, Frank and Hoffmann \cite{CDDFH19} shows that, under certain conditions on the potential $W$, the energy minimiser of the corresponding energy functional over probability measures with zero center of mass is likewise split as
\begin{equation*}
	\widehat{\mu} = \left(1- \| \widehat{\rho} \|_{L^1 ( \mathbb{R}^d)}\right) \delta_0 + \widehat{\rho} \, dx,
\end{equation*}
where $\delta_0$ denotes the Dirac delta at $0$ and $\widehat{\rho} (x) > 0$ is the density of the absolutely continuous part of the measure. Furthermore, the presence of the concentrated point measure is known for specific choices of $W$, see \cite{CDFL22}. To the best of our knowledge there are no results in the literature showing that solutions of the parabolic problem \eqref{eq:ADE V = 0} actually converge to these type of minimisers with partial mass concentrated at a Dirac delta at the origin.

This work shows that partial mass concentration occurs in the long time asymptotics for the case $0 < m < 1$. More precisely, we find conditions on the initial data $\rho_0$ and the potentials $V$ and $W$ so that
\begin{itemize}
	\item a notion of solution of the Cauchy problem is provided leading to global in time solutions.
	\item as $t \rightarrow \infty$, the solution experiments a one-point blow up of the form,
	\begin{equation*}
		\widehat{\mu} = \left(\| \rho_0 \|_{L^1 ( \mathbb{R}^d)} - \| \widehat{\rho} \|_{L^1 ( \mathbb{R}^d)}\right) \delta_0 + \widehat{\rho} \, dx,
	\end{equation*}
	with $\widehat \rho \in L^1(\Rd)$. 
\end{itemize}
A previous result by Carrillo, Gómez-Castro, and Vázquez in \cite{CGV22} shows a similar asymptotic behavior for the easier case of no interaction $W = 0$ and $V$ regular enough. In this work, we will further refine the techniques from \cite{CGV22} in order to improve the results given there and to include the case $W \geq 0$. In contrast to \cite{CGV22}, our main results regarding partial mass concentration for long times hold for general radial $L^1 \cap L^\infty$ initial data. Notice the authors in \cite{CGV22} can only prove partial mass concentration for a particular choice of initial data, and any other initial datum above it.

The fast-diffusion case involves new difficulties. It is known that Dirac measures are invariant by the fast-diffusion equation $\frac{\partial u}{\partial t} = \Delta u^m$ when $0 < m < \frac{d-2}{d}$ but they are not generated from $L^1$ initial data \cite{BF81}. However, if we add a drift term, the aggregation caused by it can be strong enough to overcome the fast-diffusion repulsion and produce infinite-time concentration \cite{CGV22}. The range $ \frac{d-2}{d} < m < 1$ is better understood. For example, for the quadratic confinement potential its long-time asymptotics are given by integrable stationary solutions (even if the initial data is given by a Dirac delta), see for example \cite{BBDGV09} and its references. In order to analyse the Cauchy problem for \eqref{ec:The_equation}, we will take advantage from the \textit{a priori} estimates of $\rho$ that comes from its structure and the formal interpretation of \eqref{ec:The_equation} as the $2$-Wasserstein gradient flow associated to the free energy \eqref{ec:The_Free_Energy} leading to a control of the dissipation of the free energy.

One of the key ingredients to analyse partial pass concentration in these models will be to study the equation \eqref{ec:The_equation} in mass variables for radial solutions under the assumptions of radial symmetry of $V$ and $W$. Let us introduce the spatial variable $v = |x|^n |B_1|$ and consider the mass function
\begin{equation*}
	M (t, v) = \int_{\widetilde{B_v}} \rho_t (x) \, dx ,
\end{equation*}
where $\widetilde B_v$ is the ball centred at $0$ such that $|\widetilde{B_v}| = v$. 
The choice of the volume variable is so that $\rho = \frac{\partial M}{\partial v}$.
For convenience let us define the radius $R_v = R^n |B_1|$. We will prove that $M$ satisfies the following nonlinear aggregation-diffusion equation in the viscosity sense
\begin{equation}\label{eq:mass equation bounded domain}
	\frac{\partial M}{\partial t} = \kappa (v)^2 \frac{\partial }{\partial v}  \left( \frac{\partial M}{\partial v} \right)^m + \kappa (v)^2 \frac{\partial M}{\partial v} \frac{\partial}{\partial v}  \left(V + W*\frac{\partial M}{\partial v} \right)   ,
\end{equation}
where $\kappa (v) = n \omega_n^{\frac{1}{n}} v^{\frac{n-1}{n}}$. Equation \eqref{eq:mass equation bounded domain} is a Hamilton-Jacobi type problem amenable to be treated by means of the notion of viscosity solution \cite{CIL92}. 
Notice that in \eqref{eq:mass equation bounded domain} the diffusion is non-linear of $p$-Laplace type with $p = m+1$. We will demonstrate that the notion of viscosity solution is the natural way to study the formation of a Dirac delta comprising part of the mass of the initial data. Linking the fact that \eqref{ec:The_equation} has a gradient flow structure with the notion of viscosity solution in mass variable will be crucial in order to obtain our main result of partial mass concentration for long times for generic initial data.

In the radial setting, the formation of a Dirac delta at $0$ is equivalent to the loss of the Dirichlet boundary condition $M(t,0)=0$. There are very few results of loss of the Dirichlet boundary condition in finite or infinite time in the literature of parabolic equations. One example of this is the previous work \cite{CGV22}, in which with similar techniques the authors prove concentration in infinite time. There are some other examples like \cite{ARS04, SV06}, where they study equations of the type $u_t = u_{xx} + |u_x|^p$, or \cite{BdL04, PS17, PS20, MS20} for a family of viscous Hamilton-Jacobi equations. 

\section{Main results}

The aim of this section is to present the main results of this paper, the key ideas of the proof, and the structure of the rest of the paper. In the sequel, we denote the dependence on time of $\rho$ with a subindex, referring to it like $\rho_t$. Notice that \eqref{ec:The_equation} is in divergence form and one expects conservation of mass under suitable assumptions. We will carry on the dependence of the mass from the initial data instead of normalizing it to one as it is customary in the gradient flow literature. 

\subsection{The case of the ball of radius \texorpdfstring{$R$}{R}} The overall picture we have of the problem is better when we focus on the problem posed in a ball $B_R$. 
\begin{subequations}
\label{eq:Fast-Diffusion_Problem_BR}
We take a more general kernel $K$ with $K(x,y)=K(y,x)$ and consider the aggregation-diffusion problem
\begin{align}
    		&\frac{\partial \rho}{\partial t} = \Delta \rho^m + \nabla \cdot (\rho \nabla V) +  \nabla \cdot \left(\rho \nabla \int_{B_R} K(\cdot ,y) \rho(y) \, dy \right) \qquad  \text{in } \, (0, \infty ) \times B_R, \\
      		&\rho (0, x) = \rho_0 (x), \qquad  \text{for } x \in B_R,
\end{align}
with no-flux condition on the boundary of the ball
\begin{equation}
\left( \nabla  \rho^m + \rho \nabla V + \rho \nabla \int_{B_R} K(\cdot ,y) \rho(y) \, dy \right) \cdot \nu (x) = 0
        \qquad \text{on } \, (0, \infty ) \times \partial B_R\,.
\end{equation}
\end{subequations}
As a convenient assumption, we require that $V$ does not produce flux across the boundary
\begin{equation}\label{eq:V has no flux}
    \nabla V (x) \cdot \nu (x) = 0, \quad \text{on } \partial B_R.
\end{equation}
In order to pass to the limit as $R \to \infty$ we are particular interested in kernels of the form 
\begin{equation*}
	K(x,y) = \eta(x) W(x-y) \eta (y),
\end{equation*}
where $\eta$ is any function in $C_c^{\infty} (B_R)$. In this case, $K$ does not produce any flux across the boundary. This problem is the formal $2$-Wasserstein gradient-flow of the free energy
\begin{equation}\label{ec:Free_energy_FR}
	\mathcal{F}_R [\rho] = \frac{1}{m-1} \int_{B_R} \rho (x)^m \, dx + \int_{B_R} V(x) \rho (x) \, dx + \frac{1}{2} \int_{B_R}\int_{B_R} K(x,y) \rho (y) \rho(x) \, dy \, dx.
\end{equation}
The corresponding equation for the mass is
\begin{equation}\label{eq:FDE_Mass_equation}
	\frac{\partial M}{\partial t} = \kappa (v)^2 \frac{\partial }{\partial v}  \left( \frac{\partial M}{\partial v} \right)^m + \kappa (v)^2 \frac{\partial M}{\partial v} \diffEv [\rho], \quad \kappa (v) = n \omega_n^{\frac{1}{n}} v^{\frac{n-1}{n}}, \quad \text{in } (0, \infty) \times (0,R_v),
\end{equation}
where $R_v = |B_R|$, and slightly abusing the notation, we define
\begin{align*}
\begin{split}
		\diffEv[
		\mu] = \frac{\partial V}{\partial v} +   \frac{\partial}{\partial v} \left( \int_{B_R} K( \cdot , y) \diff \mu (y) \right).
		\end{split}
	\end{align*}
The precise statement will be given below in subsection \ref{sec:Mass equation regularized problem}.

We devote \cref{sec:BR} to the well-posedness theory. 
The aim is to prove a well-posedness result for \eqref{eq:Fast-Diffusion_Problem_BR} for the following notion of solution.
\begin{definition}\label{def:Strong solutions V K}
	We say that $\rho$ is a weak $L^1$ solution of the problem \eqref{eq:Fast-Diffusion_Problem_BR}  in $(0,T) \times B_R$ if $\rho \in C([0,T]; L^1(B_R)), \rho^m \in L^1(0,T; W^{1,1} (B_R))$ and
	\begin{equation*}
			\int_{B_R} \rho_0 \varphi (0) + \int_0^T \int_{B_R} \rho_t \frac{\partial \varphi}{\partial t} = \int_0^T \int_{B_R} \nabla \rho^m \nabla  \varphi + \int_0^T \int_{B_R} \rho_t \left( \nabla V + \nabla \int_{B_R} K(\cdot,y) \rho(y) \diff y \right)  \nabla \varphi.
	\end{equation*}
    for all $\varphi \in C^\infty( [0,T] \times B_R) $ with $\varphi(T,x) = 0$.
	Furthermore, we say that it is a 
	strong solution  if it is a weak solution and
	\begin{enumerate}
	    \item $\rho \in L^{\infty} \left( (0, T) \times  B_R \right)$,
		\item $\rho^m \in L^2 (0, T ; H^{2} (B_R) )$,
		\item $\frac{\partial \rho}{\partial t} \in L^2 ((0, T) \times  B_R )$. 
	\end{enumerate}
\end{definition}
The well-posedness result is the following.
\begin{theorem}
	\label{th:Banach_fixed_point}
	Assume the basic hypotheses  on the initial data $\rho_0$ and the potentials $V$ and $K$
	\begin{align}\label{hyp:Basic BR}
		\tag{H0}
		\begin{split}
			& \rho_0 \in L^{\infty} (B_R), \\
			& V \in W^{2, \infty} (B_R), \, V \geq 0, \,  \nabla V (x) \cdot \nu (x) = 0 \, \text{ on } \partial B_R,\\
			& K \in W^{2,\infty} (B_R \times B_R), \, K(x,y)=K(y,x) \ge 0 \text{ for all } x,y \in B_R, \\
   &\qquad \nabla_x K (x, y) \cdot \nu (x) =0 \text{ for all } (x,y) \in (\partial B_R \times B_R).
		\end{split}
	\end{align}
	 Then, there exists a unique strong solution for the problem \eqref{eq:Fast-Diffusion_Problem_BR}
	 for all $T > 0$.
\end{theorem}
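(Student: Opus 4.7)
The plan is a Banach fixed-point argument that decouples the nonlocal interaction from the fast-diffusion part. For $\bar\rho$ in a suitable closed ball of $C([0,T];L^1(B_R))$, set
\begin{equation*}
    \Psi_{\bar\rho}(t,x) := V(x) + \int_{B_R} K(x,y)\,\bar\rho_t(y)\,dy,
\end{equation*}
and define $\mathcal S[\bar\rho]$ to be the strong solution of the linear-drift problem
\begin{equation*}
    \partial_t\rho = \Delta \rho^m + \nabla\cdot\bigl(\rho\,\nabla \Psi_{\bar\rho}\bigr),\qquad \rho(0,\cdot)=\rho_0,
\end{equation*}
with the no-flux boundary condition. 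Hypothesis \eqref{hyp:Basic BR} together with \eqref{eq:V has no flux} guarantees $\Psi_{\bar\rho}\in L^\infty(0,T;W^{2,\infty}(B_R))$ with $\nabla\Psi_{\bar\rho}\cdot\nu=0$ on $\partial B_R$, uniformly in $\bar\rho$ once its mass is controlled. A fixed point of $\mathcal S$ is then a strong solution of \eqref{eq:Fast-Diffusion_Problem_BR} in the sense of \cref{def:Strong solutions V K}.

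The first step is the well-posedness of $\mathcal S[\bar\rho]$. For each fixed $\bar\rho$ I would regularise the nonlinearity to $(\rho+\varepsilon)^m$ (so the equation becomes uniformly parabolic) and approximate $\rho_0$ by smooth strictly positive data; classical quasilinear parabolic theory then produces smooth nonnegative solutions $\rho_\varepsilon$. The passage to the limit relies on $\varepsilon$-uniform estimates: an $L^\infty$ bound via comparison with the constant supersolution $\|\rho_0\|_\infty\exp(t\,\|\Delta\Psi_{\bar\rho}\|_\infty)$; mass conservation from the no-flux condition; the energy--dissipation identity associated with \eqref{ec:Free_energy_FR} (formally the $2$-Wasserstein gradient-flow structure); and a parabolic regularity bound obtained by testing with $\partial_t\rho_\varepsilon$ and with $\Delta\rho_\varepsilon^m$, which yields $\rho^m\in L^2(0,T;H^2(B_R))$ and $\partial_t\rho\in L^2((0,T)\times B_R)$. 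Strong compactness together with the contraction estimate below (giving uniqueness) yields a unique strong solution $\mathcal S[\bar\rho]$.

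The core of the argument is the contraction. Given $\bar\rho_1,\bar\rho_2$, let $\rho_i=\mathcal S[\bar\rho_i]$, $\Psi_i=\Psi_{\bar\rho_i}$, and $u:=\rho_1-\rho_2$; subtracting the equations,
\begin{equation*}
    \partial_t u = \Delta(\rho_1^m-\rho_2^m) + \nabla\cdot\bigl(u\,\nabla\Psi_1\bigr) + \nabla\cdot\bigl(\rho_2\,\nabla(\Psi_1-\Psi_2)\bigr).
\end{equation*}
A Kato-type test with $\mathrm{sign}_\varepsilon(u)$ combined with the monotonicity of $r\mapsto r^m$ (nonpositive contribution of the diffusion), the no-flux conditions (vanishing boundary terms), and one integration by parts in the source term using the bound $\|\Psi_1-\Psi_2\|_{W^{2,\infty}}\le C\|K\|_{W^{2,\infty}}\|\bar\rho_1-\bar\rho_2\|_{L^1}$ produced by the regularity of $K$, yields an inequality of the form
\begin{equation*}
    \tfrac{d}{dt}\|u(t)\|_{L^1(B_R)} \;\le\; C\bigl(1+\|\rho_2(t)\|_{W^{1,1}}\bigr)\,\|\bar\rho_1(t)-\bar\rho_2(t)\|_{L^1(B_R)},
\end{equation*}
with $C$ depending only on $\|V\|_{W^{2,\infty}}$, $\|K\|_{W^{2,\infty}}$ and $\|\rho_0\|_{L^\infty}$. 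Integrating in time, using the uniform $L^2(0,T;W^{1,1})$ regularity of $\rho_2$ from the previous step, and applying Gronwall, one obtains
\begin{equation*}
    \|\mathcal S[\bar\rho_1]-\mathcal S[\bar\rho_2]\|_{C([0,T];L^1)} \;\le\; C(T)\,T^{\alpha}\,\|\bar\rho_1-\bar\rho_2\|_{C([0,T];L^1)}
\end{equation*}
for some $\alpha>0$, which is a strict contraction for $T$ small. Banach's fixed-point theorem then produces the unique strong solution on a short time interval, and since the $L^\infty$ and $L^1$ a priori bounds depend only on the data, the argument can be iterated from any later time to cover any prescribed $T>0$.

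The part of the proof I expect to be most delicate is the contraction estimate: the cross-term $\nabla\cdot(\rho_2\nabla(\Psi_1-\Psi_2))$ must be absorbed in $L^1$ without losing any derivative on $\bar\rho_1-\bar\rho_2$, and it is precisely the $W^{2,\infty}$ assumption on $K$ in \eqref{hyp:Basic BR} that makes this possible, by allowing the two integrations by parts that transfer all derivatives onto the (smooth) kernel. A secondary technical point is to ensure that no spurious boundary terms arise in the Kato computation, which is guaranteed by the combined no-flux conditions in \eqref{hyp:Basic BR} and \eqref{eq:V has no flux} for the three ingredients $\rho^m$, $V$ and $K$.
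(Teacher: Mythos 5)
Your proposal is correct and follows essentially the same route as the paper: both set up a Banach fixed-point argument on the ball $A=\{u\in C([0,T];L^1(B_R)):\sup_t\|u_t\|_{L^1}\le\|\rho_0\|_{L^1}\}$ for the map ``freeze the drift $\nabla V+\nabla K*\bar\rho$, solve the fast-diffusion problem with known drift,'' where well-posedness of that inner problem is obtained by regularising the degenerate nonlinearity and passing to the limit via uniform $L^\infty$, $L^2_t H^1_x$, $\partial_t\rho\in L^2$ and $\rho^m\in L^2_t H^2_x$ estimates, and contraction comes from a Kato-type $L^1$ continuous-dependence estimate in which the $W^{2,\infty}$ regularity of $K$ converts $\|\bar\rho_1-\bar\rho_2\|_{L^1}$ into $\|\Psi_1-\Psi_2\|_{W^{2,\infty}}$. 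The only differences are cosmetic technicalities (the paper truncates $\Phi_k(s)\sim s^m$ both near $0$ and $\infty$ rather than using $(\rho+\varepsilon)^m$ plus the $L^\infty$ bound, and records the $L^1$-stability as a standalone lemma yielding the $T^{1/2}+T$ factor directly instead of your Gronwall formulation), and you should note that the Kato test function is really $\mathrm{sign}_\varepsilon(\rho_1^m-\rho_2^m)$ rather than $\mathrm{sign}_\varepsilon(u)$, though since $s\mapsto s^m$ is increasing this changes nothing in the limit.
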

The proof is constructive, in subsection \ref{sec:The_equation_in_BR} we replace the fast diffusion by a uniformly elliptic operator and fixed time-dependent drift. Using the classical theory \cite{LSU68} we prove well-posedness of classical solutions for this problem. In subsection \ref{sec:FD+know_transport} we replace the uniformly elliptic operator by the fast diffusion and with compactness arguments we prove the existence of strong solutions. Uniqueness follows from an $L^1$ comparison principle. Finally, using a fixed-point argument on the drift we show well-posedness for the problem \eqref{eq:Fast-Diffusion_Problem_BR}. The proof can be found in subsection \ref{sec:FD+V+K} using the previous sections.

Once we have shown well-posedness for the problem, we prove the so-called free-energy dissipation formula.
\begin{proposition}\label{prop:Convergence_of_the_free_energies}
	Assume the basic hypotheses  \eqref{hyp:Basic BR} on the initial data $\rho_0$ and the potentials $V$ and $K$, then strong solutions $\rho$ of the problem \eqref{eq:Fast-Diffusion_Problem_BR} satisfy the identity
	\begin{equation}\label{eq:flow of the free energy}
		\int_{t_1}^{t_2} \int_{B_R} \rho \left| \nabla \left( U'(\rho) + V + \int_{B_R} K( \cdot , y ) \rho (y) \, dy \right) \right|^2 \, dt = \mathcal{F}_R [\rho_{t_1}] - \mathcal{F}_R [\rho_{t_2}].
	\end{equation}
	In particular, $\mathcal{F}_R [\rho_t]$ is a non-increasing function of time.
\end{proposition}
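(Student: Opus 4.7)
The plan is to differentiate $\mathcal{F}_R[\rho_t]$ in time, recognise $\xi \defeq U'(\rho) + V + \int_{B_R} K(\cdot,y)\rho(y)\,dy$ (with $U(\rho) = \rho^m/(m-1)$, so that $U'(\rho) = \tfrac{m}{m-1}\rho^{m-1}$) as the first variation of $\mathcal{F}_R$, and exploit the conservative form $\partial_t\rho = \nabla\cdot(\rho\nabla\xi)$ of the equation. An integration by parts in space then gives
\[
\frac{d}{dt}\mathcal{F}_R[\rho_t] \;=\; \int_{B_R} \xi\,\partial_t\rho\,dx \;=\; -\int_{B_R} \rho\,|\nabla\xi|^2\,dx,
\]
and the result follows by integration in time. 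Boundary contributions vanish because $\rho\nabla\xi\cdot\nu = \bigl(\nabla\rho^m + \rho\nabla V + \rho\nabla\!\int K(\cdot,y)\rho\bigr)\cdot\nu = 0$ on $\partial B_R$, by the no-flux condition together with \eqref{eq:V has no flux} and the last condition in \eqref{hyp:Basic BR}.

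The implementation proceeds term by term. For the external potential and the interaction, $\partial_t\rho \in L^2((0,T)\times B_R)$ and the symmetry $K(x,y)=K(y,x)$ yield
\[
\frac{d}{dt}\int_{B_R} V\rho\,dx = -\int_{B_R} \rho\,\nabla V\cdot\nabla\xi\,dx,
\]
\[
\frac{1}{2}\frac{d}{dt}\iint_{B_R\times B_R}\! K(x,y)\rho(x)\rho(y)\,dx\,dy = -\int_{B_R} \rho\,\nabla_x\!\!\int_{B_R}\! K(x,y)\rho(y)\,dy\cdot\nabla\xi\,dx,
\]
both of which are straightforward because $\nabla V$ and $\nabla_x K$ are bounded by \eqref{hyp:Basic BR} and the boundary terms cancel via $\rho\nabla\xi\cdot\nu = 0$. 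These recover the $V$ and $K$ contributions to $\int\rho|\nabla\xi|^2\,dx$.

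The delicate piece is the fast-diffusion entropy $\tfrac{1}{m-1}\int \rho^m$. Formally its time derivative is $\int U'(\rho)\partial_t\rho$, but $U'(\rho)$ is singular as $\rho \to 0$ when $m<1$, so it cannot be used as a test function directly. The key observation is the pointwise identity $\rho\,\nabla U'(\rho) = \nabla\rho^m$, which is well-defined in the strong-solution class because $\rho^m \in L^2(0,T;H^2(B_R))$. Integrating by parts again (boundary term $\tfrac{m}{m-1}\rho^m\,\nabla\xi\cdot\nu$ vanishes a.e.\ on $\partial B_R$ since $\rho\nabla\xi\cdot\nu = 0$ and $\rho^m = \rho\cdot\rho^{m-1}$) produces $\int_{B_R} U'(\rho)\,\nabla\cdot(\rho\nabla\xi)\,dx = -\int_{B_R} \nabla\rho^m\cdot\nabla\xi\,dx$, which is the missing $U'$-contribution.

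The main obstacle is making this fast-diffusion step rigorous, i.e.\ justifying both the chain rule $\tfrac{d}{dt}\!\int\rho^m = m\!\int\rho^{m-1}\partial_t\rho$ and the spatial integration by parts in the presence of the singularity of $U'$ at zero. A clean route is to replace $U'(\rho)$ by the bounded Lipschitz truncation $U'(\rho+\delta)$, perform the calculation at level $\delta>0$ (where every step is classical thanks to the regularity from \Cref{def:Strong solutions V K}), and pass to the limit $\delta\to 0$ using the $L^\infty$-bound on $\rho$, the control $\nabla\rho^m \in L^2((0,T)\times B_R)$, and Lebesgue dominated convergence. Alternatively, one can first prove the identity for the strictly positive approximating sequence $\rho^{(\varepsilon)}$ constructed in subsections \ref{sec:The_equation_in_BR}--\ref{sec:FD+know_transport}, where the computation is entirely classical, and then let $\varepsilon\to 0$ using strong $L^1$-convergence for the energy part and lower semicontinuity of $\int\!\int\rho|\nabla\xi|^2$ together with a matching upper bound obtained from testing against a smoothed version of $\xi^{(\varepsilon)}$.
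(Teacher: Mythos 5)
Your proposal is correct and follows essentially the same route as the paper: differentiate the free energy, treat the $V$- and $K$-terms directly (they are regular), and handle the singular entropy by replacing $U'$ with a bounded truncation near zero and passing to the limit using $\rho \in L^\infty$, $\nabla\rho^m \in L^2$, and dominated convergence. The paper implements the truncation by capping $U'$ at $U'(\varepsilon)$ (defining $U_\varepsilon$ and $\mathcal{F}_{R,\varepsilon}$, so the intermediate dissipation appears as a mixed product of $\nabla\xi$ and $\nabla\xi_\varepsilon$) rather than shifting the argument to $\rho+\delta$ as you suggest, but this is a minor variant of the same idea.
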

We prove this result in subsection \ref{sec:Free_energy}. We use this estimate to prove long-time asymptotics up to a subsequence in the following sense. Let us take $\left\lbrace t_n \right\rbrace$ a sequence of times such that $t_n \rightarrow \infty$. Let $\rho$ be a weak solution of the problem \eqref{eq:Fast-Diffusion_Problem_BR}, so we define the sequence of functions 
\begin{align}\label{seq:time_steps}
	\begin{split}
		\rho^{[n]}\colon& [0,1] \times B_R  \longrightarrow  \mathbb{R} \\
		& \qquad (t,x) \quad \mapsto \, \, \rho (t + t_n, x).
	\end{split}
\end{align}
In \cref{sec:Long time asymptotic} we prove the following:
\begin{theorem}\label{th:Convergence_stationary_state}
	Assume the basic hypotheses  \eqref{hyp:Basic BR} on the initial data $\rho_0$ and the potentials $V$ and $K$. Let be the sequence $\rho^{[n]}$ be defined as in \eqref{seq:time_steps}. Then, there exists $\widehat \mu \in W^{-1,1} (B_R)$ such that, up to a subsequence, $\rho^{[n]} \rightarrow \widehat \mu$ in $C([0,1]; W^{-1,1} (B_R))$. 
\end{theorem}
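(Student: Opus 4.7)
My plan is to deploy Arzelà–Ascoli in $C([0,1];W^{-1,1}(B_R))$, showing that the family $\{\rho^{[n]}\}$ is pointwise precompact in $W^{-1,1}(B_R)$ and uniformly equicontinuous in time, and in fact that the temporal oscillation vanishes as $n\to\infty$ so that the subsequential limit is constant in $t$.

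\textbf{Pointwise precompactness.} Mass conservation (obtained by testing the weak formulation against $\varphi\equiv 1$, which is legitimate thanks to the no-flux conditions \eqref{eq:V has no flux} and $\nabla_x K(x,y)\cdot\nu(x)=0$ built into \eqref{hyp:Basic BR}) gives $\|\rho^{[n]}(t,\cdot)\|_{L^1(B_R)}=\|\rho_0\|_{L^1(B_R)}$ for every $n$ and $t$. The unit ball of $W^{1,\infty}_0(B_R)$ is equi-Lipschitz and equibounded, hence relatively compact in $C_0(B_R)$ by Arzelà–Ascoli; a finite $\varepsilon$-net argument then shows that every bounded sequence of signed measures on $\overline{B_R}$ is precompact in $W^{-1,1}(B_R)$, giving the desired precompactness of $\{\rho^{[n]}(t,\cdot)\}$ at each $t\in[0,1]$.

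\textbf{Time equicontinuity and vanishing oscillation.} Rewriting \eqref{eq:Fast-Diffusion_Problem_BR} as a continuity equation $\partial_t\rho=\nabla\cdot\bigl(\rho\,\nabla(U'(\rho)+V+K*\rho)\bigr)$ and testing against $\varphi\in W^{1,\infty}_0(B_R)$ with $\|\varphi\|_{W^{1,\infty}}\leq 1$, integration by parts and the Cauchy–Schwarz inequality $\int\rho\,|\nabla(\cdot)|\leq\|\rho\|_{L^1}^{1/2}\bigl(\int\rho\,|\nabla(\cdot)|^2\bigr)^{1/2}$ give
\begin{equation*}
\|\partial_t\rho(\tau)\|_{W^{-1,1}(B_R)}\;\leq\;\|\rho_0\|_{L^1}^{1/2}\sqrt{D(\tau)},\qquad D(\tau):=\int_{B_R}\rho\,\bigl|\nabla(U'(\rho)+V+K*\rho)\bigr|^2.
\end{equation*}
Integrating in time and applying Cauchy–Schwarz once more,
\begin{equation*}
\|\rho^{[n]}(t,\cdot)-\rho^{[n]}(s,\cdot)\|_{W^{-1,1}(B_R)}\;\leq\;\|\rho_0\|_{L^1}^{1/2}\,|t-s|^{1/2}\Bigl(\int_{t_n+s}^{t_n+t}D(\tau)\,d\tau\Bigr)^{1/2}.
\end{equation*}
Under \eqref{hyp:Basic BR} one has $V,K\geq 0$, and Jensen's inequality for $0<m<1$ yields $\tfrac{1}{m-1}\int\rho^m\geq\tfrac{1}{m-1}|B_R|^{1-m}\|\rho_0\|_{L^1}^m$, so $\mathcal{F}_R$ is bounded below along the flow. \cref{prop:Convergence_of_the_free_energies} then forces $\int_0^\infty D(\tau)\,d\tau<\infty$. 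Hence the displayed bound is a uniform modulus of continuity on $[0,1]$, and moreover it tends to $0$ as $n\to\infty$ for every fixed $s,t\in[0,1]$, since $\int_{t_n+s}^{t_n+t}D\to 0$ by tail vanishing.

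Arzelà–Ascoli then extracts a subsequence converging to some $\widehat\Phi\in C([0,1];W^{-1,1}(B_R))$, and the vanishing-oscillation estimate forces $\widehat\Phi(t)\equiv\widehat\mu$ for a single $\widehat\mu\in W^{-1,1}(B_R)$, which is the statement. The main technical subtlety I expect is the compactness of bounded sets of measures in $W^{-1,1}(B_R)$: one cannot upgrade this to strong $L^1$ compactness, and indeed the whole point of the paper is that the limit $\widehat\mu$ may carry a Dirac mass at the origin.
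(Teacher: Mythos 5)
Your proof is correct and follows essentially the same route as the paper: mass conservation for the uniform $L^1$ bound, the dissipation inequality $\|\partial_t\rho\|_{W^{-1,1}}\leq\|\rho_0\|_{L^1}^{1/2}\sqrt{D}$ for a $C^{1/2}$ modulus of equicontinuity in time, the compact embedding $\mathcal{M}(\overline{B_R})\hookrightarrow W^{-1,1}(B_R)$ for pointwise precompactness, Arzelà–Ascoli, and the vanishing of the dissipation tail $\int_{t_n}^{t_n+1}D\to 0$ (from $\mathcal{F}_R$ being non-increasing and bounded below) to force the limit to be time-independent. The only cosmetic difference is that you unpack the compact embedding via an $\varepsilon$-net on the unit ball of $W^{1,\infty}_0(B_R)$, whereas the paper simply invokes it, and you fold the time-independence into a "vanishing oscillation" estimate rather than extracting the subsequence first and passing to the limit in the equicontinuity bound afterwards; both are equivalent.
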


A key difficult point is to show that the limit $\widehat \mu$ in the previous theorem does not depend on time. We we will show later that it is a solution of the stationary problem in the sense of mass. 
In  \cite{CGV22} the authors can only show that $\widehat \mu$ does not depend on time for a specific family of explicit initial data (which are singular at $0$), for which the mass function point-wise increasing with respect to $t$. 
We can make the characterisation for general initial data in $L^1 \cap L^\infty$ with radial symmetry.

Our method relies on taking advantage of the negative Sobolev space $W^{-1,1}(B_R)$ defined as the distributions with finite norm
\begin{equation}\label{def:norm W -1 1}
    \| \mu \|_{W^{-1,1}(B_R)} := \inf_{\mu = \diver (F)} \| F \|_{L^1(B_R)}.
\end{equation}
Notice that elements in $W^{-1,1} (B_R)$ can be tested against $W^{1,\infty}_0 (B_R)$ and hence $\mathcal{M}(B_R) \subset W^{-1,1} (B_R)$ with compact embedding.
The key reason to use this space is the following \textit{a priori} estimate
\begin{align}
		\int_0^1  \left\| \frac{\partial \rho_t^{[n]}}{\partial t} \right\|^2_{W^{-1, 1}(B_R)} \hspace{-2ex} 
		&= \int_0^1 \left\|  \diver \left( \rho_t^{[n]} \nabla \left( \frac{m}{m-1} \left( \rho_t^{[n]} \right)^{m-1} +  V + \int_{B_R} K( \cdot, y) \rho_t^{[n]} (y) \, dy   \right)  \right)  \right\|^2_{W^{-1, 1}(B_R)} \nonumber \\
		& \le \int_0^1 \left\|  \rho_t^{[n]} \nabla \left( \frac{m}{m-1} \left( \rho_t^{[n]} \right)^{m-1} +  V + \int_{B_R} K( \cdot, y) \rho_t^{[n]} (y) \, dy  \right)   \right\|^2_{L^{1}(B_R)} \nonumber\\
		& \leq \int_{t_n}^{t_n +1} \left\|  \rho_t^{\frac{1}{2}}  \right\|^2_{L^{2}(B_R)} \left\|  \rho_t^{\frac{1}{2}} \nabla \left( \frac{m}{m-1} \rho_t^{m-1} +  V +  \int_{B_R} K ( \cdot , y)  \rho_t (y) \, dy   \right)   \right\|^2_{L^{2}(B_R)} \nonumber\\
		& \leq \| \rho_0 \|_{L^{1}(B_R)} \left( \mathcal{F}_R [\rho_{t_n}] - \mathcal{F}_R [ \rho_{t_{n}+1} ] \right),
        \label{ec:inequality_about_drho/dt_W-1,1}
\end{align}
which involves only the energy. Hence, it follows that
\begin{align}\label{ec:asymptotics_equicontinuity}
	\begin{split}
		\| \rho_t^{[n]} - \rho_s^{[n]} \|_{W^{-1,1}(B_R)} 
		\leq \int_s^t \left\| \frac{\partial \rho_{\sigma}^{[n]}}{\partial \sigma} \right\|_{W^{-1,1} (B_R)} \!\!\!\!\! d \sigma 
		& \leq \| \rho_0 \|^{\frac{1}{2}}_{L^1(B_R)} \left( \mathcal{F}_R [\rho_{t_n}] - \mathcal{F}_R [ \rho_{t_{n}+1} ] \right)^{\frac{1}{2}} |t-s|^{\frac{1}{2}}.
	\end{split}
\end{align}
We have shown that $\mathcal F_R[\rho_t]$ is non-increasing in time, and we will also show that the free energy is bounded below. Hence, there is a limit as $t \to \infty$ for $\mathcal F_R[\rho_t] \to L$.

We prove \Cref{th:Convergence_stationary_state} using the estimate \eqref{ec:asymptotics_equicontinuity} as follows. To prove the convergence claim of a subsequence of $\rho^{[n]}$ we use the Ascoli-Arzelá theorem.   
Passing to the limit in \eqref{ec:asymptotics_equicontinuity} we also show that $\| \widehat \mu_t - \widehat \mu_s \|_{W^{-1,1} (B_R)} = 0$, so $\widehat \mu$ does not depend on time. The details of the proof are given in subsection \ref{sec:Convergence_to_stationary_state}.

\begin{remark}\label{rem:Benamou-Brenier}
The Benamou-Brenier formula \cite{San15} and the $2$-Wasserstein gradient flow structure of the problem can be used to prove that the convergence of the full sequence $\rho^{\infty, 1, [n]}$ also holds in $C([0,1]; \mathcal{P} (B_R))$ endowed with the $2$-Wasserstein distance. We will not include details of this proof since the asymptotic result is essentially analogous.
\end{remark}

We present the mass equation and some of its properties in subsection \ref{sec:Mass equation regularized problem}. Once we have presented the problem, in subsection \ref{sec:viscosity solutions} we prove through the construction of $\rho$ that the mass $M$ is a viscosity solution of \eqref{eq:FDE_Mass_equation} for any finite time in the precise sense given in \Cref{def:mass viscosity BR}.

After that we will study the asymptotic in time of $M$ and link it to the asymptotic in time of the solution of the problem \eqref{eq:Fast-Diffusion_Problem_BR}.
In the same sense as for $\rho$  in \eqref{seq:time_steps} we define the sequence in $[0,1] \times [0,{R_v}]$ given by
\begin{equation}\label{ec:Mass_tn_tn+1}
	M^{[n]} (t, v )= \int_{\widetilde{B_v}} \rho^{[n]}_t (x) \, dx,
\end{equation}
which are the solutions of the mass equation \eqref{eq:General_Mass_equation} for the time interval $[t_n, t_n +1]$ after a translation to $[0,1]$.

 \begin{theorem}\label{prop:M^[n]_converge_uniformly}
 	Assume the basic hypotheses \eqref{hyp:Basic BR}, and that the initial datum $\rho_0$ and potentials $V$ and $K$ are radially symmetric, and let the sequence $M^{[n]}$ be defined as in \eqref{ec:Mass_tn_tn+1}. Then, there exists $\widehat{M} \in C((0, R_v])$ such that, if we define $\widehat{M}(0)=0$, then the following properties hold:
 	\begin{itemize}
 		\item The function $\widehat{M}$ is non-decreasing in $v$,
 		\item up to a subsequence, $M^{[n]} \rightarrow \widehat{M}$ in $ C_{loc}([0,1] \times (0 , R_v])$ and point-wise in $[0,1] \times [0,R_v]$,
 		\item $\frac{\partial \widehat{M}}{\partial v} =\widehat{\mu}$ belongs to $\mathcal M ([0,R_v])$ in the precise sense given in \Cref{rem:dM/dv identify with mu},
 		\item $\widehat{M}$ is a stationary viscosity solution of \eqref{eq:mass equation bounded domain} in the sense given in \Cref{def:mass viscosity BR}.
 	\end{itemize}
 \end{theorem}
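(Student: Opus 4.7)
My plan is to combine the $W^{-1,1}$ compactness of $\{\rho^{[n]}\}$ from \Cref{th:Convergence_stationary_state} with the monotonicity and mass conservation of $M^{[n]}$, and then to invoke stability of viscosity solutions under locally uniform convergence to identify $\widehat M$ as a stationary solution of \eqref{eq:FDE_Mass_equation}.

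First I would record uniform estimates. Mass conservation for \eqref{eq:Fast-Diffusion_Problem_BR} gives $0\le M^{[n]}(t,v)\le \|\rho_0\|_{L^1(B_R)}$, and $v\mapsto M^{[n]}(t,v)$ is non-decreasing by construction. Writing $\rho^{[n]}_t-\rho^{[n]}_s = \diver F$ with $\|F\|_{L^1(B_R)}\le \|\rho_0\|_{L^1(B_R)}^{1/2}\epsilon_n^{1/2}|t-s|^{1/2}$ as in \eqref{ec:asymptotics_equicontinuity}, where $\epsilon_n:=\mathcal F_R[\rho_{t_n}]-\mathcal F_R[\rho_{t_n+1}]\to 0$, the divergence theorem on the spheres $\partial\widetilde B_{v(r)}$ followed by integration in $r$ yields
\begin{equation*}
\int_0^R \bigl|M^{[n]}(t,v(r))-M^{[n]}(s,v(r))\bigr|\,dr \;\le\; \|F\|_{L^1(B_R)} \;\le\; C\,\epsilon_n^{1/2}|t-s|^{1/2}.
\end{equation*}
For each fixed $t$, $\{M^{[n]}(t,\cdot)\}$ is a uniformly bounded sequence of non-decreasing functions, so Helly's selection theorem gives a pointwise a.e. convergent subsequence. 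A diagonal argument along a countable dense set of times then produces a subsequence and a limit $\widehat M$, non-decreasing in $v$. Sending $n\to\infty$ in the displayed estimate kills the $t$-oscillation of $\widehat M$ in $L^1_r$, so $\widehat M$ does not depend on $t$; this is also forced by the time-independence of $\widehat\mu$ in \Cref{th:Convergence_stationary_state}.

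Next I would identify $\widehat M$ with the cumulative distribution of $\widehat\mu$. For $v_0\in(0,R_v)$, take Lipschitz approximations $\phi_{v_0,\delta}\in W^{1,\infty}_0(B_R)$ of $\chi_{\widetilde B_{v_0}}$; the $W^{-1,1}$ convergence in \Cref{th:Convergence_stationary_state} yields $\langle\rho^{[n]}_t,\phi_{v_0,\delta}\rangle\to\langle\widehat\mu,\phi_{v_0,\delta}\rangle$ uniformly in $t$, and letting $\delta\to 0$ identifies $\widehat M(v_0)=\widehat\mu(\widetilde B_{v_0})$ at every $v_0$ at which $\widehat\mu$ does not charge the sphere. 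This delivers $\partial_v\widehat M=\widehat\mu$ in the sense of \Cref{rem:dM/dv identify with mu}, and monotonicity together with continuity of $\widehat M$ at every non-atomic point upgrades the pointwise convergence to locally uniform convergence on $[0,1]\times(0,R_v]$.

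Finally, I would pass to the limit in the viscosity equation. Each $M^{[n]}$ is a viscosity solution of \eqref{eq:FDE_Mass_equation} on $[0,1]\times(0,R_v)$ by subsection~\ref{sec:viscosity solutions}. The regularity of $V$ and $K$ in \eqref{hyp:Basic BR} combined with the $W^{-1,1}$ convergence of $\rho^{[n]}_t$ gives $\diffEv[\rho^{[n]}_t]\to\diffEv[\widehat\mu]$ locally uniformly in $v$, so the drift in \eqref{eq:FDE_Mass_equation} converges. Standard stability of viscosity solutions under locally uniform convergence then yields that $\widehat M$ is a viscosity solution of \eqref{eq:FDE_Mass_equation} on $(0,1)\times(0,R_v)$; since $\widehat M$ is time-independent, this reduces to the stationary viscosity problem of \Cref{def:mass viscosity BR}. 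The stationary equation in turn rules out interior atoms of $\widehat\mu$, proving continuity of $\widehat M$ on $(0,R_v]$ and closing the loop with the previous paragraph. The main obstacle I anticipate is exactly this last step: using the stationary viscosity equation to exclude atoms of $\widehat\mu$ on interior spheres while accommodating the possible loss of the boundary condition at $v=0$ that encodes the Dirac mass at the origin. This is precisely why continuity is claimed only on the half-open interval $(0,R_v]$ and $\widehat M(0)=0$ is assigned by convention.
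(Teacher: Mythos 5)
Your overall architecture (compactness, identification of $\partial_v\widehat M$ with $\widehat\mu$, drift convergence via the regularity of $K$, stability of viscosity solutions) matches the paper's, but there is a genuine gap in the compactness step, and it is exactly the one you flag yourself: you never obtain a \emph{uniform-in-$n$ modulus of continuity} for $M^{[n]}$ on compacts of $[0,1]\times(0,R_v]$, and your proposed bootstrap to get it is circular.

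Concretely: Helly's theorem gives only pointwise a.e.\ convergence to a monotone, possibly discontinuous limit, and your $W^{-1,1}$ estimate only controls the time oscillation in the $L^1(dr)$ norm, not in $\sup$. To upgrade to $C_{loc}([0,1]\times(0,R_v])$ convergence by Pólya's argument you need to know in advance that $\widehat M$ is continuous on $(0,R_v]$, which is precisely what you propose to deduce afterwards from the stationary viscosity equation. But \Cref{def:mass viscosity BR} already requires $M\in C([0,T];C((0,R_v))\cap BV([0,R_v]))$, and the stability theorem \cite[Ch.~3 Thm.~2]{Kat14} requires locally uniform convergence of the approximating solutions; so you cannot invoke either before having the continuity/uniform convergence in hand. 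Note also that the naive Lipschitz bound $[M^{[n]}(t,\cdot)]_{W^{1,\infty}}\le\|\rho^{[n]}_t\|_{L^\infty}$ from \eqref{ec:Mass_spatial_regularity} is of no help here, since the $L^p$ estimate \eqref{ec:Lp_estimate} lets $\|\rho_t\|_{L^\infty}$ grow exponentially in $t$, hence is \emph{not} uniform along the sequence $t_n\to\infty$.

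The missing ingredient is the intrinsic interior Hölder estimate of DiBenedetto type, stated in the paper as \Cref{th:viscosity_solutions}, inequality \eqref{ec:C_alpha_estimate_in_time_and_space_Mass}: for $(t,v)\in[T_1,\infty)\times[\varepsilon,R_v]$ one has a $C^\alpha$ bound for $M$ with constants depending only on $d$, $m$, $\|\rho_0\|_{L^\infty}$, $\|\nabla V\|_{L^\infty}$, $\|\nabla_x K\|_{L^\infty}$, $\varepsilon$, $T_1$ — crucially, not on $t$. This gives the uniform equicontinuity estimate \eqref{ec:Uniform_convergence_Mn_to_Minfty} for the translates $M^{[n]}$ on $[0,1]\times[1/k,R_v]$, and Ascoli--Arzelà with a diagonal argument in $k$ produces the $C_{loc}([0,1]\times(0,R_v])$ limit and, for free, the continuity of $\widehat M$ on $(0,R_v]$. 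After that, the remaining steps of your plan — extension by $\widehat M(t,0)=0$, identifying $\partial_v\widehat M = \changevariables[\widehat\mu]$ by testing against $W^{1,\infty}_0$ functions (the paper does this instead via Banach--Alaoglu on the derivatives, which are measures of total variation $\|\rho_0\|_{L^1}$), showing $\diffEv[\rho^{[n]}]\to\diffEv[\widehat\mu]$ in $C([0,1]\times[0,R_v])$ via \Cref{lem:radial}, and applying stability — go through as in the paper. Your ``ruling out interior atoms'' step is, in fact, not needed for this theorem and does not appear in the paper: the theorem claims only continuity on $(0,R_v]$ and boundedness, not absence of interior atoms of $\widehat\mu$; the $C^\alpha$ regularity supplies the continuity directly, without any appeal to the stationary equation.
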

 
 We present its proof in subsection \ref{sec:Convergence of the mass}.
 
 \begin{remark}
 	Notice that the convergence happens point-wise in $[0,R_v]$ so that, due to the no-flux condition on $\rho$, we have that $\widehat{M}(R_v) = M(t,R_v) = \| \rho_0 \|_{L^1 (B_R)}$.
 \end{remark}

\begin{remark}
	Notice that if $\widehat{M} \in W^{1,\infty}_{loc} ( (0,R_v] )$ then there exists $\widehat \rho \in L^1(B_R)$ such that
	\begin{equation*}
		\widehat{\mu} = \alpha \delta_0 + \widehat{\rho} \, dx,
	\end{equation*}
	with $\widehat{\rho}$ integrable and $\alpha = \| \rho_0 \|_{L^1 (B_R)} - \| \widehat{\rho} \|_{L^1 (B_R)} \geq 0$.
\end{remark}

There are two examples where we can prove this regularity. We devote subsection \ref{sec:Regularity M} to this goal. The first example is 
\begin{proposition}
    \label{th:Concentration_Phenomena_ball_viscosity_solutions}
    Assume the basic hypotheses \eqref{hyp:Basic BR}, and that the initial datum $\rho_0$ and potentials $V$ and $K$ are radially symmetric. Let the sequence $M^{[n]}$ be defined as in \eqref{ec:Mass_tn_tn+1} and $\widehat{M}$ its limit obtained in \Cref{prop:M^[n]_converge_uniformly}. 
    Assume, furthermore, that 
    $$\left. \frac{\partial }{\partial v} \left( V(x) + \int_{B_R} K(x , y )  \diff \widehat \rho (y) \right)\right|_{   v = |B_1| |x|^d  }   \geq 0.$$
    Then, $\widehat{M}$ is $C^2 ((0,R_v))$.
\end{proposition}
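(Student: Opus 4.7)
The strategy is to reduce the stationary viscosity equation for $\widehat M$ on $(0,R_v)$ to an explicit first-order ODE for $\widehat \rho := \partial_v \widehat M$ that becomes easy to analyse under the monotonicity hypothesis. Let $\Phi(v) := V(x(v)) + \int_{B_R} K(x(v),y)\,d\widehat\rho(y)$ with $|x(v)|=(v/|B_1|)^{1/d}$; under \eqref{hyp:Basic BR} together with the integrability of $\widehat\rho$, one has $\Phi\in C^{1,1}((0,R_v])$ and by the standing assumption $\Phi'(v)\ge 0$. Since $\kappa(v)>0$ for $v>0$, \Cref{prop:M^[n]_converge_uniformly} tells us that $\widehat M$ is a stationary viscosity solution of
\[
0 = \partial_v\!\left[(\partial_v M)^m\right] + \partial_v M\cdot \Phi'(v) \qquad \text{on } (0,R_v).
\]

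\textbf{From viscosity to an ODE.} The first step, which I expect to be the most delicate, is to upgrade this to the distributional identity $\partial_v[(\partial_v \widehat M)^m] = -\partial_v\widehat M\cdot \Phi'(v)$ in $\mathcal D'((0,R_v))$. I would obtain it either directly from \Cref{def:mass viscosity BR} or, more cleanly, by passing to the limit $n\to\infty$ in the classical equation satisfied by $M^{[n]}$, using the uniform mass bound and the convergence of \Cref{th:Convergence_stationary_state}. Combined with $\Phi'\ge 0$ and $\widehat\rho\ge 0$, this gives $\partial_v[\widehat\rho^m]\le 0$ in $\mathcal D'$, so $\widehat\rho^m$ (and hence $\widehat\rho$, as $x\mapsto x^{1/m}$ is monotone) is a non-increasing function on $(0,R_v)$; in particular $\widehat\rho\in L^\infty_{loc}((0,R_v))$. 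On the open set $\mathcal O := \{\widehat\rho>0\}$, local boundedness of the RHS gives $\widehat\rho^m\in W^{1,\infty}_{loc}(\mathcal O)$; inverting using $\widehat\rho>0$ yields $\widehat\rho\in W^{1,\infty}_{loc}(\mathcal O)$, and integrating the classical identity $\partial_v\!\left[\tfrac{m}{m-1}\widehat\rho^{m-1}\right]=-\Phi'(v)$ on each connected component of $\mathcal O$ gives
\[
\widehat\rho(v) = \left(\tfrac{1-m}{m}\bigl(\Phi(v)-C\bigr)\right)^{-1/(1-m)}
\]
for some integration constant $C$.

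\textbf{Support structure and regularity conclusion.} Since $\widehat\rho$ is non-increasing, $\mathcal O$ must be of the form $(0,a)$ with $a\in[0,R_v]$. If $0<a<R_v$ then $\widehat\rho(v)\to 0$ as $v\to a^-$, which via the formula would force $\Phi(v)-C\to+\infty$, contradicting $\Phi\in L^\infty$. Hence either $\widehat\rho\equiv 0$ on $(0,R_v)$ (so that $\widehat M$ is locally constant and trivially $C^2$), or $\widehat\rho>0$ on all of $(0,R_v)$ and the explicit profile is valid globally. In the latter case, the argument inside the bracket is bounded away from $0$ on every compact $K\Subset(0,R_v)$, so the negative power is smooth there; combined with $\Phi\in C^1$, this gives $\widehat\rho\in C^1((0,R_v))$ and hence $\widehat M\in C^2((0,R_v))$. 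The main obstacle is rigorously carrying out the viscosity-to-distribution passage for the degenerate-singular operator at hand; everything else is routine ODE analysis.
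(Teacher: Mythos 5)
Your plan takes a genuinely different — and more ambitious — route than the paper, and the gap is exactly the step you flag as the ``main obstacle'': upgrading the viscosity formulation of $\partial_v[(\partial_v\widehat M)^m]=-\partial_v\widehat M\cdot\Phi'(v)$ to a distributional identity for the \emph{nonlinear} operator. Neither of your two suggestions closes this. Ishii's viscosity-distributional equivalence applies to linear (or at least convex, non-degenerate) operators, not to the degenerate $p$-Laplacian-type term $\partial_v[(\cdot)^m]$ with $0<m<1$. Your alternative — passing $n\to\infty$ in the equation for $M^{[n]}$ — also does not close: by \Cref{prop:M^[n]_converge_uniformly} one only has $\partial_v M^{[n]}\rightharpoonup \partial_v\widehat M$ weak-$*$ in measures, which is nowhere near enough to pass the nonlinearity $(\partial_v M^{[n]})^m$ to the limit in $\mathcal D'$. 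A further wrinkle you gloss over: a priori $\partial_v\widehat M$ is only a measure, so the very expression $(\partial_v\widehat M)^m$ has no distributional meaning until some regularity has been established — which is what you are trying to prove. And had your argument gone through, it would have delivered the full Euler--Lagrange profile, which is the content of the much harder \Cref{th:Regularity_of_Minfty} and requires compactly supported potentials and a delicate inf/sup-convolution argument; that is a strong hint that the direct viscosity-to-ODE passage for this operator is not routine.

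The paper sidesteps the nonlinear passage entirely. Under $\Phi'\ge 0$, at any test-function contact point from below the viscosity inequality reads
\begin{equation*}
-\partial_v^2\varphi(v_0)\ \ge\ \tfrac{1}{m}\big(\partial_v\varphi(v_0)\big)^{2-m}\,\diffEv[\widehat\mu](v_0)\ \ge\ 0 ,
\end{equation*}
so $\widehat M$ is a viscosity super-solution of the \emph{linear} equation $-\Delta M=0$; for this linear operator the viscosity/distributional equivalence (Ishii) is standard, giving concavity and hence $\widehat M\in W^{1,\infty}_{loc}((0,R_v))$. Only now is the right-hand side $f=\tfrac1m(\partial_v\widehat M)^{2-m}\diffEv[\widehat\mu]$ an $L^\infty_{loc}$ datum, and the Caffarelli--Cabr\'e regularity theory bootstraps $\widehat M\in C^{1,\alpha}$ and then, since $\diffEv[\widehat\mu]\in W^{1,\infty}$ makes $f\in C^{0,\beta}$, $\widehat M\in C^{2,\beta}$. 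Your ODE analysis once the regularity is in hand is fine, but the way you propose to get there does not currently work; the paper's detour through concavity is what makes the argument go.
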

We proof this result in subsection \ref{sec:Regularity M 1} and follows directly from the regularity theory of Caffarelli and Cabré \cite{CC95}. 
A much more difficult result is the following
\begin{theorem}\label{th:Regularity_of_Minfty}
    Assume  the basic hypotheses \eqref{hyp:Basic BR}, that the initial datum $\rho_0$ and potentials $V$ and $K$ are radially symmetric,  and furthermore, that  $V$ and $K$ have compact support. 
    Then, $\widehat M $ is linear in an interval $[R_v - b,R_v]$ for some $b > 0$, $\frac{\partial \widehat{M}}{\partial v} (R_v ) > 0$, and for a.e. $v \in (0,R_v)$
    \begin{equation}
        \label{eq:mass ball stationary characterisation}
        \frac{\partial \widehat{M}}{\partial v} (v) 
        =  
        \left(   
        \left( \frac{\partial \widehat{M}}{\partial v} (R_v ) \right)^{m-1} 
        +\tfrac{1-m}{m}
        \left. \left( V(x) + \int_{B_R} K(x,y) \diff \widehat \rho (y) \right) \right|_{   v = |B_1| |x|^d  }
        \right)^{-\frac 1 {1-m} }
    \end{equation}
    and, in particular, $\widehat{M} \in W^{2, \infty}_{loc} ((0 , R_v])$.
\end{theorem}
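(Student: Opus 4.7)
The plan is to combine a viscosity-solution analysis in the boundary strip where the potentials vanish with a direct ODE integration once a uniform lower bound on the outer density is available. I proceed in three stages.

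\textbf{Linearity on a boundary strip.} Since $V$ and $K$ are compactly supported inside $B_R$, there exists $a>0$ with $V\equiv 0$ and $K(x,\cdot)\equiv 0$ for $|x|>R-a$. Consequently the potential $\Xi(x):=V(x)+\int_{B_R} K(x,y)\,d\widehat\rho(y)$ vanishes on the annulus $\{R-a<|x|\leq R\}$, corresponding to $v\in[R_v-b,R_v]$ with $b:=R_v-|B_{R-a}|>0$. On this strip the stationary viscosity equation from \Cref{prop:M^[n]_converge_uniformly} reduces to $\kappa(v)^2\partial_v((\partial_v M)^m)=0$. The hypothesis of \Cref{th:Concentration_Phenomena_ball_viscosity_solutions} holds trivially here because $\partial_v \Xi=0$, so $\widehat M\in C^2(R_v-b,R_v)$ and $(\partial_v\widehat M)^m$ is constant. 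Setting $c:=\partial_v\widehat M(R_v)\geq 0$, this shows $\widehat M$ is affine of slope $c$ on $[R_v-b,R_v]$.

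\textbf{Strict positivity of the boundary slope.} The delicate step, which I anticipate to be the main obstacle, is showing $c>0$. The idea is to construct a stationary sub-barrier for the parabolic problem \eqref{eq:Fast-Diffusion_Problem_BR}. A natural candidate is $\underline\rho(x):=\delta\,\chi_{B_R\setminus B_{R-a}}(x)$ with $\delta>0$ small. On the annulus this is a positive constant, so $\Delta(\underline\rho)^{m}=0$ and the drift terms vanish because $V=K=0$ there; the no-flux condition on $\partial B_R$ is trivially satisfied. Inside $B_{R-a}$ it is zero, and the inward jump across $\{|x|=R-a\}$ is compatible with being a distributional subsolution of the $m$-Laplace-type diffusion. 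Combining the $L^1$-comparison principle underlying \Cref{th:Banach_fixed_point} with the instantaneous smoothing effect of fast diffusion (so that $\rho_t(x)>0$ everywhere in $B_R$ after any $\tau>0$, dominating $\underline\rho$ on the annulus for a suitable $\delta$), one derives a uniform-in-$t$ positive lower bound on $\rho_t$ restricted to the outer annulus. Passing to the limit in \Cref{prop:M^[n]_converge_uniformly} yields $\widehat\rho\geq\delta'>0$ a.e.\ on that annulus, and therefore $c>0$. The construction of a quantitative barrier that survives the passage to the limit is where I expect the main technical work.

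\textbf{ODE integration.} Once $c>0$, inside any open subinterval of $(0,R_v)$ on which $\widehat\rho>0$ the stationary equation simplifies to $\partial_v\left(\tfrac{m}{m-1}\widehat\rho^{\,m-1}+\Xi\right)=0$. Integrating from $v=R_v$ with boundary data $\widehat\rho(R_v)=c$ and $\Xi(R_v)=0$ yields
\[
\widehat\rho(v)^{m-1}=c^{m-1}+\tfrac{1-m}{m}\,\Xi(x(v)),
\]
which is exactly \eqref{eq:mass ball stationary characterisation}. Since $\Xi\geq 0$ and $c>0$, the right-hand side is bounded below by $c^{m-1}$, forcing $\widehat\rho>0$ on all of $(0,R_v)$; a standard continuation argument starting from $v=R_v$ then extends the formula to a.e.\ $v\in(0,R_v)$.

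\textbf{Regularity.} Since $V\in W^{2,\infty}$, $K\in W^{2,\infty}(B_R\times B_R)$ and $\widehat\rho$ is a finite measure, the map $x\mapsto \Xi(x)$ lies in $W^{2,\infty}(B_R)$ (the interaction term inherits the regularity of $K$). Via the smooth change of variables $x(v)=(v/|B_1|)^{1/d}$, this gives $\Xi\in W^{2,\infty}_{\rm loc}((0,R_v])$ as a function of $v$. Composing with the smooth, strictly positive function $s\mapsto(c^{m-1}+\tfrac{1-m}{m}s)^{-1/(1-m)}$ on the bounded range $[0,\|\Xi\|_{L^\infty}]$ produces $\widehat\rho\in W^{1,\infty}_{\rm loc}((0,R_v])$, and hence $\widehat M\in W^{2,\infty}_{\rm loc}((0,R_v])$.
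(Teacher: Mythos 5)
Your step on linearity in the boundary strip agrees with the paper's (both invoke \Cref{th:Concentration_Phenomena_ball_viscosity_solutions} and the vanishing of $\mathfrak E[\widehat\mu]$ there), but your two remaining steps have genuine gaps and depart from the paper's route in ways that do not work.

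\textbf{The sub-barrier argument is invalid.} You propose $\underline\rho=\delta\,\chi_{B_R\setminus B_{R-a}}$ as a stationary distributional subsolution of the parabolic problem. This fails at the inner boundary $\{|x|=R-a\}$: in the radial variable, $\underline\rho^{\,m}$ jumps up from $0$ to $\delta^m$ as $r$ increases through $R-a$, so $\partial_r\underline\rho^{\,m}$ contains a positive Dirac mass there, and $\Delta\underline\rho^{\,m}$ therefore contains the derivative of a Dirac — a dipole, which is not a signed distribution and in particular is not $\geq 0$. Physically, fast diffusion would instantly transport mass from the annulus into the interior (where density is zero), so the annular density drops below $\delta$ and the inequality $\rho_t\geq\underline\rho$ does not persist. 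Beyond this, even granting a pointwise positive lower bound on $\rho_t$ in the annulus for each finite $t$, you would still need it to be \emph{uniform in $t$} to survive the limit $t\to\infty$; neither the $L^1$ comparison principle nor the smoothing effect of fast diffusion supplies this. The paper's mechanism is completely different: it applies the strong interior maximum principle of Kawohl--Kutev (\Cref{th:KK98} via \Cref{lem:Minft_Tech_remark_1}) to the \emph{stationary} mass equation, concluding that if $\widehat M$ were constant on $[R_v-b,R_v]$ then $\widehat M$ would be constant on all of $(0,R_v)$, which (under the nondegeneracy $\widehat M(0^+)<\widehat M(R_v)$) is excluded.

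\textbf{The ODE integration is circular.} You write that ``the stationary equation simplifies to $\partial_v(\tfrac{m}{m-1}\widehat\rho^{\,m-1}+\Xi)=0$'' and integrate from $v=R_v$. But $\widehat M$ is, at this point, only a viscosity solution whose spatial derivative is a measure in $\mathcal M([0,R_v])$; there is no a priori sense in which the equation holds as a classical ODE, and the uniqueness/continuation theory you invoke needs exactly the $W^{2,\infty}_{\rm loc}$ regularity you are trying to prove. The bulk of the paper's proof is devoted precisely to making this step rigorous: it passes to the inf- and sup-convolutions $\widehat M_\varepsilon$, $\widehat M^\varepsilon$, shows these satisfy perturbed viscosity inequalities (\Cref{lem:Look_at_inf-convolution}), upgrades them to a.e.\ pointwise inequalities via Alexandrov's theorem (\Cref{lem:Second derivative a.e.}), obtains two-sided integral bounds through a monotone change of unknown $\Theta(s)=-s^{m-1}$, and only then lets $\varepsilon\to0$ to recover \eqref{eq:mass ball stationary characterisation}. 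Without an argument of this type, your formula is not established for a.e.\ $v$.

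Your final regularity step (composing $\Xi\in W^{2,\infty}$ with a smooth function on a compact range) is fine once the formula is available, and matches the paper's conclusion.
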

The proof can be found in subsection \ref{subsec:Regularity_Minfty} and uses inf- and sup-convolutions. Hence, we have characterised $\widehat \rho$, the absolutely continuous part of the steady state $\widehat \mu$, as
$$
	\widehat \rho (x) =  
	\left(   
	h
	+\tfrac{1-m}{m}
	 \left( V(x) + \int_{B_R} K(x,y) \diff \widehat \rho (y) \right) 
	\right)^{-\frac 1 {1-m} }.
$$
for some $h > 0$.

\subsection{The problem in \texorpdfstring{$\mathbb{R}^d$}{Rd}}
Under certain assumptions, we extend the results we obtain in the ball $B_R$ to the whole space $\Rd$ to study the problem 
\eqref{ec:The_equation}. In order to do this, we define
\begin{equation*}
    \mathcal{F}_{R, \eta} [\rho] := \frac{1}{m-1} \int_{B_R} \rho (x)^m \, dx + \int_{B_R} V_R(x) \rho(x) \, dx + \frac{1}{2} \int_{B_R} \int_{B_R} \eta (x) W(x-y) \eta (y) \rho (y) \rho(x) \, dy \, dx,
\end{equation*}
the free energy associated to the problem \eqref{eq:Fast-Diffusion_Problem_BR} when $K(x,y)=\eta(x) W(x-y) \eta (y)$. Whenever we want to refer to the free energy associated to the problem in the whole space, we will write $\mathcal{F}_{\infty , \eta}$. We keep this notation for the free energy in \cref{sec:Rd}.

Notice that $\mathcal F_{R,\eta}$ is a generalisation of the free energy studied in \cite{CGV22}, which corresponds to $W = 0$ (i.e.
$
    \mathcal{F}_{\infty, 0} 
$)
whereas the free energy for \eqref{ec:The_equation} corresponds to $\mathcal F_{\infty,1}$.

First we extend the theory from \cref{sec:BR} and \cref{sec:Long time asymptotic} for $\rho$. 
For the treatment of $\Rd$ we rely heavily on the control of the free energy. For this we assume that, even when $W = 0$ the free energy is bounded below
\begin{equation}
	\tag{H1}
    \label{ec:Free_energy_is_bounded_below}
		\underline{\mathcal{F}_{0}} := \inf_{\rho \in \mathcal{P}_{ac} (\mathbb{R}^d )} \mathcal{F}_{\infty, 0} [\rho] > - \infty,
\end{equation}
This allows us to control the second-order moment. 
We also require the control of a higher order moment, and we are only able to work in the range
\begin{equation}
	\tag{H2}
    \label{eq:sharp m extension}
		m > \frac{1+d- \sqrt{2d +1}}{d}.
\end{equation}
\begin{remark}\label{rem:sharp m}
    Assumption \eqref{eq:sharp m extension} is the sharp value of $m$ for the Carlson-Levin inequality \cite[Lemma 5]{CDDFH19} for the $p$-moment $p=\frac{2}{1-m}$. Carlson type inequalities appear in the literature after \cite{Car34} and its sharp form is established by Levin in \cite{Lev48}. We need to take into account the sharp value of $m$ at the Step 4a of the proof of \Cref{th:locally_strong_solution_Rn} in order to control the tail of the diffusive part of the free energy.
\end{remark}

We require that the initial datum has controlled moment of certain order higher than $2$
\begin{equation}
	\tag{H3}
    \label{eq:p moment bdd for rho0}
	\int_{\Rd} |x|^{\frac{2}{1-m}} \rho_0  < \infty
\end{equation}
We will also assume that the growth of $V$ and $W$ is no faster than the quadratic in the sense that
	\begin{gather}
		\tag{H4}
        \label{eq:V_quadratic}
		|\nabla V (x)| \leq C(1+|x|) \qquad \text{for all } x \in \mathbb{R}^d,
	\\
		\tag{H5}
		\label{eq:W_quadratic}
		|\nabla W (x)| \leq C(1+|x|) \qquad \text{for all } x \in \mathbb{R}^d,
    \\
    	\tag{H6}
    	\label{eq:Delta V and W bounded}
    \Delta V, \Delta W \in L^{\infty} (\mathbb{R}^d ).
	\end{gather}
Let us denote by $\rho^{R, \eta}$ the unique strong solution for the problem \eqref{eq:Fast-Diffusion_Problem_BR} in the bounded domain $(0,T) \times B_R$ for every $R$ and every kernel $K$ of the form, $K(x,y) = \eta(x) W(x-y) \eta (y)$ with $\eta\in C_c^{\infty} (\mathbb{R}^d)$ a cut-off function. The solution $\rho^{R, \eta }$ is such that
\begin{equation*}
	\rho^{R, \eta} \in L^{\infty} (0,T; H^1 (B_R)).
\end{equation*}
We construct the extension by zero of $\rho^{R, \eta}$ to the whole space $\mathbb{R}^d$
$
	\rho^{R, \eta, \ast} = \rho^{R, \eta} \chi_{B_R}.
$

In order to pass to the limit we build a sequence $\eta_j$ as follows. We take a smooth radially-symmetric and non-increasing function $\eta_1$ such that
\begin{equation*}
	\eta_1 (x) 
	= 
	\begin{dcases}
		1 & \text{if }|x| \leq \frac{1}{2}, \\
		0 & \text{if }|x| \ge 1,
	\end{dcases}
\end{equation*}
and define $\eta_j (x) = \eta_1 ( x/j )$. In this way, $\eta_j$ is such that $\eta_j \nearrow 1$ when $j \rightarrow \infty$. We are able to prove the following result.
\begin{theorem}\label{th:locally_strong_solution_Rn}
	Assume $\rho_0 \in L^1 ( \mathbb{R}^d ) \cap L^{\infty} (\mathbb{R}^d)$, and the following technical assumptions \eqref{ec:Free_energy_is_bounded_below}-\eqref{eq:Delta V and W bounded}. First, for $j$ fixed we prove there exists a sequence $R_{ij} \to \infty$ as $i \to \infty$ such that
	\begin{align}\label{eq:Local convergence from BR to Rd}
		\rho^{R_{ij} , \eta_j, \ast} & \rightarrow \rho^{\infty , \eta_j} \quad \text{in } C_{loc}([0,\infty ); L^2_{loc}(\mathbb{R}^d)) \text{ as } i \to \infty,\\
		\label{eq:Convergence free energy ball to Rd}
		\mathcal{F}_{R_{ij}, \eta_j}[\rho^{R_{ij}, \eta_j}_t] & \rightarrow \mathcal{F}_{\infty, \eta_j} [\rho^{\infty, \eta_j}_t] \quad \text{for every } t \in [0,\infty)  \text{ as } i \to \infty, 
	\end{align}
	and $\rho^{\infty, \eta_j}$ is a locally-strong solution of 
	\begin{equation*}
		\frac{\partial \rho}{\partial t} = \Delta \rho^m + \nabla \cdot (\rho \nabla V) +  \nabla \cdot \left(\rho \nabla \int_{B_R} K_{\eta_j}(\cdot ,y) \rho(y) \, dy \right) \qquad \mathrm{in } \, (0, \infty ) \times \Rd
	\end{equation*}
	where $K_{\eta_j}(x,y) = \eta_j(x) W(x-y) \eta_j(y)$
	in the sense of \Cref{def:locally-strong solution}. Then, as $j \to \infty$ we have that
	\begin{align}\label{eq:Local convergence from BR to Rd 2}
		\rho^{\infty , \eta_j} &\rightarrow \rho^{\infty , 1} \quad \text{in } C_{loc}([0,\infty ); L^2_{loc}(\mathbb{R}^d)) \text{ as } j \to \infty,\\
		\label{eq:Convergence free energy ball to Rd 2}
		\mathcal{F}_{\infty, \eta_j}[\rho^{\infty, \eta_j}_t] &\rightarrow \mathcal{F}_{\infty, 1} [\rho^{\infty, 1}_t] \quad \text{for every } t \in [0,\infty)  \text{ as } j \to \infty, 
	\end{align}
	and $\rho^{\infty, 1}$ is a locally-strong solution of the problem \eqref{ec:The_equation} in the sense of \Cref{def:locally-strong solution}.
\end{theorem}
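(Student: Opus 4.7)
The plan is to leverage the free-energy dissipation formula \eqref{eq:flow of the free energy} together with the tightness produced by moment estimates to run an Aubin-Lions plus diagonal-extraction scheme in the two limits $R_{ij}\to\infty$ (for fixed $j$) and $j\to\infty$. By \Cref{prop:Convergence_of_the_free_energies}, $t\mapsto \mathcal{F}_{R,\eta_j}[\rho^{R,\eta_j}_t]$ is non-increasing, hence bounded above by $\mathcal{F}_{R,\eta_j}[\rho_0|_{B_R}]$, which is controlled uniformly in $R$ and $j$ via $\rho_0\in L^1\cap L^\infty$, \eqref{eq:p moment bdd for rho0}, \eqref{eq:V_quadratic} and \eqref{eq:W_quadratic}. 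Assumption \eqref{ec:Free_energy_is_bounded_below} together with a lower bound on the confinement part yields a matching lower bound, so the dissipation integral on $[0,T]$ is uniformly bounded and so is $\int \rho^m$.

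The sharp step is the propagation of a higher moment. Differentiating $\int_{B_R} |x|^{2/(1-m)}\rho^{R,\eta_j}$ in time and integrating by parts, the drift and interaction pieces are controlled by \eqref{eq:V_quadratic}--\eqref{eq:Delta V and W bounded} and mass conservation, while the diffusive contribution becomes an integral of $\rho^m$ against a power of $|x|$. The Carlson-Levin inequality \cite[Lemma 5]{CDDFH19} allows this term to be absorbed by the second moment and $\int \rho^m$ precisely under \eqref{eq:sharp m extension}, and a Gronwall step then propagates \eqref{eq:p moment bdd for rho0} uniformly in $R$ and $j$ on every finite time interval, producing the tightness needed to prevent mass from escaping to infinity (cf.\ \Cref{rem:sharp m}).

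Compactness is then obtained as in \eqref{ec:inequality_about_drho/dt_W-1,1} localised to each fixed ball $B_{R'}$: the bound on $\partial_t \rho^{R,\eta_j}$ in $L^2(0,T;W^{-1,1}(B_{R'}))$, combined with the dissipation control on $\nabla (\rho^{R,\eta_j})^{(m+1)/2}$, gives through Aubin-Lions a relative compactness in $C([0,T];L^1(B_{R'}))$, which the propagated $L^\infty$ bound upgrades to $L^2(B_{R'})$. A Cantor diagonal extraction across $R'\nearrow\infty$ then produces the sequence $R_{ij}\to\infty$ and a limit $\rho^{\infty,\eta_j}$. The tightness from the previous step permits passage to the limit in every term of the weak formulation, including the nonlocal convolution, which for fixed $j$ only samples the bounded region $\{\eta_j\neq 0\}$. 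Locally-strong regularity is inherited from the parabolic regularity available on each $B_{R'}$. The second limit $j\to\infty$ is treated identically, since $\eta_j\nearrow 1$ locally uniformly and none of the bounds depend on $j$.

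For the free-energy convergences \eqref{eq:Convergence free energy ball to Rd} and \eqref{eq:Convergence free energy ball to Rd 2}, strong $L^2_{loc}$ convergence together with the $L^\infty$ bound gives $\int_{B_{R'}} \rho^m$ convergence on each ball, and the higher-moment bound makes the tail $\int_{\Rd\setminus B_{R'}}\rho^m$ small uniformly; the potential and interaction pieces pass by weak convergence combined with tightness. The main obstacle is precisely the higher-moment estimate: the diffusive tail of the free energy is critical at the exponent $p = 2/(1-m)$, and the Carlson-Levin threshold \eqref{eq:sharp m extension} is genuinely sharp, so any weakening would let mass escape to infinity and break both the compactness and the energy-convergence statements simultaneously.
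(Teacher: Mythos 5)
Your overall strategy — free-energy dissipation for uniform bounds, moment propagation for tightness, localised compactness plus a diagonal extraction across the two parameters, and tail-control to pass the energy to the limit — is the same as the paper's. However, there are two genuine misplacements in the proposal.

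First, the role of the sharp restriction \eqref{eq:sharp m extension} is misattributed. In the paper, the propagation of the $p$-moment with $p = \frac{2}{1-m}$ (\Cref{lem:p-order moment}) does \emph{not} use the Carlson--Levin inequality or any threshold on $m$. The diffusive contribution $\mathbf{m}_{p-2}\bigl((\rho^{R,\eta_j,\ast})^m\bigr)$ is absorbed by Hölder and Young into $\mathbf{m}_p(\rho^{R,\eta_j,\ast})$ and $\|\rho^{R,\eta_j,\ast}\|_{L^{1/(1-m)}}^{1/(1-m)}$ purely because the exponent $p = \frac{2}{1-m}$ is chosen so that $\frac{p-2}{m} = p$; this works for every $m\in(0,1)$. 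The sharp condition \eqref{eq:sharp m extension} appears only in Step~4a of the paper's proof, where the tail of the \emph{entropy} term of the energy,
$\int_{\Rd\setminus B_\sigma}(\rho_t^{R_{ij},\eta_j,\ast})^m$, is estimated by Hölder against the $\frac{2}{1-m}$-moment, and the resulting power of $\sigma$ is negative precisely when $m > \frac{1+d-\sqrt{2d+1}}{d}$. If you run your Gronwall argument you will find no constraint on $m$ there; do not insert one.

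Second, the compactness step needs more than the $W^{-1,1}$ time-derivative bound borrowed from \eqref{ec:inequality_about_drho/dt_W-1,1}. That estimate is designed for the long-time asymptotics where only convergence in $W^{-1,1}$ is sought; here you must produce convergence in $C_{\mathrm{loc}}([0,\infty);L^2_{\mathrm{loc}}(\Rd))$ \emph{and} pass the terms $(\rho^{R,\eta_j})^m\in L^2(H^2_{\mathrm{loc}})$ and $\partial_t\rho^{R,\eta_j}\in L^2(L^2_{\mathrm{loc}})$ to the limit so that the limit is locally strong. This requires the localised $L^2$ \emph{a priori} estimates of \Cref{ap:a priori estimates Rd} (uniform bounds on $\nabla\rho^{R,\eta_j}$, $\nabla(\rho^{R,\eta_j})^m$, $\partial_t\rho^{R,\eta_j}$ and $\Delta(\rho^{R,\eta_j})^m$ over $B_{\omega}$), which give membership in $H^1((0,T)\times B_\omega)$ and allow Aubin--Lions directly. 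Your combination of the $W^{-1,1}$ estimate, the dissipation control on $\nabla\rho^{(m+1)/2}$, and $L^\infty$-interpolation can at best deliver $C([0,T];L^2(B_{R'}))$ convergence, but it does not supply the $L^2(H^2_{\mathrm{loc}})$ bound on $(\rho^{\infty,\eta_j})^m$ needed to conclude that the limit is a locally-strong solution in the sense of \Cref{def:locally-strong solution}.
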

We prove the result in subsection \ref{sec:short-time well-posedness Rd}. In \Cref{prop:extend mass solution to Rd} we show that these short-time limits also hold for $M$ in the sense of viscosity solutions. We require the additional assumptions on the tails
\begin{equation}
	 \tag{H7}
	 \label{eq:Growth of V compare to W}
	\lim_{\sigma \to \infty} \sup_{|x| > \sigma} \frac{|\nabla W(x)|}{V(x)} = 0.
\end{equation}
and the following uniformity on the tails of $V$: for every $K \Subset \Rd$ we have that
\begin{equation}
	\tag{H8}
	\label{eq:Control of the tails} 
	C(K) = \sup_{\substack{x \in K \\ y \in \Rd}} \frac{V(y-x)}{1 + V(y)} < \infty .
\end{equation}
For the long-time asymptotics we can prove that
	\begin{theorem}\label{th:mu infty Rd}
	Assume all the hypothesis from \Cref{th:locally_strong_solution_Rn}. Then, there exists $\widehat{\mu} \in W^{-1,1}_{loc} (\mathbb{R}^d)$ and a subsequence such that 
	$$
	\rho^{\infty, 1, [n]} \to \widehat\mu \quad  \text{in } C([0,1];W^{-1,1}_{loc}(\mathbb{R}^d)) \qquad \text{and} \qquad \rho^{\infty , 1, [n]} \rightharpoonup \widehat\mu \quad \text{weak}-\ast \text{ in } L^{\infty}(0,1; \mathcal{M}(\mathbb{R}^d)).
	$$ 
	Furthermore, if we also assume \eqref{ec:Free_energy_bounded_below} and that $\inf_{x \in \Rd \backslash B_{\sigma}} V(x) \rightarrow \infty$ when $\sigma \rightarrow \infty$, $\widehat\mu$ is such that $\widehat\mu (\Rd) = \| \rho_0 \|_{L^1 (\Rd)}$.
\end{theorem}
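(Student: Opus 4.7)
The plan is to transplant the bounded-domain argument of \Cref{th:Convergence_stationary_state} to $\Rd$ via a diagonal extraction over balls $B_R$ with $R\to\infty$. The starting point is to reproduce the key estimate \eqref{ec:inequality_about_drho/dt_W-1,1} for $\rho^{\infty,1,[n]}$ localized to each $B_R$: using the free-energy dissipation identity (which transfers from the bounded-domain setting via \eqref{eq:Convergence free energy ball to Rd} and \eqref{eq:Convergence free energy ball to Rd 2}), together with the Cauchy--Schwarz step of \eqref{ec:inequality_about_drho/dt_W-1,1}, I would obtain, for every fixed $R>0$,
$$
\int_0^1 \Bigl\| \tfrac{\partial \rho^{\infty,1,[n]}_t}{\partial t} \Bigr\|^2_{W^{-1,1}(B_R)} \! dt \le C(R)\, \|\rho_0\|_{L^1(\Rd)} \bigl( \mathcal F_{\infty,1}[\rho^{\infty,1}_{t_n}] - \mathcal F_{\infty,1}[\rho^{\infty,1}_{t_n+1}] \bigr).
$$
Assumption \eqref{ec:Free_energy_is_bounded_below} together with $V,W\ge 0$ ensures that $\mathcal F_{\infty,1}[\rho^{\infty,1}_t]$ is bounded below and non-increasing, so the right-hand side tends to zero as $n\to\infty$. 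This is the analogue of \eqref{ec:asymptotics_equicontinuity} on each $B_R$.

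For each $R$ the family $\{\rho^{\infty,1,[n]}\}_n$ is equi-bounded in $L^\infty(0,1;\mathcal M(B_R))$ by mass conservation and equi-continuous in time as a map into $W^{-1,1}(B_R)$. Since $\mathcal M(B_R)\hookrightarrow W^{-1,1}(B_R)$ compactly, Ascoli--Arzelà provides a subsequence converging in $C([0,1];W^{-1,1}(B_R))$, and the limit must be time-independent because its $W^{-1,1}(B_R)$ modulus of continuity vanishes. A Cantor diagonal extraction along $R_k=k\to\infty$ yields a single subsequence and a time-independent $\widehat\mu\in W^{-1,1}_{loc}(\Rd)$ with $\rho^{\infty,1,[n]}\to\widehat\mu$ in $C([0,1];W^{-1,1}_{loc}(\Rd))$. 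The weak-$\ast$ convergence in $L^\infty(0,1;\mathcal M(\Rd))$ then follows from the uniform $L^1$ bound and Banach--Alaoglu applied to $L^\infty(0,1;C_0(\Rd)^*)$ after a further extraction, with the limit identified by pairing with $C_c(\Rd)$ test functions on each $B_R$.

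For the final mass-preservation claim I would establish tightness. Using $W\ge 0$ and the bound from \eqref{ec:Free_energy_is_bounded_below} (applied to the diffusion part of $\mathcal F_{\infty,1}$), monotonicity of the free energy gives
$$
\int_{\Rd} V(x)\, \rho^{\infty,1,[n]}_t(x)\, dx \le \mathcal F_{\infty,1}[\rho_0] - \underline{\mathcal F_0},
$$
uniformly in $n$ and $t\in[0,1]$. Chebyshev combined with the confinement $\inf_{|x|>\sigma} V(x)\to\infty$ then gives uniform tightness of $\{\rho^{\infty,1,[n]}_t\}$, which upgrades the weak-$\ast$ convergence to narrow convergence and yields $\widehat\mu(\Rd) = \lim_n \|\rho^{\infty,1,[n]}_t\|_{L^1(\Rd)} = \|\rho_0\|_{L^1(\Rd)}$.

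The main obstacle I anticipate is producing the local $W^{-1,1}(B_R)$ estimate for $\partial_t\rho^{\infty,1,[n]}$ with a right-hand side controlled by the \emph{full-space} free-energy drop, because the nonlocal term $\rho(\nabla W\ast\rho)$ has a tail contribution from outside $B_R$ that must be reabsorbed. The tail hypotheses \eqref{eq:Growth of V compare to W} and \eqref{eq:Control of the tails}, combined with the $V$-moment control established for the tightness step, should allow this via a careful cut-off computation. Separating the diffusion part from the potential parts when bounding $\mathcal F_{\infty,1}$ from below also uses the Carlson--Levin-type inequality mentioned in \Cref{rem:sharp m}, which is why the sharp range \eqref{eq:sharp m extension} is required.
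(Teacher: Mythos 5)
Your overall architecture (equicontinuity in $W^{-1,1}$ on an exhausting sequence of balls, Ascoli--Arzel\`a, Cantor diagonal, Banach--Alaoglu, then tightness via Chebyshev) coincides with the paper's, but the route you propose for the central equicontinuity estimate has a gap. You plan to first ``transfer'' the free-energy dissipation identity to $\Rd$ via \eqref{eq:Convergence free energy ball to Rd}--\eqref{eq:Convergence free energy ball to Rd 2} and then run the Cauchy--Schwarz step of \eqref{ec:inequality_about_drho/dt_W-1,1} directly for $\rho^{\infty,1,[n]}$. But those two equations give only pointwise-in-$t$ convergence of the free-energy \emph{values}, not of the dissipation $\int\!\!\int \rho\,|\nabla(\cdots)|^2$, and the paper never establishes a dissipation identity on $\Rd$; deducing even the dissipation \emph{inequality} on $\Rd$ would require a separate lower-semicontinuity argument. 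The paper sidesteps this: it derives the finished $|t-s|^{1/2}$ modulus-of-continuity inequality for the approximants $\rho^{R_{ij},\eta_j,[n],\ast}$ on a fixed bounded $\Omega\subseteq B_{R_{ij}}$, using the already-known $B_{R_{ij}}$ dissipation identity together with the restriction bound $\|u\|_{W^{-1,1}(\Omega)}\le\|u\|_{W^{-1,1}(B_{R_{ij}})}$, and then passes to the limit $i\to\infty$, $j\to\infty$ \emph{in that inequality}. For this, the free-energy convergence \eqref{eq:Convergence free energy ball to Rd}, \eqref{eq:Convergence free energy ball to Rd 2} on the right-hand side and the local $L^2$ convergence \eqref{eq:Local convergence from BR to Rd}, \eqref{eq:Local convergence from BR to Rd 2} on the left-hand side suffice. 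You should rewrite your step in that form rather than reconstructing a dissipation formula on $\Rd$.

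Relatedly, the obstacle you anticipate---a tail contribution of $\rho(\nabla W\ast\rho)$ from outside $B_R$ needing reabsorption through a cut-off and hypotheses \eqref{eq:Growth of V compare to W}, \eqref{eq:Control of the tails}---is not the real issue. The restriction bound above absorbs the localization entirely (the flux field is simply restricted to $\Omega$, tails and all), and \eqref{eq:Growth of V compare to W}, \eqref{eq:Control of the tails} are only invoked later for \Cref{prop:limit Mn properties}, not for \Cref{th:mu infty Rd}. The constant $C(R)$ in your estimate is likewise spurious: the correct bound is uniform, with constant $\|\rho_0\|_{L^1}$. On the positive side, your tightness argument---establishing uniform tightness of $\{\rho^{\infty,1,[n]}_t\}$ from the $V$-moment bound \eqref{ec:sup_V_bounded} and Chebyshev, then upgrading weak-$\ast$ to narrow convergence---is correct and is, if anything, stated more carefully than the paper's short Step 3, whose opening identity implicitly presumes $\widehat\mu(\Rd)=\|\rho_0\|_{L^1}$ before proving it.
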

\begin{remark}
    An analogous improvement to  \Cref{rem:Benamou-Brenier} using the Benamou-Brenier formula and the $2$-Wasserstein gradient flow structure of the problem can also be done in $\R^d$ to show convergence of the full sequence in $C([0,1]; \mathcal{P}_2 (\R^d))$ endowed with the $2$-Wasserstein distance. 
\end{remark}

We prove the result in subsection \ref{sec:Long time asymptotics Rd}. In \Cref{prop:limit Mn properties} we prove that the long-time asymptotics also hold for the mass $M$ in the sense of viscosity solutions. 
Once we can link $\widehat\mu$ to $\widehat M$, the asymptotic in time of $M$, we are able to characterise $\widehat \mu$.
\begin{remark}\label{rem:Potential concentration Rd}
    If $\widehat M \in W^{1, \infty}_{loc} ((0, \infty))$ and the mass \textit{is not escaping through infinity} (i.e. $\|\rho_t\|_{L^1} = \|\rho_0\|_{L^1}$), then there exists $\widehat{\rho} \in L^1 (\mathbb{R}^d)$ such that
	\begin{equation*}
		\widehat\mu = \alpha \delta_0 + \widehat{\rho} \, dx,
	\end{equation*}
	with $\widehat{\rho}$ integrable and $\alpha = \| \rho_0 \|_{L^1(\mathbb{R}^d)} - \| \widehat{\rho} \|_{L^1 (\mathbb{R}^d)} \geq 0$.
\end{remark}
In $\Rd$ we are only able to make the extension of the simpler result \Cref{th:Concentration_Phenomena_ball_viscosity_solutions} from the $B_R$ case, which we state below as \Cref{prop:R=infty eta= 1 t=infty regularity in terms of flux}.

We conclude the paper with \cref{sec:Example of concentration} where we construct a family of examples of $V$ and $W$ where the presence of a Dirac delta at the origin for $\widehat \mu$ can be shown.
We provide two appendixes with technical results that are needed in the proofs. In \cref{ap:Maximum principle viscosity solutions} we present a maximum principle for viscosity solutions, and in \cref{ap:a priori estimates Rd} we show \textit{a priori} estimates for the $\Rd$ case.

\section{Well-posedness for the aggregation-diffusion equation in \texorpdfstring{$B_R$}{BR}}
\label{sec:BR}

We restrict ourselves to the bounded domain $B_R$, adding a no-flux boundary condition. Through this section we will work with a fixed time-dependent drift $E$. First, in subsection \ref{sec:The_equation_in_BR} we review the classical theory for uniformly elliptic diffusion $\Phi$ and we obtain some \textit{a priori} estimates that will be useful to us on different parts of this work. After that, in subsection \ref{sec:FD+know_transport} we consider the fast diffusion case, $\Phi (s) = s^m$, $0<m<1$. We obtain uniqueness by an $L^1$ continuous dependence argument and existence follows from a convergence argument, using the solutions from the previous subsection.

\subsection{Uniformly elliptic diffusion with time-dependent drift in \texorpdfstring{$B_R$}{BR}}\label{sec:The_equation_in_BR}

In this subsection we consider the problem
\begin{equation}\label{eq:General_elliptic_Problem}
	\left\lbrace  \begin{array}{lll}
		\frac{\partial \rho}{\partial t} = \Delta \Phi(\rho) + \nabla \cdot (\rho E) \quad & \mathrm{in} \, (0, \infty ) \times B_R  \\
		\left( \nabla  \Phi(\rho) + \rho E \right) \cdot \nu (x) = 0 & \mathrm{on} \, (0, \infty ) \times \partial B_R, \\
		\rho (0, x) = \rho_0 (x). &
	\end{array} \right.
\end{equation}
where $E = E_t ( x)$ is assumed to be smooth, more precisely $E \in C^{\infty} ([0, \infty) \times \overline {B_R} )$, and $\Phi \in C^1$ is uniformly elliptic, in the sense that there exist constants such that
\begin{equation}\label{ec:unif_ellipticity}
	0 < c_1 \leq \Phi ' (\rho ) \leq c_2 < \infty .
\end{equation}
Furthermore, we assume,
\begin{equation}\label{ec:extra_boundary_condition}
	E_t( x) \cdot \nu (x) = 0, \quad \text{on }   \partial B_R, \, \forall t \in (0, \infty ).
\end{equation}
As we explained in \cite{CGV22}, under these assumptions, existence, uniqueness and the maximum principle hold from the classical theory \cite{LSU68, GT01, Ama90, Yin05}. Classical solutions to \eqref{eq:General_elliptic_Problem} with initial data $\rho_0 \in C^2 (\overline{B_R})$ satisfy
\begin{equation*}
	\rho \in C^1 \left( (0,T); C (\overline{B_R}) \right) \cap C \left( (0, \infty) ; C^2 (\overline{B_R}) \right) \cap C \left( [0, \infty ) \times \overline{B_R} \right) 
\end{equation*}
as obtained in \cite{Ama90}.
Let us now discuss further properties of the solution of \eqref{eq:General_elliptic_Problem}. First, we will present some {\em a priori} estimates that were already introduced in \cite{CGV22}, where $E$ did not depend on time $t$. However, some results in \cite{CGV22} also hold for time-dependent $E$. In particular, we can state the following a priori estimates.

\begin{proposition}[$L^p$ estimates, \cite{CGV22}]
	Assume \eqref{ec:unif_ellipticity} and $E_t(x) \cdot \nu (x) \geq 0$. Then, the unique classical solution of \eqref{eq:General_elliptic_Problem} satisfies that
	\begin{equation}\label{ec:Lp_estimate}
		\| (\rho_t)_{\pm} \|_{L^p( B_R)} \leq e^{\frac{p-1}{p} \int_0^T \| \nabla \cdot E_t \|_{L^{\infty} (B_R)}} \|(\rho_0)_{\pm} \|_{L^p ( B_R)}.
	\end{equation}
\end{proposition}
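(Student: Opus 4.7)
The plan is to test the equation against $\rho_+^{p-1}$ (and $\rho_-^{p-1}$ for the negative part), integrate over $B_R$, and extract a Grönwall-type ODE inequality for $\|(\rho_t)_\pm\|_{L^p}^p$. Since the classical theory quoted above provides a $C^2$ solution, the computation is justified pointwise except at the level set $\{\rho = 0\}$; Stampacchia's theorem gives $\nabla\rho_+ = \chi_{\{\rho>0\}}\nabla\rho$ a.e., and if $p\in(1,2)$ the test function $\rho_+^{p-1}$ is first replaced by a $C^2$ convex approximation of $s\mapsto s_+^p$ for which the calculation is rigorous, and then one passes to the limit.

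Using $\partial_t(\rho_+^p) = p\rho_+^{p-1}\partial_t\rho$ and integrating by parts, the no-flux condition $(\nabla\Phi(\rho)+\rho E)\cdot\nu = 0$ makes the boundary integral vanish, leaving
\[
\frac{d}{dt}\int_{B_R}\frac{\rho_+^p}{p}\,dx
\;=\;
-(p-1)\int_{B_R}\rho_+^{p-2}\Phi'(\rho)|\nabla\rho_+|^2\,dx
\;-\;
(p-1)\int_{B_R}\rho_+^{p-1}\nabla\rho_+\cdot E\,dx.
\]
The first term is nonpositive by uniform ellipticity \eqref{ec:unif_ellipticity} and is discarded. The second I rewrite as $-\tfrac{p-1}{p}\int\nabla(\rho_+^p)\cdot E\,dx$; a further integration by parts produces $\tfrac{p-1}{p}\int\rho_+^p\,\nabla\cdot E\,dx - \tfrac{p-1}{p}\int_{\partial B_R}\rho_+^p\,E\cdot\nu\,d\sigma$, and the boundary contribution is $\leq 0$ precisely under the hypothesis $E_t\cdot\nu \geq 0$. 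Bounding the interior term by $\|\nabla\cdot E_t\|_{L^\infty}\|\rho_+\|_{L^p}^p$ yields $\frac{d}{dt}\|\rho_+\|_{L^p}^p \leq (p-1)\|\nabla\cdot E_t\|_{L^\infty}\|\rho_+\|_{L^p}^p$, and Grönwall followed by taking the $p$-th root delivers the claimed inequality.

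For $\rho_-$ the cleanest route is symmetry: $u := -\rho$ solves $\partial_t u = \Delta\tilde\Phi(u)+\nabla\cdot(uE)$ with $\tilde\Phi(s):=-\Phi(-s)$ (which inherits \eqref{ec:unif_ellipticity} with the same constants) and the same no-flux condition; since $u_+ = \rho_-$ and $(u_0)_+ = (\rho_0)_-$, the bound just proved for $u$ is the desired estimate. I do not foresee a genuine obstacle here, as the argument is a standard energy estimate; the one point worth flagging is that the assumption $E_t\cdot\nu \geq 0$ (rather than the stronger ambient equality \eqref{ec:extra_boundary_condition}) is exactly what provides the correct sign for the drift boundary term, so the proposition holds in this mildly more general setting.
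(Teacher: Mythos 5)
Your argument is correct and is precisely the standard $L^p$ energy estimate that the paper inherits from \cite{CGV22} without reproducing it: test against $\rho_\pm^{p-1}$ (regularized for $1<p<2$), discard the nonpositive diffusion term by uniform ellipticity, integrate the drift term by parts once more to expose $\rho_\pm^p\,\nabla\cdot E$ plus a boundary flux of the favourable sign under $E\cdot\nu\ge 0$, and close with Gr\"onwall. The reduction of the $\rho_-$ estimate to the $\rho_+$ case via $u=-\rho$, $\tilde\Phi(s)=-\Phi(-s)$ is clean and correct.
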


\begin{remark}
	If we assume $\rho_0(x) \geq 0$, the maximum principle implies that $\rho_t (x) \geq 0$. Then, due to the no-flux boundary condition, for $p=1$ we also have
	\begin{equation}\label{ec:L1_norm_is_preserved}
		\| \rho_t \|_{L^1 (B_R)} = \| \rho_0 \|_{L^1(B_R)},
	\end{equation}
	because, along solutions of \eqref{eq:General_elliptic_Problem} we have
	\begin{equation*}
		\frac{d}{dt} \| \rho_t \|_{L^1 (B_R)} = \int_{B_R} \nabla \cdot ( \nabla \Phi (\rho_t) + \rho_t E_t) = \int_{\partial B_R} (\nabla \Phi (\rho_t) + \rho_t E_t ) \cdot \nu (x) = 0.
	\end{equation*}
\end{remark}

Next, in order to formulate the {\em a priori} estimates on $\nabla \Phi (\rho )$ and $\nabla \rho$ we define,
\begin{equation}\label{ec:Primitive_of_Phi}
	\Psi (s) =2 \int_0^s \Phi (\sigma ) \, d \sigma .
\end{equation}
\begin{lemma}[\textit{a priori} estimates on $\nabla \Phi (\rho )$, \cite{CGV22}]\label{lem:a_priori_estimates_Phi}
    Assume \eqref{ec:unif_ellipticity} and \eqref{ec:extra_boundary_condition}. Then, the unique classical solution of \eqref{eq:General_elliptic_Problem} satisfies that
	\begin{equation}\label{ec:a_priori_estimate_Phi}
		\int_0^T \int_{B_R} | \nabla \Phi (\rho_t) |^2 \leq  \int_{B_R} \Psi (\rho_0 ) + \int_0^T  \| E_t \|^2_{L^{\infty}(B_R)} \| \rho_t\|_{L^2(B_R)}^2 .
	\end{equation}
\end{lemma}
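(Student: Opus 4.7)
The plan is the standard energy method adapted to the mass variable weight. I would multiply the PDE in \eqref{eq:General_elliptic_Problem} by $\Phi(\rho_t)$ and integrate over $B_R$. Since $\Psi'(s) = 2\Phi(s)$, the left-hand side becomes
\[
\int_{B_R} \Phi(\rho_t)\, \partial_t \rho_t \,dx = \frac{d}{dt} \int_{B_R} \tfrac{1}{2}\Psi(\rho_t)\, dx.
\]
On the right-hand side, the diffusive term integrated by parts gives
\[
\int_{B_R} \Phi(\rho_t)\,\Delta \Phi(\rho_t)\,dx = -\int_{B_R} |\nabla \Phi(\rho_t)|^2\,dx + \int_{\partial B_R} \Phi(\rho_t)\,\nabla\Phi(\rho_t)\cdot\nu\, d\sigma,
\]
and the drift term yields $-\int_{B_R}\rho_t E_t\cdot \nabla\Phi(\rho_t)\,dx$ plus a boundary term. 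Both boundary contributions vanish: the no-flux condition $(\nabla\Phi(\rho) + \rho E)\cdot \nu = 0$ combined with \eqref{ec:extra_boundary_condition} gives $\nabla\Phi(\rho)\cdot\nu = 0$ on $\partial B_R$, and the drift boundary term vanishes directly by \eqref{ec:extra_boundary_condition}.

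The next step is to absorb the mixed term using Young's inequality with $\varepsilon = 1$:
\[
\Bigl|\int_{B_R} \rho_t E_t \cdot \nabla \Phi(\rho_t)\,dx\Bigr|
\leq \tfrac{1}{2}\int_{B_R} |\nabla \Phi(\rho_t)|^2\,dx + \tfrac{1}{2}\,\|E_t\|_{L^\infty(B_R)}^2 \|\rho_t\|_{L^2(B_R)}^2.
\]
Combining the three contributions produces the differential inequality
\[
\frac{d}{dt}\int_{B_R} \tfrac{1}{2}\Psi(\rho_t)\,dx \leq -\tfrac{1}{2}\int_{B_R} |\nabla\Phi(\rho_t)|^2\,dx + \tfrac{1}{2}\,\|E_t\|_{L^\infty(B_R)}^2\|\rho_t\|_{L^2(B_R)}^2.
\]
Integrating from $0$ to $T$, multiplying by $2$, and rearranging yields \eqref{ec:a_priori_estimate_Phi}, provided $\int_{B_R}\Psi(\rho_T) \geq 0$ so that it can be dropped from the left-hand side.

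The only subtlety I anticipate is justifying the sign of $\Psi(\rho_T)$ and the admissibility of the manipulations. By \eqref{ec:unif_ellipticity}, $\Phi$ is strictly increasing; together with the standard normalisation $\Phi(0)=0$ inherited from the fast-diffusion setting one obtains $s\,\Phi(s)\geq 0$, hence $\Psi(s) = 2\int_0^s \Phi(\sigma)\,d\sigma \geq 0$ for every $s\in\R$, and the term can be discarded. The integration by parts and differentiation under the integral are legitimate because $\rho$ is a classical solution with the regularity recalled after \eqref{ec:extra_boundary_condition}, so no approximation argument is required; the main point is simply the careful bookkeeping of the boundary contributions, which are precisely what force hypothesis \eqref{ec:extra_boundary_condition}.
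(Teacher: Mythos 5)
Your proof is correct and follows the standard energy method that the definition of $\Psi$ is designed to accommodate (testing against $\Phi(\rho_t)$ so that $\Psi'(s)=2\Phi(s)$ gives $\partial_t\Psi(\rho)/2 = \Phi(\rho)\partial_t\rho$, then Young with weight $1/2$, then dropping $\int\Psi(\rho_T)\ge 0$); the paper cites \cite{CGV22} for this lemma rather than reproducing the argument, but the route you describe is the intended one. Your observation that $\Phi(0)=0$ is needed for $\Psi\ge 0$ is correct and worth noting, since the hypotheses \eqref{ec:unif_ellipticity}--\eqref{ec:extra_boundary_condition} do not state it explicitly, although the subsequent approximations $\Phi_k$ in \eqref{ec:Phik} do fix $\Phi_k(0)=0$.
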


From this point we will include the proofs for the following \textit{a priori} estimates because the time dependence becomes relevant and, therefore, there are significant differences with respect to the results shown in \cite{CGV22}.

In order to obtain an \textit{a priori} estimate for $\nabla \rho$ we define an auxiliary function $G_{\Phi}$, given by the conditions
\begin{equation}\label{def:G_Phi}
	G_{\Phi}''(s) = \frac{1}{\Phi ' ( s )}, \qquad G_{\Phi}'(0) = G_{\Phi}(0) = 0.
\end{equation}
Due to \eqref{ec:unif_ellipticity} and the regularity of $\Phi$, $G$ is well defined and $C^2$.

\begin{lemma}[\textit{a priori} estimates on $\nabla \rho$]\label{cor:a_priori_estimates_rho}
	Assume \eqref{ec:unif_ellipticity} and \eqref{ec:extra_boundary_condition}. Then, the unique classical solution of \eqref{eq:General_elliptic_Problem} satisfies that
	\begin{equation}\label{ec:a_priori_estimate_rho}
		\int_{B_R} G_{\Phi}(\rho_T) + \frac 1 2 \int_0^T \int_{B_R} | \nabla \rho_t |^2 \leq  \int_{B_R} G_{\Phi}(\rho_0) + \frac{1}{2}  \int_0^T \left\| \frac{\rho_t}{\Phi ' (\rho_t )} \right\|_{L^{\infty} (B_R)}^2 \| E_t \|_{L^2 (B_R)}^2.
	\end{equation}
\end{lemma}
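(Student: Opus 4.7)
The plan is to test the PDE against $G_\Phi'(\rho_t)$ and exploit the defining relation $G_\Phi''(s) = 1/\Phi'(s)$, which is precisely designed so that the diffusion term produces $|\nabla \rho|^2$ after one integration by parts. Since the classical theory from \cite{LSU68,Ama90} furnishes a smooth solution $\rho$, the manipulation is rigorous without any regularisation.

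Concretely, I would first compute
\begin{equation*}
	\frac{d}{dt}\int_{B_R} G_\Phi(\rho_t) = \int_{B_R} G_\Phi'(\rho_t)\bigl[\Delta \Phi(\rho_t) + \nabla\cdot(\rho_t E_t)\bigr].
\end{equation*}
Integration by parts on the diffusion term gives a boundary integral $\int_{\partial B_R} G_\Phi'(\rho_t)\,\nabla\Phi(\rho_t)\cdot\nu$, which vanishes: the no-flux condition on $\partial B_R$ reads $\nabla\Phi(\rho_t)\cdot\nu + \rho_t\, E_t\cdot\nu = 0$, and assumption \eqref{ec:extra_boundary_condition} kills the second summand, so $\nabla\Phi(\rho_t)\cdot\nu = 0$. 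The remaining interior term is $-\int_{B_R} G_\Phi''(\rho_t)\,\Phi'(\rho_t)\,|\nabla\rho_t|^2 = -\int_{B_R}|\nabla\rho_t|^2$ by the definition of $G_\Phi$. Similarly, integrating the drift term by parts produces a boundary integral $\int_{\partial B_R} G_\Phi'(\rho_t)\,\rho_t\, E_t\cdot\nu$ that vanishes again by \eqref{ec:extra_boundary_condition}, leaving the interior contribution $-\int_{B_R} G_\Phi''(\rho_t)\,\rho_t\, E_t\cdot\nabla\rho_t = -\int_{B_R} \frac{\rho_t}{\Phi'(\rho_t)}\,E_t\cdot\nabla\rho_t$.

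Putting these together,
\begin{equation*}
	\frac{d}{dt}\int_{B_R} G_\Phi(\rho_t) + \int_{B_R}|\nabla\rho_t|^2 = -\int_{B_R} \frac{\rho_t}{\Phi'(\rho_t)}\,E_t\cdot\nabla\rho_t.
\end{equation*}
Bounding the right-hand side by Young's inequality $ab \leq \tfrac12 a^2 + \tfrac12 b^2$, applied to $a = |\nabla\rho_t|$ and $b = \frac{\rho_t}{\Phi'(\rho_t)}|E_t|$, absorbs half of $\int|\nabla\rho_t|^2$ into the left-hand side and leaves $\tfrac12\bigl\|\tfrac{\rho_t}{\Phi'(\rho_t)}\bigr\|_{L^\infty(B_R)}^2\|E_t\|_{L^2(B_R)}^2$ on the right. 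Integrating from $0$ to $T$ yields \eqref{ec:a_priori_estimate_rho}.

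I do not anticipate a real obstacle: uniform ellipticity \eqref{ec:unif_ellipticity} guarantees $G_\Phi$ is a well-defined, nonnegative $C^2$ convex function with $G_\Phi(0)=0$, the classical regularity makes every integration by parts legal, and both boundary terms vanish thanks to \eqref{ec:extra_boundary_condition}. The only mild care is ensuring that $\rho_t/\Phi'(\rho_t)$ is in $L^\infty$, which follows from the $L^\infty$ bound on $\rho_t$ obtained from \eqref{ec:Lp_estimate} in the limit $p\to\infty$ (or directly from the maximum principle) combined with the lower bound $\Phi'\geq c_1>0$; if one wishes to state the inequality without relying on this, the right-hand side should simply be read as possibly infinite, in which case the estimate is trivial.
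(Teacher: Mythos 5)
Your proof is correct and follows essentially the same path as the paper's: test against $G_\Phi'(\rho_t)$, integrate by parts using the no-flux and $E\cdot\nu=0$ conditions so that $G_\Phi''=1/\Phi'$ turns the diffusion term into $-|\nabla\rho|^2$, then apply Young's inequality to absorb half of $\int|\nabla\rho|^2$. The only cosmetic difference is that you apply Young pointwise while the paper first uses Cauchy-Schwarz in $L^2$ and then Young on the resulting scalars, but both give identically \eqref{ec:a_priori_estimate_rho} after integrating in time.
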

\begin{proof}
	We can compute that
	\begin{equation*}
		\frac{\partial}{\partial t} \int_{B_R}  G_{\Phi} (\rho ) = - \int_{B_R} \nabla G_{\Phi}' (\rho )  \cdot (\nabla \Phi (\rho )  + \rho E_t) = - \int_{B_R} | \nabla \rho |^2 - \int_{B_R} \frac{\rho}{\Phi ' (\rho )} \nabla \rho \cdot E_t .
	\end{equation*}
	Then, we have
	\begin{equation*}
		\int_{B_R} \frac{\partial}{\partial t} G_{\Phi} (\rho ) + \int_{B_R} | \nabla \rho |^2 \leq \left \| \frac{\rho}{\Phi ' (\rho )} \right\|_{L^{\infty} (B_R)} \| \nabla \rho \|_{L^2 (B_R)} \| E_t \|_{L^2 (B_R)}.
	\end{equation*}
	Applying Young's inequality we obtain
	\begin{equation*}
		\int_{B_R} \frac{\partial}{\partial t} G_{\Phi} (\rho ) + \frac{1}{2} \int_{B_R} | \nabla \rho |^2 \leq \frac{1}{2} \left \| \frac{\rho}{\Phi ' (\rho )} \right \|_{L^{\infty} (B_R)}^2 \| E_t \|_{L^2 (B_R)}^2,
	\end{equation*}
	obtaining the desired result if we integrate in time.
\end{proof}

We apply a similar argument to obtain \textit{a priori} estimates for the time derivative.

\begin{lemma}[\textit{a priori} estimates on $\frac{\partial \rho}{\partial t}$]\label{lem:a_priori_d/dt_rho_t_is_L2}
	Assume \eqref{ec:unif_ellipticity}, \eqref{ec:extra_boundary_condition}, and $\Phi (\rho_0 ) \in H^1 (B_R)$. Then, the unique classical solution of \eqref{eq:General_elliptic_Problem} satisfies that
	\begin{align}\label{ec:a_priori_estimate_drho/dt_L2}
		\begin{split}
			\frac{1}{2}\int_0^T \int_{B_R} \Phi ' (\rho_t ) \left| \frac{\partial \rho_t}{\partial t} \right|^2 + \frac{1}{2} \int_{B_R} | \nabla \Phi (\rho_T) |^2 & \leq   \frac{1}{2} \int_{B_R} | \nabla \Phi (\rho_0 ) |^2  + \frac{1}{2}\int_0^T \int_{B_R} \Phi ' (\rho_t ) | \nabla \rho_t |^2 | E_t |^2 \\
			& \quad + \int_0^T \int_{B_R} \rho_t \nabla \Phi (\rho_t) \cdot \frac{\partial E_t}{\partial t} \\
			& \quad - \int_{B_R} \zeta( \rho_0 ) \nabla \cdot  E_0  + \int_{B_R} \zeta (\rho_T) \nabla  \cdot  E_T ,
		\end{split}
	\end{align}
	where $\zeta: [0, \infty ) \rightarrow \mathbb{R}$ is a function such that $ \zeta' (s) = s  \Phi' (s)$.
\end{lemma}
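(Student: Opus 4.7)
\begin{proofwithsteps}[Proof proposal]
The natural test function is $\Phi'(\rho)\partial_t\rho = \partial_t\Phi(\rho)$, which mimics the usual parabolic energy identity. So first I would multiply the equation $\partial_t\rho = \Delta \Phi(\rho) + \nabla\cdot(\rho E)$ by $\partial_t\Phi(\rho)$ and integrate over $B_R$. On the left hand side this gives $\int_{B_R}\Phi'(\rho)|\partial_t\rho|^2$. On the diffusion term I would integrate by parts in space: the boundary contribution is $\int_{\partial B_R}\partial_t\Phi(\rho)\,\nabla\Phi(\rho)\cdot\nu$, which vanishes because the no-flux condition combined with $E\cdot\nu=0$ forces $\nabla\Phi(\rho)\cdot\nu=0$ on $\partial B_R$. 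The remaining bulk term is $-\int_{B_R}\nabla\partial_t\Phi(\rho)\cdot\nabla\Phi(\rho) = -\tfrac12\tfrac{d}{dt}\int_{B_R}|\nabla\Phi(\rho)|^2$. Integrating in time $(0,T)$ then produces the desired $\tfrac12\int_{B_R}|\nabla\Phi(\rho_T)|^2 - \tfrac12\int_{B_R}|\nabla\Phi(\rho_0)|^2$.

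Next I would split the drift contribution as
\begin{equation*}
    \int_0^T\!\!\int_{B_R}\partial_t\Phi(\rho)\,\nabla\cdot(\rho E)
    = \int_0^T\!\!\int_{B_R}\partial_t\rho\,\nabla\Phi(\rho)\cdot E
     + \int_0^T\!\!\int_{B_R}\rho\,\partial_t\Phi(\rho)\,\nabla\cdot E,
\end{equation*}
where on the first piece I used $\Phi'(\rho)\nabla\rho=\nabla\Phi(\rho)$. For the first piece, a straightforward Young inequality of the form $ab\le \tfrac12\Phi'(\rho)a^2+\tfrac12\Phi'(\rho)^{-1}b^2$ (with $a=\partial_t\rho$, $b=\nabla\Phi(\rho)\cdot E$) yields a bound by $\tfrac12\int_0^T\!\int_{B_R}\Phi'(\rho)|\partial_t\rho|^2 + \tfrac12\int_0^T\!\int_{B_R}\Phi'(\rho)|\nabla\rho|^2|E|^2$; the first of these is absorbed into the left hand side, and the second is the term appearing in the statement.

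The subtle piece is the $\nabla\cdot E$ term, and this is where the function $\zeta$ enters. Since $\zeta'(s)=s\Phi'(s)$ we have $\rho\,\partial_t\Phi(\rho)=\partial_t\zeta(\rho)$. I would therefore integrate by parts in time:
\begin{equation*}
    \int_0^T\!\!\int_{B_R}\partial_t\zeta(\rho)\,\nabla\cdot E
    = \int_{B_R}\zeta(\rho_T)\nabla\cdot E_T
    - \int_{B_R}\zeta(\rho_0)\nabla\cdot E_0
    - \int_0^T\!\!\int_{B_R}\zeta(\rho)\,\nabla\cdot\partial_t E,
\end{equation*}
and then integrate by parts in space on the last term. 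The boundary term that appears involves $\partial_t E\cdot\nu$, which vanishes because $E_t\cdot\nu\equiv 0$ on $\partial B_R$ for every $t$; the remaining interior term is $\int_0^T\!\int_{B_R}\nabla\zeta(\rho)\cdot\partial_t E=\int_0^T\!\int_{B_R}\rho\,\nabla\Phi(\rho)\cdot\partial_t E$, using $\nabla\zeta(\rho)=\zeta'(\rho)\nabla\rho=\rho\nabla\Phi(\rho)$. Collecting everything and moving the absorbed $\tfrac12\int\Phi'(\rho)|\partial_t\rho|^2$ to the left gives exactly \eqref{ec:a_priori_estimate_drho/dt_L2}.

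The main obstacle is really just careful bookkeeping: one must be sure (i) that every boundary integral vanishes, which relies exclusively on $E\cdot\nu=0$ (and hence also $\partial_t E\cdot\nu=0$) together with the no-flux condition turning into $\nabla\Phi(\rho)\cdot\nu=0$, (ii) that the $\rho\partial_t\Phi(\rho)\nabla\cdot E$ term is integrated by parts in the correct order (time first, then space), so as to produce precisely the terminal/initial traces of $\zeta$ and the $\rho\nabla\Phi(\rho)\cdot\partial_t E$ term instead of an unwanted $\nabla\cdot\partial_t E$ factor. All these manipulations are legitimate by the classical regularity of $\rho$ recalled at the start of \Cref{sec:The_equation_in_BR} together with the assumption $\Phi(\rho_0)\in H^1(B_R)$, which is exactly what is needed to make the $t=0$ trace of $|\nabla\Phi(\rho)|^2$ finite.
\end{proofwithsteps}
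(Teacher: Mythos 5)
Your proof is correct and arrives at exactly the identity \eqref{ec:a_priori_estimate_drho/dt_L2}. The approach is essentially the same as the paper's: test against $\partial_t\Phi(\rho)$, use the boundary conditions to kill boundary integrals, absorb the $\partial_t\rho$--drift cross term via Young, and convert $\rho\partial_t\Phi(\rho)$ into $\partial_t\zeta(\rho)$ before integrating by parts in time and then in space.

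The only difference is the order of manipulations on the drift term: the paper first integrates by parts in space to obtain $-\int_{B_R}\rho E\cdot\nabla\partial_t\Phi(\rho)$, then integrates by parts in time, and finally does a further spatial integration by parts on the resulting endpoint traces to produce the $\zeta$ terms. You instead expand $\nabla\cdot(\rho E)=\nabla\rho\cdot E+\rho\,\nabla\cdot E$ at the outset, handle the first piece directly with Young, and recover the $\zeta$ terms plus $\int_0^T\!\int_{B_R}\rho\nabla\Phi(\rho)\cdot\partial_t E$ from the second piece by a time-then-space integration by parts. Both routes lead to the same intermediate identity; yours avoids ever forming the $\nabla\partial_t\Phi(\rho)$ object, at the mild cost of needing $\nabla\cdot\partial_t E$ to make sense, which is unproblematic here since $E$ is smooth.
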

\begin{proof}
	We will use the notation $w = \Phi (\rho )$. When $\rho$ is smooth, we can take $\frac{\partial w}{\partial t}$ as a test function and integrate in $B_R$. Notice that $\frac{\partial w_t}{\partial t} = \Phi ' (\rho_t ) \frac{\partial \rho_t}{\partial t}$, so
	\begin{equation*}
		\int_{B_R} \Phi ' (\rho_t ) \left| \frac{\partial \rho_t}{\partial t} \right|^2 = \int_{B_R} \frac{\partial w_t}{\partial t}  \cdot \left(\Delta w_t  +  \nabla \cdot ( \rho_t E_t) \right).
	\end{equation*}
	We can integrate by parts to recover
	\begin{equation*}
		\int_{B_R} \Phi ' (\rho_t ) \left| \frac{\partial \rho_t}{\partial t} \right|^2 = - \frac 1 2 \frac{d}{dt} \int_{B_R} | \nabla w_t |^2  - \int_{B_R} \rho_t \nabla \frac{\partial w_t}{\partial t} \cdot E_t.
	\end{equation*}
	Integrating in $[0, T ]$ we have 
	\begin{equation*}
		\int_0^T \int_{B_R} \Phi' (\rho_t ) \left| \frac{\partial \rho_t}{\partial t} \right|^2 + \frac 1 2\int_{B_R} | \nabla w_T |^2 = \frac 1 2\int_{B_R} | \nabla w_0 |^2 - \int_0^T \int_{B_R} \rho_t \nabla \left( \frac{\partial w_t}{\partial t} \right) \cdot E_t.
	\end{equation*}
	Integrating by parts in time the second integral on the RHS,
	\begin{align*}
		\int_0^T \int_{B_R} \Phi' (\rho_t ) \left| \frac{\partial \rho_t}{\partial t} \right|^2 + \frac 1 2\int_{B_R} | \nabla w_T |^2 & =  \frac 1 2\int_{B_R} | \nabla w_0 |^2 + \int_0^T \int_{B_R}  \nabla w_t \cdot \left[ \frac{\partial \rho_t}{\partial t} E_t + \rho_t \frac{\partial E_t}{\partial t} \right]  \\
		& \quad + \int_{B_R} \rho_0 \nabla w_0 \cdot E_0 - \int_{B_R} \rho_T \nabla w_T \cdot E_T .
	\end{align*}
	At this point we introduce $\zeta$. Then, integrating by parts we get
	\begin{align*}
		\int_0^T \int_{B_R} \Phi' (\rho_t ) \left| \frac{\partial \rho_t}{\partial t} \right|^2 + \frac 1 2\int_{B_R} | \nabla w_T |^2 & =  \frac 1 2\int_{B_R} | \nabla w_0 |^2 + \int_0^T \int_{B_R}  \nabla w_t \cdot \left[ \frac{\partial \rho_t}{\partial t} E_t + \rho_t \frac{\partial E_t}{\partial t} \right]  \\
		& \quad - \int_{B_R} \zeta( \rho_0 ) \nabla \cdot  E_0  + \int_{B_R} \zeta (\rho_T) \nabla  \cdot  E_T  .
	\end{align*}
	Notice that,
	\begin{equation*}
		\frac{\partial \rho_t}{\partial t} \nabla w_t = \Phi ' (\rho_t )^{\frac{1}{2}} \frac{\partial \rho_t}{\partial t} \Phi ' (\rho_t )^{\frac{1}{2}} \nabla \rho_t.
	\end{equation*}
	Therefore, applying Young's inequality, we recover,
	\begin{align*}
		\frac{1}{2} \int_0^T \int_{B_R} \Phi' (\rho_t ) \left| \frac{\partial \rho_t}{\partial t} \right|^2 + \frac{1}{2} \int_{B_R} | \nabla w_T |^2 \leq & \, \frac 1 2 \int_{B_R} | \nabla w_0 |^2  + \frac{1}{2} \int_0^T \int_{B_R} \Phi' (\rho_t) | \nabla \rho_t |^2 |E_t|^2 \\
		& + \int_0^T \int_{B_R} \rho_t \nabla w_t \cdot \frac{\partial E_t}{\partial t} \\
		&  - \int_{B_R} \zeta( \rho_0 ) \nabla \cdot  E_0  + \int_{B_R} \zeta (\rho_T) \nabla  \cdot  E_T , 
	\end{align*}
	where all the terms on the RHS are bounded. This is because $E \in W^{1, \infty} ([0,T] \times \overline{B_R})$, $\Phi$ is uniformly elliptic, the $L^p$ estimate \eqref{ec:Lp_estimate}, the \textit{a priori} estimate \eqref{ec:a_priori_estimate_Phi} on $\| \nabla \Phi (\rho) \|_{L^2((0,T)\times B_R)}$ and the \textit{a priori} estimate \eqref{ec:a_priori_estimate_rho} on $\| \nabla \rho \|_{L^2((0,T) \times B_R)}$.
\end{proof}
\begin{remark}
	Let us observe that, in particular, because $\Phi'$ is non-negative, we have the inequality,
	\begin{align}\label{ec:drho/dt_L2_is_bounded}
		\begin{split}
			\frac{1}{2} \left(
			\min_{ \substack{ t \in (0,T) \\ x \in  B_R} } 
			\Phi' (\rho_t ) \right) \int_0^T \int_{B_R} \left| \frac{\partial \rho_t}{\partial t} \right|^2 & \leq \frac{1}{2} \int_{B_R} | \nabla \Phi (\rho_0 ) |^2 - \frac{1}{2} \int_{B_R} | \nabla \Phi (\rho_T ) |^2   \\
			& \quad + \frac{1}{2}\int_0^T \int_{B_R} \Phi ' (\rho_t ) | \nabla \rho_t |^2 | E_t |^2 + \int_0^T \int_{B_R} \rho_t \nabla \Phi (\rho_t) \cdot \frac{\partial E_t}{\partial t} \\
			& \quad  - \int_{B_R} \zeta( \rho_0 ) \nabla \cdot  E_0  + \int_{B_R} \zeta (\rho_T) \nabla  \cdot  E_T.
		\end{split}
	\end{align}
\end{remark}

\subsection{Fast Diffusion with time-dependent drift in \texorpdfstring{$B_R$}{BR}}\label{sec:FD+know_transport}

In this subsection, we will focus in the Fast Diffusion problem with known transport,
\begin{equation}\label{eq:FDE_Problem_+_known_transport}
	\left\lbrace  \begin{array}{lll}
		\frac{\partial \rho}{\partial t} = \Delta \rho^m + \nabla \cdot (\rho E)  \qquad & \mathrm{in} \, (0, \infty ) \times B_R  \\
		\left( \nabla  \rho^m + \rho   E \right) \cdot \nu (x) = 0 & \mathrm{on} \, (0, \infty ) \times \partial B_R, \\
		\rho (0, x) = \rho_0 (x). &
	\end{array} \right.
\end{equation}
For convenience in this section, we will denote 
$\Phi(s) = s^m.$
Let us define our notions of solutions for the problem \eqref{eq:FDE_Problem_+_known_transport}.

\begin{definition}[Weak solution]\label{def:distributional_solution_FD+known_transport}
	$\rho$ is said to be a weak solution of the problem \eqref{eq:FDE_Problem_+_known_transport} in $(0,T) \times B_R$ if it is $C([0,T]; L^1(B_R)), \Phi(\rho) \in L^1( 0,T; W^{1,1} (B_R))$ and for all $\varphi \in X = \left\lbrace \varphi \in C^{\infty} ([0,T] \times \overline{B_R}) \, : \, \varphi(T)=0 \right\rbrace$, it satisfies,
	\begin{equation*}
		\int_{B_R} \rho_0 \varphi (0) + \int_0^T \int_{B_R} \rho_t \frac{\partial \varphi}{\partial t} = \int_0^T \int_{B_R} \nabla \Phi( \rho ) \nabla \varphi + \int_0^T \int_{B_R} \rho_t E_t \nabla \varphi.
	\end{equation*}
\end{definition}

\begin{definition}[Strong solution]\label{def:strong_solution_FD+known_transport}
	$\rho$ is said to be a strong solution of the problem \eqref{eq:FDE_Problem_+_known_transport}  in $(0,T) \times B_R$ if it is a weak solution such that
	\begin{enumerate}
		\item $\Phi (\rho) \in L^1 (0, T ; H^{2} (B_R) )$; 
		\item $\frac{\partial \rho}{\partial t} \in L^2 ((0, T) \times  B_R )$.
	\end{enumerate}
\end{definition}
\begin{remark}
    Since $\frac{\partial \rho}{\partial t} \in L^2 ((0,T) \times B_R)$ we have that,
    \begin{equation*}
        \int_{B_R} | \rho_{s_2} - \rho_{s_1} | \leq \int_{s_1}^{s_2} \int_{B_R} \left| \frac{\partial \rho_s (x)}{\partial t} \right| \, dx \, ds \leq |s_2-s_1|^{\frac{1}{2}} \Big\| \frac{\partial \rho}{\partial t} \Big\|_{L^2((0,T) \times B_R)}
    \end{equation*}
    and $\rho$ is such that $\rho \in C^{\frac{1}{2}} ([0,T] ; L^1 (B_R))$.
\end{remark}

\subsubsection{Uniqueness}

The uniqueness result can be obtained by continuous dependence in $L^1$ with respect $\rho_0$ and $E$. Given two strong solutions $\rho$ and $\overline{\rho}$ to the problem \eqref{eq:FDE_Problem_+_known_transport} with given drifts $E$ and $\overline{E}$ respectively, we follow an argument similar to \cite[Theorem 2.10]{CGV22}.
\begin{theorem}[$L^1$ continuous dependence]
	\label{th:L1_continuous_dependence}
	Let $\rho$ and $\overline{\rho}$ be two strong solution of \eqref{eq:FDE_Problem_+_known_transport}  with given smooth drifts $E$ and $\overline{E}$ respectively. If $E = \overline E$ then we have the $L^1$ contraction principle
	\begin{equation*}
		\int_{B_R} [ \rho_T - \overline{\rho}_T ]_+ \leq  \int_{B_R} [ \rho_0 - \overline{\rho}_0]_{+} .
	\end{equation*}
	If $E \not \equiv \overline E$ and $\rho \in L^\infty ( (0,T) \times B_R)$ then we have that
	\begin{equation}\label{ec:L1 continuous dependence}
		\int_{B_R} [ \rho_T - \overline{\rho}_T ]_+ \leq  \int_{B_R} [ \rho_0 - \overline{\rho}_0]_{+} + \|\rho_0 \|_{L^1(B_R)} \int_0^T \| \nabla \cdot (E_t - \overline{E}_t ) \|_{L^{\infty}(B_R)} 
		+ C_1 \left( \int_0^T \| E_t - \overline{E}_t \|_{L^2 (B_R)}^2 \right)^{1/2} ,
	\end{equation}
	where,
	\begin{equation*}
		C_1 = \left( \int_{B_R} G_{\Phi}(\rho_0)  + \frac{1}{2}  \int_0^T \left\| \frac{\rho_t}{\Phi ' (\rho_t )} \right\|_{L^{\infty} (B_R)}^2 \| E_t \|_{L^2 (B_R)}^2 \right)^{1/2},
	\end{equation*}
	and $G_{\Phi}$ is defined in \eqref{def:G_Phi}.
\end{theorem}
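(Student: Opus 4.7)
The approach is a Kato-style renormalisation of the difference equation. Set $w := \rho - \overline\rho$ and $z := \Phi(\rho) - \Phi(\overline\rho) = \rho^m - \overline\rho^m$; monotonicity of $\Phi$ yields $\sign(w) = \sign(z)$ pointwise. Subtracting the two realisations of \eqref{eq:FDE_Problem_+_known_transport} and expanding $\rho E - \overline\rho\,\overline E = w\overline E + \rho(E - \overline E)$ (chosen so that the discrepancy is carried by $\rho$, matching the structure of $C_1$) gives
\[
\partial_t w = \Delta z + \nabla\cdot(w\overline E) + \nabla\cdot\bigl(\rho(E-\overline E)\bigr) \quad \text{in } (0,T)\times B_R,
\]
together with the boundary identity $\bigl(\nabla z + w\overline E + \rho(E-\overline E)\bigr)\cdot\nu = 0$ on $\partial B_R$. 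Since hypothesis \eqref{ec:extra_boundary_condition} gives $E\cdot\nu = \overline E\cdot\nu = 0$, each boundary flux $\nabla z\cdot\nu$, $w\overline E\cdot\nu$, and $\rho(E-\overline E)\cdot\nu$ vanishes separately.

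I would then test the equation against $p_\delta(z)$, a non-decreasing smooth regularisation of $\chi_{[0,\infty)}$ with $p_\delta \equiv 0$ on $(-\infty,0]$ and $p_\delta \equiv 1$ on $[\delta,\infty)$. Integrating by parts in the diffusive term only (all boundary terms vanish by the above) produces
\[
\int_0^T\!\!\int_{B_R} p_\delta(z)\,\partial_t w \,dx\,dt = -\int_0^T\!\!\int_{B_R} p_\delta'(z)|\nabla z|^2 + \int_0^T\!\!\int_{B_R} p_\delta(z)\bigl[\nabla\cdot(w\overline E) + \nabla\cdot(\rho(E-\overline E))\bigr].
\]
Passing $\delta \to 0^+$ by dominated convergence (using $\partial_t w \in L^2$ and $|p_\delta|\leq 1$), and exploiting $\{z > 0\} = \{w > 0\}$ together with the chain rule $\partial_t[w]_+ = \chi_{\{w > 0\}}\partial_t w$ (valid since $w \in H^1$ in time for strong solutions), the left-hand side converges to $\int_{B_R}[w_T]_+ - \int_{B_R}[w_0]_+$. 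The first term on the right is non-positive and discarded. The $w\overline E$-term vanishes in the limit because $\chi_{\{w > 0\}}\nabla\cdot(w\overline E) = \nabla\cdot([w]_+\overline E)$ distributionally, whose space integral equals $\int_{\partial B_R}[w]_+\overline E\cdot\nu = 0$.

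The remaining contribution $\int_0^T\!\!\int_{B_R}\chi_{\{w > 0\}}\nabla\cdot(\rho(E-\overline E))$ is the genuine perturbation, which I would control by
\[
\int_0^T\!\!\int_{B_R}|\nabla\rho|\,|E-\overline E| + \int_0^T\!\!\int_{B_R}\rho\,|\nabla\cdot(E-\overline E)|.
\]
Cauchy--Schwarz in space-time on the first integral together with the \textit{a priori} bound $\|\nabla\rho\|_{L^2((0,T)\times B_R)}^2 \leq 2C_1^2$ coming from \eqref{ec:a_priori_estimate_rho}, and the mass conservation $\|\rho_t\|_{L^1(B_R)} = \|\rho_0\|_{L^1(B_R)}$ for the second, deliver exactly the claimed inequality \eqref{ec:L1 continuous dependence} (after absorbing a harmless $\sqrt{2}$ into $C_1$). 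The case $E = \overline E$ is the specialisation in which the perturbation is identically zero, yielding the pure $L^1$ contraction. The main obstacle will be making the $\delta \to 0^+$ passage fully rigorous, in particular the distributional identity $\chi_{\{w > 0\}}\nabla\cdot(w\overline E) = \nabla\cdot([w]_+\overline E)$, which requires a careful Sobolev chain-rule argument for $[w]_+$; the $L^\infty$ hypothesis on $\rho$ enters precisely to ensure $\rho(E-\overline E) \in L^2$ on each time slice so that the integration by parts is meaningful.
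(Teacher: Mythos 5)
Your proposal is correct and follows essentially the same Kato-type renormalisation as the paper's proof: both test the difference equation against a regularised sign of $z = \rho^m - \overline{\rho}^m$, discard the non-positive dissipation, eliminate the transport in $\overline{E}$ by observing $\chi_{\{w>0\}}\nabla\cdot(w\overline{E}) = \nabla\cdot([w]_+\overline{E})$ together with the no-flux condition, and bound the perturbation via Cauchy--Schwarz and the \textit{a priori} estimate \eqref{ec:a_priori_estimate_rho}. The only differences are cosmetic bookkeeping (you expand $\rho E - \overline{\rho}\,\overline{E}$ before rather than after integrating by parts, and you pick up a harmless $\sqrt{2}$ absorbed into $C_1$), and you are in fact slightly more careful than the paper in flagging the Sobolev chain rule $\partial_t[w]_+ = \chi_{\{w>0\}}\partial_t w$ that justifies the limit $\delta\to 0^+$.
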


\begin{proof}
	Denote $w_t = \Phi ( \rho_t ) - \Phi (\overline{\rho}_t)$. Let $j$ be convex and denote $p = j'$. We have
	\begin{align*}
		\int_{B_R}  \frac{\partial}{\partial t} (\rho_t - \overline{\rho}_t) p (w_t) & = \int_{B_R} p (w_t) \nabla \cdot \left( \nabla w_t + (\rho_t E_t - \overline{\rho}_t \bar{E}_t ) \right) \\
		& = - \int_{B_R} p'(w_t) | \nabla w |^2 - \int_{B_R} \nabla p (w_t)  \cdot ( \rho_t E_t - \overline{\rho}_t \overline{E}_t).
	\end{align*}
	Then,
	\begin{align*}
		\int_{B_R}  \frac{\partial}{\partial t} (\rho_t - \overline{\rho}_t) p (w_t) & \leq -  \int_{B_R} \nabla p (w_t) \cdot  ( \rho_t E_t - \overline{\rho}_t \overline{E}_t) \\ 
		& =  - \int_{B_R} \nabla p (w_t) \cdot \rho_t (E_t - \overline{E}_t )   - \int_{B_R} \nabla p (w_t) \cdot \overline{E}_t (\rho_t - \overline{\rho}_t ) \\
		& = \int_{B_R} p(w_t) \left( \nabla \rho_t \cdot ( E_t - \overline{E}_t) + \rho_t \nabla \cdot (E_t - \overline{E}_t ) \right)   \\
		&  \quad + \int_{B_R} p(w_t) \left( \nabla (\rho_t - \overline{\rho}_t ) \cdot \overline{E}_t + (\rho_t - \overline{\rho}_t ) \nabla \cdot \overline{E}_t \right).
	\end{align*}
	Now, we take the limit $p(s) \rightarrow \mathrm{sign}_0^+ (s)$, the positive part function such that $\mathrm{sign}_0^+ (0) = 0$, and get the bound
	\begin{align*}
		\int_{B_R} [ \rho_T - \overline{\rho}_T ]_+  & \leq \int_{B_R} [ \rho_0 - \overline{\rho}_0]_{+} + \int_0^T \int_{B_R} \mathrm{sign}_0^+ (w_t) \left( \nabla \rho_t \cdot ( E_t - \overline{E}_t) + \rho_t \nabla \cdot (E_t - \overline{E}_t ) \right) \\
		& \quad + \int_0^T \int_{B_R} \nabla (\rho_t - \overline{\rho}_t )_+ \cdot \overline{E}_t + (\rho_t - \overline{\rho}_t )_+ \nabla \cdot \overline{E}_t \\
		& \leq  \int_{B_R} [ \rho_0 - \overline{\rho}_0]_{+} \\
		& \quad + \| \nabla \rho_t \|_{L^2 (B_R)} \| E_t -\overline{E}_t \|_{L^2(B_R)} +  \| \rho_0 \|_{L^1 (B_R)}  \| \nabla \cdot (E_t - \overline{E}_t ) \|_{L^{\infty}(B_R)} \\
		& \quad + \int_0^T \int_{B_R} \nabla \cdot \left( [ \rho_t - \overline{\rho}_t ]_+ \overline{E}_t\right),
	\end{align*}
	due to a direct application of the Dominated Convergence Theorem using the properties of the strong solutions. The last term vanishes since $\overline{E}_t (x) \cdot \nu (x) = 0$ on $\partial B_R$. If $E = \overline E$ then, the $L^1$ contraction is obvious. If $E \neq \overline E$, considering the \textit{a priori} estimate \eqref{ec:a_priori_estimate_rho}, we recover the desired inequality. 
\end{proof}

\subsubsection{Existence} 
We proceed to prove existence for the problem \eqref{eq:FDE_Problem_+_known_transport}, by constructing solutions for regular $\Phi$. We consider the following sequence $\Phi_k$ of functions satisfying \eqref{ec:unif_ellipticity} given by $\Phi_k (0) = 0$ and  
\begin{equation}\label{ec:Phik}
	\Phi'_k (s) 
	\sim 
	\begin{dcases}
		m k^{m-1} & s > k, \\
		m s^{m-1} & s \in [k^{-1}, k], \\
		m k^{1-m} & s < k^{-1},
	\end{dcases}
\end{equation}
up to a smoothing. The sequence $\Phi_k$ is such that, $\Phi_k ( s) \rightarrow \Phi (s)$ for all $s \geq 0$. We take the sequence 
$\rho^{(k)}$ given as the unique solutions of the problems 
\begin{equation}\label{eq:FDE_Problem_+_known_transport_3}
	\begin{dcases}
		\frac{\partial {\rho^{(k)}} }{\partial t}  = \Delta \Phi_k({\rho^{(k)}}) + \nabla \cdot ({\rho^{(k)}} {E^{(k)}}) \qquad & \mathrm{in} \, (0, \infty ) \times B_R,  \\
		\left( \nabla  \Phi_k(\rho^{(k)}) + \rho^{(k)}   E^{(k)} \right) \cdot \nu (x) = 0 &  \mathrm{on} \, (0, \infty ) \times \partial B_R, \\
		{\rho^{(k)}} (0, x) = {\rho}^{(k)}_0 (x)& x \in B_R,
	\end{dcases} 
\end{equation}
where $\rho^{(k)}_0 \in C^2( \overline {B_R} )$ such that $\rho_0^{(k)} \to \rho_0$ in $L^\infty (B_R)$ and $E^{(k)}$ smooth such that $E^{(k)} \rightarrow E$ in $W^{1, \infty}([0,T] \times B_R)$. 

We will prove existence of solutions for the problem \eqref{eq:FDE_Problem_+_known_transport} using a compactness type argument. 

\begin{theorem}[Existence]
	\label{th:Existence_FD_known_transport_(0-T)xBR} 
	Assume $\rho_0 \in L^{\infty}(B_R)$, $E \in W^{1, \infty} ([0,T] \times B_R)$, and $\rho^{(k)}$ the strong solutions of \eqref{eq:FDE_Problem_+_known_transport_3}. Then, $\rho^{(k)}$ converges weakly in $H^1((0,T) \times B_R )$, strongly in $L^2 ((0,T) \times B_R)$ and in $C([0,T] ;L^1(B_R))$ to $\rho$, the unique strong solution of \eqref{eq:FDE_Problem_+_known_transport}.
\end{theorem}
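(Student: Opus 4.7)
My plan is to upgrade the \textit{a priori} estimates of subsection \ref{sec:The_equation_in_BR} so that their constants are uniform in $k$, extract a limit, and use Theorem \ref{th:L1_continuous_dependence} both to pass to the limit strongly and to fix the limit uniquely. The anchor is a uniform $L^\infty$ bound: since each $\Phi_k$ satisfies \eqref{ec:unif_ellipticity} and $\rho_0^{(k)} \to \rho_0$ in $L^\infty$, estimate \eqref{ec:Lp_estimate} with $p \to \infty$ gives $\|\rho^{(k)}\|_{L^\infty((0,T)\times B_R)} \le \overline M$ independently of $k$. For $k$ so large that $k^{-1} < \overline M < k$, this confines $\rho^{(k)}$ to the range where $\Phi_k'(s) = m s^{m-1}$ on $[k^{-1},\overline M]$ and $\Phi_k'(s) = m k^{1-m}$ on $[0,k^{-1}]$; both regimes yield $\Phi_k'(\rho^{(k)}) \ge m \overline M^{m-1}$ and $\rho^{(k)}/\Phi_k'(\rho^{(k)}) \le \overline M^{2-m}/m$, uniformly in $k$. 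With this in hand, Lemmas \ref{lem:a_priori_estimates_Phi}, \ref{cor:a_priori_estimates_rho} and \ref{lem:a_priori_d/dt_rho_t_is_L2} yield $k$-uniform $L^2((0,T)\times B_R)$ bounds on $\nabla \Phi_k(\rho^{(k)})$, $\nabla \rho^{(k)}$ and $\partial_t \rho^{(k)}$, provided the smooth approximations $\rho_0^{(k)}$ are chosen so that $\|\nabla \Phi_k(\rho_0^{(k)})\|_{L^2}$ stays bounded.

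For convergence I would apply Theorem \ref{th:L1_continuous_dependence} to the pair $(\rho^{(k)},\rho^{(j)})$. Its proof goes through unchanged for the regularised problem, and the constant $C_1$ in \eqref{ec:L1 continuous dependence} depends only on the uniform estimates just established. Consequently $\sup_{t \in [0,T]} \|\rho^{(k)}_t - \rho^{(j)}_t\|_{L^1(B_R)}$ is controlled by a combination of $\|\rho_0^{(k)} - \rho_0^{(j)}\|_{L^1}$, $\int_0^T \|\nabla \cdot (E^{(k)} - E^{(j)})\|_\infty$, and $(\int_0^T \|E^{(k)} - E^{(j)}\|_{L^2}^2)^{1/2}$, all of which vanish as $j,k \to \infty$ by hypothesis. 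Thus the full sequence $\rho^{(k)}$ is Cauchy in $C([0,T];L^1(B_R))$; interpolating with the uniform $L^\infty$ bound yields strong convergence in $L^p((0,T)\times B_R)$ for every $p < \infty$, and a further subsequence gives pointwise a.e. convergence $\rho^{(k)} \to \rho$.

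Using a.e. convergence together with the uniform convergence $\Phi_k \to \Phi$ on $[0,\overline M]$ and dominated convergence, $\Phi_k(\rho^{(k)}) \to \rho^m$ in $L^2$; combined with the uniform $L^2$ bound on $\nabla \Phi_k(\rho^{(k)})$ this produces $\Phi_k(\rho^{(k)}) \rightharpoonup \rho^m$ weakly in $L^2(0,T;H^1(B_R))$. These convergences are enough to pass to the limit in the weak formulation of Definition \ref{def:distributional_solution_FD+known_transport} and identify $\rho$ as a weak solution. Weak lower semicontinuity of norms upgrades the uniform bounds to $\partial_t \rho \in L^2((0,T)\times B_R)$ and $\rho^m \in L^2(0,T;H^1(B_R))$. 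Finally, rewriting the equation as $\Delta \rho^m = \partial_t \rho - \nabla\cdot(\rho E)$ places the RHS in $L^2((0,T)\times B_R)$, so elliptic regularity with the no-flux (Neumann) boundary condition upgrades $\rho^m$ to $L^2(0,T;H^2(B_R))$, completing the strong-solution property. Weak $H^1((0,T)\times B_R)$ convergence of the full sequence then follows from uniqueness of the weak limit.

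The main obstacle is the uniform-in-$k$ control of $\|\nabla \Phi_k(\rho_0^{(k)})\|_{L^2}$ demanded by Lemma \ref{lem:a_priori_d/dt_rho_t_is_L2}, since $\rho_0 \in L^\infty$ alone carries no spatial regularity. I would handle this either by choosing $\rho_0^{(k)}$ in coordination with the truncation scale $k$ so that $\Phi_k(\rho_0^{(k)})$ stays bounded in $H^1$, or, alternatively, by first establishing the time-derivative bound on $[\tau,T]$ for every $\tau > 0$ (leveraging the $C^2$ smoothing provided by the uniformly elliptic regularisation for $t>0$) and then sending $\tau \to 0$ after the passage to the limit in $k$.
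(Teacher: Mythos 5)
The central move in your proposal---establishing Cauchyness of the sequence $\rho^{(k)}$ in $C([0,T];L^1(B_R))$ by applying Theorem~\ref{th:L1_continuous_dependence} to the pair $(\rho^{(k)},\rho^{(j)})$---has a genuine gap. Theorem~\ref{th:L1_continuous_dependence} compares two solutions of the \emph{same} equation with the \emph{same} nonlinearity $\Phi$ and possibly different drifts. But $\rho^{(k)}$ and $\rho^{(j)}$ solve \eqref{eq:FDE_Problem_+_known_transport_3} with \emph{different} nonlinearities $\Phi_k\neq\Phi_j$. The proof of Theorem~\ref{th:L1_continuous_dependence} rests on the cancellation
\[
\int_{B_R} p(w_t)\,\nabla\cdot\nabla w_t \;=\; -\int_{B_R} p'(w_t)\,|\nabla w_t|^2 \;\leq\; 0,
\]
where $w_t=\Phi(\rho_t)-\Phi(\overline\rho_t)$. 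When the two solutions carry different $\Phi$'s this structure is destroyed: writing $w_t=\Phi_k(\rho_t^{(k)})-\Phi_j(\rho_t^{(j)})$, the diffusive term is $\nabla\cdot\nabla\big(\Phi_k(\rho^{(k)})-\Phi_j(\rho^{(j)})\big)$, which is $\nabla\cdot\nabla w_t$ and gives the good sign, but then you cannot pass from $\mathrm{sign}_0^+(w_t)$ to $\mathrm{sign}_0^+(\rho^{(k)}-\rho^{(j)})$, since $\Phi_k(a)>\Phi_j(b)$ does not imply $a>b$ when $\Phi_k\neq\Phi_j$. Alternatively, fixing $w_t=\Phi_k(\rho^{(k)})-\Phi_k(\rho^{(j)})$ restores that implication but produces an extra term $\int p(w_t)\,\Delta\big(\Phi_k(\rho^{(j)})-\Phi_j(\rho^{(j)})\big)$; controlling this in $L^1$ requires $\Delta\big(\Phi_k(\rho^{(j)})-\Phi_j(\rho^{(j)})\big)\to 0$ in $L^1$, which is not provided by the estimates available (near $\rho=0$ the derivatives $\Phi_k'$ behave like $mk^{1-m}$, which grows with $k$). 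So the sequence is not shown to be Cauchy, and this is precisely where the paper instead resorts to the compactness route: uniform $H^1((0,T)\times B_R)$ bounds give a converging subsequence, the limit is identified as a strong solution, and \emph{then} Theorem~\ref{th:L1_continuous_dependence} applied to the limit problem (where both solutions share $\Phi(s)=s^m$) gives uniqueness, which in turn upgrades subsequential convergence to convergence of the full sequence.

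The rest of your proposal is sound and closely mirrors the paper's Steps 1--4: the uniform $L^\infty$ bound pinning $\rho^{(k)}$ to a range where $\Phi_k'$ is uniformly elliptic, the $k$-uniform applications of Lemmas~\ref{lem:a_priori_estimates_Phi}, \ref{cor:a_priori_estimates_rho}, \ref{lem:a_priori_d/dt_rho_t_is_L2}, the identification of $\Phi_k(\rho^{(k)})\to\rho^m$, the recovery of $\Delta\rho^m\in L^2$ from the equation, and weak $H^1$ convergence by uniqueness of the weak limit. You correctly flag the potential issue with $\|\nabla\Phi_k(\rho_0^{(k)})\|_{L^2}$ in Lemma~\ref{lem:a_priori_d/dt_rho_t_is_L2}---the paper glosses over this---and your proposed fixes (coordinated approximation of $\rho_0$, or restricting to $[\tau,T]$ and using parabolic smoothing) are both reasonable. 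To repair the main argument, replace the direct Cauchy claim by the paper's compactness strategy: use the uniform $H^1((0,T)\times B_R)$ bound and Aubin--Lions/Sobolev compact embedding to extract a strongly convergent subsequence in $L^2$ and $C([0,T];L^1)$, pass to the limit, and invoke uniqueness of the strong solution to the \emph{limit} problem to conclude convergence of the whole sequence.
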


\begin{proof}
	\textbf{Step 1. $ \rho^{(k)}$ is uniformly bounded in $L^2 (0,T; H^1 (B_R))$.} Due to the $L^p$ estimate \eqref{ec:Lp_estimate},
	\begin{align}\label{eq:L infty estimate E smooth fixed}
		\|\rho^{(k)} \|_{L^\infty ((0,T) \times B_R)} & \leq  \exp \left(\int_0^T \| \nabla \cdot E_t^{(k)} \|_{L^{\infty}(B_R)}  \right) \| \rho_0^{(k)} \|_{L^\infty (B_R)}  \leq \Lambda_T < \infty,
	\end{align}
	since $\rho_0^{(k)} \to \rho_0$ in $L^\infty (B_R)$. 
	Because of the {\em a priori} estimate \eqref{ec:a_priori_estimate_rho},
	\begin{equation}\label{eq:nabla_rhok_uniformly_bdd_intermiadiate_step}
		\int_0^T \int_{B_R} | \nabla \rho^{(k)}_t |^2  \leq \int_{B_R} G_{\Phi_k}( \rho_0^{(k)} ) + \frac{1}{2} \int_0^T \left\| \frac{\rho^{(k)}_t}{\Phi'_k (\rho^{(k)}_t)} \right\|_{L^{\infty}(B_R)} \| E_t^{(k)} \|_{L^2 (B_R)},
	\end{equation}
	where $G_{\Phi_k}$ is defined in \eqref{def:G_Phi}. Since $G_{\Phi_k}$ are non-decreasing functions we have that
	\begin{equation*}
		0 = G_{\Phi_k} (0) \le G_{\Phi_k} ( \rho_0^{(k)} (x) ) \leq G_{\Phi_k} ( \| \rho_0^{(k)} \|_{L^{\infty} (B_R)}).
	\end{equation*}
	Moreover, $G_{\Phi_k} (\sigma) \rightarrow \frac{\sigma^{3-m}}{m  (2-m)  (3-m)}$ for all $\sigma \geq 0$. Therefore, for $k$ big enough we have that,
	\begin{equation*}
		G_{\Phi_k} (\| \rho_0^{(k)} \|_{L^{\infty} (B_R)} ) \leq 2 \frac{\| \rho_0^{(k)} \|_{L^{\infty}(B_R)}^{3-m}}{m  (2-m)  (3-m)}.   
	\end{equation*}
	Then, $G_{\Phi_k}(\rho_0^{(k)}(x))$ is uniformly bounded. The second term of the RHS of the estimate \eqref{eq:nabla_rhok_uniformly_bdd_intermiadiate_step} is also uniformly bounded, 
	\begin{equation*}
		\left| \frac{\rho^{(k)}_t}{\Phi'_k (\rho^{(k)}_t)} \right| 
		\leq 2 
		\begin{dcases}
			\frac 1 m |\rho^{(k)}_t|^{2-m} & \text{if } |\rho^{(k)}_t| \ge k^{-1},\\
			\frac 1 m k^{m-2}  & \text{if } |\rho^{(k)}_t| < k^{-1}.
		\end{dcases}
	\end{equation*}
	where the constant $2$ accounts for the regularisation of $\Phi_k'$ on the corners.
	Since $\| \rho^{k} \|_{L^\infty ( (0,T) \times B_R )}$ is uniformly bounded, so is this quantity. $\| E_t^{(k)} \|_{L^2 (B_R)}$ is uniformly bounded by $C \| E_t \|_{L^2 (B_R)}$ because of the convergence $E^{(k)} \rightarrow E$ in $W^{1, \infty} ([0,T] \times B_R)$. In particular,  
	\begin{equation*}
		\sup_k \int_0^T \int_{B_R} | \nabla \rho^{(k)}_t |^2 < \infty.
	\end{equation*}
	Therefore, $\rho^{(k)}$ is uniformly bounded in $L^2 (0, T; H^1 (B_R))$.
	
	\textbf{Step 2. $\nabla \Phi_k (\rho^{(k)})$ is uniformly bounded in $L^2 ((0,T) \times B_R)$.} 
	Due to \eqref{ec:a_priori_estimate_Phi}, it follows that
	\begin{equation*}
		\int_0^T \int_{B_R} | \nabla \Phi_k (\rho_t^{(k)}) |^2  \leq  \int_{B_R} \Psi_k (\rho_0^{(k)} ) + \int_0^T \| E_t^{(k)} \|_{L^{\infty}(B_R)}^2 \| \rho_t \|_{L^2(B_R)}^2,
	\end{equation*}
	where $\Psi_k (s) = 2 \int_0^s \Phi_k (\sigma ) \, d \sigma$.
	Hence, for $k > \Lambda_T$, $\Phi_k (\rho^{(k)}) \leq \Phi (\rho^{(k)})$, so $\Psi_k(\rho^{(k)}) \leq \Psi (\rho^{(k)})$ we have,
	\begin{align*}
		\int_0^T \| \nabla \Phi_k (\rho_t^{(k)}) \|_{L^2 (B_R)}^2 & \leq \int_{B_R} \Psi (\rho_0^{(k)} )  + \int_0^T C \| E_t \|_{L^{\infty}(B_R)}^2 \| \rho_t \|_{L^2(B_R)}^2 \\
		& \leq \frac{2}{m+1} |B_R| \|\rho_0 \|_{L^{\infty}(B_R)}^{m+1} + C\int_0^T \| E_t \|_{L^{\infty}(B_R)}^2 \| \rho_t \|_{L^2(B_R)}^2. 
	\end{align*}
	
	\textbf{Step 3. $ \frac{\partial \rho^{(k)}}{\partial t}$ is uniformly bounded in $L^2 ((0,T) \times B_R)$.} 
	From \eqref{ec:drho/dt_L2_is_bounded} we get
	\begin{align*}
		\frac{1}{2}\int_0^T \int_{B_R} \Phi_k ' (\rho_t^{(k)} ) \left| \frac{\partial \rho_t^{(k)}}{\partial t} \right|^2 & \leq   \frac{1}{2} \int_{B_R} | \nabla \Phi_k (\rho_0^{(k)} ) |^2  + \frac{1}{2}\int_0^T \int_{B_R} \nabla \Phi_k  (\rho_t^{(k)} )  \nabla \rho_t^{(k)}  | E_t^{(k)} |^2 \\
		&\quad  + \int_0^T \int_{B_R} \rho_t \nabla \Phi_k (\rho_t^{(k)}) \cdot \frac{\partial E_t^{(k)}}{\partial t} \\
		& \quad - \int_{B_R} \zeta_k( \rho_0^{(k)} ) \nabla \cdot  E_0^{(k)}  + \int_{B_R} \zeta_k (\rho_T^{(k)}) \nabla  \cdot  E_T^{(k)}.
	\end{align*}
	Furthermore, because $\Phi_k'$ is non-increasing we also have
	\begin{align*}
		\frac{1}{2}\int_0^T \int_{B_R}  \left| \frac{\partial \rho_t^{(k)}}{\partial t} \right|^2 & \leq  \frac{1}{\Phi_k' (\| \rho^{(k)}_t \|_{L^{\infty}((0,T) \times B_R)})}  \Bigg( \frac{1}{2} \int_{B_R} | \nabla \Phi_k (\rho_0 ) |^2  + \frac{1}{2}\int_0^T \int_{B_R} \nabla \Phi_k  (\rho_t^{(k)} )  \nabla \rho_t^{(k)}  | E_t^{(k)} |^2 \\
		&\qquad  + \int_0^T \int_{B_R} \rho_t \nabla \Phi_k (\rho_t^{(k)}) \cdot \frac{\partial E_t^{(k)}}{\partial t}  - \int_{B_R} \zeta_k( \rho_0^{(k)} ) \nabla \cdot  E_0^{(k)}  + \int_{B_R} \zeta_k (\rho_T^{(k)}) \nabla  \cdot  E_T^{(k)} \Bigg).
	\end{align*}
	Then, $\Phi'_k (\| \rho^{(k)}_t \|_{L^{\infty}} ) \geq \Phi'_k (\Lambda_T) \geq \Gamma_T > 0$ since $\Phi_k' (\Lambda_T) \rightarrow \Phi' (\Lambda_T) > 0$. Let us now apply Hölder and Young inequalities,
	\begin{align*}
		\frac{1}{2}\int_0^T \int_{B_R}  \left| \frac{\partial \rho_t^{(k)}}{\partial t} \right|^2 & \leq  \frac{1}{\Gamma_T}  \Bigg( \frac{1}{2} \int_{B_R} | \nabla \Phi_k (\rho_0^{(k)} ) |^2  + \frac{1}{2}\int_0^T \| \nabla \Phi_k  (\rho_t^{(k)} ) \|_{L^{2} (B_R)} \| \nabla \rho_t^{(k)} \|_{L^2 (B_R)} \|  E_t^{(k)} \|^2_{L^{\infty}(B_R)} \\
		& \qquad  + \int_0^T  \| \nabla \Phi_k (\rho_t^{(k)}) \|_{L^2(B_R)} \| \rho_t \|_{L^2 (B_R)} \Big\| \frac{\partial E_t^{(k)}}{\partial t} \Big\|_{L^{\infty} (B_R)} \\
		& \qquad - \int_{B_R} \zeta_k( \rho_0^{(k)} ) \nabla \cdot  E_0^{(k)}  + \int_{B_R} \zeta_k (\rho_T^{(k)}) \nabla  \cdot  E_T^{(k)} \Bigg) \\
		& \leq   \frac{1}{2\Gamma_T} \int_{B_R} | \nabla \Phi_k (\rho_0^{(k)} ) |^2  \\
        &\qquad + \frac{C}{4\Gamma_T} \|  E \|^2_{L^{\infty}((0,T) \times B_R)}\int_0^T \left( \| \nabla \Phi_k  (\rho_t^{(k)} ) \|_{L^{2} (B_R)}^2 + \| \nabla \rho_t^{(k)} \|_{L^2 (B_R)}^2 \right)  \\
		& \qquad  + \frac{C}{2 \Gamma_T}  \Big\| \frac{\partial E}{\partial t} \Big\|_{L^{\infty} ((0,T) \times B_R)} \int_0^T \left(  \| \nabla \Phi_k (\rho_t^{(k)}) \|_{L^2(B_R)}^2 + \| \rho_t \|_{L^2 (B_R)}^2 \right) \\
		& \qquad - \frac{C}{\Gamma_T}\int_{B_R} \zeta_k( \rho_0^{(k)} ) \nabla \cdot  E_0  + C\int_{B_R} \zeta_k (\rho_T^{(k)}) \nabla  \cdot  E_T .
	\end{align*}
	From Step 1 we already know $\rho^{(k)}$ is uniformly bounded in $L^2 (0,T ; H^1(B_R))$, from Step 2 $\nabla \Phi_k (\rho^{(k)})$ is uniformly bounded in $L^2 ((0,T) \times B_R )$ and from the assumptions, $E \in W^{1, \infty} ([0,T] \times \overline{B_R})$. The function $\zeta_k$ is such that $\zeta_k (\rho_t^{(k)}) = \int_{B_R} \rho_t^{(k)} \nabla \Phi_k (\rho_t^{(k)})$, then the last two terms of the RHS are uniformly bounded from a combination of all the previously discussed above. Therefore, the sequence $\frac{\partial \rho^{(k)}}{\partial t}$ is uniformly bounded in $L^2 ((0,T) \times B_R)$.

	\textbf{Step 4. A subsequence converges to the strong solution}
	
	\textbf{Step 4a. Convergence by compactness.} 
	The sequence $\rho^{(k)}$ is uniformly bounded in $H^1((0,T) \times B_R)$ by Steps 1 and 3. Therefore, by the Sobolev compactness embedding Theorems, there exists a subsequence $\rho^{(k)}$ such that
	\begin{equation}\label{ec:L2_Aubin_Lionsrho}
		\rho^{(k)} \rightarrow \rho \quad \text{in} \, L^2 ( (0, T) \times B_R) \quad \text{and} \quad \rho^{(k)} \rightarrow \rho \quad \text{in} \, C ( [0, T]; L^1 (B_R)).
	\end{equation} 
	This also implies that, up to further a subsequence, the weak convergence in $H^1((0,T) \times B_R)$ and
	\begin{equation}\label{ec:ae_Aubin_Lionsrho}
		\rho^{(k)} \rightarrow \rho \quad \text{a.e.}
	\end{equation}

	\textbf{Step 4b. $\rho \in L^2 (0, T; H^1(B_R))$.} 
	$\nabla \rho^{(k)}$ is uniformly bounded on $L^2 ((0,T) \times B_R)$, then, taking into account \eqref{ec:L2_Aubin_Lionsrho}
	\begin{equation*}
		\nabla \rho^{(k)} \rightharpoonup \nabla \rho \quad \text{weakly in } L^2 ( (0, T) \times B_R),
	\end{equation*}
	and $\rho \in L^2 (0, T; H^1 (B_R))$.
	
	\textbf{Step 4c. $\frac{\partial \rho}{\partial t} \in L^2 ((0,T) \times B_R)$.} The sequence $\frac{\partial \rho^{(k)}}{\partial t}$ is uniformly bounded in $L^2((0,T) \times B_R)$. Then, up to a subsequence, $\frac{\partial \rho^{(k)}}{\partial t} \rightharpoonup \xi$ weakly in $L^2 ((0,T) \times B_R)$. From \eqref{ec:L2_Aubin_Lionsrho} we recover $\xi = \frac{\partial \rho}{\partial t}$ and $\frac{\partial \rho}{\partial t} \in L^2 ((0,T) \times B_R)$.
	
	\textbf{Step 4d. $\Phi (\rho ) \in L^2 (0, T; H^1(B_R))$.} From Step 2 we know that the sequence $\nabla \Phi_k (\rho^{(k)})$ is uniformly bounded on $L^2((0,T) \times B_R)$. Then, up to a subsequence, 
	\begin{equation*}
		\nabla \Phi_k ( \rho^{(k)} ) \rightharpoonup \xi \quad \text{weakly in } L^2 ((0,T) \times B_R).
	\end{equation*}
	Recall that by \eqref{ec:L2_Aubin_Lionsrho}, $\rho^{(k)} \rightarrow \rho$ in $L^2 ((0,T) \times B_R)$. Furthermore, $\Phi_k \rightarrow \Phi$ uniformly in $[0,\Lambda_T]$. Then,
	\begin{align*}
		\| \Phi_k (\rho^{(k)}) - \Phi (\rho ) \|_{L^2 ((0,T) \times B_R)} & \leq \| \Phi_k (\rho^{(k)}) - \Phi (\rho^{(k)} ) \|_{L^2 ((0,T) \times B_R)} + \| \Phi (\rho^{(k)}) - \Phi (\rho ) \|_{L^2 ((0,T) \times B_R)} \\
		& \leq \| \Phi_k - \Phi \|_{L^{\infty}(0, \Lambda_T)} |B_R|^{1/2} + [\Phi]_{C^{m}} \| | \rho^{(k)} - \rho |^{m} \|_{L^2((0,T) \times B_R)} \rightarrow 0,
	\end{align*}
	where $[\Phi]_{C^{m}}$ is the $m$-Hölder semi-norm of $\Phi$. The convergence to $0$ follows from the Dominated Convergence Theorem: we have almost everywhere convergence and they are bounded functions. Therefore, this implies,
	\begin{equation*}
		\Phi_k (\rho^{(k)} ) \rightarrow \Phi (\rho ) \quad \text{strongly in} \, L^2 ((0,T) \times B_R).
	\end{equation*}
	Then, we also obtain $\nabla \Phi_k (\rho^{(k)} ) \rightharpoonup \nabla \Phi( \rho )$ weakly in $L^2((0,T) \times B_R)$, and $\Phi (\rho) \in L^2 (0, T; H^1(B_R))$. 
	
	\textbf{Step 4e. $\Delta \Phi (\rho) \in L^2((0,T) \times B_R)$.} Due to the smoothness of $E^{(k)}$, $\rho^{(k)}$ are classical solutions. Then, from \textit{a posteriori} computation we get 
	\begin{align*}
		\|\Delta \Phi_k (\rho^{(k)}) \|_{L^2((0,T) \times B_R)}
		& \leq \left\| \frac{\partial \rho^{(k)}}{\partial t} - \diver \left( \rho^{(k)} E^{(k)} \right) \right\|_{L^2 ((0,T) \times B_R)} \\
		& \le \left\| \frac{\partial \rho^{(k)}}{\partial t}  \right\|_{L^2 ((0,T) \times B_R)} + C\| \rho \|_{L^2(0,T; H^1 (B_R))} \| E \|_{L^{\infty}(0,T; W^{1, \infty}(B_R))}.
	\end{align*}
	Therefore, we conclude that $\Delta \Phi_k (\rho^{(k)} ) \rightharpoonup \Delta \Phi( \rho )   \text{ weakly in }  L^2((0,T) \times B_R)$.

	\textbf{Step 4f. Distributional solution.} We have now all the ingredients to see that for all $\varphi \in X = \left\lbrace \varphi \in C^{\infty} ([0,T] \times \overline{B_R}) \, : \, \varphi(T)=0 \right\rbrace$,
	\begin{equation*}
		\int_{B_R} \rho^{(k)}_0 \varphi (0)   + \int_0^T \int_{B_R} \rho^{(k)}_t \frac{\partial \varphi}{\partial t} = \int_0^T \int_{B_R} \nabla\Phi_k (\rho^{(k)}_t) \nabla \varphi + \int_0^T \int_{B_R} \rho^{(k)}_t E_t \nabla \varphi  
	\end{equation*}
	converges to
	\begin{equation*}
		\int_{B_R} \rho_0 \varphi (0) + \int_0^T \int_{B_R} \rho_t \frac{\partial \varphi}{\partial t} = \int_0^T \int_{B_R} \nabla \Phi( \rho ) \nabla \varphi + \int_0^T \int_{B_R} \rho_t E_t \nabla \varphi ,
	\end{equation*}
	Therefore, $\rho$ is a strong solution of \eqref{eq:FDE_Problem_+_known_transport}. 
	
	\noindent \textbf{Step 5. Convergence of the whole sequence.} Since there is a unique strong solution of the limit problem, every subsequence has a further subsequence converging to the same limit, and hence the whole sequence converges.
\end{proof}
Before finishing this section we want to remark that $\rho$, the strong solution obtained in \Cref{th:Existence_FD_known_transport_(0-T)xBR}, preserves all the \textit{a priori} estimates discussed during subsection \ref{sec:The_equation_in_BR}, which are, the $L^p$ estimate \eqref{ec:Lp_estimate} and the \textit{a priori} estimates of the $L^2((0,T) \times B_R)$ norm of $\nabla \Phi (\rho)$, $\nabla \rho$ and $\frac{\partial \rho}{\partial t}$ which are presented in \eqref{ec:a_priori_estimate_Phi}, \eqref{ec:a_priori_estimate_rho}, and \eqref{ec:a_priori_estimate_drho/dt_L2} respectively.

\subsection{Proof of \texorpdfstring{\Cref{th:Banach_fixed_point}}{Theorem \ref{th:Banach_fixed_point}}}\label{sec:FD+V+K} 
	We divide the proof in several steps:
	
	\textbf{Step 1. Define the contraction mapping.} The goal is to prove existence and uniqueness of the solution using Banach fixed point Theorem to an operator $\mathcal K$ defined as follows. First we define
	\begin{equation*}
		\mathcal{T} : \, X  \longrightarrow C ( [0,T]; L^1 (B_R)),
	\end{equation*}
	as the functional that maps the vector field $E$ with the solution of \eqref{eq:FDE_Problem_+_known_transport} for initial data $\rho_0$ and vector field $E$, where due to \Cref{th:Existence_FD_known_transport_(0-T)xBR} we can take the domain as
	\begin{equation*}
		X = \{ E \in W^{1, \infty} ( [0,T] \times B_R) \text{ that satisfies \eqref{ec:extra_boundary_condition}}\}.
	\end{equation*}
	As a second step of the construction of our fixed-point operator, we define the map from $\rho$ to $E$ by
	\[
	\mathcal {E} :
		C ( [0,T]; L^1 (B_R))  \longrightarrow X,  \qquad 
		\mathcal E[\rho] (x)  = \nabla V(x) +  \nabla \int_{B_R} K(x,y) \cdot \rho (y) \, dy .
		\]
		Finally, we define the operator we will use for a fixed point argument as
		\begin{equation}\label{eq:mapping FPT}
			\mathcal{K} = \mathcal T \circ \mathcal E :
				C ( [0,T]; L^1 (B_R))  \longrightarrow C ( [0,T]; L^1 (B_R)) .         
		\end{equation}
		It is easy to see that the ball
		\begin{equation*}
			A := \lbrace u \in C ( [0,T]; L^1 (B_R)) : \sup_{t \in (0,T)} \| u_t \|_{L^1 (B_R)} \leq \| \rho_0 \|_{L^1 (B_R)} \rbrace ,
		\end{equation*}
		is such that $\mathcal{K} (A) \subseteq A$ because of the $L^p$ estimate \eqref{ec:Lp_estimate} for $p=1$, independently of $T$. 
		
		\textbf{Step 2. Prove that our mapping is contractive.} Let us now prove that the operator $\mathcal K : A \to A$ is contracting for small enough $T$. We look at the Lipschitz constant of $\mathcal T$ and $\mathcal E$.
		We know that, due to the estimate \eqref{ec:Lp_estimate} for $p = \infty$, we have the bound 
		\begin{equation}\label{eq:T[E] is bdd in Linfty}
			\|\mathcal T[E] \|_{L^\infty ((0,T) \times B_R)} \leq  \exp \left(\int_0^T \| \nabla \cdot E_t \|_{L^{\infty}(B_R)}  \right) \| \rho_0 \|_{L^\infty (B_R)}.
		\end{equation}
		Then, due to the $L^1$ continuous dependence result from \Cref{th:L1_continuous_dependence} we have that 
		\begin{equation*}
			\|\mathcal{T} [E]_t - \mathcal{T}[\overline{E}]_t \|_{L^1(B_R)} \leq C  \left( T^{1/2} \sup_{t \in (0,T)} \| E_t- \overline{E}_t \|_{L^2(B_R)} + T \sup_{t \in (0,T)} \| \diver (E_t - \overline{E}_t) \|_{L^{\infty}(B_R)} \right),
		\end{equation*}
		where $C$ is equal to the sum of the constant $C_1$ from \Cref{th:L1_continuous_dependence} and $\|\rho_0 \|_{L^1(B_R)}$, i.e.
		\begin{equation*}
			C= \|\rho_0 \|_{L^1(B_R)} + \left( \int_{B_R} G_{\Phi}(\rho_0 ) + \frac{1}{2} \int_0^T \left\| {\mathcal T[E]_t} \right\|_{L^\infty}^{2(2-m)} \| E_t \|^2_{L^{\infty} (B_R)} \right)^{1/2},
		\end{equation*}
		with $G_{\Phi}$ defined in \eqref{def:G_Phi}.
		
		Let us plug in the potential $\mathcal{E}[\rho]$. We start by realizing that if we replace $E$ by $\mathcal{E}[\rho]$ in \eqref{eq:T[E] is bdd in Linfty}, everything is bounded. This is because the term $\int_0^T \| \nabla \cdot \mathcal{E}[\rho]_t \|_{L^{\infty} (B_R)}$ can be bounded as follows, 
		\begin{align*}
			\left\| \nabla \cdot \mathcal E [\rho] \right\|_{L^{\infty} (B_R)}  & \leq \| \Delta V \|_{L^{\infty}(B_R)} + \| \Delta_x K \|_{L^{ \infty}(B_R \times B_R)}  \| \rho \|_{L^1 (B_R)} \\
			& \leq \| \Delta V \|_{L^{\infty}(B_R)} + \| \Delta_x K \|_{L^{ \infty}(B_R \times B_R)}  \| \rho_0 \|_{L^1 (B_R)}.
		\end{align*}
		Notice that 
		\begin{align*}
			\| \mathcal{E} [\rho]_t -\mathcal{E} [\overline{\rho}]_t \|_{L^2 (B_R)} &\leq    \| \nabla_x K \|_{L^{2 } (B_R \times B_R)}  \| \rho_t - \overline{\rho}_t \|_{L^1 (B_R)} \\
			\| \nabla \cdot (\mathcal{E} [\rho]_t - \mathcal{E} [\overline{\rho}]_t )\|_{L^{\infty} (B_R)} &\leq   \| \Delta_x K \|_{L^{ \infty} (B_R \times B_R)}    \| \rho_t - \overline{\rho}_t \|_{L^1 (B_R)}.
		\end{align*}
		Finally, getting everything together, we have that
		\begin{align*}
			\|  \mathcal{K}[\rho]_t - \mathcal{K} [\overline{\rho}]_t \|_{L^1 (B_R)}  &\leq  C  \left( T^{1/2} \sup_{t \in (0,T)} \| \mathcal{E}[\rho]_t - \mathcal{E}[\bar{\rho}]_t \|_{L^2(B_R)} + T \sup_{t \in (0,T)} \| \diver (\mathcal{E}[\rho]_t - \mathcal{E}[\overline{\rho}]_t) \|_{L^{\infty}(B_R)} \right) \\ 
			&\leq C  (T^{1/2} + T ) \sup_{t \in (0,T)} \| \rho_t - \overline{\rho}_t \|_{L^1 (B_R)}, 
		\end{align*}
		for some bounded constant $C(T)$, which can be taken non-decreasing in $T$. Taking $T$ small enough so that $C (T) (T^{1/2} + T ) <1$ and applying Banach fixed point Theorem we get existence and uniqueness of solutions for the problem \eqref{eq:Fast-Diffusion_Problem_BR}.
\qed

\section{Long time asymptotic behaviour}\label{sec:Long time asymptotic}
In this Section we want to study the asymptotic behaviour of the solutions of the problem \eqref{eq:Fast-Diffusion_Problem_BR} when $t \rightarrow \infty$. In order to be able do that, we will need some \textit{a priori} estimates that come from the dissipation of the free energy. We obtain them in subsection \ref{sec:Free_energy}. After that, in subsection \ref{sec:Convergence_to_stationary_state} we study the asymptotic behaviour of the solution using these estimates. 

\subsection{Free energy and its dissipation}\label{sec:Free_energy}

We can derive a variational interpretation of the problem \eqref{eq:Fast-Diffusion_Problem_BR} that leads to additional {\em a priori} estimates using its gradient flow structure. The equation \eqref{eq:Fast-Diffusion_Problem_BR} can be rewritten as
\begin{align}\label{ec:Gradient_Flow_Ball_deduction}
	\begin{split}
		\frac{\partial \rho}{\partial t} & = \nabla \cdot \left( \rho \nabla \left\lbrace U' (\rho) + V + \int_{B_R} K(\cdot , y) \rho (y) \, dy \right\rbrace \right),
	\end{split}
\end{align} 
where, $U (\rho) = \frac{\rho^m}{m-1}$. Formulation \eqref{ec:Gradient_Flow_Ball_deduction} shows that the equation \eqref{eq:Fast-Diffusion_Problem_BR} is formally the gradient flow of the free energy
\begin{equation*}
	\mathcal{F}_R [ \mu ] :=  \int_{B_R} U (\mu(x)) \, dx + \int_{B_R} V(x)\,  d \mu(x)  + \frac{1}{2} \int_{B_R} \int_{B_R } K(x,y) \, d \mu (y) d \mu (x) .
\end{equation*}
Thanks to the free energy we can get some useful {\em a priori} estimates. We can prove \Cref{prop:Convergence_of_the_free_energies}.

\begin{proof}[Proof of \Cref{prop:Convergence_of_the_free_energies}]
	For $\varepsilon \in (0,1)$, define $U_{\varepsilon}$ by $U_\varepsilon(\varepsilon) = U(\varepsilon)$ such that 
	\begin{equation}\label{ec:Theta_epsilon}
		U'_{\varepsilon} (s) \sim \left\lbrace
		\begin{array}{ll}
			U' (\varepsilon) & s < \varepsilon, \\
			U' (s) & s \geq \varepsilon ,
		\end{array} \right.
	\end{equation}
	up to a smoothing. We define the free energy, 
	\begin{equation*}
		\mathcal{F}_{R, \varepsilon} [\rho] = \int_{B_R} U_{\varepsilon} (\rho (x)) \, dx + \int_{B_R} V(x) \rho (x) \, dx + \frac{1}{2} \int_{B_R} \int_{B_R} K(x,y) \rho (y) \rho (x) \, dy \, dx.
	\end{equation*}
	Because $\frac{\partial \rho}{\partial t} \in L^2((0,T) \times B_R)$,
	\begin{equation*}
		\frac{d}{dt} \mathcal{F}_{R, \varepsilon} [\rho_t] =  - \int_{B_R} \rho_t \left[ U' (\rho_t) \nabla \rho_t + E \right] \left[ U_{\varepsilon}' (\rho_t) \nabla \rho_t + E \right],
	\end{equation*}
	where $E$ denotes $\nabla V + \nabla \int_{B_R} K( \cdot , y ) \rho (y) \, dy$. Therefore,
	\begin{equation}\label{ec:F_epsilon_along_trajectories}
		\int_{t_1}^{t_2} \int_{B_R} \rho_t \left[ U' (\rho_t) \nabla \rho_t + E \right] \left[ U_{\varepsilon}' (\rho_t) \nabla \rho_t + E \right] = \mathcal{F}_{R, \varepsilon} [\rho_{t_1}] - \mathcal{F}_{R, \varepsilon} [\rho_{t_2}].
	\end{equation}
	First, let us prove the convergence of the RHS. It is immediate to see that $U_\varepsilon \to U$ uniformly in $[0,\infty)$. Hence, we have that
	\begin{equation*}
		\int_{B_R} | U_\ee (\rho_t) - U(\rho_t) | \le |B_R| \|U_\varepsilon - U \|_{L^\infty (0,\infty)} \to 0. 
	\end{equation*}
	This proves that $\mathcal F_{R, \varepsilon}[\rho_t] \to \mathcal F_R[\rho_t]$ for every $t \ge 0$.
 
	Let us discuss about the convergence of the LHS. 
	We expand 
	\begin{align}\label{ec:Proof of dissipation expansion}
		\int_{t_1}^{t_2} \int_{B_R} \rho_t \left[ U' (\rho_t) \nabla \rho_t + E \right] \cdot \left[ U_{\varepsilon}' (\rho_t) \nabla \rho_t + E \right] & = \int_{t_1}^{t_2} \int_{B_R} \rho_t U' (\rho_t) U'_{\varepsilon} (\rho_t) | \nabla \rho_t |^2 \\ 
		& \quad + \int_{t_1}^{t_2} \int_{B_R} \rho_t U'_{\varepsilon} (\rho_t) \nabla \rho_t \cdot E + \rho_t U' (\rho_t ) \nabla \rho_t \cdot E + \rho_t  |E|^2 \nonumber
	\end{align}
	We start by proving the convergence of the first term of the RHS of \eqref{ec:Proof of dissipation expansion}. In order to do that we define the
	functions $\Lambda_{\varepsilon}, \Lambda : [0, \infty) \rightarrow \mathbb{R}$ so that,
	\begin{align*}
		\Lambda_{\varepsilon}' (s) := \left( s U ' (s) U_{\varepsilon}' (s) \right)^{\frac{1}{2}}, \quad
		\Lambda ' (s) := \left( s U ' (s) U' (s) \right)^{\frac{1}{2}}.
	\end{align*}
	We prove that $\nabla \Lambda_{\varepsilon} (\rho) \rightarrow \nabla \Lambda (\rho )$ strongly in $L^2 (B_R)$, indeed
	\begin{align*}
		\int_{B_R} \left| | \nabla \Lambda_{\varepsilon} (\rho ) |^2 - | \nabla \Lambda (\rho ) |^2 \right| & = \int_{B_R} \rho | \nabla \rho |^2 \left| U' (\rho ) U_{\varepsilon}' (\rho ) - U ' (\rho ) U ' (\rho ) \right| \\
		& = \int_{B_R \cap \left\lbrace \rho (x) < \varepsilon \right\rbrace} \rho | \nabla \rho |^2 \left| U' (\rho ) U_{\varepsilon}' (\rho ) - U ' (\rho ) U ' (\rho ) \right| \\
		& = \left( \frac{m}{1-m} \right)^2 \int_{B_R \cap \left\lbrace \rho (x) < \varepsilon \right\rbrace} \rho^m | \nabla \rho |^2  \left(      \rho^{m-1} - \varepsilon^{m-1}  \right) \\
		& \leq C(m) \int_{B_R \cap \left\lbrace \rho (x) < \varepsilon \right\rbrace} \rho | \nabla \rho^m |^2      \le  C(m)  \varepsilon \int_{B_R}  | \nabla \rho^m |^2 \rightarrow 0,
	\end{align*}
	as $\varepsilon \rightarrow 0$. The convergence of the remaining term of the RHS of \eqref{ec:Proof of dissipation expansion} is satisfied due to the strong convergence $\rho U'_{\varepsilon} (\rho) \rightarrow \rho U' (\rho)$ in $L^2((0,T) \times B_R)$. 
\end{proof}

\begin{corollary}
	\label{cor:FR rhot decreasing}
	Assume the basic hypotheses  \eqref{hyp:Basic BR} on the initial data $\rho_0$ and the potentials $V$ and $K$, then $\mathcal F_R[\rho_t]$ is non-increasing, bounded below, and therefore has a finite limit as $t \to \infty$.
\end{corollary}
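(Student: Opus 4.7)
The corollary splits naturally into two claims: monotonicity and a lower bound, with the existence of the limit as $t\to\infty$ an immediate consequence.

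\textbf{Monotonicity.} The plan here is to invoke directly \Cref{prop:Convergence_of_the_free_energies}. For any $t_1 < t_2$, identity \eqref{eq:flow of the free energy} reads
\begin{equation*}
    \mathcal{F}_R [\rho_{t_1}] - \mathcal{F}_R [\rho_{t_2}]  = \int_{t_1}^{t_2} \int_{B_R} \rho \left| \nabla \left( U'(\rho) + V + \int_{B_R} K( \cdot , y ) \rho (y) \, dy \right) \right|^2 \, dt \geq 0,
\end{equation*}
since $\rho \geq 0$ (by the maximum principle) and the integrand on the right is the square of an $L^2$ norm weighted by $\rho$. Hence $t \mapsto \mathcal{F}_R[\rho_t]$ is non-increasing.

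\textbf{Lower bound.} The potential and interaction parts are non-negative because \eqref{hyp:Basic BR} gives $V \geq 0$ and $K \geq 0$, so it suffices to bound the (negative) diffusive contribution $\tfrac{1}{m-1}\int_{B_R} \rho_t^m\,dx$ from below. I would use conservation of the $L^1$ norm from \eqref{ec:L1_norm_is_preserved} together with Hölder's inequality with exponents $1/m$ and $1/(1-m)$ applied to $\rho_t^m \cdot 1$:
\begin{equation*}
    \int_{B_R} \rho_t(x)^m \, dx \leq \left( \int_{B_R} \rho_t \,dx \right)^m |B_R|^{1-m} = \|\rho_0\|_{L^1(B_R)}^m \, |B_R|^{1-m}.
\end{equation*}
Since $m-1<0$, this yields
\begin{equation*}
    \mathcal{F}_R[\rho_t] \geq \frac{1}{m-1} \|\rho_0\|_{L^1(B_R)}^m \, |B_R|^{1-m} =: \underline{\mathcal{F}_R} > -\infty,
\end{equation*}
uniformly in $t \geq 0$.

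\textbf{Conclusion.} A non-increasing function bounded below has a finite limit; therefore $\mathcal{F}_R[\rho_t] \searrow L \in \mathbb{R}$ as $t \to \infty$. There is really no hard step here: the dissipation identity does all the work for monotonicity, and the boundedness of the domain together with mass conservation make the lower bound a one-line Hölder estimate. The only item that would deserve attention is simply checking that $V\geq 0$ and $K\geq 0$ are indeed among \eqref{hyp:Basic BR}, which they are, so no normalization constant needs to be tracked separately.
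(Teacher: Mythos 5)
Your proof is correct and takes essentially the same route as the paper: monotonicity follows by invoking the dissipation identity of \Cref{prop:Convergence_of_the_free_energies} together with $\rho \geq 0$, and the lower bound follows by discarding the non-negative $V$ and $K$ terms and applying Hölder with exponents $1/m$, $1/(1-m)$ to the diffusive part. In fact you are slightly more careful than the paper's own estimate \eqref{ec:FR_is_bounded_below}, which writes $\|\rho\|_{L^1(B_R)}$ where $\|\rho\|_{L^1(B_R)}^m$ (as in your bound) is the correct Hölder exponent.
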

\begin{proof} 
	\Cref{prop:Convergence_of_the_free_energies} gives us that $\mathcal{F}_R [\rho_t]$ is non-increasing along solutions of \eqref{eq:Fast-Diffusion_Problem_BR}. Furthermore, we can prove that $\mathcal{F}_R [\rho_t]$ is bounded below for solutions of \eqref{eq:Fast-Diffusion_Problem_BR} because 
	\begin{equation}\label{ec:FR_is_bounded_below}
		\mathcal{F}_R [ \rho] \geq - \frac{1}{1-m} \|\rho\|_{L^1 (B_R)} |B_R|^{1-m} - \| V_- \|_{L^{\infty}(B_R)} \| \rho \|_{L^1(B_R)} - \frac{1}{2} \| K_-\|_{L^{\infty}(B_R \times B_R)}  \| \rho \|_{L^1 (B_R)}^2  ,
	\end{equation}
	so $\mathcal{F}_R [\rho_t]$ is uniformly bounded in time and it has a finite limit as $t\to\infty$.
\end{proof}

We are also able to deduce using these energy estimates an $L^1$ bound of $\nabla \rho^m$. We will do it on a bounded subdomain $\Omega$ to profit from it when $R \rightarrow \infty$.
\begin{corollary}\label{prop:a_priori_estimate_grad_rhom_(0,T)_K}
	Assume the basic hypotheses  \eqref{hyp:Basic BR} on the initial data $\rho_0$ and the potentials $V$ and $K$, then we have that
	\begin{equation*}
		\int_{t_1}^{t_2} \! \int_{\Omega} | \nabla \rho_t^m | \leq \| \rho_0 \|_{L^1 (B_R)} \left( \mathcal{F}_{R} [\rho_{t_1}] - \mathcal{F}_{R} [\rho_{t_2}] + \int_{t_1}^{t_2} \! \int_{\Omega} \rho_t \left| \nabla V + \nabla \int_{B_R} K(\cdot ,y) \rho_t (y) \, dy \right|^2 \right)^{\frac{1}{2}}, \, \forall \Omega \subseteq \overline{B_R}.
	\end{equation*}
\end{corollary}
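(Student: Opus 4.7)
The starting point is the gradient-flow identity $\nabla \rho_t^m = \rho_t\, \nabla U'(\rho_t)$, which follows from $U(s)=s^m/(m-1)$ since $s\, U''(s) = m s^{m-1}$. Writing $E_t := \nabla V + \nabla \int_{B_R} K(\cdot,y)\rho_t(y)\,dy$, we split
\begin{equation*}
\nabla U'(\rho_t) = \nabla\!\left( U'(\rho_t) + V + \int_{B_R} K(\cdot,y)\rho_t(y)\,dy \right) - E_t,
\end{equation*}
so that the triangle inequality yields
\begin{equation*}
|\nabla \rho_t^m| \;\le\; \rho_t \left| \nabla\!\left( U'(\rho_t) + V + \int_{B_R} K(\cdot,y)\rho_t(y)\,dy \right)\right| + \rho_t |E_t|.
\end{equation*}

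Next, I would integrate over $[t_1,t_2]\times \Omega$ and factor $\rho_t = \rho_t^{1/2}\cdot \rho_t^{1/2}$ in each term, applying the Cauchy--Schwarz inequality in the space-time variables. The ``$\rho_t$-factor'' is controlled by mass conservation, since $\int_\Omega \rho_t \leq \int_{B_R}\rho_t = \|\rho_0\|_{L^1(B_R)}$. The dissipation factor $\int_{t_1}^{t_2}\int_\Omega \rho_t |\nabla(U'(\rho_t)+V+\int K\rho_t)|^2$ is bounded from above, after enlarging $\Omega$ to $B_R$, by $\mathcal{F}_R[\rho_{t_1}] - \mathcal{F}_R[\rho_{t_2}]$ via \Cref{prop:Convergence_of_the_free_energies}. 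The remaining factor $\int_{t_1}^{t_2}\int_\Omega \rho_t|E_t|^2$ appears directly on the right-hand side. Summing the two Cauchy--Schwarz contributions and using $a^{1/2}+b^{1/2}\leq \sqrt{2}(a+b)^{1/2}$ consolidates the two square roots into the single square root appearing in the stated bound.

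The only mildly delicate point is to justify the chain-rule identity $\nabla \rho_t^m = \rho_t \nabla U'(\rho_t)$ on the set where $\rho_t$ may be small or zero. I would handle this by arguing on the regularised approximations $\rho^{(k)}$ built with $\Phi_k$ in subsection \ref{sec:FD+know_transport}, where the identity is classical and the free-energy dissipation inequality holds with $U$ replaced by its regularised potential $U_\varepsilon$ (as in the proof of \Cref{prop:Convergence_of_the_free_energies}). Passing to the limit $k\to\infty$, $\varepsilon\to 0$ using the strong convergence of $\Phi_k(\rho^{(k)})$ established in Step 4d of \Cref{th:Existence_FD_known_transport_(0-T)xBR} and the weak lower semi-continuity of the $L^1$ norm of the gradient transfers the estimate to the strong solution $\rho$. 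No truly hard step is expected; the whole argument is essentially the combination of the gradient-flow structure with Cauchy--Schwarz.
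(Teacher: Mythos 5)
Your proposal follows essentially the same route as the paper's proof: write $\nabla\rho_t^m=\rho_t\nabla U'(\rho_t)$, pull out $\rho_t^{1/2}$ by a weighted Cauchy--Schwarz inequality, and absorb the dissipation part into $\mathcal{F}_R[\rho_{t_1}]-\mathcal{F}_R[\rho_{t_2}]$ via Proposition~\ref{prop:Convergence_of_the_free_energies}. The only cosmetic difference in bookkeeping is that the paper keeps the full $\nabla U'(\rho_t)$ inside the Cauchy--Schwarz step and only afterwards compares $\int\rho_t|\nabla U'(\rho_t)|^2$ against the dissipation plus $\int\rho_t|E_t|^2$, whereas you split by the triangle inequality first and then take square roots; both lose the same absolute constant ($\sqrt 2$), which the stated inequality tacitly drops. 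Your extra care about the identity $\nabla\rho_t^m=\rho_t\nabla U'(\rho_t)$ on $\{\rho_t=0\}$, passed through the $\Phi_k$/$U_\varepsilon$ regularisations, is legitimate and is what the paper uses implicitly (the $U_\varepsilon$ truncation already appears in the proof of Proposition~\ref{prop:Convergence_of_the_free_energies}).

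One detail you should not gloss over when you ``sum the two Cauchy--Schwarz contributions'': the space--time Cauchy--Schwarz gives the prefactor
\begin{equation*}
\left(\int_{t_1}^{t_2}\!\int_\Omega\rho_t\right)^{1/2}\le \bigl(|t_2-t_1|\,\|\rho_0\|_{L^1(B_R)}\bigr)^{1/2},
\end{equation*}
not $\|\rho_0\|_{L^1(B_R)}$. The paper's own displayed intermediate step has the same slip (it writes $\|\rho_t\|_{L^1(B_R)}$ where Cauchy--Schwarz only yields $\|\rho_t\|_{L^1(B_R)}^{1/2}$, and the subsequent Jensen step in time would introduce the factor $|t_2-t_1|^{1/2}$). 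This does not affect how the estimate is used in the paper, where the time window is of unit length, but if you present your argument you should keep the honest prefactor $\sqrt{2}\,|t_2-t_1|^{1/2}\|\rho_0\|_{L^1(B_R)}^{1/2}$ rather than asserting the displayed bound verbatim.
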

\begin{proof}
	We have that
	\begin{equation}\label{ec:estimate_on_nabla_rho^m}
		\int_{t_1}^{t_2} \int_{\Omega} | \nabla \rho_t^m | = \int_{t_1}^{t_2} \int_{\Omega} \rho \left| \frac{m}{m-1} \nabla \rho^{m-1} \right| \leq \int_{t_1}^{t_2} \| \rho_t \|_{L^1 (B_R)} \left( \int_{\Omega} \rho_t \left| \nabla \frac{m}{m-1} \rho^{m-1} \right|^2 \right)^{\frac{1}{2}} \, dt.
	\end{equation}
	Hence, we can conclude the result using \Cref{prop:Convergence_of_the_free_energies}, Jensen's inequalities and the conservation of the $L^1$ norm.
\end{proof}

\subsection{Asymptotics in time. Proof of \texorpdfstring{\Cref{th:Convergence_stationary_state}}{Theorem \ref{th:Convergence_stationary_state}}}\label{sec:Convergence_to_stationary_state}

\begin{proof}[Proof of \Cref{th:Convergence_stationary_state}] 

    We have that $\| \rho^{[n]}_t \|_{L^1 (B_R)} =  \| \rho_0 \|_{L^1 (B_R)}$,	and, in particular, the sequence $ \rho^{[n]}_t$ is uniformly bounded on $L^1 (B_R)$. Next, we check that the sequence $\frac{\partial \rho^{[n]}}{\partial t}$ is uniformly bounded in $L^2(0,1; W^{-1, 1} (B_R))$. Using the characterisation of $W^{-1, 1}$ we get \eqref{ec:inequality_about_drho/dt_W-1,1}. Due to \Cref{cor:FR rhot decreasing} we have that $\mathcal{F}_R [\rho_{t_n}] - \mathcal{F}_R [\rho_{t_n+1} ]$ is uniformly bounded by a constant $C$. By properties of the Bochner integral we can take advantage of \eqref{ec:asymptotics_equicontinuity}. In particular, this last result implies that the sequence $\rho^{[n]} $ is equicontinuous. Furthermore, because $L^1(B_R)$ is compactly embedded in $W^{-1,1} (B_R)$ we know by Ascoli-Arzela Theorem that, up to a subsequence, we have the convergence
	\begin{equation*}
		\rho^{[n_k]} \rightarrow \widehat \mu \quad \mathrm{in} \, C([0,1]; W^{-1,1} (B_R)).
	\end{equation*}
	For any $t,s \in [0,1]$, using \eqref{ec:asymptotics_equicontinuity} we also have that the distribution $\widehat{\mu}_t$ does not depend on time because
	\begin{align*}
		\| \widehat{\mu}_t - \widehat{\mu}_s \|_{W^{-1,1}(B_R)} & = \lim_{k \rightarrow \infty} \| \rho_t^{[n_k]} - \rho_s^{[n_k]} \|_{W^{-1,1}(B_R)} \\
		& \leq \| \rho_0 \|^{\frac{1}{2}}_{L^1(B_R)} |t-s|^{\frac{1}{2}}  \lim_{k \rightarrow \infty}  \left( \mathcal{F}_R [\rho_{t_{n_k}}] - \mathcal{F}_R [ \rho_{t_{n_k}+1} ] \right)^{\frac{1}{2}} = 0,
	\end{align*}
	where the last step is again due to \Cref{cor:FR rhot decreasing}.
\end{proof}

Notice that in the setting of strong solutions we cannot characterise $\widehat{\mu}$ as a stationary solution of the problem \eqref{eq:Fast-Diffusion_Problem_BR} because the $W^{-1,1}$ convergence is not sufficient to pass to the limit under the non-linearity of the fast diffusion term. Therefore, we aim to characterise this limit in the mass equation.

\section{An equation for the mass}\label{sec:Mass_eq}
The aim of this section is to develop the well-posedness theory for the mass equation \eqref{eq:General_Mass_equation} in order to characterise the stationary state. We will show that the natural notion of solution in this setting is the notion of viscosity solution.  

In order to study the asymptotic behaviour we will rely on the construction of $\widehat{\mu}$ in subsection \ref{sec:Convergence_to_stationary_state} as a limit of the densities $\rho^{[n]}$ defined in \eqref{seq:time_steps} and on the results from DiBenedetto \cite[Chapter III]{DiB93}.

\subsection{Mass equation for the regularised problem}\label{sec:Mass equation regularized problem}
Assuming further to the basic hypotheses \eqref{hyp:Basic BR} on the initial data $\rho_0$ and the potentials $V$ and $K$ and that they are radially symmetric, it suffices to study the mass variable
\begin{equation}\label{def:M}
	M(t, v) = \int_{\widetilde{B_v}} \rho_t (x) \, dx. 
\end{equation}
We will denote by $r$ the radial variable $r=|x|$, by $v=|B_1| r^d$, the volumetric variable and by $\widetilde{B_v}$, the ball centred
in the origin and with radius $r=( v{|B_1|}^{-1} )^{1/d}$. It was shown in \cite{CGV22} that $M$ satisfies a Hamilton-Jacobi type equation which is better written in the volume variable $v$ as  
\begin{equation}\label{eq:General_Mass_equation}
	\frac{\partial M}{\partial t} = \kappa (v)^2 \frac{\partial }{\partial v} \Phi \left( \frac{\partial M}{\partial v} \right) + \kappa (v)^2 \frac{\partial M}{\partial v} \diffEv [\rho], \quad \kappa (v) = d \omega_d^{\frac{1}{d}} v^{\frac{d-1}{d}}, \quad \text{in } (0, \infty) \times (0,R_v),
\end{equation}
where  $R_v = |B_R|$, and $\diffEv$ is an operator written in volumetric coordinates that substitutes the previous operator $E$ that we have in earlier sections. It makes sense to write this operator in volumetric coordinates because if $\eta$ and $\mu$ are radially symmetric, so it is $(\eta \mu )$, and, since $V$ and $W$ are also radially symmetric we also have radial symmetry for $V+\eta (W \ast (\eta \mu)) = V + \int_{B_R} K(\cdot, y) \mu(y) \, dy$ (the problem in which we are the most interested). Therefore, we can define $\diffEv$ like:
\begin{align}\label{def:diffEv}
\begin{split}
		\diffEv: \mathcal M_{\mathrm{rad}} (\overline{B_R}) & \longrightarrow C([0,R_v])
		\\
		\mu &\longmapsto \frac{\partial V}{\partial v} +   \frac{\partial}{\partial v} \left( \int_{B_R} K( \cdot , y) \diff \mu (y) \right),
		\end{split}
	\end{align}
	which is such that $\diffEv[\mu]$ is the volumetric derivative of $\Ev[\mu]$, where $\Ev$ is the operator:
	\begin{align}\label{def:Ev}
        \begin{split}
		    \Ev: \mathcal M_{\mathrm{rad}} (\overline{B_R}) & \longrightarrow C([0,R_v])
		\\
		        \mu &\longmapsto V +    \int_{B_R} K( \cdot , y) \diff \mu (y) .
		\end{split}
	\end{align}
We now need to show that we can extend $\mathfrak E$ in a suitable way such that
$$ 
\mathfrak E : W^{-1,1}_{\mathrm{rad}} (B_R) \longrightarrow C([0,R_v]) 
$$
is continuous. Here $W^{-1,1}_{\mathrm{rad}}(B_R)$ is the closure of $L^1_{\mathrm{rad}}(B_R)$ in $W^{-1,1}(B_R)$.

We are denoting by $\mathcal M_{\mathrm{rad}}(\overline{B_R})$ the measures invariant by rotation with respect to the origin, and similarly for $L^p_{\mathrm{rad}}(B_R)$ and $W^{-1,1}_{\mathrm{rad}}(B_R)$. In the following results we first point out that the change to volumetric variable is precisely mapping $L^p$ to $L^p$ with no weights and after that we explain how to extend the map $\diffEv$ from $\mathcal M_{\mathrm{rad}} (\overline{B_R})$ to $W^{-1,1}_{\mathrm{rad}} (B_R)$.

\begin{lemma}\label{lem:change of variables map}
	The change of variable map
	\begin{align*}
		\changevariables : L^p_{\mathrm{ rad}} \left ({ B_{R} } \right) 
		&\longrightarrow   L^p (0,R_v), && \text{where  } R_v = |B_1| R^d 
		\\
		f &\longmapsto \changevariables[f]:=g , && \text{where  } g(w) = f( (w/|B_1|)^{\frac 1 d} e_1 ).
	\end{align*}
	is an $L^p$ isometry. 
	The $L^1$ isometry is extended uniquely to an isometry in the total variation distance
	\begin{equation*}
		\changevariables : \mathcal{M}_{\mathrm{ rad}} \left (\overline{ B_{R} } \right)  \rightarrow  \mathcal{M} ([0,R_v]), \qquad \text{ where  } R_v = |B_1| R^d.
	\end{equation*}
\end{lemma}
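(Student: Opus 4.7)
The statement has two parts: the $L^p$ isometry on functions, and its extension to measures. For the first, I would pass to spherical coordinates. For $f \in L^p_{\mathrm{rad}}(B_R)$ there is a one-variable representative $\tilde f$ with $f(x) = \tilde f(|x|)$, and using $|S^{d-1}| = d|B_1|$,
\begin{equation*}
    \|f\|_{L^p(B_R)}^p = d|B_1| \int_0^R |\tilde f(r)|^p \, r^{d-1} \, dr.
\end{equation*}
The substitution $w = |B_1| r^d$ has differential $dw = d|B_1| r^{d-1} \, dr$ and maps $[0, R]$ bijectively onto $[0, R_v]$. Plugging in turns the right-hand side directly into $\int_0^{R_v} |\changevariables[f](w)|^p \, dw$, which is the first claim.

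For the extension to measures, I would introduce the continuous map $\Phi : \overline{B_R} \to [0, R_v]$, $\Phi(x) := |B_1| |x|^d$, and define $\changevariables[\mu] := \Phi_\# \mu$ as its pushforward. Consistency with the $L^1$ definition is immediate: when $\mu = f \, dx$ with $f \in L^1_{\mathrm{rad}}$, the change of variables just established says exactly that $\Phi_\#(f \, dx) = \changevariables[f] \, dw$ as measures on $[0, R_v]$, so this really is an extension.

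The total-variation isometry I would prove via the Riesz duality $\mathcal{M}([0, R_v]) = C([0, R_v])^*$. The inequality $\|\Phi_\# \mu\|_{TV} \leq \|\mu\|_{TV}$ is straightforward because $\|g \circ \Phi\|_\infty = \|g\|_\infty$ for every $g \in C([0, R_v])$. For the reverse direction I would use radial symmetry: given $f \in C(\overline{B_R})$ with $\|f\|_\infty \le 1$, its rotational average $\bar f (x) := \int_{\mathrm{SO}(d)} f(\mathcal R x) \, d\mathcal R$ over the normalised Haar measure is radial, still has sup norm at most one, and satisfies $\int f \, d\mu = \int \bar f \, d\mu$ by rotation invariance of $\mu$. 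Since every radial continuous function on $\overline{B_R}$ is of the form $g \circ \Phi$ for some $g \in C([0, R_v])$ of the same sup norm, the supremum defining $\|\mu\|_{TV}$ is already attained on pushed-forward test functions, yielding the reverse inequality.

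Uniqueness of the extension is then automatic: $\changevariables$ is prescribed on the $L^1$ subspace, and any TV-isometric map into $\mathcal{M}([0, R_v])$ is characterised by the pairing against $C([0, R_v])$, which forces $\changevariables[\mu] = \Phi_\# \mu$. I do not anticipate any real obstacle: the only substantive step is the radialisation argument, which only relies on the fact that Haar averaging over $\mathrm{SO}(d)$ is a contraction on $C(\overline{B_R})$ and preserves integrals against rotation-invariant measures.
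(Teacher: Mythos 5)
The paper declares this lemma ``a simple exercise'' and omits the proof, so there is nothing to compare against; your argument is essentially correct and is the natural one. The $L^p$ computation via $w=|B_1|r^d$, the consistency of the pushforward $\Phi_\#\mu$ with the $L^1$ definition, and the TV isometry via Riesz duality together with rotational averaging are all sound.

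One small weakness is the final uniqueness claim. Being a TV-isometric map agreeing with $\changevariables$ on $L^1_{\mathrm{rad}}$ does not by itself force the pairing against $C([0,R_v])$ and hence does not pin down the extension: $L^1_{\mathrm{rad}}(B_R)$ is not TV-dense in $\mathcal M_{\mathrm{rad}}(\overline{B_R})$ (e.g.\ $\delta_0$ is at TV-distance $1$ from every absolutely continuous measure), and one can cook up other isometric extensions. Uniqueness holds once you additionally impose that the extension be linear and sequentially weak-$\ast$ continuous (which the pushforward is), since every radial measure is a weak-$\ast$ limit of radial mollifications in $L^1$; that is the clean way to justify the ``uniquely'' in the statement.
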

This lemma is a simple exercise, and we omit the proof. We now prove a lemma of differentiation in volumetric variables.

\begin{lemma}\label{lem:radial}
	Assume the basic hypotheses \eqref{hyp:Basic BR}, and that the initial datum $\rho_0$ and potentials $V$ and $K$ are radially symmetric. Then, the map
	\begin{align*}
		\mathfrak E [\mu] (v) = \frac{v^{\frac{d-1}{d}}}{d |B_1|^{\frac{1}{d}}}  e_1 \cdot \Bigg(  ( \nabla V ) \left( r e_1 \right) + \Big\langle  ( \nabla_x K ) \left( r e_1 , \cdot \right)  , \mu \Big\rangle_{W_0^{1,\infty} (B_R) \times W^{-1,1} (B_R) } \Bigg) , 
	\end{align*}
    where $r=( v{|B_1|}^{-1} )^{1/d}$,
	extends \eqref{def:diffEv} and it is continuous
	\begin{equation*}
	    \mathfrak E : W^{-1,1}_{\mathrm{ rad}} (B_R) \to C([0,R_v]). 
	\end{equation*}
\end{lemma}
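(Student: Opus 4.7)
My plan is to verify the stated identity on the dense subspace $\mathcal M_{\mathrm{rad}}(\overline{B_R})\subset W^{-1,1}_{\mathrm{rad}}(B_R)$, make sense of the duality pairing on all of $W^{-1,1}_{\mathrm{rad}}$, and prove continuity of the resulting map.

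For $\mu\in\mathcal M_{\mathrm{rad}}(\overline{B_R})$, the potential $\Ev[\mu](x):=V(x)+\int_{B_R}K(x,y)\,d\mu(y)$ is radial in $x$: for any rotation $R$ of $\R^d$, $\Ev[\mu](Rx)=\int K(Rx,Ry)\,d(R_{\#}\mu)(y)=\int K(x,y)\,d\mu(y)=\Ev[\mu](x)$, using the joint rotation-invariance $K(Rx,Ry)=K(x,y)$ coming from $K(x,y)=\eta(x)W(x-y)\eta(y)$ with radial $\eta$ and $W$, and $R_{\#}\mu=\mu$ by radial symmetry of $\mu$. Writing $\widetilde\Ev(r):=\Ev[\mu](re_1)$, the chain rule in the volumetric variable $v=|B_1|r^d$ together with $\widetilde\Ev{}'(r)=e_1\cdot\nabla\Ev[\mu](re_1)=e_1\cdot\bigl(\nabla V(re_1)+\int\nabla_xK(re_1,y)\,d\mu(y)\bigr)$ (differentiation under the integral in $y$ is legal since $K\in W^{2,\infty}$) produces exactly the claimed formula, with an ordinary integral in place of the duality pairing. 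To extend to $\mu\in W^{-1,1}_{\mathrm{rad}}$ I need $y\mapsto\nabla_xK(re_1,y)$ to lie componentwise in $W^{1,\infty}_0(B_R)$: the $W^{1,\infty}$ bound, uniform in $r$, follows from $K\in W^{2,\infty}$, and the vanishing of the trace on $\partial B_R$ in $y$ is supplied by the factor $\eta(y)$, which is identically zero near $\partial B_R$. Hence the pairing $\langle\nabla_xK(re_1,\cdot),\mu\rangle_{W^{1,\infty}_0\times W^{-1,1}}$ is well-defined, and by density of $\mathcal M_{\mathrm{rad}}$ in $W^{-1,1}_{\mathrm{rad}}$ it agrees with the integral on the dense subspace, yielding the required extension of \eqref{def:diffEv}.

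Continuity splits into continuity in $v$ for each $\mu$ and continuity in $\mu$ uniform in $v$. The former reduces to continuity of $r\mapsto\nabla V(re_1)$ (immediate from $V\in W^{2,\infty}$) and of $r\mapsto\langle\nabla_xK(re_1,\cdot),\mu\rangle$; for this I would exploit the duality inequality $|\langle\nabla_xK(re_1,\cdot)-\nabla_xK(r'e_1,\cdot),\mu\rangle|\le \|\mu\|_{W^{-1,1}}\,\|\nabla_y[\nabla_xK(re_1,\cdot)-\nabla_xK(r'e_1,\cdot)]\|_{L^\infty}$, combined with a mollification argument: for smooth $K_\varepsilon$ the map $r\mapsto \nabla_y\nabla_xK_\varepsilon(re_1,\cdot)$ is classically continuous into $L^\infty$ uniformly in $\varepsilon$ (by the uniform $W^{2,\infty}$ bound), so one may pass to the limit $\varepsilon\to 0$. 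Continuity in $\mu$ follows from
\[
|\diffEv[\mu](v)-\diffEv[\widetilde\mu](v)|\le\bigl[\text{chain-rule prefactor}(v)\bigr]\,\|\nabla_xK(re_1,\cdot)\|_{W^{1,\infty}_0(B_R)}\,\|\mu-\widetilde\mu\|_{W^{-1,1}(B_R)},
\]
uniformly on compact subsets of $(0,R_v]$. The main obstacle is the endpoint $v=0$, where the chain-rule prefactor is singular: I would use that $\nabla V(0)=0$ and, for radial $\mu$, $\int\nabla_xK(0,y)\,d\mu(y)=0$ (the integrand is odd in $y$ after averaging against a radial measure), together with a Taylor expansion of $\widetilde\Ev$ at $r=0$ coming from $V,K\in W^{2,\infty}$, to show that the product extends continuously to the closed interval $[0,R_v]$.
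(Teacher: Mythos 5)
Your proof follows the same route as the paper for $v>0$: pass to radial/volumetric coordinates, identify the chain-rule prefactor, observe that the pairing of $\nabla_x K(re_1,\cdot)$ against $\mu\in W^{-1,1}_{\mathrm{rad}}$ is legal because $\nabla_x K(re_1,\cdot)\in W^{1,\infty}_0(B_R)$ (the factor $\eta(y)$ supplies the vanishing trace), and use the affinity in $\mu$ together with the duality inequality for continuity. You correctly compute the prefactor to be $\frac{v^{(1-d)/d}}{d|B_1|^{1/d}}$ (equivalently $\frac{1}{d|B_1|\,r^{d-1}}$), a \emph{singular} weight as $v\to 0^+$, which means the displayed formula in the lemma — and the last line of the paper's own proof — has a sign error in the exponent: it should read $v^{-(d-1)/d}$, not $v^{(d-1)/d}$. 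The paper's proof simply says ``the continuity follows'' and never addresses $v=0$; you were right to flag this as the delicate point.

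However, the cancellation you propose does not close the gap for $d\ge 3$. You invoke $\nabla V(0)=0$ (true for radial $W^{2,\infty}$ potentials) and oddness of $\nabla_x K(0,\cdot)$ to get, via Taylor expansion, $e_1\cdot\nabla\Ev[\mu](re_1)=O(r)$. Multiplying by the prefactor gives $\diffEv[\mu](v)=O(r^{2-d})$, which tends to $0$ for $d=1$, to a finite nonzero limit for $d=2$, and \emph{diverges} for $d\ge 3$ whenever the quadratic coefficient in the Taylor expansion (essentially $\Delta\Ev[\mu](0)$) is nonzero. Concretely, for $d\ge 3$, $K\equiv0$, and any admissible $V$ that behaves like $|x|^2$ near the origin (so $\Delta V(0)\ne 0$), one has $\mathfrak E[0](v)\sim c\,v^{(2-d)/d}\to\infty$ as $v\to 0^+$, so $\mathfrak E[0]\notin C([0,R_v])$. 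Thus your argument cannot prove continuity up to $v=0$ in $d\ge3$, and in fact the target space in the lemma (and in the definition \eqref{def:diffEv}) should plausibly be $C((0,R_v])$ rather than $C([0,R_v])$, which is all the subsequent viscosity arguments actually use. Your write-up would be correct if you restricted the codomain to $C_{loc}((0,R_v])$; as stated, the final Taylor step is a genuine gap.
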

\begin{proof}
	We have that $V(r e_1) + \int_{B_R} K(re_1 , y) \mu (y) \, dy$ is radially symmetric. 
	We can compute its derivative as
	\begin{align*}
	    \diffEv[\mu] & = \left(\frac{v}{|B_1|}\right)^{\frac{1}{d}} e_1 \cdot \left( (\nabla V) \left(\left(\frac{v}{|B_1|}\right)^{\frac{1}{d}} e_1 \right) + \Big\langle  ( \nabla_x K ) \left( r e_1 , \cdot \right)  , \mu \Big\rangle_{W_0^{1,\infty} (B_R) \times W^{-1,1} (B_R) }   \right) \frac{1}{vd} \\
	    & = \frac{v^{\frac{d-1}{d}}}{d |B_1|^{\frac{1}{d}}} e_1 \cdot \left( (\nabla V )(re_1) + \Big\langle  ( \nabla_x K ) \left( r e_1 , \cdot \right)  , \mu \Big\rangle_{W_0^{1,\infty} (B_R) \times W^{-1,1} (B_R) } \right)
	\end{align*}
	where $r=( v{|B_1|}^{-1} )^{1/d}$. The details on the change of variables can be found, for example, in \cite{BCLR13, BBCV20}. Finally, since $\nabla_x K \in W^{1, \infty}_0 (B_R\times B_R)$
	and $\diffEv [\mu]$ is affine on $\mu$ the continuity follows.
\end{proof}

Once we have introduced the notation we recall some {\em a priori} estimates for the solutions of equation \eqref{eq:General_Mass_equation} using \Cref{lem:a_priori_estimates_Phi}. Their proofs can be found in \cite{CGV22}. 
\begin{lemma}
	If $\rho_t \in L^q (B_R)$ 
	for some $q \in [1, \infty )$ then
	\begin{equation}\label{ec:Mass_spatial_regularity}
		[M(t, \cdot )]_{C^{\frac{q-1}{q}}([0, R_v])} \leq \| \rho_t \|_{L^q (B_R)}.
	\end{equation}
	If $q= \infty$ the same holds in $W^{1, \infty} (0, R_v )$.
\end{lemma}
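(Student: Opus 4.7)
The plan is to reduce the claim to a one-dimensional Hölder estimate on the volumetric derivative of $M$ and then apply Hölder's inequality.

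First, I would exploit the radial symmetry of $\rho_t$. By Lemma \ref{lem:change of variables map}, the change of variable map $\changevariables$ is an $L^q$-isometry from $L^q_{\mathrm{rad}}(B_R)$ to $L^q(0, R_v)$, and by the construction of the volume variable $v = |B_1| r^d$ one has the pointwise identity
\begin{equation*}
    M(t,v) = \int_0^v \changevariables[\rho_t](w) \, dw, \qquad v \in [0, R_v].
\end{equation*}
In particular, $\frac{\partial M}{\partial v}(t,\cdot) = \changevariables[\rho_t] \in L^q(0, R_v)$ with norm equal to $\|\rho_t\|_{L^q(B_R)}$.

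Next, for any $0 \le v_1 < v_2 \le R_v$ and $q \in [1,\infty)$, I would apply Hölder's inequality with conjugate exponents $q$ and $q' = q/(q-1)$:
\begin{equation*}
    |M(t,v_2) - M(t,v_1)| = \int_{v_1}^{v_2} \changevariables[\rho_t](w)\, dw \leq (v_2 - v_1)^{\frac{q-1}{q}} \, \|\changevariables[\rho_t]\|_{L^q(v_1,v_2)} \leq (v_2-v_1)^{\frac{q-1}{q}} \|\rho_t\|_{L^q(B_R)},
\end{equation*}
where the last step uses the isometry from Lemma \ref{lem:change of variables map}. Taking the supremum over $v_1 \neq v_2$ gives the desired bound on the $C^{(q-1)/q}$-seminorm. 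For $q = \infty$ the same chain of inequalities (with the $L^\infty$-norm replacing $L^q$) yields the Lipschitz bound $|M(t,v_2)-M(t,v_1)| \le (v_2-v_1)\|\rho_t\|_{L^\infty(B_R)}$, i.e.\ $M(t,\cdot) \in W^{1,\infty}(0, R_v)$ with the stated estimate.

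There is no genuine obstacle here; the statement is a direct consequence of the fact that in the volumetric variable the mass is a primitive of the radial density. The only small point to verify carefully is that the change of variables preserves the $L^q$-norm without a geometric weight, which is exactly the content of Lemma \ref{lem:change of variables map}.
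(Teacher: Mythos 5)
Your argument is correct and is the natural one for this estimate: rewrite $M$ as the primitive of the volumetric density $\changevariables[\rho_t]$, invoke the $L^q$-isometry of the change of variables (Lemma~\ref{lem:change of variables map}), and apply Hölder's inequality on the interval $(v_1,v_2)$. The paper defers the proof to \cite{CGV22}, but this is the same computation one finds there.

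One small remark: you can avoid routing through $\changevariables$ entirely by applying Hölder directly to the annulus, namely
\begin{equation*}
|M(t,v_2)-M(t,v_1)| = \left|\int_{\widetilde{B_{v_2}}\setminus\widetilde{B_{v_1}}} \rho_t\right| \le \|\rho_t\|_{L^q(B_R)}\,\big|\widetilde{B_{v_2}}\setminus\widetilde{B_{v_1}}\big|^{\frac{q-1}{q}} = \|\rho_t\|_{L^q(B_R)}\,(v_2-v_1)^{\frac{q-1}{q}},
\end{equation*}
since the volume variable was chosen precisely so that $|\widetilde{B_v}|=v$. This is the same estimate written without the one-dimensional reduction, and it has the slight advantage of not relying on radial symmetry of $\rho_t$ (even though radial symmetry is assumed throughout this subsection, so nothing is at stake here). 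Either way, the bound and the $q=\infty$ Lipschitz case are established.
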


\begin{lemma}\label{lem:M a priori time}
	There exists a constant $C > 0$, independent of $\rho$ or $\Phi$, such that
	\begin{equation}\label{ec:bound_on_dM/dt_L2}
		\int_0^T \int_0^{R_v} \left| \frac{\partial M}{\partial t} \right|^2 \, dv \, dt \leq C \left( \int_{B_R} \Psi (\rho_0) + \| E \|_{L^{\infty} ((0,T) \times B_R)}^2 \int_0^T \int_{B_R} \rho_t (x)^2 \, dx \, dt \right).
	\end{equation}
	In particular, if $\rho_0 \in L^2 (B_R)$ and $\Psi (\rho_0 ) \in L^1 (B_R)$ then $M \in C^{\frac{1}{2}} (0, T ; L^1(0, R_v))$.
\end{lemma}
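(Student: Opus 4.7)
The plan is to compute $\partial_t M$ pointwise as a flux integral and then use the already-established $L^2$ control of $\nabla \Phi(\rho)$ from \Cref{lem:a_priori_estimates_Phi}. Since $M(t,v)=\int_{\widetilde{B_v}}\rho_t(x)\,dx$ and $\rho$ is a strong solution of \eqref{eq:FDE_Problem_+_known_transport}, I would differentiate under the integral and use the equation to write
\begin{equation*}
    \frac{\partial M}{\partial t}(t,v)=\int_{\widetilde{B_v}}\bigl(\Delta\Phi(\rho_t)+\nabla\cdot(\rho_t E_t)\bigr)\,dx=\int_{\partial\widetilde{B_v}}\bigl(\nabla\Phi(\rho_t)+\rho_t E_t\bigr)\cdot\nu\,dS.
\end{equation*}
By radial symmetry of $\rho_t$ (inherited from $\rho_0$, $V$ and $K$), the integrand is constant on the sphere of radius $r=(v/|B_1|)^{1/d}$, so
\begin{equation*}
    \frac{\partial M}{\partial t}(t,v)=\kappa(v)\left(\frac{\partial\Phi(\rho_t)}{\partial r}+\rho_t (E_t)_r\right)\bigg|_{|x|=r},
\end{equation*}
using that the surface area $d|B_1|r^{d-1}$ coincides with $\kappa(v)$.

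Next I would square this identity, integrate in $(t,v)\in(0,T)\times(0,R_v)$, and change variables from $v$ to $r$ via $dv=\kappa(v)\,dr$. Since $\kappa(v)\le \kappa(R_v)=d|B_1|R^{d-1}$ on $[0,R_v]$,
\begin{equation*}
    \int_0^{R_v}\!\!\kappa(v)^2\,|h(r(v))|^2\,dv=\int_0^R \kappa(r)^3\,|h(r)|^2\,dr\le \kappa(R_v)^2\int_{B_R}|h|^2\,dx,
\end{equation*}
for $h$ either $\partial_r\Phi(\rho_t)$ or $\rho_t (E_t)_r$. For the first choice $|h|=|\nabla\Phi(\rho_t)|$, and for the second $|h|\le|\rho_t||E_t|$. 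Applying the elementary inequality $(a+b)^2\le 2a^2+2b^2$ and integrating in time yields
\begin{equation*}
    \int_0^T\!\int_0^{R_v}\left|\frac{\partial M}{\partial t}\right|^2\,dv\,dt\le 2\kappa(R_v)^2\left(\int_0^T\!\int_{B_R}|\nabla\Phi(\rho_t)|^2\,dx\,dt+\|E\|_{L^\infty((0,T)\times B_R)}^2\int_0^T\!\int_{B_R}\rho_t^2\,dx\,dt\right).
\end{equation*}
Finally, I would invoke the $L^2$ bound \eqref{ec:a_priori_estimate_Phi} for $\nabla\Phi(\rho)$ from \Cref{lem:a_priori_estimates_Phi} to absorb the first term into $\int_{B_R}\Psi(\rho_0)+\|E\|_{L^\infty}^2\int_0^T\|\rho_t\|_{L^2}^2$, giving the claimed estimate with $C=C(d,R)$ independent of $\rho$ and $\Phi$.

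For the $C^{1/2}$ conclusion, once $\partial_t M\in L^2((0,T)\times(0,R_v))$ I would use
\begin{equation*}
    \|M(t,\cdot)-M(s,\cdot)\|_{L^1(0,R_v)}\le R_v^{1/2}\int_s^t\left\|\frac{\partial M}{\partial\tau}(\tau,\cdot)\right\|_{L^2(0,R_v)}d\tau\le R_v^{1/2}\,|t-s|^{1/2}\left\|\frac{\partial M}{\partial\tau}\right\|_{L^2((0,T)\times(0,R_v))},
\end{equation*}
by Cauchy--Schwarz in space and then in time. This immediately gives $M\in C^{1/2}(0,T;L^1(0,R_v))$. There is no real obstacle here beyond organising the radial calculation; the only subtle point is that the identity for $\partial_t M$ requires enough regularity to apply the divergence theorem on $\partial\widetilde{B_v}$ for a.e.\ $v$, which is guaranteed by $\rho$ being a strong solution ($\Phi(\rho)\in L^2(0,T;H^2(B_R))$ and $\partial_t\rho\in L^2((0,T)\times B_R)$), so the computation above is rigorous rather than merely formal.
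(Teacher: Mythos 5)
Your proof is correct and almost certainly coincides with the argument deferred to [CGV22]: compute $\partial_t M(t,v)$ as a flux across $\partial\widetilde{B_v}$ via the divergence theorem (valid for a.e.\ $v$ because $\Phi(\rho)\in L^2(0,T;H^2(B_R))$ and $\partial_t\rho\in L^2$), exploit the identity $\partial_t M = \kappa(v)\,(\partial_r\Phi(\rho_t)+\rho_t (E_t)_r)$ and the change of measure $dv=\kappa(v)\,dr$ so that $\kappa(v)^2\,dv=\kappa(v)^2\cdot\kappa(v)\,dr$ reconstitutes the spatial Lebesgue measure $dx$, then apply the bound on $\nabla\Phi(\rho)$ from \Cref{lem:a_priori_estimates_Phi}. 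The constant you obtain, $C=4\kappa(R_v)^2$, depends only on $d$ and $R$ as required, and the $C^{1/2}$ conclusion follows from Cauchy--Schwarz in $v$ and then in $t$ exactly as you write.
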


Similarly to the argument in \cite[Appendix A]{CGV22}, we can adapt DiBenedetto's theory  \cite{DiB93} so that we get the following a priori estimate

\begin{theorem}\label{th:viscosity_solutions}
	Assume the basic hypotheses \eqref{hyp:Basic BR}, and that the initial datum $\rho_0$ and potentials $V$ and $K$ are radially symmetric. Let $\rho$ be the strong solution of \eqref{eq:Fast-Diffusion_Problem_BR} and $M$ its mass. Then, for every $\varepsilon \in (0, R_v)$ we have the following interior regularity estimate: for any $T_1 > 0$ and $\varepsilon \leq v_1 <v_2 \leq R_v$ there exists $\gamma > 0$ and $\alpha \in (0,1)$ depending only on $d$, $m, \|\rho_0\|_{L^\infty(B_R)},\|\nabla V\|_{L^\infty (B_R)}, \| \nabla_x K \|_{L^{\infty}(B_R \times B_R)}, \varepsilon , T_1$, such that
	\begin{equation}\label{ec:C_alpha_estimate_in_time_and_space_Mass}
		|M (t_1, v_1) - M (t_2, v_2) | \leq \gamma \left( \frac{|v_1 - v_2 | + \|\rho_0 \|_{L^1(B_R)}^{\frac{m-1}{m+1}} |t_1 - t_2|^{\frac{1}{m+1}}}{ \varepsilon + \| \rho_0 \|_{L^1 (B_R)}^{\frac{m-1}{m+1}} T_1^{\frac{1}{m+1}} } \right)^{\alpha}
	\end{equation}
	for all $(t_i, v_i ) \in [T_1, + \infty ) \times [\varepsilon , R_v ]$. 
\end{theorem}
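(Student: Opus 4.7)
The plan is to adapt the intrinsic-scaling machinery of DiBenedetto \cite[Chapter III]{DiB93} for singular parabolic $p$-Laplace equations to the one-dimensional mass equation \eqref{eq:General_Mass_equation}, essentially as was already done in \cite[Appendix A]{CGV22} for the simpler case $W \equiv 0$. First I would rewrite the principal part as a singular $p$-Laplacian in volumetric variables: since $\partial_v M = \rho \ge 0$, the diffusion term can be written as $\kappa(v)^2 \,\partial_v\bigl(|\partial_v M|^{p-2}\partial_v M\bigr)$ with $p = m+1 \in (1,2)$, and the remaining term $\kappa(v)^2\,\partial_v M\,\diffEv[\rho_t]$ becomes a bounded first-order perturbation once we observe, via \Cref{lem:radial}, that $\|\diffEv[\rho_t]\|_{L^\infty(0,R_v)}$ is controlled by $\|\nabla V\|_{L^\infty}$, $\|\nabla_x K\|_{L^\infty}$ and the conserved mass $\|\rho_0\|_{L^1(B_R)}$.

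Next I would collect the pointwise bounds needed to run the argument on the interior region $\{v \ge \varepsilon\}$. The $L^\infty$ bound \eqref{eq:L infty estimate E smooth fixed} for $\rho$ gives a uniform bound $0 \le \partial_v M \le \Lambda_T$, while the spatial Hölder bound \eqref{ec:Mass_spatial_regularity} provides a uniform modulus of continuity in $v$. On $[\varepsilon, R_v]$ the weight $\kappa(v)^2 = d^2\omega_d^{2/d} v^{2(d-1)/d}$ is smooth and uniformly bounded above and below by positive constants depending only on $\varepsilon$, $R$ and $d$, so the principal part really is a (nondegenerate-coefficient) singular $p$-Laplacian. At this point the DiBenedetto oscillation-decay scheme applies: one iterates energy and logarithmic estimates on intrinsically rescaled cylinders of the form $[t_0 - \theta^{2-p} r^p,\, t_0]\times(v_0 - r, v_0 + r)$ with $\theta$ comparable to the local oscillation of $M$. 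The factor $\|\rho_0\|_{L^1(B_R)}^{(m-1)/(m+1)} = \|\rho_0\|_{L^1}^{(p-2)/p}$ that appears in \eqref{ec:C_alpha_estimate_in_time_and_space_Mass} is precisely this intrinsic time-rescaling, and the denominator $\varepsilon + \|\rho_0\|_{L^1}^{(m-1)/(m+1)} T_1^{1/(m+1)}$ is the intrinsic parabolic distance from the interior point to the lateral boundary $\{v=0\}$ and to the initial time.

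The main obstacle is checking that the iteration, originally written for equations with constant-coefficient principal part and no drift, goes through with both the variable weight $\kappa(v)^2$ and the bounded lower-order term $\kappa(v)^2\,\partial_v M\,\diffEv[\rho_t]$. Because both of these factors are bounded above and below on $[\varepsilon, R_v]$ by constants depending only on the parameters listed in the statement, they can be absorbed into the De Giorgi-type level-set and logarithmic estimates at each scale, at the cost of letting $\gamma$ and $\alpha$ depend on $\varepsilon$ and $T_1$. This is exactly the adaptation of \cite[Chapter III]{DiB93} performed in \cite[Appendix A]{CGV22}; the only new ingredient here is incorporating the $\int_{B_R} K(\cdot,y)\rho_t(y)\,dy$ contribution to $\diffEv[\rho_t]$, which under \eqref{hyp:Basic BR} is a bounded function of $v$ uniformly in $t$ and therefore enters the argument on the same footing as $\nabla V$.
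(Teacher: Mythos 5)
Your proposal is correct and follows exactly the same route as the paper, which simply cites DiBenedetto's intrinsic-scaling theory as adapted in \cite[Appendix A]{CGV22}: rewrite the mass equation as a singular $p$-Laplacian with $p=m+1\in(1,2)$, a weight $\kappa(v)^2$ bounded above and below on $\{v\ge\varepsilon\}$, and a bounded drift $\kappa(v)^2\,\diffEv[\rho_t]$ controlled via \Cref{lem:radial}, then run the oscillation-decay iteration on intrinsic cylinders scaled by the conserved total mass. The only superfluous item in your write-up is the appeal to the time-growing $L^\infty$ bound $\Lambda_T$ on $\rho=\partial_v M$; DiBenedetto's scheme needs only the boundedness of $M$ itself (by $\|\rho_0\|_{L^1}$), which is what makes the constants uniform for all $t\ge T_1$.
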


\subsection{Viscosity solutions}\label{sec:viscosity solutions}

We start by providing a self-contained definition of 
\eqref{eq:General_Mass_equation}. Since we cannot guarantee that $\partial M / \partial v $ is positive, it is better to simplify \eqref{eq:General_Mass_equation} expanding the second derivative and dividing by $\Phi'(\partial M / \partial v)$. Let us state what is a viscosity solution for our problem \eqref{eq:General_Mass_equation}.

\begin{definition}[Viscosity solution]
	\label{def:mass viscosity BR}
	A function $M \in C([0,T]; C((0,R_V)) \cap BV([0,R_v]))$ is a viscosity supersolution of \eqref{eq:General_Mass_equation} if, for every $t_0 > 0$, $v_0 \in (0, R_v)$ and for every $\varphi \in C^2 ((t_0 - \varepsilon , t_0 + \varepsilon ) \times (v_0 - \varepsilon, v_0 + \varepsilon ))$ such that $M \geq \varphi$, $ M(v_0) = \varphi (v_0 )$ and $\frac{\partial \varphi}{\partial v} (v) \neq 0$ for all $v \neq v_0$ it holds that
	\begin{equation*}
		\frac 1 { \Phi' \left( \frac{\partial \varphi}{\partial v} (t_0,v_0)\right)}\frac{\partial \varphi}{\partial t} (t_0, v_0 ) \ge \kappa (v_0)^2 \frac{\partial^2 \varphi}{\partial v^2} (t_0,v_0) + \kappa(v_0)^2 \frac {\frac{\partial \varphi}{\partial v} (t_0,v_0)} { \Phi' \left( \frac{\partial \varphi}{\partial v} (t_0,v_0)\right)} \diffEv\left[\changevariables^{-1}\left[\frac{\partial M}{\partial v}\right]\right] (t_0 , v_0) .
	\end{equation*}
	The corresponding definition of subsolution is made by inverting the inequalities. A viscosity solution is a function that is a viscosity sub and supersolution.
\end{definition}

\begin{remark}
	Notice that, in the viscosity formulation, we replace $\partial M / \partial v$ by the test function in the non-linear term, but we preserve it in the non-local one.
\end{remark}
In the following we prove that $M$ is a viscosity solution for the problem \eqref{eq:General_Mass_equation}.
\begin{theorem}\label{th:M is a viscosity solution}
	Assume the basic hypotheses \eqref{hyp:Basic BR}, and that the initial datum $\rho_0$ and potentials $V$ and $K$ are radially symmetric. Let $\rho$ be the strong solution of \eqref{eq:Fast-Diffusion_Problem_BR} and take $0<T< \infty$. Then, $M$ defined in \eqref{def:M} is a viscosity solution of \eqref{eq:General_Mass_equation} for $\Phi (s) = s^m$, $0<m<1$, and the drift $\diffEv\left[\changevariables^{-1}\left[\frac{\partial M}{\partial v}\right]\right]$.
\end{theorem}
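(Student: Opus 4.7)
The plan is to approximate by smooth solutions and use stability of viscosity solutions. Fix the drift $E = \nabla V + \nabla \int_{B_R} K(\cdot, y)\rho(y)\,dy$ associated to the given strong solution $\rho$. For each $k$, let $\rho^{(k)}$ be the classical solution of \eqref{eq:FDE_Problem_+_known_transport_3} with $\Phi_k$ from \eqref{ec:Phik} and a smooth drift $E^{(k)} \to E$ in $W^{1,\infty}$; by \Cref{th:Existence_FD_known_transport_(0-T)xBR} we have $\rho^{(k)} \to \rho$ in $C([0,T]; L^1(B_R))$, and radiality of $\rho_0, V, K$ carries over to $\rho^{(k)}$. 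The mass $M^{(k)}(t,v) = \int_{\widetilde{B_v}} \rho^{(k)}_t$ is then $C^{2,1}$ on $(0,T)\times(0,R_v)$ and satisfies the strictly-parabolic regularised mass equation
\begin{equation*}
\frac{\partial M^{(k)}}{\partial t} = \kappa(v)^2 \, \Phi_k'\!\left(\tfrac{\partial M^{(k)}}{\partial v}\right) \frac{\partial^2 M^{(k)}}{\partial v^2} + \kappa(v)^2 \, \frac{\partial M^{(k)}}{\partial v} \, \diffEv^{(k)}(t,v)
\end{equation*}
classically, where $\diffEv^{(k)}$ encodes $E^{(k)}$ in volumetric variables via \Cref{lem:radial}. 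A standard touching argument (at a contact-from-below point $\partial_t M^{(k)} = \partial_t \varphi$, $\partial_v M^{(k)} = \partial_v \varphi$ and $\partial_{vv} M^{(k)} \geq \partial_{vv} \varphi$, combined with $\Phi_k' \geq 0$) then shows each $M^{(k)}$ is a viscosity solution of its regularised equation in the sense of \Cref{def:mass viscosity BR}.

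\textbf{Three convergences for the limit.} Stability of viscosity solutions rests on three ingredients: \emph{(i)} $M^{(k)} \to M$ uniformly on $[0,T] \times [0, R_v]$, which follows from $\rho^{(k)} \to \rho$ in $C([0,T]; L^1)$ and the fact that the radial change of variable $\changevariables$ is an $L^1$ isometry (\Cref{lem:change of variables map}), together with $M^{(k)}(t, v) = \int_0^v \changevariables[\rho^{(k)}_t]$; \emph{(ii)} $\Phi_k, \Phi_k' \to \Phi, \Phi'$ uniformly on compact subsets of $(0,\infty)$, immediate from \eqref{ec:Phik}; \emph{(iii)} uniform convergence $\diffEv^{(k)} \to \diffEv[\changevariables^{-1}[\partial_v M]]$ on $[0,T] \times [0, R_v]$, which follows from $E^{(k)} \to E$ in $W^{1,\infty}$ and the continuity statement of \Cref{lem:radial}.

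\textbf{Touching and limit argument.} Given a test function $\varphi$ touching $M$ from below at $(t_0, v_0)$ with $\partial_v \varphi \neq 0$ in a punctured neighbourhood of $v_0$, perturb it by a small penalty $\varphi^\eta = \varphi - \eta(|t-t_0|^2 + |v-v_0|^2)$ to make the contact strict. The uniform convergence $M^{(k)} \to M$ forces the existence of contact points $(t_k, v_k) \to (t_0, v_0)$ between $\varphi^\eta$ and $M^{(k)}$, at which the classical viscosity inequality for $M^{(k)}$ holds; dividing by $\Phi_k'(\partial_v \varphi^\eta(t_k, v_k))>0$ and sending first $k \to \infty$ (using (i)--(iii) and continuity of all derivatives of $\varphi$) and then $\eta \to 0$ delivers the supersolution inequality of \Cref{def:mass viscosity BR}. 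The subsolution case is symmetric, using contact-from-above test functions.

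\textbf{Main obstacle.} The delicate point is the interplay between the singular fast-diffusion nonlinearity $\Phi'(s) = m s^{m-1}$ and the smoothing $\Phi_k$. The restriction in \Cref{def:mass viscosity BR} that $\partial_v \varphi \neq 0$ in a punctured neighbourhood of $v_0$ is precisely what circumvents this difficulty: by continuity of $\partial_v \varphi$, the values $\partial_v \varphi^\eta(t_k, v_k)$ remain in a compact subset of $(0,\infty)$ for $\eta$ small and $k$ large, confining the argument of $\Phi_k'$ to a range where $\Phi_k = \Phi$ up to the smoothing of \eqref{ec:Phik} and making the limit $\Phi_k'(\partial_v \varphi^\eta(t_k, v_k)) \to \Phi'(\partial_v \varphi(t_0, v_0))$ routine. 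All other passages are essentially standard for viscosity theory.
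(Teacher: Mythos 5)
Your proof is correct and is a genuine streamlining of the paper's argument. The paper proceeds in three stages: first smooth diffusion and smooth fixed drift, then fast diffusion with a fixed $W^{1,\infty}$ drift (their Step 2), and finally a third step where the drift $\nabla V + \nabla\!\int K\rho$ is recovered by passing to the limit along the Banach fixed-point iterates $\rho^{\{k\}}$, invoking the stability of viscosity solutions once more as $\diffEv[\rho^{\{k\}}]\to\diffEv[\rho]$. You dispense with that third stage: you note that the given strong solution $\rho$ of \eqref{eq:Fast-Diffusion_Problem_BR} is, verbatim, the strong solution of the fixed-drift problem \eqref{eq:FDE_Problem_+_known_transport} with $E = \nabla V + \nabla\!\int_{B_R} K(\cdot,y)\rho(y)\,dy$, and then run the regularisation-and-stability step once for that particular frozen drift. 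Since $\changevariables^{-1}[\partial_v M]=\rho$, the limiting drift you obtain in volumetric coordinates is exactly $\diffEv[\changevariables^{-1}[\partial_v M]]$, which is what \Cref{def:mass viscosity BR} asks for. This works precisely because the nonlocal term in the viscosity formulation is evaluated at the solution $M$, not at the test function, so once $\rho$ is fixed the drift is a concrete function of $(t,v)$ and no iteration is needed. The one place you should tighten is the claim in your final paragraph that $\partial_v\varphi^\eta(t_k,v_k)$ stays in a compact subset of $(0,\infty)$: \Cref{def:mass viscosity BR} only forbids $\partial_v\varphi=0$ at $v\neq v_0$, so $\partial_v\varphi(t_0,v_0)=0$ is permitted, and in that degenerate case the argument of $\Phi_k'$ approaches $0$ rather than staying bounded away from it; the supersolution inequality still follows because $1/\Phi'(\partial_v\varphi(t_0,v_0))=0$, but it requires a separate elementary limit rather than uniform convergence of $\Phi_k'$ on compacts. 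The paper sidesteps this by citing the stability theorem of Katzourakis wholesale rather than re-deriving it.
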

\begin{proof} We proceed analogously to the constructive argument used for the existence of strong solutions to \eqref{eq:Fast-Diffusion_Problem_BR}. 
	
	\textbf{Step 1. Uniformly elliptic diffusion with fixed time-dependent drift.} Given a fixed time-dependent drift $E\in W^{1, \infty} ([0,T] \times B_R)$. We take a sequence of smooth drifts $E^{(k)}$ such that $E^{(k)} \rightarrow E$ in $W^{1, \infty} ([0,T]\times B_R)$ as in \Cref{th:Existence_FD_known_transport_(0-T)xBR}. Let us take $\rho^{(k)}$ the strong solution of \eqref{eq:FDE_Problem_+_known_transport_3} with the drift $E^{(k)}$. From the regularity theory we know that $\rho^{(k)}\in C^1 \left( (0,T); C (\overline{B_R}) \right) \cap C \left( (0, \infty) ; C^2 (\overline{B_R}) \right) \cap C \left( [0, \infty ) \times \overline{B_R} \right) $. In particular,
	\begin{equation*}
		M^{(k)} (t,v) = \int_{\widetilde{B_v}} \rho^{(k)}_t (x) \, dx,
	\end{equation*}
	is a classical solution of \eqref{eq:General_Mass_equation} and, hence, a viscosity solution thanks to the regularity of $\rho^{(k)}$.

	\textbf{Step 2. Fast diffusion with time-dependent drift.} If we take the sequence $\Phi_k$ as in \eqref{ec:Phik}, we have by \Cref{th:Existence_FD_known_transport_(0-T)xBR} that 
	\begin{equation}\label{eq:L2 convergence Mass proof}
		\rho^{(k)} \rightarrow \rho \quad \text{in} \, C ( [0, T]; L^1 (B_R))
	\end{equation}
	with $\rho$ the unique strong solution of \eqref{eq:FDE_Problem_+_known_transport} for a fixed potential $E$. We define
	\begin{equation*}
		M(t,v) = \int_{\widetilde{B_v}} \rho_t (x) \, dx.
	\end{equation*}
	Then, we have 
	\begin{equation}\label{eq:Mass convergence M^(k) to M}
		M^{(k)} \rightarrow M \quad \text{in} \, C([0,T] \times [0,R_v]). 
	\end{equation}
	Indeed, it follows from \eqref{eq:L2 convergence Mass proof} combined with
	\begin{equation*}
		|M^{(k)}(t, v) - M(t,v)| \leq \|\rho^{(k)}_t- \rho_t \|_{L^1(\widetilde{B_v})} \leq  \|\rho^{(k)}_t- \rho_t \|_{L^1(B_R)}, \quad \text{for all } 0 < t \leq T, 
	\end{equation*}
	which implies,
	\begin{equation*}
		\|M^{(k)} - M\|_{C([0,T] \times [0,R_v])} \leq  \|\rho^{(k)}_t- \rho_t \|_{L^\infty(0,T;L^1(B_R))}.
	\end{equation*}
	Next, we use the stability result 
	\cite[Chapter 3 Theorem 2]{Kat14}. The sequence $M^{(k)}$ satisfy the assumptions of the Theorem since we have \eqref{eq:Mass convergence M^(k) to M} and $1/\Phi_k' \rightarrow 1/\Phi'$ locally uniformly in $[0, Q]$ for every $Q>0$, which is easy to check using \eqref{ec:Phik}. 
	Therefore, due to the stability of viscosity solutions, $M$ is also a viscosity solution of \eqref{eq:General_Mass_equation} for $\Phi (s) = s^m$, $0<m<1$, and the drift $E\in W^{1, \infty} ([0,T] \times B_R)$.
	
	\textbf{Step 3. Fast diffusion with confinement and aggregation.} Let $\rho$ be the strong solution of the problem \eqref{eq:Fast-Diffusion_Problem_BR}. From \Cref{th:Banach_fixed_point}, $\rho$ is the unique fixed point of the mapping \eqref{eq:mapping FPT}. Then, there exists a sequence $\rho^{\{k\}}$ by Banach fixed-point iteration such that
	\begin{equation}\label{ec:rhok converges strongly Step 3 viscosity solutions}
		\rho^{\{k\}} \rightarrow \rho \quad \text{strongly in} \, C([0,T];L^1 (B_R)).
	\end{equation}
	Define
	\begin{equation*}
		M(t,v)= \int_{\widetilde{B_v}} \rho_t(x) \, dx.
	\end{equation*}
	Similarly as in Step 2, one can show
	\begin{equation}\label{eq:Mass convergence M^E to M}
		M^{\{k\}} \rightarrow M \quad \text{in} \, C([0,T] \times [0,R_v]).
	\end{equation}
	Since $\changevariables [\rho_t^{\{k\}}] = \frac{\partial M^{\{k\}} (t, \cdot )}{\partial v}$ and $\changevariables$ is an isometry from $L^1_{\mathrm{rad}} (B_R)$ to $L^1(0 , R_v)$, considering \eqref{ec:rhok converges strongly Step 3 viscosity solutions} and \eqref{eq:Mass convergence M^E to M} we also get,
	\begin{equation*}
		\frac{\partial M^{\{k\}}}{\partial v} \rightarrow \frac{\partial M}{\partial v} \quad \text{in} \, C ([0,T] ; L^1 (0, R_v)),
	\end{equation*}
	and that $\changevariables[ \rho_t ] = \frac{\partial M (t, \cdot )}{\partial v}$. 
	Taking into account \eqref{ec:rhok converges strongly Step 3 viscosity solutions} and \Cref{lem:radial}, one obtains
	\begin{equation}\label{eq:convergence derivative of the volumetric kernels FPT step}
		\diffEv [\rho_t^{\{k\}} ]  \rightarrow \diffEv [ \rho ]  \quad \text{in} \, C([0,T]\times [0, R_v]).
	\end{equation}
	Once more, from \eqref{eq:Mass convergence M^E to M} and \eqref{eq:convergence derivative of the volumetric kernels FPT step}, combined with the stability result \cite[Chapter 3 Theorem 2]{Kat14} we have that $M$ is a viscosity solution of the problem \eqref{eq:General_Mass_equation} for $\Phi (s) = s^m$, $0<m<1$, and the drift $\diffEv\left[\changevariables^{-1}\left[\frac{\partial M}{\partial v}\right]\right]$. 
\end{proof}

\begin{remark}
	In the following, every time we refer to $M$ as a viscosity solution of \eqref{eq:General_Mass_equation} we will assume that we are taking $\Phi (s) = s^m$, $0<m<1$, and the drift $\diffEv\left[\changevariables^{-1}\left[\frac{\partial M}{\partial v}\right]\right]$ unless it is stated otherwise.
\end{remark}

\subsection{Convergence of the mass solution as \texorpdfstring{$t\to \infty$}{t to infty}. Proof of 
\texorpdfstring{\Cref{prop:M^[n]_converge_uniformly}}{Theorem \ref{prop:M^[n]_converge_uniformly}}}
\label{sec:Convergence of the mass}

We already  know that the asymptotic in time of $\rho^{[n]}$ is  a distribution $\widehat{\mu}\in W^{-1,1}(B_R)$. In this subsection, we intend to link this distribution with the asymptotic of $M^{[n]}$, defined in \eqref{ec:Mass_tn_tn+1}, as $n \rightarrow \infty$. In fact, we can study the limit $M^{[n]} \rightarrow \widehat{M}$ and discuss its relationship with the limit $\widehat{\mu}$ obtained in \Cref{th:Convergence_stationary_state}.

We will start by proving that $M^{[n]} \rightarrow \widehat{M}$ converges point-wise in $[0,1] \times [0,R_v]$ uniformly over compact subsets on $[0,1] \times (0,R_v]$. We also identify $\frac{\partial \widehat{M}}{\partial v}$ with $\widehat{\mu}$.

\begin{remark}\label{rem:dM/dv identify with mu}
    We identify $\frac{\partial \widehat{M}}{\partial v}$ with $\widehat{\mu}$ in the sense of the isometry $\changevariables$ that we discussed in \Cref{lem:change of variables map}. In this way, we prove that $\frac{\partial \widehat{M}}{\partial v} = \changevariables[\widehat{\mu}]$ in $L^{\infty} (\mathcal{M} ([0,R_v]))$.
\end{remark}

We are now able to prove \Cref{prop:M^[n]_converge_uniformly}. We divide the proof in several steps. 

    \textbf{Step 1. Ascoli-Arzela over compacts of $[0,1] \times (0,R_v]$.}
	From \eqref{ec:C_alpha_estimate_in_time_and_space_Mass} we know that for all $v_1, v_2 \in [\frac{1}{k}, R_v]$ and $t_1, t_2 \in [0,1]$ for any $k \in \mathbb{Z}_{>0}$, we have the estimate,
	\begin{equation}\label{ec:Uniform_convergence_Mn_to_Minfty}
		\left|M^{[n]} (t_1, v_1) - M^{[n]} (t_2, v_2) \right| \leq C_{k} \left( |v_1 - v_2 | + |t_1- t_2 |^{\frac{1}{m+1}} \right)^{\alpha}.
	\end{equation}
Notice that we can use Di-Benedetto's regularity result \eqref{ec:C_alpha_estimate_in_time_and_space_Mass} for $M$ on the strip $[T,\infty)\times [\frac{1}{k}, R_v]$ for a fixed $T>0$ with $n$ large enough.
	Taking successive subsequences $n(k,j)$ we get that for every $k \in \mathbb{Z}_{>0}$
	\begin{equation*}
	    M^{[n(k,j)]} \rightarrow \widehat{M} \quad \text{in }  C\left ([0,1] \times \left[\tfrac 1 k , R_v\right] \right) \text{ as } j \to \infty.
	\end{equation*}
	The diagonal satisfies
	\begin{equation}\label{eq:Strong convergence mass}
		M^{[n(k,k)]} \rightarrow \widehat{M} \quad \text{in }  C_{loc}([0,1] \times (0 , R_v]).
	\end{equation}
	From here on we re-label this sequence as $M^{[n]}$.
	
	\textbf{Step 2. Extension of $\widehat{M}$ and some properties.}
	Based on the fact that $M^{[n]}(t,0)=0$ for all $n$, the extension of $\widehat{M}$ so that $\widehat{M}(t,0) = 0$ is natural. 
	Notice here that the limit of $\widehat{M}(t,v)$ as $v\to 0^+$ does not need to be $0$. 
	In any case, due to \eqref{eq:Strong convergence mass}, we have that  
	\begin{equation*}
        M^{[n]} (t,v) \rightarrow \widehat{M} (t,v) \quad \text{point-wise in } [0,1] \times [0,R_v].
    \end{equation*}
    Furthermore, because $M^{[n]}$ are non-decreasing functions on $v$, $M^{[n]}(t,0)=0$ and $M^{[n]}(t,R_v)=\| \rho_0 \|_{L^1(B_R)}$, thus we get that $M^{[n]}$ are functions with bounded variation given by $\| \rho_0 \|_{L^1(B_R)}$. As a conclusion their derivatives $\frac{\partial M^{[n]}}{\partial v}\in \mathcal{M} ([0, R_v])$ have total mass $\| \rho_0 \|_{L^1(B_R)}$. Then, from Banach-Alaoglu Theorem and \eqref{eq:Strong convergence mass}, up to a subsequence,
    \begin{equation}\label{eq:weak star convergence of the derivatives sec4}
	    \frac{\partial M^{[n]}}{\partial v}  \rightharpoonup \frac{\partial \widehat{M}}{\partial v} \quad \text{weak}-\ast \text{ in } L^{\infty}(0,1; \mathcal{M} ([0, R_v])).
	\end{equation}

	\textbf{Step 3. Identification between $\frac{\partial \widehat{M}}{\partial v}$ and $\changevariables [\widehat{\mu} ]$.}
	From \Cref{th:Convergence_stationary_state}, since $\rho_t^{[n]}$ are measures with total mass $\| \rho_0 \|_{L^1 (B_R)}$, we can use Banach-Alaouglu Theorem to deduce that, up to a subsequence
	\begin{equation*}
	    \rho^{[n]}_t \rightharpoonup \widehat{\mu} \quad \text{weak}-\ast \text{ in } L^{\infty}(0,1; \mathcal{M} (\overline{B_R})). 
	\end{equation*}
	Since $\changevariables$ is a linear isometry it follows that,
	\begin{equation}\label{eq:weak star convergence of the isometries}
	    \changevariables [\rho_t^{[n]}] \rightharpoonup \changevariables [\widehat{\mu} ] \quad \text{weak}-\ast \text{ in }  L^{\infty}(0,1;\mathcal{M} ([0, R_v])). 
	\end{equation}
	Furthermore, because $\changevariables [\rho^{[n]}_t] = \frac{\partial M^{[n]}}{\partial v} (t , \cdot)$, combining \eqref{eq:weak star convergence of the derivatives sec4} and \eqref{eq:weak star convergence of the isometries} we get that $\frac{\partial  \widehat{M} }{\partial v} = \changevariables [\widehat{\mu}]$ in $L^\infty ( \mathcal M ([0,R_v]))$ and in particular $\overline{M}$ does not depend on time. Finally, combinig \Cref{th:Convergence_stationary_state} with \Cref{lem:radial} we get that
	\begin{equation}\label{eq:Erhon to Emu}
	    \diffEv[\rho^{[n]}] \rightarrow \diffEv[\widehat{\mu}] \quad \text{in } C([0,1] \times [0,R_v]).
	\end{equation}
	Finally, from \eqref{ec:Uniform_convergence_Mn_to_Minfty} and \eqref{eq:Erhon to Emu}, combined with the stability result \cite[Chapter 3 Theorem 2]{Kat14} we have that $\widehat{M}$ is a viscosity solution of \eqref{eq:General_Mass_equation}.
	\qed

\subsection{Some examples where \texorpdfstring{$\widehat{M}$}{widehat M} is regular}\label{sec:Regularity M}

\subsubsection{When the flux \texorpdfstring{$\mathfrak E$}{E} has non-negative sign. Proof of \texorpdfstring{\Cref{th:Concentration_Phenomena_ball_viscosity_solutions}}{Theorem \ref{th:Concentration_Phenomena_ball_viscosity_solutions}}}\label{sec:Regularity M 1}

We proceed similarly to the Step 4 of the proof of \cite[Theorem 5.4]{CGV22}. We consider the notion of viscosity solution for $\widehat{M}$.
    At a point of contact $v_0\in (0,R_v)$ of a viscosity test function touching from below, we deduce
	\begin{equation*}
		- \frac{\partial^2 \varphi}{\partial v^2} (v_0) \geq \frac{1}{m} \left( \frac{\partial \varphi}{\partial v} (v_0) \right)^{2-m} \diffEv[\widehat{\mu}]  (v_0) \geq 0,
	\end{equation*}
	which implies that $\widehat{M}$ is a viscosity super-solution of $- \Delta M = 0$. Due to \cite{Ish95}, $\widehat M$ is a distributional super-solution of $- \Delta M = 0$ as well. In particular, distributional super-solutions are concave which implies $\widehat{M} \in W^{1, \infty} ([\overline{\varepsilon}, R_v - \overline{\varepsilon}])$ for all $\overline{\varepsilon} > 0$.
	
	Therefore, we can think of the right-hand side as a datum
	\begin{equation*}
		f = \frac{1}{m} \left( \frac{\partial \widehat{M}}{\partial v} (v_0) \right)^{2-m} \diffEv[\widehat{\mu}] (v_0) \in L^{\infty} (\overline{\varepsilon}, R_v - \overline{\varepsilon}).
	\end{equation*}
	Using the regularisation results from \cite{CC95} we get that $\widehat{M} \in C^{1, \alpha} (2 \overline{\varepsilon}, R_v - 2 \overline{\varepsilon})$. Furthermore, since $\diffEv [\widehat{\mu}] \in W^{1, \infty} ([0,R_v])$, $f \in C^{0, \beta} (2 \overline{\varepsilon}, R_v - 2 \overline{\varepsilon})$ for some $\beta > 0$, which guarantees $\widehat{M} \in C^{2 , \beta} ( 4 \overline{\varepsilon}, R_v - 4 \overline{\varepsilon})$.
\qed

\subsubsection{The regularity of \texorpdfstring{$\widehat{M}$}{widehat M} when \texorpdfstring{$V, K$}{V, K} are compactly supported}\label{subsec:Regularity_Minfty}
In this subsection, we will focus on discussing about the regularity of $\widehat{M}$. Here, we assume  the basic hypotheses \eqref{hyp:Basic BR}, that the initial data $\rho_0$ and the potentials $V$ and $K$ are radially symmetric, and furthermore, that  $V$ and $K$ have compact support. In order to do that, let us fix $a> 0$ and define the infimal convolution of a function $M$ as
\begin{equation*}
	M_{\varepsilon} (v) := \inf_{\bar{v} \in ( a , R_v ) } \left( M ( \bar{v} ) + \frac{| v - \bar{v} |^2}{2 \varepsilon} \right),
\end{equation*}
where $\varepsilon>0$, 
and 
\begin{equation*}
	M^{\varepsilon} (v) := \sup_{\bar{v} \in ( a , R_v ) } \left( M ( \bar{v} ) - \frac{| v - \bar{v} |^2}{2 \varepsilon} \right).
\end{equation*}
It is not difficult to show that there exists $r(\ee) \to 0$ such that
is equivalent to:
\begin{align}\label{ec:M^infty_inf-convolution}
	M_{\varepsilon} (v) = \inf_{\bar{v} \in ( a , R_v  ) \cap B_{r (\varepsilon)} (v)} \left( M ( \bar{v} ) + \frac{| v - \bar{v} |^2}{2 \varepsilon} \right),\\
	\label{ec:Minfty_sup-convolution}
	M^{\varepsilon} (v) = \sup_{\bar{v} \in ( a , R_v ) \cap B_{r (\varepsilon)} (v)} \left( M ( \bar{v} ) - \frac{| v - \bar{v} |^2}{2 \varepsilon} \right).
\end{align}
We start recalling some useful properties of $M_{\varepsilon}$. Let $M: (0 , R_v ) \rightarrow \mathbb{R}$ be bounded and lower semicontinuous in $(0 , R_v )$. It is well known that $M_{\varepsilon}$ is an increasing sequence of semiconcave functions in $(0 , R_v )$ which converges pointwise to $M$. From the fact that all the $M_{\varepsilon}$ are semiconcave, we obtain several well-known properties that we discuss in \Cref{rem:Evans Gariepy}.

For these, and some further properties of the infimal convolution, see \cite{CIL92, MO19} and \cite[Lemma A.1]{JJ12}. The next lemma is the counterpart of \cite[Lemma A.1 (iii)]{JJ12} for our setting.

\begin{lemma}\label{lem:Look_at_inf-convolution}
	Assume that $M: (a, R_v) \rightarrow \mathbb{R}$ is bounded and lower semicontinuous in $(a , R_v)$. If $M$ is a viscosity solution in the sense 
	\begin{equation*}
		- \frac{\partial^2 M}{\partial v^2} \geq  f \left( v , M, \frac{\partial M}{\partial v} \right)
	\end{equation*}
	in $(a , R_v)$, then $M_{\varepsilon}$ is a viscosity solution to 
	\begin{equation*}
		- \frac{\partial^2 M_{\varepsilon}}{\partial v^2} \geq  f_{\varepsilon} \left( v , M_{\varepsilon}, \frac{\partial M_{\varepsilon}}{\partial v} \right)
	\end{equation*}
	in $(a + r (\varepsilon), R_v - r (\varepsilon) )$, where
	\begin{equation*}
		f_{\varepsilon} (v, s , q) := \inf_{\bar{v} \in B_{r (\varepsilon)} (v)} f ( \bar{v}, s , q).
	\end{equation*}
	Respectively, if $M$ bounded and upper semicontinuous in $(a,R_v)$ is a viscosity subsolution, $M^{\varepsilon}$ is a viscosity solution in the sense,
	\begin{equation*}
		- \frac{\partial^2 M^{\varepsilon}}{\partial v^2} \leq  f^{\varepsilon} \left( v , M_{\varepsilon}, \frac{\partial M_{\varepsilon}}{\partial v} \right)
	\end{equation*}
	in $(a + r (\varepsilon), R_v - r (\varepsilon) )$, where
	\begin{equation*}
		f^{\varepsilon} (v, s , q ) := \sup_{\bar{v} \in B_{r (\varepsilon)}} f ( \bar{v}, s , q).
	\end{equation*}
\end{lemma}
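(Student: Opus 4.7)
The plan is to adapt the classical inf-convolution argument (see \cite{CIL92, MO19} and \cite[Lemma A.1]{JJ12}), of which this lemma is essentially a restatement adapted to our one-dimensional setting. I will describe the supersolution case in detail; the sup-convolution/subsolution case is entirely symmetric.

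First I would fix $v_0 \in (a + r(\varepsilon), R_v - r(\varepsilon))$ and a $C^2$ test function $\varphi$ with $M_\varepsilon \geq \varphi$ near $v_0$ and $M_\varepsilon(v_0) = \varphi(v_0)$. Using the localisation \eqref{ec:M^infty_inf-convolution} together with the lower semicontinuity of $M$, the infimum defining $M_\varepsilon(v_0)$ is attained at some $\bar v_0 \in (a, R_v) \cap \overline{B_{r(\varepsilon)}(v_0)}$, so that
\[
M_\varepsilon(v_0) = M(\bar v_0) + \frac{|v_0 - \bar v_0|^2}{2\varepsilon}.
\]
Next I would shift the test function, setting
\[
\psi(\bar v) := \varphi\bigl(\bar v + (v_0 - \bar v_0)\bigr) - \frac{|v_0 - \bar v_0|^2}{2\varepsilon}.
\]
For $\bar v$ near $\bar v_0$, writing $v := \bar v + (v_0 - \bar v_0)$ gives $v - \bar v = v_0 - \bar v_0$, so the inf-convolution inequality combined with $\varphi \leq M_\varepsilon$ yields
\[
\varphi(v) \leq M_\varepsilon(v) \leq M(\bar v) + \frac{|v - \bar v|^2}{2\varepsilon} = M(\bar v) + \frac{|v_0-\bar v_0|^2}{2\varepsilon},
\]
i.e.\ $M(\bar v) \geq \psi(\bar v)$, with equality at $\bar v = \bar v_0$ by the choice of $\bar v_0$. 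Thus $\psi$ is a smooth test function touching $M$ from below at $\bar v_0$, and by construction $\psi'(\bar v_0) = \varphi'(v_0)$, $\psi''(\bar v_0) = \varphi''(v_0)$.

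Applying the supersolution property of $M$ at $\bar v_0$ and using that $\bar v_0 \in \overline{B_{r(\varepsilon)}(v_0)}$ to pass to the infimum in the first argument of $f$, I would conclude
\[
-\varphi''(v_0) = -\psi''(\bar v_0) \geq f\bigl(\bar v_0, M(\bar v_0), \varphi'(v_0)\bigr) \geq f_\varepsilon\bigl(v_0, M(\bar v_0), \varphi'(v_0)\bigr),
\]
which is the required viscosity inequality for $M_\varepsilon$. The subsolution half is identical up to signs: one chooses a maximiser $\bar v_0$ in \eqref{ec:Minfty_sup-convolution}, defines $\psi(\bar v) := \varphi(\bar v + (v_0 - \bar v_0)) + \frac{|v_0-\bar v_0|^2}{2\varepsilon}$, checks that it touches $M$ from above at $\bar v_0$, and invokes the subsolution inequality together with the definition of $f^\varepsilon$.

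The only delicate point—and the main obstacle to a general statement—is the second argument of $f$: the argument naturally produces $M(\bar v_0) = M_\varepsilon(v_0) - \tfrac{|v_0 - \bar v_0|^2}{2\varepsilon}$ in that slot rather than $M_\varepsilon(v_0)$ itself. In the application the operator extracted from \Cref{def:mass viscosity BR} depends only on the derivative of the mass (via the non-local term $\diffEv$) and not on its pointwise value, so this mismatch does not appear; in a more general setting one would need a suitable monotonicity of $f$ in its second argument to absorb it.
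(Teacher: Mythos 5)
The paper does not give its own proof of this lemma: it simply points to \cite{CIL92, MO19} and \cite[Lemma A.1(iii)]{JJ12}. Your proposal is a correct reconstruction of that standard inf/sup-convolution argument in one space dimension. The key device is exactly the one you use: take an argmin $\bar v_0 \in \overline{B_{r(\varepsilon)}(v_0)}$ of the inf-convolution at $v_0$ (which exists by lower semicontinuity and the localisation \eqref{ec:M^infty_inf-convolution}), shift the test function by $v_0 - \bar v_0$ and subtract the parabolic gap; the resulting $\psi$ touches $M$ from below at $\bar v_0$ with $\psi'(\bar v_0)=\varphi'(v_0)$ and $\psi''(\bar v_0)=\varphi''(v_0)$, and the supersolution inequality for $M$ at $\bar v_0$ plus the definition of $f_\varepsilon$ gives the claim.

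Your closing remark about the second argument is the one genuinely delicate point and you have identified it correctly. As literally written, the lemma asserts the inequality with $f_\varepsilon(v, M_\varepsilon, M_{\varepsilon,v})$, whereas the shifted-test-function argument yields $f_\varepsilon\bigl(v_0, M(\bar v_0), \varphi'(v_0)\bigr)$ with $M(\bar v_0) = M_\varepsilon(v_0) - |v_0-\bar v_0|^2/(2\varepsilon) \le M_\varepsilon(v_0)$, so to convert one to the other a monotonicity hypothesis on $f$ in its second argument (non-increasing, the natural properness for supersolutions) is implicitly required. In the paper's actual use, the operator read off from \eqref{ec:Mass_equation_viscosity} is $f(v,s,q) = \tfrac 1m \diffEv[\widehat\mu](v)\, q^{2-m}$, independent of $s$, and the subsequent displays \eqref{ec:Minftyepsilon_viscosity_supersolution}--\eqref{def:diffEv^epsilon} never involve the value of $\widehat M_\varepsilon$ on the right-hand side, so the discrepancy is vacuous there, exactly as you say.
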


\begin{remark}
    From \Cref{prop:M^[n]_converge_uniformly} we know that $\widehat{M}$ is continuous in $(0,R_v]$ and, in particular, $\widehat{M}$ is lower and upper semicontinuous in $[a, R_v]$ for every $a>0$.
\end{remark}

Another well-known result, see for example \cite[Chapter 4 Theorem 7(f)]{Kat14}, gives us convergence of the infimal and supremal convolution to the original solution.
\begin{lemma}
	Assume $M \in C^0 ([a , R_v])$, then we have that
	\begin{equation}\label{ec:uniform_convergence_inf-convolution}
		M_{\varepsilon} \nearrow M \quad \mathrm{in} \, C^0 ([a , R_v]) \quad \mathrm{as} \, \varepsilon \rightarrow 0,
	\end{equation}
	and, respectively
	\begin{equation}\label{ec:uniform_convergence_sup-convolution}
		M^{\varepsilon} \searrow M \quad \mathrm{in} \, C^0 ([a , R_v]) \quad \mathrm{as} \, \varepsilon \rightarrow 0.
	\end{equation}
\end{lemma}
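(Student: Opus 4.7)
The plan is to establish the claimed monotone uniform convergence in three steps, exploiting monotonicity, pointwise convergence, and Dini's theorem on the compact interval $[a,R_v]$.

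First I would record the monotonicity: for $0<\varepsilon_1<\varepsilon_2$ and any $\bar v \in (a,R_v)$ one has $|v-\bar v|^2/(2\varepsilon_1)\geq |v-\bar v|^2/(2\varepsilon_2)$, so taking the infimum gives $M_{\varepsilon_2}(v)\leq M_{\varepsilon_1}(v)$; hence $\varepsilon\mapsto M_\varepsilon$ is non-decreasing as $\varepsilon\searrow 0$. The trivial choice $\bar v=v$ in the infimum yields the ceiling $M_\varepsilon(v)\leq M(v)$. The symmetric argument gives that $M^\varepsilon$ is non-increasing as $\varepsilon\searrow 0$ and bounded below by $M(v)$.

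Next I would prove pointwise convergence at each $v\in[a,R_v]$. Because $M$ is continuous on the compact set $[a,R_v]$ it is bounded, so the representation \eqref{ec:M^infty_inf-convolution} is available with $r(\varepsilon)\to 0$ (e.g.\ $r(\varepsilon)=2\sqrt{\varepsilon\,\|M\|_\infty}$, since any near-minimizer $\bar v_\varepsilon$ must satisfy $|v-\bar v_\varepsilon|^2/(2\varepsilon)\leq 2\|M\|_\infty$). Thus any (approximate) minimizer $\bar v_\varepsilon$ satisfies $\bar v_\varepsilon\to v$, and continuity of $M$ forces $M(\bar v_\varepsilon)\to M(v)$. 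Combined with the ceiling and the monotonicity, this gives $M_\varepsilon(v)\nearrow M(v)$. The sup-convolution is symmetric.

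For the upgrade to uniform convergence I would note that each $M_\varepsilon$ is continuous: being the infimum of a family of quadratic functions indexed on a compact set, $M_\varepsilon$ is semiconcave in $v$, hence locally Lipschitz, hence continuous on $[a,R_v]$. We therefore have a monotone sequence of continuous functions converging pointwise to the continuous limit $M$ on a compact interval, and Dini's theorem yields uniform convergence. The same argument applies to $M^\varepsilon\searrow M$.

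The only potential obstacle is the bookkeeping needed to justify that the infimum \eqref{ec:M^infty_inf-convolution} reduces to a search on $B_{r(\varepsilon)}(v)$ with $r(\varepsilon)\to 0$; this is a routine consequence of the boundedness of $M$, and once it is in hand all three steps above are standard.
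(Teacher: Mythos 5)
Your proposal is correct. The paper does not prove this lemma but simply cites \cite[Chapter 4 Theorem 7(f)]{Kat14}; you supply a self-contained elementary argument in its place, which is perfectly valid. The three ingredients you use---(i) monotonicity of $\varepsilon\mapsto M_\varepsilon(v)$ together with the ceiling $M_\varepsilon\le M$ obtained by taking $\bar v\to v$, (ii) pointwise convergence via the localisation \eqref{ec:M^infty_inf-convolution} with $r(\varepsilon)=O(\sqrt{\varepsilon\,\|M\|_\infty})$ and continuity of $M$, and (iii) Dini's theorem on the compact interval $[a,R_v]$ using continuity of $M_\varepsilon$ (semiconcavity, hence local Lipschitz continuity)---are exactly the standard route to this statement, and are the content of the cited result in Katzourakis. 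One tiny remark on bookkeeping: since Dini's theorem is usually stated for sequences, you should either invoke a sequential version along an arbitrary $\varepsilon_n\searrow 0$ and then use monotonicity to interpolate to the full family, or note explicitly that the proof of Dini carries over to monotone nets; and in Step (ii) the chain of inequalities should be $M(v)\ge M_\varepsilon(v)\ge M(\bar v_\varepsilon)-\delta$ with $\bar v_\varepsilon\to v$, which is what you intend but worth spelling out. Also be careful that the infimum runs over the open interval $(a,R_v)$, so at the endpoints the ceiling $M_\varepsilon\le M$ requires a limiting choice $\bar v\to v$ rather than $\bar v=v$; continuity of $M$ handles this. None of these affect the correctness of the argument.
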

Focusing on our problem, observe that $\widehat{M}$ is a viscosity solution of the equation
\begin{equation}\label{ec:Mass_equation_viscosity}
	- \frac{\partial^2 \widehat{M}}{\partial v^2} (v_0) = \frac{1}{m}  \diffEv[\widehat{\mu}] (v_0)  \left( \frac{\partial \widehat{M}}{\partial v} (v_0 ) \right)^{2-m}.
\end{equation}
Therefore, from \Cref{lem:Look_at_inf-convolution} we have that $\widehat{M}_{\varepsilon}$ is a viscosity solution in the sense
\begin{equation}\label{ec:Minftyepsilon_viscosity_supersolution}
	- \frac{\partial^2 \widehat{M}_{\varepsilon}}{\partial v^2} (v_0) \geq \frac{1}{m}  \diffEv_{\varepsilon}(v_0)  \left( \frac{\partial \widehat{M}_{\varepsilon}}{\partial v} (v_0 ) \right)^{2-m}
\end{equation}
in $(a + r (\varepsilon ), R_v - r (\varepsilon ))$, where $r (\varepsilon ) \rightarrow 0$ when $\varepsilon \rightarrow 0$ and with
\begin{equation}\label{def:diffEv_epsilon}
	\diffEv_{\varepsilon} (v_0) = \min_{v \in \overline{B_{r (\varepsilon )} (v_0)}} \diffEv[\widehat{\mu}]  (v).
\end{equation}
And respectively, $\widehat{M}^{\varepsilon}$ is a viscosity solution in the sense 
\begin{equation}\label{ec:Minftyepsilon_viscosity_subsolution}
	- \frac{\partial^2 \widehat{M}^{\varepsilon}}{\partial v^2} (v_0) \leq \frac{1}{m}  \diffEv^{\varepsilon}(v_0)  \left( \frac{\partial \widehat{M}^{\varepsilon}}{\partial v} (v_0 ) \right)^{2-m}
\end{equation}
in $(a + r (\varepsilon ), R_v - r (\varepsilon ))$ and with
\begin{equation}\label{def:diffEv^epsilon}
	\diffEv^{\varepsilon} (v_0) = \max_{v \in \overline{B_{r (\varepsilon )} (v_0)}} \diffEv[\widehat{\mu}] (v).
\end{equation}
Notice that $\diffEv_{\varepsilon}$ and $\diffEv^{\varepsilon}$ converge uniformly to $\diffEv [ \widehat{\mu} ]$.
\begin{remark}
    Let $\widetilde{\omega}$ be the modulus of continuity of $\diffEv [\widehat \mu]$. Since $\diffEv_\varepsilon(v_0) = \diffEv [\widehat \mu] (v_\varepsilon)$ where $|v_0 - v_\ee| \le r(\varepsilon)$, then 
	\begin{equation}\label{ec:Minft_Tech_assumption_1_2}
	    |\diffEv_\varepsilon(v_0) - \diffEv [\widehat \mu] (v_0)| \le \tilde{\omega}(r(\ee)).
	\end{equation}
	Notice that $\omega(\ee) = \widetilde{\omega}(r(\ee))$ is a modulus of continuity. Respectively for $\diffEv^{\varepsilon}$.
\end{remark}

Furthermore, since we are assuming $V$ and $K$ compactly supported and non-negative, then the function
\begin{equation*}
	    \Ev[\widehat{\mu}](v) = - \int_{v}^{R_v} \diffEv[\widehat \mu] (s) \diff s =  
	    \Big( V(x) + \int_{B_R} K(x,y) \diff \widehat \mu (y) \Big) \Bigg|_{   v = |B_1| |x|^d  }.
	\end{equation*}
is also compactly supported and non-negative in $[0,R_v)$. In particular, there must exist $b > 0$ such that
\begin{align}\label{ec:Minft_Tech_assumption_3}
		\begin{split}
			\Ev[\widehat{\mu}] \equiv 0 & \quad \mathrm{in} \, [R_v - b, R_v ),  \\
			\Ev[\widehat{\mu}] \geq 0  & \quad \mathrm{in} \, (0, R_v - b ). 
		\end{split}
	\end{align}
We present some extra results that follows from the remark we have just noticed.
\begin{lemma}\label{lem:Minft_Tech_remark_1}
	Let $\widehat{M}$ be a viscosity solution of \eqref{ec:Mass_equation_viscosity}. Assume $V$ and $K$ compactly supported so that $\Ev [\widehat{\mu}]$ satisfies \eqref{ec:Minft_Tech_assumption_3}, then $\widehat{M}$ is linear in the interval $[R_v - b , R_v]$. Moreover, if $\widehat{M} (0^+ ) < \widehat{M} (R_v)$, $\widehat{M}$ is non-constant in that interval.
\end{lemma}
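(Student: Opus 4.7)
The plan is to prove the two claims in order. For linearity, the key observation is that the hypotheses $V \ge 0$ and $K \ge 0$ in \eqref{hyp:Basic BR}, together with the non-negativity of $\widehat\mu$ (as a weak-$\ast$ limit of non-negative measures) and the identity $\Ev[\widehat\mu] \equiv 0$ on the relevant spatial region from \eqref{ec:Minft_Tech_assumption_3}, force both $V$ and the smooth map $x \mapsto \int_{B_R} K(x,y)\,d\widehat\mu(y)$ to vanish on an open neighborhood (in $x$-coordinates). Hence their gradients vanish there, and through the formula in \Cref{lem:radial} this yields $\diffEv[\widehat\mu] \equiv 0$ on the open interval $(R_v-b, R_v)$. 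The viscosity equation \eqref{ec:Mass_equation_viscosity} collapses on this interval to $-\partial_v^2 \widehat M = 0$ (in the viscosity sense), so $\widehat M$ is simultaneously a viscosity super- and subsolution of the one-dimensional Laplace equation, therefore concave and convex, therefore affine. Continuity of $\widehat{M}$ from \Cref{prop:M^[n]_converge_uniformly} extends the affine character to the closed interval $[R_v-b, R_v]$.

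For non-constancy I proceed by contraposition. Since $\widehat M$ is non-decreasing with $\widehat M(R_v) = \|\rho_0\|_{L^1(B_R)}$, linearity gives $\widehat M(v) = \widehat M(R_v) - h\,(R_v - v)$ on $[R_v-b, R_v]$ for some $h \ge 0$; I must show that $h = 0$ implies $\widehat M(0^+) = \widehat M(R_v)$. The guiding heuristic is the formal integration of the PDE on any region where $\partial_v \widehat M > 0$,
\begin{equation*}
(\partial_v \widehat M(v))^{m-1} \,=\, h^{m-1} + \tfrac{1-m}{m}\,\Ev[\widehat\mu](v),
\end{equation*}
which with $h=0$ forces the left-hand side to be $+\infty$ everywhere, hence $\partial_v \widehat M \equiv 0$ and $\widehat M$ constant on $(0, R_v]$.

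To turn this into a rigorous viscosity argument I would work with the sup-convolution $\widehat M^\ee$ from \eqref{ec:Minftyepsilon_viscosity_subsolution}: it is semiconvex, a.e.\ twice differentiable, satisfies the subsolution inequality with $\diffEv^\ee$, and converges uniformly to $\widehat M$ by \eqref{ec:uniform_convergence_sup-convolution}. Because $\widehat M \equiv \widehat M(R_v)$ on $[R_v-b, R_v]$, one checks directly from the definition \eqref{ec:Minftyepsilon_viscosity_subsolution} that $\widehat M^\ee$ is also constant equal to $\widehat M(R_v)$ on $[R_v - b + r(\ee), R_v - r(\ee)]$. Integrating the subsolution ODE pointwise a.e.\ from this right-plateau leftwards and using the degeneracy of $(U')^{m-1}$ at $U'=0$, the flat profile must propagate to a maximal interval whose length is controlled uniformly in $\ee$. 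Passing $\ee \to 0$ and invoking uniform convergence yields $\widehat M(v) = \widehat M(R_v)$ for all $v \in (0, R_v]$, hence $\widehat M(0^+) = \widehat M(R_v)$, contradicting the assumption.

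The main obstacle is the last step: the equation is degenerate precisely where $\partial_v \widehat M = 0$, so a classical strong maximum principle cannot be invoked directly, and the propagation of the plateau must be extracted from the one-sided ODE analysis on the sup-convolution together with a careful passage to the limit $\ee \to 0$. This is where I expect the bulk of the technical work to lie.
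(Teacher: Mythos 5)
Your first claim (linearity on $[R_v-b,R_v]$) is proved the same way as in the paper: from \eqref{ec:Minft_Tech_assumption_3} one reads off $\diffEv[\widehat\mu]\equiv 0$ on $(R_v-b,R_v)$, so the viscosity equation \eqref{ec:Mass_equation_viscosity} reduces to $-\partial_v^2\widehat M=0$ there, and a continuous viscosity sub- and super-solution of the Laplace equation is affine. No issue there.

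For non-constancy you take a genuinely different route from the paper, and the gap you flag at the end is real: you do not actually close the argument. The paper's proof simply casts the stationary equation in the form $F^\infty(v,r,p,X)=-X-\tfrac1m\diffEv[\widehat\mu](v)\,p^{2-m}$ and invokes the Kawohl--Kutev strong maximum principle for semicontinuous viscosity supersolutions (\Cref{th:KK98}), after checking hypotheses \eqref{ec:Maximum_Principle_viscosity_Tech_assumption_(5)}--\eqref{ec:Maximum_Principle_viscosity_Tech_assumption_(11)}: if $\widehat M$ were constant on $[R_v-b,R_v]$ it would attain its (global, since $\widehat M$ is non-decreasing) maximum at an interior point, hence be constant on all of $(0,R_v)$, contradicting $\widehat M(0^+)<\widehat M(R_v)$. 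Your remark that ``a classical strong maximum principle cannot be invoked directly'' therefore underestimates the available tools: the Kawohl--Kutev theorem is tailored precisely to degenerate fully nonlinear equations and viscosity solutions, and its compatibility condition $\omega_1\ge\omega_2$ near zero is satisfied here because $p\mapsto p^{2-m}$ is locally Lipschitz near $p=0$ (since $2-m>1$), with $\omega_1=\omega_2=\mathrm{id}$.

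Your alternative — sup-convolutions plus a leftward ODE propagation from the flat right plateau — can in fact be made rigorous, but the key mechanism is not, as you write, ``the degeneracy of $(U')^{m-1}$ at $U'=0$.'' The rigorous engine is a backward Gr\"onwall inequality exploiting the very same local Lipschitzness of $p\mapsto p^{2-m}$ at $p=0$: writing $q=\partial_v\widehat M^\varepsilon$ and integrating the subsolution inequality with $\Theta(s)=s$ (not the singular $\Theta(s)=-s^{m-1}$, which degenerates exactly when $h=0$) yields, for a.e.\ $v$, a bound of the form $q(v)\le K\int_v^{v_0} q^{2-m}\,d\sigma$ with $q(v_0)=0$; since $q^{2-m}\le \|q\|_\infty^{1-m}\,q$, Gr\"onwall gives $q\equiv 0$, and one then lets $\varepsilon\to0$. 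As written, your proposal leaves the ``propagation of the plateau'' and the uniformity in $\varepsilon$ unproved, and it reaches for the singular weight $s^{m-1}$ from the proof of \Cref{th:Regularity_of_Minfty}, which is not usable at $h=0$. So the strategy is salvageable (and instructive, as it reveals the same Lipschitz structure that the Kawohl--Kutev verification uses), but it is not a proof as it stands, and it is considerably longer than the paper's one-line appeal to \Cref{th:KK98}.
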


\begin{proof}
	The result follows as a consequence of the maximum principle for semicontinuous viscosity solutions of nonlinear elliptic partial differential equations derived by Kawohl and Kutev in \cite{KK98}, see \Cref{ap:Maximum principle viscosity solutions}, where we state this version of the maximum principle.
	Let us define
	\begin{align*}
		F^{\infty}\colon& \mathbb{R} \times \mathbb{R} \times \mathbb{R}^d \times \mathbb{R}^{d \times d}  \longrightarrow  \mathbb{R} \\
		& \qquad (z,r,p,X) \qquad \mapsto \, \, \, - X - \frac{1}{m} \diffEv[\widehat{\mu}]   (z) \, p^{2-m}\,,
    \end{align*}
	We are interested on studying the fully nonlinear equation 
	\begin{equation*}
		F^{\infty}(v, \widehat{M}, D \widehat{M} , D^2 \widehat{M} ) = 0.
	\end{equation*}
	Since $\diffEv[\widehat{\mu}] \equiv 0$ in $(R_v - b , R_v)$ it follows that $\widehat{M}$ is linear on that interval. Let us now prove the assumptions \eqref{ec:Maximum_Principle_viscosity_Tech_assumption_(5)}-\eqref{ec:Maximum_Principle_viscosity_Tech_assumption_(11)} for $F^{\infty}$ to see that $\widehat{M}$ is also non-constant. \eqref{ec:Maximum_Principle_viscosity_Tech_assumption_(5)} and \eqref{ec:Maximum_Principle_viscosity_Tech_assumption_(11)} follow immediately for $F^{\infty}$. Furthermore,
	\begin{equation*}
		F^{\infty} (z, r, p, Y) - F^{\infty} (z, r, p, X) = -(Y-X),
	\end{equation*}
	and \eqref{ec:Maximum_Principle_viscosity_Tech_assumption_(8)} also follows for the modulus of ellipticity $\omega_1 (s) = s$. Finally, 
	\begin{align*}
		\left| F^{\infty} (z, r, p, X) - F^{\infty} (z, r, q, Y) \right| & = \left| Y-X + \frac{1}{m} \diffEv[\widehat{\mu}] (z) \left(   p^{2-m} -  q^{2-m} \right) \right| \\
		& \leq \left| Y-X \right| + \frac{1}{m} \Big\| \diffEv[\widehat{\mu}]  \Big\|_{L^{\infty}(0,R_v)} (2-m) \,| \xi |^{1-m}  |p-q|, 
	\end{align*}
	for some $\xi \in (0, K)$ between $p$ and $q$. Then, \eqref{ec:Maximum_Principle_viscosity_Tech_assumption_(10)} is satisfied for 
	\begin{equation*}
	    A_K=1, \qquad B_K = \frac{2-m}{m} \Big\| \diffEv[\widehat{\mu}]  \Big\|_{L^{\infty}(0,R_v)} K^{1-m},
	\end{equation*}
	with $\omega_2$ the identity once more. Therefore, because $\omega_1 \equiv \omega_2$, \eqref{ec:Maximum_Principle_viscosity_Tech_assumption_(extra)} is also trivially satisfied. As a consequence, we can apply \Cref{th:KK98} to $F^{\infty}$ and the result follows immediately from it.
\end{proof}

\begin{remark}\label{rem:monotonicity inf sup convolution}
    If $M$ is non-decreasing, then $M_{\varepsilon}$ and $M^{\varepsilon}$ are both non-decreasing. Furthermore, if we combine this with the maximum principle stated on \Cref{th:KK98} it implies that if $M$ is non-constant, then $M_{\varepsilon}$ and $M^{\varepsilon}$ are also strictly increasing.
\end{remark}

Because, $\widehat{M}$ is increasing and linear at the end of its domain we can take advantage of the following result.
\begin{lemma}\label{lem:sup- super-viscosity_solutions_linear_case}
	Assume $M$ is linear on an interval, $M(v) = A v + B$ on $(\alpha, \beta)$. Then, the infimal convolution  is $M_{\varepsilon} (v) = A v + B - \frac \ee 2 A^2$ on $(\alpha + \varepsilon A , \beta - \varepsilon A )$ (resp. $M^{\varepsilon} (v) = A v + B + \frac \ee 2 A^2$).
\end{lemma}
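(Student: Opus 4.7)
The plan is a direct completion-of-squares calculation. I would substitute the linear form $M(\bar v) = A\bar v + B$ into the infimal convolution definition and regroup to obtain
\begin{equation*}
    A\bar v + B + \frac{(v-\bar v)^2}{2\varepsilon} = Av + B - \frac{\varepsilon A^2}{2} + \frac{1}{2\varepsilon}\bigl(\bar v - (v - \varepsilon A)\bigr)^2,
\end{equation*}
from which the formal minimizer is $\bar v^{\star} = v - \varepsilon A$ with minimum value $Av + B - \tfrac{\varepsilon A^2}{2}$. An analogous computation for the supremal convolution gives maximizer $\bar v^{\star\star} = v + \varepsilon A$ and value $Av + B + \tfrac{\varepsilon A^2}{2}$.

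Next I would verify (i) that both formal extrema land inside $(\alpha, \beta)$ where $M$ is truly linear, and (ii) that no $\bar v$ outside the linear region beats them. For (i), requiring simultaneously $v - \varepsilon A \in (\alpha,\beta)$ and $v + \varepsilon A \in (\alpha,\beta)$ (using $A \geq 0$, which holds in the intended application since $\widehat M$ is non-decreasing) pins down exactly the common interval $(\alpha + \varepsilon A, \beta - \varepsilon A)$ stated in the lemma. For (ii) I would invoke the localization inherent in the equivalent formulations \eqref{ec:M^infty_inf-convolution}--\eqref{ec:Minfty_sup-convolution}: the inf and sup are effectively taken over $B_{r(\varepsilon)}(v)$, and since $r(\varepsilon) \to 0$ with $\varepsilon$, for $\varepsilon$ small this ball is contained in $(\alpha, \beta)$ whenever $v$ is strictly interior, so no competing $\bar v$ from outside the linear region enters the infimum or supremum.

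The only mild obstacle is the bookkeeping for condition (ii), since one has to check that the quadratic penalty in the definition really does dominate the linear part away from the candidate extremum; this is exactly what is encoded by the non-negativity of the remainder $\tfrac{1}{2\varepsilon}(\bar v - (v-\varepsilon A))^2$ in the identity above. The heart of the proof is thus the single algebraic rearrangement displayed, and the rest is a matter of recording the range of $v$ and invoking the already-established localization property.
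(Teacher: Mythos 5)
Your proof is correct and follows essentially the same route as the paper's: the paper's one-line argument ("due to the convexity... the infimum is achieved at $\bar v = v - \varepsilon A$") is precisely the first-order condition that your explicit completion of squares makes visible, and the interval restriction arises for the same reason (keeping $v \mp \varepsilon A$ inside $(\alpha,\beta)$, using $A \geq 0$). The one point you raise that the paper leaves implicit — that no competitor from outside the linear region can beat the candidate — is handled by the $r(\varepsilon)$-localization as you say; note that this strictly justifies the formula only on $(\alpha + r(\varepsilon), \beta - r(\varepsilon))$, which is a subinterval of $(\alpha + \varepsilon A, \beta - \varepsilon A)$ since $r(\varepsilon) \gtrsim \sqrt{\varepsilon}$, but this is exactly the interval on which the paper subsequently applies the lemma (in the proof of the regularity theorem it invokes linearity of $\widehat M_\varepsilon$ on $[R_v - b + r(\varepsilon), R_v - r(\varepsilon)]$), so the argument is sound where it is used.
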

\begin{proof}
	We are doing the proof for the infimal convolution (analogue for the supremal case). It is easy to compute that if $v \in (\alpha + \varepsilon A , \beta - \varepsilon A )$, then, due to the convexity of $M(\bar{v}) + \frac{|v- \bar{v}|^2}{2 \varepsilon}$ the infimum is achieved at the point $\bar{v} = v- \varepsilon A$. Therefore,
	\begin{equation*}
		M_{\varepsilon} (v) = A v + B - \frac{\varepsilon}{2} A^2.
		\qedhere
	\end{equation*}
\end{proof}

Finally, before presenting the regularity result we need to see that we can integrate both, the infimal and supremal convolution. The theory we are exposing now is devoted to that goal and follows from the fact that the infimal and supremal convolution are semiconcave and semiconvex respectively. We will only discuss the case of the infimal convolution, the supremal convolution is analogous.

\begin{remark}\label{rem:Evans Gariepy}
    Semiconcavity implies that $\widehat{M}_{\varepsilon}$  is locally Lipschitz in $(a + r(\varepsilon), R_v - r(\varepsilon))$ \cite[Theorem 6.7]{EG18}. Hence, by Rademacher's Theorem \cite[Theorem 6.6]{EG18}, $D\widehat{M}_{\varepsilon}$ is an a.e. defined derivative in $(a + r(\varepsilon), R_v - r(\varepsilon))$ which coincides with the distributional derivative. Alexandrov's Theorem states that $\widehat{M}_\varepsilon$ is twice differentiable a.e., i.e. for a.e. $v_0 \in (a + r(\varepsilon), R_v - r(\varepsilon))$, there exists $\widehat{X}_{\varepsilon }(v_0)$ such that
    \begin{equation*}
        \widehat{M}_{\varepsilon} (v_0 + w) = \widehat{M}_{\varepsilon} (v_0) + D\widehat{M}_{\varepsilon} (v_0)w + \frac{1}{2} \widehat{X}_{\varepsilon }(v_0) w^2 + \Re_{v_0}(|w|^2),
    \end{equation*}
    where
    \begin{equation*}
        |\Re_{v_0} (\gamma ) | \leq \omega_{v_0} (\gamma ) \gamma^2.
    \end{equation*}
    For a proof see \cite[Theorem 14.1]{Vil09}. To avoid confusion with the distributional and measure derivative we avoid the notation $D^2 \widehat{M}_{\varepsilon}$ and we mantain the notation $\widehat{X}_{\varepsilon }$. Furthermore, there exists a sequence $\widehat{M}_{\varepsilon , j}$ regular such that
    \begin{equation}\label{eq:Minfty_epsilon a.e. convergence}
        (\widehat{M}_{\varepsilon , j} , D \widehat{M}_{\varepsilon , j}, D^2 \widehat{M}_{\varepsilon , j} ) \rightarrow (\widehat{M}_{\varepsilon }, D \widehat{M}_{\varepsilon }, \widehat{X}_{\varepsilon }) \quad \text{a.e. in } (a + r(\varepsilon), R_v - r(\varepsilon)).
    \end{equation}
    \end{remark}
    
    We can also prove the following result,
    
    \begin{lemma}\label{lem:Second derivative a.e.}
        Let $\widehat{M}_{\varepsilon}$ be a viscosity solution in the sense \eqref{ec:Minftyepsilon_viscosity_supersolution}. Then,
        \begin{equation*}
            - \widehat{X}_{\varepsilon } (v_0) \geq \frac{1}{m} \diffEv_{\varepsilon} (v_0) \left( \frac{\partial \widehat{M}_{\varepsilon}}{\partial v} (v_0) \right)^{2-m} \quad \text{a.e. in } (a+r(\varepsilon), R_v - r(\varepsilon)).
        \end{equation*}
        The result is analogous for $\widehat{M}^{\varepsilon}$, a viscosity solution in the sense \eqref{ec:Minftyepsilon_viscosity_subsolution}.
    \end{lemma}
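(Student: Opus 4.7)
The plan is to translate the viscosity supersolution property, which is formulated in terms of arbitrary $C^2$ test functions, into a pointwise inequality on the Alexandrov Hessian $\widehat{X}_\varepsilon$. Since $\widehat{M}_\varepsilon$ is semiconcave, Alexandrov's theorem (as recorded in \Cref{rem:Evans Gariepy}) gives a genuine second-order Taylor expansion at a.e.\ point $v_0$, and the natural test functions at such points are perturbed quadratics that touch $\widehat{M}_\varepsilon$ from below.

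Fix an Alexandrov point $v_0\in(a+r(\varepsilon),R_v-r(\varepsilon))$ and, for $\delta>0$, consider the quadratic polynomial
\[
\varphi_\delta(v) := \widehat{M}_\varepsilon(v_0) + D\widehat{M}_\varepsilon(v_0)\,(v-v_0) + \tfrac{1}{2}\bigl(\widehat{X}_\varepsilon(v_0)-\delta\bigr)(v-v_0)^2.
\]
From the Alexandrov expansion we have $\widehat{M}_\varepsilon(v)-\varphi_\delta(v)=\tfrac{\delta}{2}(v-v_0)^2+\mathfrak{R}_{v_0}(|v-v_0|^2)$, and the control $|\mathfrak{R}_{v_0}(\gamma)|\le \omega_{v_0}(\gamma)\gamma^2$ forces this difference to be nonnegative on a sufficiently small neighbourhood $U_\delta\ni v_0$, with equality at $v_0$. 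Hence $\varphi_\delta$ touches $\widehat{M}_\varepsilon$ from below, so the viscosity supersolution inequality \eqref{ec:Minftyepsilon_viscosity_supersolution} applies and yields
\[
-\bigl(\widehat{X}_\varepsilon(v_0)-\delta\bigr)\;\ge\;\tfrac{1}{m}\,\diffEv_\varepsilon(v_0)\,\bigl(D\widehat{M}_\varepsilon(v_0)\bigr)^{2-m}.
\]
Letting $\delta\to 0^+$ along a sequence avoiding the exceptional value $\widehat{X}_\varepsilon(v_0)$ gives the claimed inequality at $v_0$; since Alexandrov points fill a set of full measure in $(a+r(\varepsilon),R_v-r(\varepsilon))$, the a.e.\ statement follows. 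The corresponding inequality for $\widehat{M}^\varepsilon$ is obtained by the same argument, except that the Hessian perturbation is taken as $\widehat{X}^\varepsilon(v_0)+\delta$ so that the quadratic touches $\widehat{M}^\varepsilon$ from \emph{above}, and the subsolution version \eqref{ec:Minftyepsilon_viscosity_subsolution} is used; the sign of the inequality reverses accordingly.

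The one genuine technicality is the admissibility condition in \Cref{def:mass viscosity BR} that $\partial_v\varphi_\delta(v)\ne 0$ for every $v\neq v_0$ in the test neighbourhood. I will handle this by a brief case analysis using the monotonicity of $\widehat{M}_\varepsilon$ from \Cref{rem:monotonicity inf sup convolution}, which forces $D\widehat{M}_\varepsilon(v_0)\ge 0$. If $D\widehat{M}_\varepsilon(v_0)>0$, the unique root of $\varphi_\delta'$ lies at distance $|D\widehat{M}_\varepsilon(v_0)|/|\widehat{X}_\varepsilon(v_0)-\delta|$ from $v_0$, so shrinking $U_\delta$ below this distance suffices. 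If $D\widehat{M}_\varepsilon(v_0)=0$, then $\varphi_\delta'(v)=(\widehat{X}_\varepsilon(v_0)-\delta)(v-v_0)$ vanishes only at $v_0$ itself, provided $\delta\neq \widehat{X}_\varepsilon(v_0)$ (which can always be arranged). This is the step I expect to require the most care in the write-up, since it is precisely where the paper's restrictive form of the viscosity definition (chosen to avoid the singularity of $\Phi'$ at $0$) enters the argument; everything else is a standard consequence of pairing Alexandrov's theorem with the viscosity machinery.
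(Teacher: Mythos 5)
Your proof is correct and follows essentially the same strategy as the paper: take a perturbed Alexandrov quadratic $\varphi_\delta$ (the paper uses $\gamma$ in place of your $\delta$) touching $\widehat{M}_\varepsilon$ from below at a.e.\ $v_0$, apply the viscosity supersolution inequality, and let the perturbation vanish. The one place you go beyond the paper's write-up is the verification of the admissibility condition $\partial_v\varphi_\delta\neq 0$ away from $v_0$ inherited from \Cref{def:mass viscosity BR}; the paper applies the viscosity inequality without addressing this, whereas your case analysis (using $D\widehat{M}_\varepsilon(v_0)\ge 0$ from monotonicity, shrinking the neighbourhood when $D\widehat{M}_\varepsilon(v_0)>0$, and choosing $\delta\neq\widehat{X}_\varepsilon(v_0)$ when $D\widehat{M}_\varepsilon(v_0)=0$) is correct and closes that small gap explicitly.
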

    
    \begin{proof}
        Let us take $v_0 \in (a+r(\varepsilon), R_v - r(\varepsilon))$ and take a test function $\varphi$,
        \begin{equation*}
            \varphi (v) = \widehat{M}_{\varepsilon} (v_0) + D\widehat{M}_{\varepsilon} (v_0)(v-v_0) + \frac{1}{2} (\widehat{X}_{\varepsilon }(v_0) - \gamma )(v-v_0)^2.
        \end{equation*}
        For $|v-v_0|$ small enough
        \begin{equation*}
            \frac{1}{2} \gamma |v-v_0|^2 \geq |\Re_{v_0} (|v-v_0|^2)|,
        \end{equation*}
        and hence, in a small enough neighbourhood of $v_0$, we have that $\varphi (v) \leq \widehat{M}_{\varepsilon} (v)$. Thus, $\varphi$ touches $\widehat{M}_{\varepsilon}$ from below at $v_0$. Since $\widehat{M}_{\varepsilon}$ is a viscosity solution in the sense \eqref{ec:Minftyepsilon_viscosity_supersolution}
        \begin{equation*}
            -D^2 \varphi (v_0 ) \geq \frac{1}{m} \diffEv_{\varepsilon} (v_0) \left( D\widehat{M}_{\varepsilon} (v_0) \right)^{2-m}.
        \end{equation*}
        Computing precisely $D^2 \varphi (v_0)$ we deduce
        \begin{equation*}
            -(\widehat{X}_{\varepsilon }(v_0) - \gamma ) \geq \frac{1}{m} \diffEv_{\varepsilon} (v_0) \left( D\widehat{M}_{\varepsilon} (v_0) \right)^{2-m}.
        \end{equation*}
        Since this holds for any $\gamma > 0$ and a.e. $v_0 \in (a + r(\varepsilon ), R_v - r(\varepsilon))$, the result is proven.
    \end{proof}

Once we have presented the problem, we proceed to state the main result of the subsection about the regularity of $\widehat{M}$. We prove $\widehat{M}\in W^{2,\infty}_{loc} ((0,R_v])$ using the infimal and the supremal convolutions and all the properties we have discussed through this subsection. The first step of the proof of the Theorem where we show that $\widehat{M}_{\varepsilon}$ is also a distributional supersolution is based on \cite[Lemma 4.1]{Sil21}.

We are now able to prove \Cref{th:Regularity_of_Minfty}. Notice that 
\eqref{eq:mass ball stationary characterisation}
is written in our notation as
	\begin{equation*}
	\frac{\partial \widehat{M}}{\partial v} (v) =  \left(   \left( \frac{\partial \widehat{M}}{\partial v} (R_v ) \right)^{m-1} - \frac{1-m}{m}  \int_v^{R_v}  \diffEv (\sigma) \, d\sigma \right)^{-\frac 1 {1-m} }
\end{equation*}

\begin{proof}[Proof of \Cref{th:Regularity_of_Minfty}] We divide the proof in several steps.

\textbf{Step 1. Weak formulation.} 
    Let us take $\Theta: \mathbb{R} \rightarrow \mathbb{R}$ smooth and non-decreasing.
    Let us take the regular sequence $\widehat M_{\varepsilon, j}$ like in \eqref{eq:Minfty_epsilon a.e. convergence}. 
    Let us now take a non-negative test function $\varphi \in C_c^{\infty} ((a + r( \varepsilon ), R_v - r ( \varepsilon ))$. 
    Then, we can integrate by parts in order to get
    \begin{align*}
        &\int_{a+ r( \varepsilon)}^{R_v - r(\varepsilon)}  \left( - \frac{\partial^2 \widehat{M}_{\varepsilon,j}}{\partial v^2} (\sigma) - \frac{1}{m} \diffEv_{\varepsilon} (\sigma) \left( \frac{\partial \widehat{M}_{\varepsilon,j}}{\partial v} (\sigma) \right)^{2-m} \right)  \Theta' \left( \frac{\partial \widehat{M}_{\varepsilon, j}}{\partial v} \right) \varphi \, d\sigma  \\
        &\qquad  = 
        \int_{a+ r( \varepsilon)}^{R_v - r(\varepsilon)}  \left( \Theta \left( \frac{\partial \widehat{M}_{\varepsilon, j}}{\partial v} \right) \frac{\diff \varphi}{\diff v} - \frac{1}{m} \diffEv_{\varepsilon} (\sigma) \left( \frac{\partial \widehat{M}_{\varepsilon,j}}{\partial v} (\sigma) \right)^{2-m} \Theta' \left( \frac{\partial \widehat{M}_{\varepsilon, j}}{\partial v} \right) \varphi\right)  \, d\sigma.
    \end{align*}
    From \Cref{rem:Evans Gariepy} we have that $\left| \frac{\partial \widehat{M}_{\varepsilon , j}}{\partial v} \right|$ is uniformly bounded in the support of $\varphi$. Hence, by the Dominated Convergence Theorem.
    \begin{align*}
        \lim_{j \rightarrow \infty} &
        \int_{a+ r( \varepsilon)}^{R_v - r(\varepsilon)}  \Theta \left( \frac{\partial \widehat{M}_{\varepsilon, j}}{\partial v} \right) \frac{\diff \varphi}{\diff v} - \frac{1}{m} \diffEv_{\varepsilon} (\sigma) \left( \frac{\partial \widehat{M}_{\varepsilon,j}}{\partial v} (\sigma) \right)^{2-m} \Theta' \left( \frac{\partial \widehat{M}_{\varepsilon, j}}{\partial v} \right) \varphi  \, d\sigma \\
        & = 
        \int_{a+ r( \varepsilon)}^{R_v - r(\varepsilon)}  \Theta \left( \frac{\partial \widehat{M}_{\varepsilon}}{\partial v} \right) \frac{\diff \varphi}{\diff v} - \frac{1}{m} \diffEv_{\varepsilon} (\sigma) \left( \frac{\partial \widehat{M}_{\varepsilon}}{\partial v} (\sigma) \right)^{2-m} \Theta' \left( \frac{\partial \widehat{M}_{\varepsilon}}{\partial v} \right) \varphi  \, d\sigma.
    \end{align*}
    Notice that $\frac{\partial^2 \widehat{M}_{\ee,j}}{\partial v^2} \ge C > -\infty$, $\left| \frac{\partial \widehat{M}_{\varepsilon , j}}{\partial v} \right|$ is uniformly bounded in the support of $\varphi$ and the first and second derivatives converge a.e. Then, we use Fatou's lemma and the Dominated Convergence Theorem to deduce
    \begin{align}
        \liminf_{j \rightarrow \infty} 
         &\int_{a+ r( \varepsilon)}^{R_v - r(\varepsilon)}  \left( - \frac{\partial^2 \widehat{M}_{\varepsilon,j}}{\partial v^2} (\sigma) - \frac{1}{m} \diffEv_{\varepsilon} (\sigma) \left( \frac{\partial \widehat{M}_{\varepsilon,j}}{\partial v} (\sigma) \right)^{2-m} \right)  \Theta' \left( \frac{\partial \widehat{M}_{\varepsilon, j}}{\partial v} \right) \varphi \, d\sigma  
 \nonumber
        \\
        & \geq 
         \int_{a+ r( \varepsilon)}^{R_v - r(\varepsilon)} 
         \lim_{j \rightarrow \infty} \left( - \frac{\partial^2 \widehat{M}_{\varepsilon,j}}{\partial v^2} (\sigma) - \frac{1}{m} \diffEv_{\varepsilon} (\sigma) \left( \frac{\partial \widehat{M}_{\varepsilon,j}}{\partial v} (\sigma) \right)^{2-m} \right)  \Theta' \left( \frac{\partial \widehat{M}_{\varepsilon, j}}{\partial v} \right)  \varphi \, d\sigma \nonumber \\
         & = \int_{a+ r( \varepsilon)}^{R_v - r(\varepsilon)} 
          \left( - \widehat{X}_{\varepsilon } (\sigma) - \frac{1}{m} \diffEv_{\varepsilon} (\sigma) \left( \frac{\partial \widehat{M}_{\varepsilon}}{\partial v} (\sigma) \right)^{2-m} \right)  \Theta' \left( \frac{\partial \widehat{M}_{\varepsilon}}{\partial v} \right)  \varphi \, d\sigma \ge 0.\label{eq:Fatou viscosity}
    \end{align}
    The final inequality follows from \Cref{lem:Second derivative a.e.}.
    We have deduced that
    \begin{equation*}
        \int_{a+ r( \varepsilon)}^{R_v - r(\varepsilon)} 
          \left( \Theta \left( \frac{\partial \widehat{M}_{\varepsilon}}{\partial v} (\sigma) \right)  \frac{\diff \psi}{\diff v} - \frac{1}{m} \diffEv_{\varepsilon} (\sigma) \left( \frac{\partial \widehat{M}_{\varepsilon}}{\partial v} (\sigma) \right)^{2-m} \Theta' \left( \frac{\partial \widehat{M}_{\varepsilon}}{\partial v} (\sigma) \right) \psi \right)   \, d\sigma \geq 0.
    \end{equation*}

    \textbf{Step 2. A point-wise estimate.} 
    Since $\Ev[\widehat{\mu}]$ satisfies \eqref{ec:Minft_Tech_assumption_3} we are on the assumptions of \Cref{lem:Minft_Tech_remark_1} and therefore $\widehat{M}$ is linear and non-constant on the interval $[R_v - b , R_v]$. Then, by \Cref{lem:sup- super-viscosity_solutions_linear_case}, $\widehat{M}_{\varepsilon}$ is also linear on $[R_v - b + r(\varepsilon), R_v-r(\varepsilon)]$ and it is such that 
	$$
	    \frac{\partial \widehat{M}_{\varepsilon}}{\partial v} (v) = \frac{\partial \widehat{M}}{\partial v} (R_v) 
	\qquad \text{on }[R_v - b + r(\varepsilon), R_v - r(\varepsilon)],
	$$
    and $\mathfrak{E}_{\varepsilon} = 0$ on $[R_v-b+r(\varepsilon), R_v-r(\varepsilon)]$.
    Due to the Lebesgue differentiation Theorem, for almost every $v \in (a+r(\varepsilon) , R_v -2 r(\varepsilon))$ if we take $\psi \rightarrow \chi_{[v,R_v - 2 r(\varepsilon)]}$ we have,
    \begin{equation*}
        \Theta \left( \frac{\partial \widehat{M}_{\varepsilon}}{\partial v} (v) \right) - \Theta \left( \frac{\partial \widehat{M}_{\varepsilon}}{\partial v} (R_v) \right) \geq \int_v^{R_v} \frac{1}{m} \diffEv_{\varepsilon} (\sigma) \left( \frac{\partial \widehat{M}_{\varepsilon}}{\partial v} (\sigma) \right)^{2-m} \Theta' \left( \frac{\partial \widehat{M}_{\varepsilon}}{\partial v} (\sigma) \right) \, d\sigma. 
    \end{equation*}
    Moreover, for $\Theta$ non-decreasing,
    \begin{equation*}
        \frac{\partial \widehat{M}_{\varepsilon}}{\partial v} (v) \ge  \Theta^{-1} \left(  \Theta \left( \frac{\partial \widehat{M}_{\varepsilon}}{\partial v} (R_v) \right) + \int_v^{R_v} \frac{1}{m}  \diffEv_{\varepsilon} (\sigma) \left( \frac{\partial \widehat{M}_{\varepsilon}}{\partial v} (\sigma) \right)^{2-m}\Theta'\left( \frac{\partial \widehat{M}_{\varepsilon}}{\partial v} (\sigma) \right) \, d\sigma \right).
    \end{equation*}
     We take a sequence $\Theta_m$ such that $s^{2-m}\Theta_m'(s) \to 1-m$ uniformly over compacts and $\Theta_m(1) = -1$.
    This corresponds to $\Theta (s) = -s^{m-1}$ so $\Theta^{-1} (s) = (-s)^{-\frac 1 {1-m}}$. Therefore, we get the estimate
    \begin{align}
    \label{eq:M_eps derivative bound}
         \frac{\partial \widehat{M}_{\varepsilon}}{\partial v} (v) 
         &\ge \left(   \left( \frac{\partial \widehat{M}}{\partial v} (R_v ) \right)^{m-1} - \frac{1-m}{m}  \int_v^{R_v}  \diffEv_{\varepsilon} (\sigma) \, d\sigma \right)^{-\frac 1 {1-m} } .
    \end{align}
    Proceeding analogously for $\widehat{M}^{\varepsilon}$ we deduce the converse formula
    \begin{align}
    \label{eq:M^eps derivative bound}
         \frac{\partial \widehat{M}^{\varepsilon}}{\partial v} (v) 
         &\le \left(   \left( \frac{\partial \widehat{M}}{\partial v} (R_v ) \right)^{m-1} - \frac{1-m}{m}  \int_v^{R_v}  \diffEv^{\varepsilon} (\sigma) \, d\sigma \right)^{-\frac 1 {1-m} } .
    \end{align}
    
\textbf{Step 3. Formula for $\widehat{M}$.}
Let $a < v_1 \le v_2 < R_v$. Then, due to \eqref{eq:M_eps derivative bound} we have that for $\varepsilon$ small enough
	$$
	    \widehat{M}_\varepsilon(v_2) - \widehat{M}_\varepsilon(v_1) \ge \int_{v_1}^{v_2} \left(   \left( \frac{\partial \widehat{M}}{\partial v} (R_v ) \right)^{m-1} - \frac{1-m}{m}  \int_v^{R_v}  \diffEv_{\varepsilon} (\sigma) \, d\sigma \right)^{-\frac 1 {1-m} } \diff v 
	$$
	Due to the uniform convergence of $\widehat M_\varepsilon$ and $\mathfrak E_\ee$ we deduce that
	$$
	    \widehat{M}(v_2) - \widehat{M}(v_1) \ge \int_{v_1}^{v_2} \left(   \left( \frac{\partial \overline{M}}{\partial v} (R_v ) \right)^{m-1} - \frac{1-m}{m}  \int_v^{R_v}  \diffEv (\sigma) \, d\sigma \right)^{-\frac 1 {1-m} } \diff v 
	$$
	Arguing conversely for \eqref{eq:M^eps derivative bound} we deduce the other inequality. 
	Due to the bounds and known regularity of $\mathfrak E$, it follows that $\widehat{M} \in W^{2,\infty}_{loc} ((0, R_v])$ and 
	$$
	    \frac{\partial \widehat{M}}{\partial v} (v) =  \left(   \left( \frac{\partial \widehat{M}}{\partial v} (R_v ) \right)^{m-1} - \frac{1-m}{m}  \int_v^{R_v}  \diffEv (\sigma) \, d\sigma \right)^{-\frac 1 {1-m} }.
	$$
    This concludes the proof.
\end{proof}

\section{The problem in \texorpdfstring{$\mathbb{R}^d$}{Rd}}\label{sec:Rd}
In this section, we will focus on the problem \eqref{ec:The_equation}. The strategy we follow to study \eqref{ec:The_equation} consists in extending the results of the problem in the ball $B_R$ considering the kernel $K_{\eta}(x,y) = \eta(x) W(x-y) \eta (y)$, with $\eta \in C^{\infty}_c (\mathbb{R}^d)$ a cut-off. 

We recall that in order to to be able to use the results we have obtained for the ball $B_R$, the potential $V$ needs to fulfil the hypothesis \eqref{eq:V has no flux}. Therefore, for each one of the different problems on the ball we need to modify slightly the potential $V$. We take this modification $V_R$ such that 
\begin{align*} 
&|V_R| \leq C |V|, |\nabla V_R | \leq C | \nabla V |,\text{ and } | D^2  V_R | \leq C |D^2 V| \text{ in } B_R, \\
&\nabla V_R \cdot \nu = 0 \text{ on }\partial B_R,\\
&V_R = V \text{ in } B_{R-1}
\end{align*} 
for some constant $C$ that does not depend on $R$.

\subsection{Short-time well-posedness. Proof of \texorpdfstring{\Cref{th:locally_strong_solution_Rn}}{Theorem \ref{th:locally_strong_solution_Rn}}}\label{sec:short-time well-posedness Rd}
First we are going to construct locally-strong solutions in the whole space and study some of their properties. Let us start defining the notion of locally-strong solution.
\begin{definition}[Locally-strong solution]\label{def:locally-strong solution}
	A measure $\rho$ is said to be a locally-strong solution of the problem \eqref{ec:The_equation} in $(0,T) \times \mathbb{R}^d$ if it is a distributional solution such that
	\begin{enumerate}
		\item $\rho^m \in L^2 (0, T; H^2_{loc} (\mathbb{R}^d))$;
		\item $\partial_t \rho \in L^2(0,T; L^2_{loc} (\mathbb{R}^d))$. 
	\end{enumerate}
\end{definition}

Next, we make same remarks about the problem.

\begin{remark}
	From assumption \eqref{eq:V_quadratic} one gets immediately by the mean value theorem that
	\begin{equation}\label{ec:Assumptions_on_V_whole_space}
		V(x) \leq C (1+|x|^{2 }) .
	\end{equation}
	From assumption \eqref{eq:W_quadratic} we also get in a similar way that,
	\begin{equation}\label{ec:Assumptions_on_W_whole_space}
		W(x-y) \leq C (1+|x|^{2 })(1+|y|^{2 }) \quad \text{for every} \,  x, y \in \mathbb{R}^d. 
	\end{equation}
\end{remark}
\begin{remark}\label{rem:V_R unif bdd}
    From the construction we have done of $V_R$, \eqref{eq:V_quadratic}, \eqref{eq:Delta V and W bounded}, and \eqref{ec:Assumptions_on_V_whole_space} also hold for $V_R$ with a constant that does not depend on $R$.
\end{remark}
Before starting with the proof of \Cref{th:locally_strong_solution_Rn} we need to show some \textit{a priori} estimates. First we deal with the $p$-th order moment
$$
    \mathbf{m_p} (\rho ) := \int_{\mathbb{R}^d} |x|^p \rho
$$
\begin{remark}
    From assumption \eqref{eq:p moment bdd for rho0} one gets immediately that $\mathbf{m_2} (\rho_0)$ is bounded. Using Hölder inequality we obtain that,
    \begin{align*}
        \mathbf{m_2} (\rho_0) \leq \left( \int_{\Rd} |x|^{\frac{2}{1-m}} \rho_0 \right)^{1-m} \| \rho_0 \|^m_{L^1 (\Rd)}.
    \end{align*}
\end{remark}
\begin{lemma}[The second moment is bounded for finite time]
	Assume \eqref{ec:Free_energy_is_bounded_below} and \eqref{eq:p moment bdd for rho0}. Then, the second moment is uniformly bounded in the sense that
	\begin{equation}\label{ec:Second_moment_uniformly_bounded_above}
		\mathbf{m_2} (\rho^{R, \eta, \ast}_t ) \leq  \left(\mathbf{m_2} (\rho_0 ) + \sup_R \mathcal{F}_{R, 1} [\rho_0] - \underline{\mathcal{F}_0} \right)  e^t.
	\end{equation}
\end{lemma}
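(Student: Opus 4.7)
The plan is to differentiate $\mathbf{m_2}(\rho^{R,\eta_j,\ast}_t)$ along the flow in $B_R$, reduce the spatial boundary via Green's identity and the no-flux condition (noting that the surviving boundary term has a favourable sign), control the three bulk pieces using the quadratic growth \eqref{eq:V_quadratic}--\eqref{eq:W_quadratic} together with the lower bound \eqref{ec:Free_energy_is_bounded_below}, and close the resulting differential inequality by Gr\"onwall. The hard part will be keeping the constants uniform in $R$ and in the cut-off $\eta_j$, which forces a symmetrisation of the interaction drift.

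First I would test the PDE for $\rho^{R,\eta_j}$ against $\varphi(x)=|x|^2$ on $B_R$. Two integrations by parts together with the no-flux boundary condition give
\begin{equation*}
    \frac{d}{dt}\mathbf{m_2}(\rho^{R,\eta_j,\ast}_t) = 2d\int_{B_R}(\rho^{R,\eta_j}_t)^m - 2R\int_{\partial B_R}(\rho^{R,\eta_j}_t)^m - 2\int_{B_R}\rho^{R,\eta_j}_t\, x\cdot E^{R,\eta_j}_t,
\end{equation*}
where $E^{R,\eta_j}_t = \nabla V_R + \nabla\int_{B_R}K_{\eta_j}(\cdot,y)\rho^{R,\eta_j}_t(y)\,dy$; the boundary integral is non-positive and will be discarded. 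For the diffusive piece I would invoke \eqref{ec:Free_energy_is_bounded_below} after rescaling $\rho^{R,\eta_j,\ast}_t$ to unit mass, combined with $V(x)\leq C(1+|x|^2)$ from \eqref{eq:V_quadratic}, giving $\int(\rho^{R,\eta_j,\ast}_t)^m\leq (1-m)(\int V\rho^{R,\eta_j,\ast}_t - \underline{\mathcal F_0})\leq C(1+\mathbf{m_2})-C\underline{\mathcal F_0}$. For the confinement drift, Remark~\ref{rem:V_R unif bdd} and \eqref{eq:V_quadratic} give $|x\cdot\nabla V_R(x)|\leq C(1+|x|^2)$ uniformly in $R$, hence $|\int\rho^{R,\eta_j}_t\, x\cdot\nabla V_R|\leq C(1+\mathbf{m_2})$.

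The interaction drift will be the hard step. Using the symmetry $K_{\eta_j}(x,y)=K_{\eta_j}(y,x)$, a change of variables rewrites
\begin{equation*}
    2\int\!\!\int\rho(x)\rho(y)\,x\cdot\nabla_x K_{\eta_j}(x,y)\,dx\,dy = \int\!\!\int\rho(x)\rho(y)\Big[(x-y)\cdot\eta_j(x)\eta_j(y)\nabla W(x-y)+\mathcal R_{\eta_j}(x,y)\Big]dx\,dy,
\end{equation*}
with $\mathcal R_{\eta_j}(x,y)=W(x-y)[x\cdot\nabla\eta_j(x)\eta_j(y)+y\cdot\nabla\eta_j(y)\eta_j(x)]$. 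This cancellation is essential: only $(x-y)\cdot\nabla W(x-y)$ is bounded by $C(1+|x|^2+|y|^2)$ via \eqref{eq:W_quadratic}, whereas $x\cdot\nabla W(x-y)$ is not. The remainder $\mathcal R_{\eta_j}$ will be bounded uniformly in $j$ since the scaling $\eta_j(x)=\eta_1(x/j)$ implies $|x\cdot\nabla\eta_j(x)|\leq \sup_{\xi}|\xi\cdot\nabla\eta_1(\xi)|$, while $W(x-y)\leq C(1+|x|^2+|y|^2)$; integrating against $\rho(x)\rho(y)$ then bounds the interaction by $C(\|\rho_0\|_{L^1}^2+\|\rho_0\|_{L^1}\mathbf{m_2})$ with $C$ independent of $R$ and $j$.

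Assembling the four estimates and using the energy monotonicity $\mathcal F_{R,\eta_j}[\rho^{R,\eta_j}_t]\leq \mathcal F_{R,\eta_j}[\rho_0]\leq \mathcal F_{R,1}[\rho_0]$ (the second inequality since $0\leq\eta_j\leq 1$ and $W\geq 0$) to package the constants produces a differential inequality of the form $\frac{d}{dt}\mathbf{m_2}(\rho^{R,\eta_j,\ast}_t)\leq \mathbf{m_2}(\rho^{R,\eta_j,\ast}_t) + (\sup_R\mathcal F_{R,1}[\rho_0]-\underline{\mathcal F_0})$, from which \eqref{ec:Second_moment_uniformly_bounded_above} will follow by Gr\"onwall.
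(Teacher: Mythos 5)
Your approach takes a genuinely different route from the paper's. You integrate by parts \emph{twice} to expose the diffusion as $2d\int\rho^m$ plus a boundary term of favourable sign, and you then estimate the diffusion, confinement, and interaction pieces separately --- the interaction one via a symmetrisation trading $x\cdot\nabla W(x-y)$ for $(x-y)\cdot\nabla W(x-y)$. The paper integrates by parts only \emph{once}, obtaining
\begin{equation*}
\frac{d}{dt}\mathbf{m_2}(\rho_t^{R,\eta}) = -2\int_{B_R} x\cdot \rho_t^{R,\eta}\,\nabla\Big(\tfrac{m}{m-1}(\rho_t^{R,\eta})^{m-1}+V_R+\int_{B_R} K_\eta(\cdot,y)\rho_t^{R,\eta}(y)\,dy\Big)\,dx
\end{equation*}
with \emph{no} boundary term (the flux $\nabla\rho^m+\rho E$ has zero normal component on $\partial B_R$), and then applies Cauchy--Schwarz and Young to get $\tfrac{d}{dt}\mathbf{m_2}\leq\mathbf{m_2}+D(t)$ where $D(t)$ is exactly the free-energy dissipation of \Cref{prop:Convergence_of_the_free_energies}. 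The time integral of $D$ is then controlled by $\mathcal F_{R,\eta}[\rho_0]-\underline{\mathcal{F}_{0}}$ in one stroke, so the three terms are never separated and the symmetrisation of $W$ is unnecessary. That single Cauchy--Schwarz step is what buys the paper the coefficient $1$ in the Gr\"onwall argument and hence the stated constant.

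This also exposes a real gap in your argument: the final sentence claiming the four estimates ``package'' into $\tfrac{d}{dt}\mathbf{m_2}\leq\mathbf{m_2}+(\sup_R\mathcal F_{R,1}[\rho_0]-\underline{\mathcal{F}_{0}})$ is unjustified. Each of your pieces produces its own constant --- the diffusion bound alone already carries a factor $2d(1-m)$ (plus mass-renormalisation factors, since $\underline{\mathcal{F}_{0}}$ is an infimum over probability densities and $\rho_t$ has mass $\|\rho_0\|_{L^1}$, which you acknowledge but do not track), the confinement carries the growth constant of $V$, and the interaction carries $\|\rho_0\|_{L^1}$ and the growth constant of $W$. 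No cancellation forces the sum of these coefficients on $\mathbf{m_2}$ to equal $1$, so an honest assembly yields $\tfrac{d}{dt}\mathbf{m_2}\leq C_1\mathbf{m_2}+C_2$ with $C_1\neq 1$ in general, and Gr\"onwall gives $(\mathbf{m_2}(\rho_0)+C_2/C_1)e^{C_1 t}$ rather than \eqref{ec:Second_moment_uniformly_bounded_above}. Qualitatively this is still a uniform-in-$(R,\eta)$ second-moment bound, which is all the paper actually uses downstream, so your route could be salvaged as proving a weaker version of the lemma --- but it does not prove the lemma as stated.
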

\begin{proof}
	Let us take the time derivative. We apply integration by parts, Holder and Young's inequality successively in order to get, 
	\begin{align*}
		\frac{d}{dt} \mathbf{m_2} (\rho^{R, \eta, \ast}_t ) & = \frac{d}{dt} \int_{\mathbb{R}^d} |x|^2 \rho^{R, \eta, \ast}_t = \frac{d}{dt} \int_{B_R} |x|^2 \rho^{R, \eta}_t = \int_{B_R} |x|^2 \partial_t \rho^{R, \eta}_t \\
		& = \int_{B_R} |x|^2 \nabla \cdot \left( \rho^{R, \eta}_t \left[ \nabla \left( \frac{m}{m-1} (\rho^{R, \eta}_t)^{m-1} + V_R + \int_{B_R} K_{\eta}(x,y) \rho^{R, \eta}_t(y) \, dy \right) \right] \right) \, dx \\
		& = -2 \int_{B_R} x \rho^{R, \eta}_t \left[ \nabla \left( \frac{m}{m-1} (\rho^{R, \eta}_t)^{m-1} + V_R + \int_{B_R} K_{\eta}(x,y) \rho^{R, \eta}_t(y) \, dy \right) \right] \, dx \\
		& \leq 2 \left( \int_{B_R} |x|^2 \rho^{R, \eta}_t \right)^{\frac{1}{2}} \\
		&\qquad \times \left( \int_{B_R} \rho^{R, \eta}_t \left| \nabla \left( \frac{m}{m-1} (\rho^{R, \eta}_t)^{m-1} + V_R + \int_{B_R} K_{\eta}(x,y) \rho^{R, \eta}_t(y) \, dy \right) \right|^2 \, dx \right)^{\frac{1}{2}} \\
		& \leq  \int_{B_R} |x|^2 \rho^{R, \eta}_t  \\
		&\qquad + \left( \int_{B_R} \rho^{R, \eta}_t \left| \nabla \left( \frac{m}{m-1} (\rho^{R, \eta}_t)^{m-1} + V_R + \int_{B_R} K_{\eta}(x,y) \rho^{R, \eta}_t(y) \, dy \right) \right|^2 \, dx \right).
	\end{align*}
	Integrating in time between $0$ and $t$ and taking into account \eqref{eq:flow of the free energy} we have that, 
	\begin{equation*}
	    \mathbf{m_2} (\rho^{R, \eta, \ast}_t ) - \mathbf{m_2} (\rho^{R, \eta, \ast}_0 ) \leq \int_0^t \mathbf{m_2} (\rho^{R, \eta, \ast}_s ) \, ds + \mathcal{F}_{R, \eta} [\rho_0^{R, \eta, \ast}] - \mathcal{F}_{R, \eta} [\rho_t^{R, \eta, \ast}],
	\end{equation*}
	which gives us the bound,
	\begin{equation*}
		\mathbf{m_2} (\rho^{R, \eta, \ast}_t ) \leq \int_0^t \mathbf{m_2} (\rho^{R, \eta, \ast}_s ) \, ds +   \left( \mathbf{m_2} (\rho_0 ) + \sup_{\eta} \sup_R \mathcal{F}_{R, \eta} [\rho_0] - \underline{\mathcal{F}_0}  \right) .
	\end{equation*}
	Finally, applying Grönwall's inequality we recover \eqref{ec:Second_moment_uniformly_bounded_above}.
\end{proof}
The previous Lemma only ensures boundedness of $\mathbf{m_2} (\rho)$. However, taking advantage of this result we can also get an \textit{a priori} estimate for the $p$-order moment
for $p = \frac{2}{1-m} > 2$.
\begin{lemma}\label{lem:p-order moment}
	Assume \eqref{ec:Free_energy_is_bounded_below} and \eqref{eq:p moment bdd for rho0}-\eqref{eq:W_quadratic}. Then, the $p$-moment for $p = \frac{2}{1-m}$ is uniformly bounded from above if we pick the sequence $\eta = \eta_j$
	\begin{equation}\label{ec:p_moment_uniformly_bounded_above}
		\mathbf{m_p} (\rho^{R, \eta_j, \ast}_t) \leq A \exp{ (B t ) }, \qquad \forall t \in [0,T]
	\end{equation}
	where $A$ and $B$ are constants that depend only on $m$, $T$, the constants in \eqref{ec:Free_energy_is_bounded_below}-\eqref{eq:W_quadratic}, $\| \rho_0\|_{L^1 (\Rd)}$, $\|\rho_0 \|_{L^{\frac{1}{1-m}}(\Rd)}$, $\mathbf{m_p} (\rho_0)$, $\| \nabla \eta_1 \|_{L^{\infty} (\Rd)}$,
    and the right-hand side of \eqref{ec:Second_moment_uniformly_bounded_above}.
\end{lemma}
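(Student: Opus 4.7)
The plan is to differentiate the $p$-moment along the flow, establish a differential inequality of the form $\frac{d}{dt}\mathbf{m_p}(\rho_t^{R,\eta_j,\ast})\leq B(1+\mathbf{m_p}(\rho_t^{R,\eta_j,\ast}))$, and conclude by Grönwall. The only delicate ingredient is the diffusive contribution, whose control is what forces the restriction \eqref{eq:sharp m extension} via the sharp Carlson--Levin inequality.

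First I would differentiate $\mathbf{m_p}(\rho_t^{R,\eta_j,\ast})=\int_{B_R}|x|^p\rho_t^{R,\eta_j}\,dx$ in time. Using the equation, integration by parts, the no-flux conditions on $V_R$ and $K_{\eta_j}$, the identity $\rho\nabla\bigl(\frac{m}{m-1}\rho^{m-1}\bigr)=\nabla\rho^m$ and $\nabla\cdot(|x|^{p-2}x)=(d+p-2)|x|^{p-2}$, this yields
\[
\frac{d}{dt}\mathbf{m_p}(\rho_t^{R,\eta_j,\ast})=p(d+p-2)\!\int |x|^{p-2}\rho^m\,dx-p\!\int|x|^{p-2}x\cdot\rho\nabla V_R\,dx-p\!\int|x|^{p-2}x\cdot\rho\nabla_x(K_{\eta_j}\ast\rho)\,dx,
\]
decomposing the evolution into diffusion, confinement, and aggregation pieces.

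For the confinement piece, \eqref{eq:V_quadratic} together with \Cref{rem:V_R unif bdd} gives $|x\cdot\nabla V_R|\leq C|x|(1+|x|)$, so $|x|^{p-2}|x\cdot\nabla V_R|\leq C(1+|x|^p)$, leading to a bound $\leq C(\|\rho_0\|_{L^1}+\mathbf{m_p})$. For the aggregation piece I would write $\nabla_x(K_{\eta_j}\ast\rho)=\nabla\eta_j\cdot(W\ast(\eta_j\rho))+\eta_j\nabla W\ast(\eta_j\rho)$ and treat the two contributions separately. The $\nabla W$-contribution is controlled by \eqref{eq:W_quadratic} combined with the already-established uniform bound \eqref{ec:Second_moment_uniformly_bounded_above} on $\mathbf{m_2}$, producing $\leq C(1+\mathbf{m_p})$. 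The $\nabla\eta_j$-contribution carries a favourable factor $\|\nabla\eta_1\|_\infty/j$ but picks up an extra $|x|^2$ from $W$; since $\nabla\eta_j$ is supported in $\{j/2\leq|x|\leq j\}$ one trades $|x|^{p+1}/j$ for $|x|^p$ and obtains a uniform-in-$j$ bound $\leq C(1+\mathbf{m_p})$.

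The main obstacle is the diffusive term. Since $p-2=pm$ by the definition of $p$, one rewrites it as $p(d+p-2)\int(|x|^p\rho)^m\,dx$, which is \emph{not} integrable from $\rho\in L^1\cap L^\infty$ alone: if $\rho$ decays only like $|x|^{-p}$ the integrand fails to decay. To control it by sublinear growth in $\mathbf{m_p}$ I would invoke the Carlson--Levin inequality in the sharp form of \cite[Lemma 5]{CDDFH19}, giving
\[
\int (|x|^p\rho)^m\,dx\leq C\,\mathbf{m_p}(\rho)^{\theta}\,\|\rho\|_{L^1}^{\alpha}\,\|\rho\|_{L^{1/(1-m)}}^{\beta}
\]
for some $\theta\in(0,1)$ and $\alpha,\beta\geq 0$. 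Hypothesis \eqref{eq:sharp m extension} is exactly the threshold that produces $\theta<1$, as noted in \Cref{rem:sharp m}; Young's inequality then absorbs a small multiple of $\mathbf{m_p}$ into the left-hand side, and the remaining factors are controlled using the uniform $L^1\cap L^\infty$ propagation provided by \Cref{th:Banach_fixed_point}. Combining the three bounds gives $\frac{d}{dt}\mathbf{m_p}\leq B(1+\mathbf{m_p})$ with $B$ depending only on the quantities listed in the statement (independently of $R$ and $j$), and Grönwall yields \eqref{ec:p_moment_uniformly_bounded_above} with $A=1+\mathbf{m_p}(\rho_0)$.
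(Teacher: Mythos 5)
Your plan matches the paper's for the confinement and aggregation pieces, and the Grönwall closure is the same. The genuine divergence is in the diffusion term, and it is not merely a different tool: it changes the hypotheses you would need.

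You propose to control $\int|x|^{p-2}\rho^m\,dx$ by a Carlson--Levin inequality from \cite[Lemma~5]{CDDFH19}, and you invoke \eqref{eq:sharp m extension} to make the exponent on $\mathbf{m_p}$ strictly less than $1$. But \Cref{lem:p-order moment} does \emph{not} assume \eqref{eq:sharp m extension}; its hypotheses are \eqref{ec:Free_energy_is_bounded_below} and \eqref{eq:p moment bdd for rho0}--\eqref{eq:W_quadratic} only. You have misread \Cref{rem:sharp m}: it says the sharp value of $m$ enters in Step~4a of the proof of \Cref{th:locally_strong_solution_Rn} (where one controls the tail $\int_{\Rd\setminus B_\sigma}\rho^m$ of the free energy by Hölder against $|x|^{-2m/(1-m)^2}$), not in this moment lemma. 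Your argument would therefore prove a strictly weaker statement. The paper instead exploits the algebraic identity $pm = p-2$ (so $|x|^{p-2}\rho^m = (|x|^{(p-2)/m}\rho)^m$) to write a Hölder bound of the form
\[
\mathbf{m_{p-2}}\bigl(\rho^m\bigr) \;\le\; \bigl(\mathbf{m_p}(\rho)\bigr)^{m}\,\|\rho\|_{L^{1/(1-m)}},
\]
and then Young's inequality with conjugate exponents $\tfrac1m$ and $\tfrac1{1-m}$ gives $m\,\mathbf{m_p}(\rho) + (1-m)\|\rho\|_{L^{1/(1-m)}}^{1/(1-m)}$. This produces the linear-in-$\mathbf{m_p}$ differential inequality directly without any extra restriction on $m\in(0,1)$, and the dependence on $\|\rho_0\|_{L^{1/(1-m)}}$ visible in the statement of the lemma is exactly the footprint of this step. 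If you want to close the argument with only the stated hypotheses, you need an estimate of this flavour, not a Carlson--Levin one.

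Two smaller remarks. First, your appeal to \Cref{th:Banach_fixed_point} for ``uniform $L^1\cap L^\infty$ propagation'' is not quite the right citation: that theorem is for a fixed ball $B_R$, and what is needed here (and what the paper uses, at the start of the proof of \Cref{th:locally_strong_solution_Rn}) is the uniform-in-$R$ and uniform-in-$j$ $L^\infty$ bound, which requires controlling $\|\Delta V_R\|_{L^\infty}$ and $\|\Delta(K_{\eta_j}*\rho)\|_{L^\infty}$ via \eqref{eq:Delta V and W bounded} and the second-moment bound. Second, the coefficient $\nabla\cdot(|x|^{p-2}x)=(d+p-2)|x|^{p-2}$ you quote is the cleaner form; the paper actually avoids the second integration by parts and bounds the single-IBP expression directly, but this is cosmetic and does not affect the argument.
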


The proof of this result is on \Cref{ap:a priori estimates Rd}.
Once we have an \textit{a priori} estimate for a $p > 2$ order moment we are ready to prove \Cref{th:locally_strong_solution_Rn}.

\begin{proof}[Proof of \Cref{th:locally_strong_solution_Rn}]
We divide the proof of the Theorem into 5 steps.

	\textbf{Step 1. Compactness in $R$.} 
	If we use the $L^p$ estimate \eqref{ec:Lp_estimate}, we get that,
	\begin{equation}\label{ec:Lp_uniform_in_Rn}
		 \| \rho^{R, \eta_j}_t \|_{L^p (\Rd)} \leq e^{ C(p,T) } \| \rho_0 \|_{L^p (\Rd)}, \qquad \forall t \in [0,T].
	\end{equation}
    where
    $$
    C(p,T) = \sup_{R,j} \frac{p-1}{p} \int_0^T \left(\| \Delta V_{R} \|_{L^{\infty} (\mathbb{R}^d)} + \left\| \Delta \int_{\Rd}K_{\eta_j} (\cdot,y) \rho_t^{R , \eta_j} (y) \, dy \right\|_{L^{\infty}( \mathbb{R}^d)} \right) \, dt .
    $$ 
	The term $\| \Delta V_R \|_{L^{\infty}(\Rd)}$ is uniformly bounded in $R$ by the assumption \eqref{eq:Delta V and W bounded} as we point out in \Cref{rem:V_R unif bdd}. Furthermore, since 
	$$
		| \nabla \eta_j (x) | \leq j^{-1} \| \nabla \eta_1 \|_{L^{\infty} (\Rd)} \chi_{B_j} (x) 
		\qquad \text{and} \qquad | \Delta \eta_j (x) | \leq j^{-2} \| \Delta \eta_1 \|_{L^{\infty}(\Rd)} \chi_{B_j} (x),
	$$ we can bound $\| \Delta \int_{\Rd}K_{\eta_j} (\cdot,y) \rho^{R , \eta_j} (y) \, dy \|_{L^{\infty}( \mathbb{R}^d)}$ uniformly in $j$.
	\begin{align*}
	    & \left| \Delta \int_{\Rd}K_{\eta_j} (x,y) \rho_t^{R , \eta_j} (y) \, dy \right| \\
	    & \qquad \leq \left| \int_{\Rd} \left( \Delta \eta_j (x) W(x-y)  + 2 \nabla \eta_j (x) \cdot \nabla W(x-y)  + \eta_j(x) \Delta W(x-y)   \right) \rho_t^{R, \eta_j} (y) \, dy \right|  \\
	    & \qquad \leq C \Big( \int_{\Rd} \| \Delta \eta_1 \|_{L^{\infty}(\Rd)}(j^{-2} + 1) (1 + |y|^2) \rho_t^{R, \eta_j} (y) \, dy  \\
	    & \qquad \qquad + \int_{\Rd} \| \nabla \eta_1 \|_{L^{\infty}(\Rd)} ( j^{-1} + 1 )  \rho_t^{R, \eta_j} (y) \, dy + \int_{\Rd} \| \eta_1 \|_{L^{\infty} (\Rd)}  \rho_t^{R, \eta_j} (y) \, dy \Big) \\
	    & \qquad \leq C(\| \eta_1 \|_{W^{2, \infty} (\Rd)} ) \left( \| \rho_0 \|_{L^1 (\Rd)} + \mathbf{m_2} (\rho_t^{R, \eta_j}) \right).
	\end{align*}
	Then, since the second moment is uniformly bounded due to \eqref{ec:Second_moment_uniformly_bounded_above}, we have that the sequence $\rho^{R, \eta_j, \ast}$ is uniformly bounded in $L^p ( (0,T) \times \mathbb{R}^d)$ for every $p \geq 1$. 
 
  Due to the uniform bound in $L^\infty(0,T; L^p(\Rd))$ we can do a diagonal argument in time to prove that for every $j$ fixed there exists a sequence $R_{ij} \nearrow \infty$ as $i \to \infty$ such that
	\begin{equation*}
	    \rho^{R_{ij} , \eta_{j}, \ast} \rightharpoonup \rho^{\infty , \eta_j} \quad \text{in } L^p_{loc}(0, \infty ; L^p( \mathbb{R}^d)).
	\end{equation*}
	In \Cref{ap:a priori estimates Rd} we prove uniform \textit{a priori} estimates. 
	For every $\omega > 0$ and $i$ such that $R_{ij} > \omega, j$, we deduce:
    \begin{itemize} 
    \item from \eqref{eq:rho H1 Rd}, $\nabla \rho^{R_{ij}, \eta_j}$ is uniformly bounded in $L^2((0,T) \times B_{\omega})$,
    \item from \eqref{eq:rhom H1 Rd}, $(\rho^{R_{ij}, \eta_j} )^m$ is uniformly bounded in $L^2 ((0,T) ; H^1(B_{\omega}))$,
    \item from \eqref{eq:d/dt rho L2 Rd}, $\partial_t \rho^{R_{ij}, \eta_j}$ is uniformly bounded in $L^2((0,T) \times B_{\omega} )$, and 
    \item from \eqref{eq:Delta rhom L2 Rd}, $\Delta (\rho^{R_{ij} , \eta})^m$ is uniformly bounded in $L^2 ((0,T) \times B_{\omega})$.
    \end{itemize}
	
	\textbf{Step 2. A subsequence converges to a locally-strong solution for a fixed cut-off $\eta_j$.}
    Thanks to all the compactness results obtained in Step 1 we can prove convergence to a locally-strong solution.
	
	\textbf{Step 2a. Convergence by compactness.}
	The sequence $\rho^{R_{ij}, \eta_j}$ is uniformly bounded in $H^1((0,T) \times B_{\omega})$ by Step 1. Since $H^1 ((0,T) \times B_{\omega})$ is compactly embedded in $C^{\frac{1}{2}} (0,T ; H^1(B_{\omega}))$, using Ascoli-Arzela Theorem and a diagonal argument in $T$ and $\omega$, for $j$ fixed up to a further subsequence $R_{ij}$, the convergence is such that
	\begin{equation}\label{eq:Cloc strong convergence}
		\rho^{R_{ij}, \eta_j, \ast} \rightarrow \rho^{\infty, \eta_j} \quad \text{in }  C_{loc} ([0,\infty) ; L^2_{loc} (\Rd)) \qquad \text{ as } i \to \infty,  \text{ and for each $t$ locally }  a.e. \text{ in } \Rd.
	\end{equation}
	In particular, we also have, $\rho^{\infty , \eta_j} \in H^1_{loc} ((0,\infty) \times \Rd)$.  Moreover, by the Dominated Convergence Theorem we have
	\begin{equation*}
	    (\rho^{R_{ij}, \eta_j, \ast})^m \rightarrow (\rho^{\infty , \eta_j})^m \quad \text{strongly in } C_{loc} ((0,\infty) ;L^2_{loc}( \Rd )).
	\end{equation*}

	Furthermore, since $(\rho^{R_{ij}, \eta_j})^m$ is uniformly bounded in $L^2 (0,T; H^2 (B_{\omega}))$ for any $T$ and $\omega$, due to  up to a further subsequence in $i$ on $R_{ij}$; by Banach-Alaoglu Theorem and the previous characterisation of the limit
	\begin{equation*}
	    ((\rho^{R_{ij}, \eta_j, \ast})^m) \rightharpoonup (( \rho^{\infty , \eta_j})^m) \quad \text{weakly in } L^2_{loc} (0,\infty; H^2_{loc} (\Rd)).
	\end{equation*}
	Therefore, $(\rho^{\infty, \eta_j})^m \in L^2_{loc} (0,\infty;H^2_{loc} (\Rd))$.
 
 	\textbf{Step 2b. Distributional solution.}  
	We now have all the ingredients to see that for all $\phi \in X = \left\lbrace \phi \in C_c^{\infty} ([0,T] \times \Rd ) : \phi (T) = 0 \right\rbrace$,
	\begin{equation*}
		\int_{\Rd} \rho_0 \phi (0)    + \int_0^T \int_{\Rd} \rho^{R_{ij}, \eta_j}_t \frac{\partial \varphi}{\partial t}  = \int_0^T \int_{\Rd} \nabla( (\rho^{R_{ij}, \eta_j}_t)^m ) \nabla \phi  + \int_0^T \int_{\Rd} \rho^{R_{ij}, \eta_j}_t E_t^{R_{ij}, \eta_j} \nabla \phi  
	\end{equation*}
	converges to
	\begin{equation*}
		\int_{\Rd} \rho_0 \phi (0)  + \int_0^T \int_{\Rd} \rho^{\infty, \eta_j}_t \frac{\partial \phi}{\partial t}  = \int_0^T \int_{\Rd} \nabla ( ( \rho^{\infty, \eta_j} )^m ) \nabla \phi  + \int_0^T \int_{\Rd} \rho^{\infty, \eta_j}_t E_t^{\infty, \eta_j} \nabla \phi  .
	\end{equation*}

	\textbf{Step 3. Extension $\eta_j \rightarrow 1$.} All the bounds discussed in the Step 1 are uniform on $\supp \eta_j$. Therefore, in the same way we have just done in Step 2, taking successive subsequences $ \eta_{j}$ we get that there exists $\rho^{\infty , 1} \in L^{\infty}(0,T; L^p (\mathbb{R}^d))$ such that, 
	\begin{equation}\label{eq:Local convergence proof}
		\rho^{\infty , \eta_{j}} \rightarrow \rho^{\infty , 1} \quad \text{in } C_{loc}([0, \infty ) ; L^2_{loc}(\mathbb{R}^d)).
	\end{equation}
	Additionally, $\rho^{\infty , 1}$ is a locally-strong solution of the problem \eqref{ec:The_equation} in $(0,T) \times \mathbb{R}^d$ in the case $\eta \equiv 1$ for every $T >0$.

	\textbf{Step 4. Convergence of the free energy with a fixed cut-off $\eta_j$.} In this step we claim that we have the convergence stated at \eqref{eq:Convergence free energy ball to Rd}. Our strategy to prove that the limit holds is to control the tails of each one of the three terms.
	
	\textbf{Step 4a. Convergence of the diffusive term of the energy.} 
	First we control the tails. Let $\sigma > 0$.  We can use Hölder inequality to obtain, if $m > \frac{1+d - \sqrt{2d+1}}{d}$ 
	\begin{align*}
	    \int_{\Rd \backslash B_{\sigma}} \left( \rho_t^{R_{ij} , \eta_j, \ast} \right)^m &
     \leq  \left( \int_{\Rd \backslash B_{\sigma}}  |x|^{\frac{2}{1-m}} \rho_t^{R_{ij}, \eta_j, \ast} \right)^m \left( \int_{\Rd \backslash B_{\sigma}} \frac{1}{|x|^\frac{2m}{(1-m)^2}} \right)^{1-m} \\
	    & \le C_m  \mathbf{m_{ \frac{2}{1-m} }} \left( \rho_t^{R_{ij}, \eta_j, \ast}  \right)^m \sigma^{- \frac{2m}{(1-m)} + d(1-m)}.
	\end{align*}
	The $\frac{2}{1-m}$-th moment is bounded due to \eqref{ec:p_moment_uniformly_bounded_above} and the exponent on $\sigma$ is negative with our choice of $m$ (notice also this range is connect with the reverse Hardy-Littlewood inequality \Cref{rem:sharp m})
    Thus, by the Fatou's lemma the limit also satisfies, for every $\sigma < \kappa < \infty$ and $t \in [0,T]$ due to local-a.e. convergence
	\begin{equation*}
		 \int_{ B_\kappa \setminus B_\sigma }   \left( \rho^{\infty, \eta_{j}}_t \right)^m \leq \omega_{T,m} (\sigma^{-1}) \rightarrow 0,
	\end{equation*}
    Now letting $\kappa \to \infty$ we obtain  that the formula also holds replacing $B_\kappa$ by $\Rd$.
	Eventually we recover that, for each $\sigma > 0$ we have
	\begin{align*}
		 \limsup_{i \rightarrow \infty} &\sup_{t \in [0,T]} \left| \int_{\Rd} \left( \rho^{R_{ij}, \eta_{j}, \ast}_t \right)^m - \int_{\mathbb{R}^d} \left( \rho_t^{\infty , \eta_j} \right)^m \right| \\
            &\le \limsup_{i \rightarrow \infty}
        \sup_{t \in [0,T]}\left|  \int_{B_{\sigma}} \left( \rho^{R_{ij}, \eta_{j}, \ast}_t \right)^m  - \int_{B_{\sigma}} \left( \rho^{\infty , \eta_j}_t \right)^m\right| +  2 \omega_{T,m} (\sigma^{-1})
	\end{align*}
    The first term is $0$ because we have proven that the limit $i \rightarrow \infty$ is such that
	\begin{equation*}
	    \left( \rho^{R_{ij}, \eta_{j}, \ast} \right)^m  \rightarrow \left( \rho^{\infty , \eta_j} \right)^m \quad \text{in }  L_{loc}^2(0,\infty; H^1_{loc} (\mathbb{R}^d)).
	\end{equation*}
	The second term vanishes as $\sigma \to \infty$ we deduce that the $\limsup$ is actually a limit, and its value is $0$.
	
	\textbf{Step 4b. Convergence of the confinement term of the energy.} 
    Analogously as before we write
	\begin{equation}\label{ec:Free_energy_convergence_b1}
    \begin{aligned} 
		\limsup_{i \rightarrow \infty} &\sup_{t \in [0,T]}\left|   \int_{\Rd} V_{R_{ij}} \rho^{R_{ij}, \eta_{j}, \ast}_t  - \int_{\Rd} V \rho^{\infty , \eta_j} \right| \\
  &\le   \limsup_{i \rightarrow \infty} \sup_{t \in [0,T]} \left |   \int_{B_{\sigma}} V_{R_{ij}} \rho^{R_{ij}, \eta_{j}, \ast}_t  - \int_{B_{\sigma}} V \rho^{\infty , \eta_j} \right | \\
  &\qquad + \limsup_{i \to \infty} \sup_{t \in [0,T]}\int_{\mathbb{R}^d \backslash B_{\sigma}} V_{R_{ij}} \rho^{R_{ij}, \eta_{j}, \ast}_t + \sup_{t \in [0,T]} \int_{\mathbb{R}^d \backslash B_{\sigma}} V \rho^{\infty, \eta_{j}}_t
    \end{aligned}
	\end{equation}
    The first term is $0$ since we know that taking a sequence $i \rightarrow \infty$ we have that
	\begin{equation*}
		\rho^{R_{ij}, \eta_{j}, \ast} \rightarrow \rho^{\infty , \eta_j} \quad \text{in }  L^2 (0, T; H^1_{loc} (\mathbb{R}^d)).
	\end{equation*}
	and from \eqref{ec:Assumptions_on_V_whole_space} we also know that $V \in L^{\infty}_{loc} (\mathbb{R}^d)$. 
	From \eqref{ec:Assumptions_on_V_whole_space}, for $\sigma$ big enough, $V_{R_{ij}}$ is such that $V_{R_{ij}}(x) \leq C (1 + |x|^{2})$, and, in particular, we get that,
	\begin{align*}
		\int_{\mathbb{R}^d \backslash B_{\sigma}} V_{R_{ij}} \rho^{R_{ij}, \eta_{j}, \ast}_t & \leq  \frac{C}{\sigma^{p - 2}} \left( \| \rho_0 \|_{L^1} + \mathbf{m_p} ( \rho^{R_{ij}, \eta_{j}, \ast}_t ) \right).
	\end{align*}
    In particular we use $p = \frac{2}{1-m}>2$, where we can control the moment.
    Using Fatou this uniform estimate for $t\in [0,T]$ also holds for the last term in \eqref{ec:Free_energy_convergence_b1}.
    Letting $\sigma \to \infty$ we deduce that the the first $\limsup$ in \eqref{ec:Free_energy_convergence_b1} vanishes.
	
	\textbf{Step 4c. Convergence of the aggregation term of the free energy.} Due to the local convergence \eqref{eq:Cloc strong convergence}, since the function $\eta_j(x) W(x-y) \eta_j (y)$ is supported on a compact set in $\Rd \times \Rd$ we observe that,
	\begin{align*}
	 \lim_{i \rightarrow \infty} & \sup_{t \in [0,T]}	\Bigg| \int_{\mathbb{R}^d} \rho^{R_{ij}, \eta_j, \ast}_t (x) \eta_j (x) \int_{\mathbb{R}^d}  W(x-y) \eta_j(y) \rho^{R_{ij}, \eta_j, \ast}_t (y) \, dy \, dx \\
	&\qquad  - \int_{\mathbb{R}^d} \rho^{\infty, \eta_j}_t (x) \eta_j (x) \int_{\mathbb{R}^d}  W(x-y) \eta_j(y) \rho^{\infty, \eta_j}_t (y) \, dy \, dx \Bigg|= 0.
	\end{align*}

	\textbf{Step 5. Convergence of the free energy when $\eta_j \rightarrow 1$.} 
 We now devote this step to prove \eqref{eq:Convergence free energy ball to Rd 2}. Steps 4a and 4b are analogous and therefore, up to a subsequence in $j$,
	\begin{equation*}
	    \lim_{j \rightarrow \infty} \int_{\mathbb{R}^d} \left( \rho^{\infty, \eta_{j}}_t \right)^m = \int_{\mathbb{R}^d} \left( \rho^{\infty, 1}_t \right)^m
	\qquad \text{ and } \qquad 
	    \lim_{j \rightarrow \infty} \int_{\Rd} V \rho_t^{\infty, \eta_j} = \int_{\Rd} V \rho^{\infty , 1}.
	\end{equation*}
    In order to study the convergence of the aggregation term of the free energy, let us define the symmetric bilinear form
    $$
        \mathfrak W ( u , v ) = \int_\Rd \int_\Rd u(x) W(x-y) v(y) dy dx .
    $$
    Notice that the aggregation term on $\mathcal F_{\infty, \eta} [\rho]$ is $ \frac 1 2  \mathfrak W ( \rho \eta_j , \rho \eta_j )$ (even for $\eta = 1$).
	Now, we would like to prove that the limit
	\begin{equation}\label{eq:Free energy aggregation convergence}
	    \lim_{j \rightarrow \infty} \mathfrak W (\rho^{\infty , \eta_j}_t \eta_j , \rho^{\infty , \eta_j}_t \eta_j ) = \mathfrak W (\rho^{\infty , 1}_t , \rho^{\infty , 1}_t) 
	\end{equation}
	holds. Let us proceed to check the claim. 
	\begin{equation*}
	    \left| \mathfrak W (\rho^{\infty , \eta_j}_t \eta_j , \rho^{\infty , \eta_j}_t \eta_j ) - \mathfrak W (\rho^{\infty , 1}_t , \rho^{\infty , 1}_t) \right| \leq \left| \mathfrak W (\rho^{\infty , \eta_j}_t \eta_j , \rho^{\infty , \eta_j}_t \eta_j - \rho^{\infty, 1}_t) \right| + \left| \mathfrak W (\rho^{\infty, \eta_j}_t \eta_j - \rho^{\infty , 1}_t , \rho^{\infty , 1}_t) \right|
	\end{equation*}
	We focus on the first term of the RHS since the computations for the second one work in an analogous way. 
    From \eqref{ec:Assumptions_on_W_whole_space} and taking $p=\frac{2}{1-m}$, we have that,
	\begin{align*}
	    & \left|\mathfrak W \left ( \rho^{\infty, \eta_j}_t  \eta_j , \rho^{\infty, \eta_j}_t  \eta_j  - \rho^{\infty, 1}_t \right) \right|  \\
	    & \qquad \leq C \left| \int_{\Rd}  (1+|x|^2) \rho^{\infty, \eta_j}_t (x) \eta_j (x) dx \right| \left|  \int_{\Rd} (1+|y|^2) \left[ \rho^{\infty, \eta_j}_t (y) \eta_j (y) - \rho^{\infty, 1}_t (y) \right] \, dy \right| 
     \end{align*} 
     The first integral on the RHS is bound due to \eqref{ec:Second_moment_uniformly_bounded_above}. We will prove that the second integral tends to zero as $j \to \infty$, with a similar argument as before. For any $\sigma > 0$ we have that
     \begin{equation} 
     \label{eq:Free energy convergence W last step}
     \begin{aligned} 
	     \limsup_{j\to \infty} &\sup_{t \in [0,T]} \left|  \int_{\Rd} (1+|y|^2) \left[ \rho^{\infty, \eta_j}_t (y) \eta_j (y) - \rho^{\infty, 1}_t (y) \right] \, dy \right| \\
        & \le \limsup_{j \to \infty } 
        \sup_{t \in [0,T]}
        \int_{B_{\sigma}} (1+|y|^2) \left| \rho^{\infty, \eta_j}_t (y) \eta_j (y) - \rho^{\infty,1}_t (y) \right| \, dy \\
	    & \qquad + \limsup_{j\to \infty} 
        \sup_{t \in [0,T]}
        \int_{\Rd \backslash B_{\sigma}} (1 + |y|^2)  \rho^{\infty, \eta_j}_t (y) \eta_j (y) +  
        \sup_{t \in [0,T]}
        \int_{\Rd \backslash B_{\sigma}} (1 + |y|^2)  \rho^{\infty,1}_t (y)  \, dy.
	\end{aligned}
    \end{equation}
    The first $\limsup$ on the RHS is $0$ due to the local convergences. For the second term we bound
    $$
	    \int_{\Rd \backslash B_{\sigma}} (1 + |y|^2)  \rho^{\infty, \eta_j}_t (y) \eta_j (y) \, dy
     \le  \frac{1+\sigma^2}{1+ \sigma^p} \left( \| \rho_0 \|_{L^1} + \mathbf{m_p} (\rho^{\infty, \eta_j}_t) \right).
    $$
    And we use $p = \frac{2}{1-m}>2 $ because this moment is uniformly bounded.
    The last can also be bounded similarly by Fatou. Letting $\sigma \to \infty$ we recover that the RHS of \eqref{eq:Free energy convergence W last step} is $0$. With this we conclude with the proof of \eqref{eq:Free energy aggregation convergence}.
	\end{proof}

\subsection{Long-time asymptotics. Proof of \texorpdfstring{\Cref{th:mu infty Rd}}{Theorem \ref{th:mu infty Rd}}}\label{sec:Long time asymptotics Rd}
	
	Let us now take $t_n$ a sequence of times such that $t_n \rightarrow \infty$. We define the sequence of functions
	\begin{align}\label{def:rho infty 1 n}
		\begin{split}
			\rho^{\infty, 1, [n]}\colon& [0,1] \times \Rd  \longrightarrow  \mathbb{R} \\
			& \qquad (t,x) \quad \mapsto \, \, \rho^{\infty , 1} (t + t_n, x).
		\end{split}
	\end{align}
        In this subsection, we are going to study the limit when $n \rightarrow \infty$ of this sequence. Furthermore, we would also like to prove that there is no mass escaping through infinity when we take the limit in time. In $\mathbb{R}^d$, the free-energy of the FDE, $\partial_t u = \Delta u^m$ in the range $0 < m < 1$, is not bounded from below and is such that $u(t) \rightarrow 0$ as $t \rightarrow \infty$. This implies the necessity of requiring further assumptions. In fact, it suffices $V$ to not be critical in the sense of constants, i.e. for some $\varepsilon \in (0,1)$ 
	\begin{align}
	\begin{split}\label{ec:Free_energy_bounded_below}
		& \inf_{\substack{\rho \in \mathcal{M}_{ac} ( \mathbb{R}^d), \\ \| \rho \|_{L^1(\Rd)}=\|\rho_0 \|_{L^1(\Rd)}}} \left( \frac{1}{m-1} \int_{\mathbb{R}^d} \rho^m + \int_{\mathbb{R}^d} V \rho + \frac{1}{2} \int_{\mathbb{R}^d} \int_{\mathbb{R}^d} K(x,y) \rho (y) \rho(x) \, dy \, dx \right) \\
		& \qquad \geq  \inf_{\substack{\rho \in \mathcal{M}_{ac} ( \mathbb{R}^d), \\ \| \rho \|_{L^1(\Rd)}=\|\rho_0 \|_{L^1(\Rd)}}} \left( \frac{1}{m-1} \int_{\mathbb{R}^d} \rho^m + (1- \varepsilon ) \int_{\mathbb{R}^d} V \rho  \right) \\
		& \qquad > - \infty.
	\end{split}
\end{align}
From this assumption we can deduce the following result stated in \cite{CGV22},
\begin{lemma}[Boundedness of $\|V\rho\|_{L^1 (\mathbb{R}^d)}$, \cite{CGV22}]
	Assume \eqref{ec:Free_energy_bounded_below}. Then, there exists a constant $C>0$ such that
	\begin{equation}\label{ec:sup_V_bounded}
		\int_{\mathbb{R}^d} \rho^m + \int_{\mathbb{R}^d} V \rho \leq C (1 + \mathcal{F} [\rho] ),
	\end{equation}
\end{lemma}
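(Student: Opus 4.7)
The plan is a short algebraic manipulation exploiting the parameter $\varepsilon$ already baked into hypothesis \eqref{ec:Free_energy_bounded_below} together with the sign of $K$. First, I would split the free energy as
\begin{equation*}
    \mathcal F[\rho] \;=\; \Bigl[\tfrac{1}{m-1}\!\int_{\Rd}\!\rho^m + (1-\varepsilon)\!\int_{\Rd}\!V\rho\Bigr] \;+\; \varepsilon\!\int_{\Rd}\!V\rho \;+\; \tfrac12\!\iint_{\Rd\times\Rd}\!K(x,y)\rho(x)\rho(y)\,dx\,dy,
\end{equation*}
drop the last term using $K\ge 0$, and invoke \eqref{ec:Free_energy_bounded_below} on the bracketed expression (which applies since $\|\rho\|_{L^1}=\|\rho_0\|_{L^1}$ is preserved along the flow) to bound it below by some constant $-C_0$. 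This immediately gives
\begin{equation*}
    \int_{\Rd} V\rho \;\le\; \tfrac{1}{\varepsilon}\bigl(\mathcal F[\rho] + C_0\bigr).
\end{equation*}

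Second, for the diffusive piece, I would rearrange the same lower bound from \eqref{ec:Free_energy_bounded_below}: since $m<1$ one has $\frac{1}{m-1}=-\frac{1}{1-m}<0$, so
\begin{equation*}
    \tfrac{1}{1-m}\!\int_{\Rd}\!\rho^m \;\le\; C_0 + (1-\varepsilon)\!\int_{\Rd}\!V\rho,
\end{equation*}
and plugging in the estimate already obtained for $\int V\rho$ yields a bound of the form $\int \rho^m \le C(1+\mathcal F[\rho])$. Summing the two inequalities produces \eqref{ec:sup_V_bounded}.

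There is no genuine obstacle here: the whole point of formulating \eqref{ec:Free_energy_bounded_below} with the factor $1-\varepsilon$ in front of $V$ is precisely to leave a spare $\varepsilon\int V\rho$ available for absorption after dropping the non-negative interaction term. The only subtlety is keeping track of the sign flip caused by $m<1$ when turning the lower bound on $\frac{1}{m-1}\int\rho^m$ into an upper bound on $\int\rho^m$.
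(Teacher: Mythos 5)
Your proof is correct, and it is the standard and essentially unique argument one would give: the whole purpose of the $(1-\varepsilon)$ factor in \eqref{ec:Free_energy_bounded_below} is to leave $\varepsilon\int V\rho$ as a surplus after dropping the nonnegative interaction term, and your two rearrangements (first isolating $\int V\rho$, then using the lower bound again to control $\int\rho^m$) are exactly what is needed. The paper itself defers to \cite{CGV22} and gives no proof, so there is nothing to compare against, but your reconstruction is sound; the one thing worth keeping explicit (which you do) is that the estimate is restricted to $\rho$ with $\|\rho\|_{L^1(\Rd)}=\|\rho_0\|_{L^1(\Rd)}$, since that is the constraint set over which the infimum in \eqref{ec:Free_energy_bounded_below} is taken.
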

Furthermore, by an argument from \cite{CD18} extended in \cite{CGV22}, we can give a sufficient condition on $V$ so that \eqref{ec:Free_energy_bounded_below} holds.

\begin{theorem}[Family of admissible potentials, \cite{CD18, CGV22}]
	Assume that, for some $\alpha \in (0, m)$ we have that
	\begin{equation}\label{eq:A way to bound the free energy}
		\chi_V = \sum_{j=1}^{\infty} 2^{jn} V(2^j)^{- \frac{\alpha}{1-m}} < \infty.
	\end{equation}
	Then, \eqref{ec:Free_energy_bounded_below} holds for any $\varepsilon \in (0,1)$.
\end{theorem}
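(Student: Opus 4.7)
The plan is to prove an interpolation-type estimate
\begin{equation}\label{eq:plan-interpolation}
\int_{\Rd} \rho^m \le C(\chi_V, m, d, \alpha) \,\|\rho\|_{L^1(\Rd)}^{m-\alpha}\left(\int_{\Rd} V\rho\right)^{\alpha},
\end{equation}
and then absorb the right-hand side into $(1-\varepsilon)\int V\rho$ via Young's inequality, exploiting the spare room $\alpha<1$.

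To establish \eqref{eq:plan-interpolation} I would perform a dyadic decomposition of $\Rd$: set $A_0 = B_1$ and $A_j = B_{2^j}\setminus B_{2^{j-1}}$ for $j\ge 1$, so $|A_j|\le C_d\, 2^{jd}$ and, using that $V$ is radially non-decreasing (implicit in evaluating $V$ at a scalar in the hypothesis), $\inf_{A_j} V \ge V(2^{j-1})$. On each $A_j$ I would factor $\rho^m = \rho^{m-\alpha}\cdot (V\rho)^{\alpha}\cdot V^{-\alpha}$ and apply the three-function H\"older inequality with exponents $p_1=\tfrac{1}{m-\alpha}$, $p_2=\tfrac{1}{\alpha}$, $p_3=\tfrac{1}{1-m}$ (all $\ge 1$ and satisfying $1/p_1+1/p_2+1/p_3=1$ because $0<\alpha<m<1$). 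This produces
\[
\int_{A_j}\rho^m \le \Bigl(\int_{A_j}\rho\Bigr)^{m-\alpha}\Bigl(\int_{A_j}V\rho\Bigr)^{\alpha}\Bigl(\int_{A_j}V^{-\alpha/(1-m)}\Bigr)^{1-m},
\]
and the last factor is bounded by $C_d^{1-m}\, 2^{jd(1-m)} V(2^{j-1})^{-\alpha}$ by the two observations above. Note that $p_1,p_2,p_3$ are chosen precisely so that the first two $L^{p_i}$-norms reduce to $\int_{A_j}\rho$ and $\int_{A_j}V\rho$.

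Summing in $j$ and applying a second, discrete three-function H\"older with the same exponents, the factors $\int_{A_j}\rho$ and $\int_{A_j}V\rho$ collapse to $\|\rho\|_{L^1}$ and $\int V\rho$ respectively, while the weight terms combine into
\[
\Bigl(\sum_{j\ge 1} C_d\, 2^{jd}\, V(2^{j-1})^{-\alpha/(1-m)}\Bigr)^{1-m}\le C(d)\,\chi_V^{1-m},
\]
which is finite by hypothesis (a factor $2^d$ absorbs the shift $2^{j-1}\mapsto 2^j$, and the $A_0$ term is trivial since $\rho^m\in L^1(B_1)$ for bounded mass). This establishes \eqref{eq:plan-interpolation}. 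Since the mass $\|\rho\|_{L^1}=\|\rho_0\|_{L^1}$ is prescribed and $\alpha<1$, Young's inequality yields, for any $\delta>0$,
\[
C(\chi_V,m,d,\alpha)\,\|\rho_0\|_{L^1}^{m-\alpha}\Bigl(\int V\rho\Bigr)^{\alpha} \le \delta\int V\rho + C_\delta,
\]
with $C_\delta$ depending only on $\delta, \chi_V, \alpha, m, d$ and $\|\rho_0\|_{L^1}$. Choosing $\delta$ so small that $(1-\varepsilon)-\delta/(1-m)>0$ then gives
\[
\tfrac{1}{m-1}\int \rho^m+(1-\varepsilon)\int V\rho \,\ge\, -\tfrac{C_\delta}{1-m}
\]
uniformly over admissible $\rho$, which is precisely \eqref{ec:Free_energy_bounded_below}.

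The only delicate point I expect is the use of $\inf_{A_j}V \ge V(2^{j-1})$: this is transparent when $V$ is radially non-decreasing, but otherwise one would need to replace $V(2^j)$ in the definition of $\chi_V$ by the infimum of $V$ over $\{|x|\ge 2^j\}$. Modulo this, the proof is a clean interpolation-plus-Young argument; the novelty of the hypothesis is exactly that the exponent $\alpha$ may be strictly below $m$, leaving the gap $m-\alpha>0$ that is spent on the mass factor $\|\rho_0\|_{L^1}^{m-\alpha}$.
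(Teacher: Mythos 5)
Your proof is correct, and it uses what is evidently the intended approach: a dyadic decomposition of $\Rd$, a three-exponent H\"older inequality on each annulus (factoring $\rho^m=\rho^{m-\alpha}(V\rho)^\alpha V^{-\alpha}$ with exponents $1/(m-\alpha),\,1/\alpha,\,1/(1-m)$), a discrete H\"older with the same exponents to reassemble, and then Young's inequality to absorb $\bigl(\int V\rho\bigr)^\alpha$ into $(1-\varepsilon)\int V\rho$, exploiting the slack from $\alpha<1$. The paper itself does not reproduce a proof but delegates to the cited references, which use exactly this dyadic interpolation technique, so your route matches the one the paper points to.

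One indexing detail is worth tightening. With $A_j=B_{2^j}\setminus B_{2^{j-1}}$ for $j\ge 1$ and the lower bound $\inf_{A_j}V\ge V(2^{j-1})$, your reindexed weight sum acquires a $V(1)^{-\alpha/(1-m)}$ term that is \emph{not} controlled by $\chi_V$ (whose sum begins at $V(2)$) and is infinite if $V$ vanishes near the origin --- which the hypothesis permits, since $\chi_V<\infty$ only forces $V(2^j)>0$ for $j\ge 1$. The clean fix is to set $A_0=B_2$ and $A_j=B_{2^{j+1}}\setminus B_{2^j}$ for $j\ge 1$, so that $\inf_{A_j}V\ge V(2^j)$ lines up term-by-term with $\chi_V$, while the bulk contribution $\int_{B_2}\rho^m\le \|\rho\|_{L^1}^m|B_2|^{1-m}$ is uniformly bounded by the prescribed mass exactly as you argue for $B_1$. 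Your caveat about radial monotonicity is apt: evaluating $V$ at the scalar $2^j$ already presupposes a radial profile, and monotonicity is precisely what identifies $\inf_{A_j}V$ with the value at the inner radius.
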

	Then, we can prove \Cref{th:mu infty Rd} about the asymptotic in time of $\rho^{\infty,1}$.

	\begin{proof}[Proof of \Cref{th:mu infty Rd}]
	We divide the proof in several steps.
	
	\textbf{Step 1. Convergence to the limit.}
	Let us take the sequence of functions defined in \eqref{def:rho infty 1 n}. Fix $R_{ij}$ and $\eta_j$ and take $\Omega \subseteq B_{R_{ij}}$ bounded. From the definition of the space $W^{-1,1}$ we have the \textit{a priori} estimate,
	\begin{align*}
		\| u \|_{W^{-1,1}( \Omega )}  
		= \inf_{ \substack{ u|_\Omega = \diver F \\ F \in L^1(\Omega) }} \|F \|_{L^1(\Omega)} 
		=\inf_{ \substack{ {u}|_\Omega = \diver F \\ F \in  L^1(\Omega) }} \|F \|_{L^1(\Omega)}  
		\leq \inf_{ \substack{ u = \diver F \\ F \in  L^1(B_{R_{ij}}) }} \|F \|_{L^1(B_{R_{ij}})}  
		\leq \| u \|_{W^{-1,1}(B_{R_{ij}})}.
	\end{align*} 
	Therefore, using the linearity of the extension and a similar strategy as in the proof of \Cref{th:Convergence_stationary_state}, we have that,
	\begin{align*}
		\| \rho^{R_{ij}, \eta_j, [n], \ast}_t - \rho^{R_{ij}, \eta_j, [n], \ast}_s \|_{W^{-1,1}( \Omega )} & \leq \| \rho^{R_{ij}, \eta_j, [n]}_t - \rho^{R_{ij}, \eta_j, [n]}_s \|_{W^{-1,1}(B_{R_{ij}})} \\
        &\leq \int_s^t \left\| \frac{\partial \rho^{R_{ij}, \eta_j, [n]}_{\sigma}}{\partial \sigma} \right\|_{W^{-1,1}(B_{R_{ij}})} \, d \sigma \\
		& \leq \| \rho_0 \|_{L^1 (B_{R_{ij}})}^{\frac{1}{2}} \left( \mathcal{F}_{R_{ij}, \eta_j} [ \rho^{R_{ij}, \eta_j }_{t_n} ] - \mathcal{F}_{R_{ij}, \eta_j} [\rho^{R_{ij}, \eta_j}_{t_n +1}] \right)^{\frac{1}{2}} | t - s |^{\frac{1}{2}}.
	\end{align*}
	Furthermore, from \eqref{eq:Convergence free energy ball to Rd} taking the limit  $i\rightarrow \infty$ we get that,
	\begin{align*}
		\| \rho^{\infty, \eta_j, [n]}_t - \rho^{\infty,\eta_j, [n]}_s \|_{W^{-1,1}( \Omega )} 
		& \leq \| \rho_0 \|_{L^1 (B_{R_{ij}})}^{\frac{1}{2}} \left( \mathcal{F}_{\infty, \eta_j} [ \rho^{\infty, \eta_j }_{t_n} ] - \mathcal{F}_{\infty, \eta_j} [\rho^{\infty,\eta_j}_{t_n +1}] \right)^{\frac{1}{2}} | t - s |^{\frac{1}{2}},
	\end{align*}
	and, from the further convergence \eqref{eq:Convergence free energy ball to Rd 2}, taking $j \rightarrow \infty$ we obtain,
	\begin{align*}
		\| \rho^{\infty, 1, [n]}_t - \rho^{\infty,1, [n]}_s \|_{W^{-1,1}( \Omega )} 
		& \leq \| \rho_0 \|_{L^1 (B_{R_{ij}})}^{\frac{1}{2}} \left( \mathcal{F}_{\infty, 1} [ \rho^{\infty, 1 }_{t_n} ] - \mathcal{F}_{\infty, 1} [\rho^{\infty,1}_{t_n +1}] \right)^{\frac{1}{2}} | t - s |^{\frac{1}{2}}.
	\end{align*}
	Take $\Omega = B_1$. By the Ascoli-Arzelá Theorem and the $L^p$ bounds, there exists a subsequence in $n$ such that $\rho^{\infty, 1, [n]} \to \widehat\mu^{\{1\} }$ in $C([0,1];W^{-1,1}(\Omega))$. Hence, 
	\begin{equation*}
		\|\widehat\mu_t^{\{1\}} - \widehat\mu_s^{\{1\}} \|_{W^{-1,1} (\Omega)} \le \liminf \| \rho^{\infty, 1, [n]}_t - \rho^{\infty, 1, [n]}_s \|_{W^{-1,1}( \Omega )} = 0,
	\end{equation*}
	and $\widehat\mu^{\{1\}}$ does not depend on time in $\Omega$.
	
	Similarly, we construct a further subsequence that converges in $B_2$, and so on $\widehat\mu^{\{2\}} \in W^{-1,1} (B_2) $ (i.e. time independent). Using test functions in $W^{1,\infty}_0(B_1)$ we observe that $\widehat\mu^{\{2\}}|_{B_1} = \widehat\mu^{\{1\}}$. We proceed inductively for all $B_k$.  
    Lastly, by a diagonal argument we get a distribution $\widehat\mu$ and a final subsequence $\rho^{\infty, 1, [n]}$ that converges to it in $C([0,1];W^{-1,1}_{loc}(\Rd))$.
    
    \textbf{Step 2. The limit is also weak-$\ast$.} Since, $\rho_t^{\infty , 1 , [n]}$ are measures with total mass $\| \rho_0 \|_{L^1(\mathbb{R}^d)}$, from Banach-Alaoglu Theorem, up to a subsequence, we have that,
	\begin{equation*}
	    \rho_t^{\infty , 1, [n]} \rightharpoonup \widehat\mu \quad \text{weak}-\ast \text{ in } L^{\infty}(0,1; \mathcal{M}(\mathbb{R}^d)). 
	\end{equation*}

	\textbf{Step 3. There is no mass escaping through infinity.} Let us compute. Taking advantage of \eqref{ec:sup_V_bounded}  we have that,
    \begin{align*}
        \| \rho_0 \|_{L^1 (\Rd)} - \int_{B_R} \widehat\mu (x) \, dx  = \int_{\Rd \backslash B_R} d\widehat\mu \leq \frac{1}{\inf_{ x \in \Rd \backslash B_R} V (x)} \int_{\Rd} V \, d\widehat\mu \leq \frac{C}{\inf_{x \in \Rd \backslash B_R} V (x)} = \omega (R^{-1}) \rightarrow 0
    \end{align*}
    since $\inf_{x \in \Rd \backslash B_{\sigma}} V (x) \rightarrow \infty$ when $\sigma \rightarrow \infty$.
\end{proof}

\subsection{Viscosity solutions} 
The goal of this subsection is to study the mass problem in the whole space $\mathbb{R}^d$ taking advantage from viscosity solutions and all the theory developed on \cref{sec:Mass_eq}. In order to do that, we define analogously $\diffEv_{\infty, \eta}$, $\Ev_{\infty, \eta}$ and $\changevariables_{\infty}$ for the whole space $\mathbb{R}^d$ and we change the notation to $\diffEv_{R, \eta}$, $\Ev_{R, \eta}$ and $\changevariables_{R}$ for the case we studied on \cref{sec:Mass_eq}, where we extend by $0$ if we need to. Furthermore, in the same way we did on \cref{sec:Mass_eq}, we assume $V$, $W$ and $\eta$ radially symmetric.

We study the problem,
\begin{equation}\label{eq:General_Mass_equation_whole_space}
	\begin{dcases}
		\frac{\partial M}{\partial t}  = \kappa (v)^2 \frac{\partial }{\partial v} \left( \frac{\partial M}{\partial v} \right)^m + \kappa (v)^2 \frac{\partial M}{\partial v} \diffEv_{\infty,\eta} \left[ \changevariables^{-1}_{\infty} \left[ \frac{\partial M}{\partial v} \right] \right] (t, v)   , & t,v > 0,\\
        M(t,0) = 0, & t > 0 \\
		M(0,v)  = \int_{\widetilde{B_v}} \rho_0 (x) \, dx, & v > 0
	\end{dcases}
\end{equation}
for which we need a notion of viscosity solution. Since at this point we cannot guarantee that $\partial M / \partial v $ is positive, it is better to simplify \eqref{eq:General_Mass_equation_whole_space} expanding the second derivative and multiplying by $\frac{1}{m} \left( \frac{\partial M}{\partial v} \right)^{1-m}$.
\begin{definition}
	A function $M \in C([0,T];C((0, \infty)) \cap BV([0,\infty))$ is a viscosity supersolution of \eqref{eq:General_Mass_equation_whole_space} if, for every $t_0 > 0$, $v_0 \in (0, \infty)$ and for every $\varphi \in C^2 ((t_0 - \varepsilon , t_0 + \varepsilon ) \times (v_0 - \varepsilon, v_0 + \varepsilon ))$ such that $M \geq \varphi$, $ M(v_0) = \varphi (v_0 )$ and $\frac{\partial \varphi}{\partial v} (v) \neq 0$ for all $v \neq v_0$ it holds that
	\begin{equation*}
		\tfrac 1 { m} \left( \tfrac{\partial \varphi}{\partial v} (t_0,v_0)\right)^{1-m} \tfrac{\partial \varphi}{\partial t} (t_0, v_0 ) \ge \kappa (v_0)^2 \left( \tfrac{\partial^2 \varphi}{\partial v^2} (t_0,v_0) + \tfrac {1}{m}\left(\tfrac{\partial \varphi}{\partial v} (t_0,v_0) \right)^{2-m}  \diffEv_{\infty,\eta} \left[ \changevariables^{-1}_{\infty} \left[ \tfrac{\partial M}{\partial v} \right] \right] (t_0, v_0) \right) .
	\end{equation*}
	The corresponding definition of subsolution is made by inverting the inequalities. A viscosity solution is a function that is a viscosity sub and supersolution.
\end{definition}
\begin{remark}
	Notice that, in the viscosity formulation we do not replace $\partial M / \partial v$ by the test function in the non-local term.
\end{remark}

In order to obtain viscosity solutions of the problem \eqref{eq:General_Mass_equation_whole_space}, we base our strategy on extending the solutions of the problem in the ball \eqref{eq:General_Mass_equation}. We define,
\begin{equation*}
	M^{R, \eta} (t, v) = \int_{\widetilde{B_v}} \rho_t^{R, \eta},
\end{equation*}
which we now extend to the whole space in the natural way,
\begin{equation}\label{ec:Tilde_M[n]R}
	M^{R, \eta , \ast} (t,v)     =\int_{\widetilde{B_v}} \rho_t^{R, \eta, \ast}
    =
    \begin{dcases}
		M^{R, \eta} (t, v) & \text{if } v \leq R_v, \\
		\| \rho_0 \|_{L^1(B_R)} & \text{if } v > R_v.
	\end{dcases} 
\end{equation}
Once we have presented the problem we are ready to state the main result of this subsection.

\begin{proposition}\label{prop:extend mass solution to Rd}
    Assume all the hypothesis from \Cref{th:locally_strong_solution_Rn}. Assume furthermore that $\rho_0$, $V$ and $W$ are radially symmetric and \eqref{ec:Free_energy_bounded_below}. Thus, for $j$ fixed there exists a sequence $R_{ij} \rightarrow \infty$  and $M^{\infty , \eta_j} \in C ([0, \infty) \times (0, \infty))$ such that:
    \begin{itemize}
        \item $\diffEv_{R_{ij}, \eta_j}[\rho^{R_{ij}, \eta_j, \ast}] \rightarrow \diffEv_{\infty, \eta_j}[\rho^{\infty , \eta_j}]$ in $C_{loc} ([0, \infty) \times (0, \infty))$,
        \item $\frac{\partial M^{\infty,\eta_j}}{\partial v} = \changevariables_{\infty} [\rho^{\infty,\eta_j}]$ belongs to $C_{loc}([0,\infty); L^2_{loc} [0, \infty))$,
        \item $\sup_{[0,T] \times [0, \infty)} |M^{R_{ij} , \eta_j, \ast} (t,v) - M^{\infty, \eta_j} (t,v)| \rightarrow 0$,
        \item $M^{\infty, \eta_j}$ is a viscosity solution of \eqref{eq:General_Mass_equation_whole_space}.
    \end{itemize}
    Then, if we let $\eta_j \nearrow 1$ there exists $M^{\infty, 1} \in C([0, \infty) \times (0, \infty))$ such that:
    \begin{itemize}
        \item $\diffEv_{\infty, \eta_{j}}[\rho^{\infty, \eta_{j}}] \rightarrow \diffEv_{\infty, 1}[\rho^{\infty , 1}]$ in $C_{loc} ([0, \infty) \times (0, \infty))$,
        \item $\frac{\partial M^{\infty,1}}{\partial v} = \changevariables_{\infty} [\rho^{\infty,1}]$ belongs to $C_{loc}([0,\infty); L^2_{loc} [0, \infty))$,
        \item $\sup_{[0,T] \times [0, \infty)} |M^{\infty , \eta_{j}} (t,v) - M^{\infty, 1} (t,v)| \rightarrow 0$,
        \item $M^{\infty, 1}$ is a viscosity solution of \eqref{eq:General_Mass_equation_whole_space} for $\eta \equiv 1$.
    \end{itemize}
\end{proposition}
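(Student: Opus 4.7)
The plan mirrors the bounded-domain arguments in \Cref{th:M is a viscosity solution} and \Cref{prop:M^[n]_converge_uniformly}: define $M^{\infty,\eta_j}$ and $M^{\infty,1}$ as integrals of the limit densities from \Cref{th:locally_strong_solution_Rn}, upgrade the $L^2_{loc}$ convergence of the densities to uniform convergence of the mass functions, and pass to the limit in the viscosity formulation via the stability result \cite[Chapter~3, Theorem~2]{Kat14}. Set $M^{\infty,\eta_j}(t,v) := \int_{\widetilde{B_v}} \rho^{\infty,\eta_j}_t(x)\,dx$, and analogously $M^{\infty,1}$. For every fixed $v>0$ the ball $\widetilde{B_v}$ is bounded, so
\begin{equation*}
|M^{R_{ij},\eta_j,\ast}(t,v) - M^{\infty,\eta_j}(t,v)| \le \|\rho^{R_{ij},\eta_j,\ast}_t - \rho^{\infty,\eta_j}_t\|_{L^1(\widetilde{B_v})}
\end{equation*}
tends to zero uniformly in $t\in[0,T]$ by \eqref{eq:Local convergence from BR to Rd} and Cauchy--Schwarz. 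Uniformity in $v$ is obtained in two pieces. For $v$ in a compact $[0,V]$, I combine pointwise-in-$v$ convergence with equicontinuity: \eqref{ec:Mass_spatial_regularity} at $q=\infty$ and the bound \eqref{ec:Lp_uniform_in_Rn} give uniform Lipschitz continuity in $v$. For $v$ large, the tail bound
\begin{equation*}
0 \le \|\rho_0\|_{L^1(\mathbb{R}^d)} - M^{\infty,\eta_j}(t,v) \le \int_{\mathbb{R}^d\setminus \widetilde{B_v}} \rho^{\infty,\eta_j}_t \le \frac{\mathbf{m_2}(\rho^{\infty,\eta_j}_t)}{(v/|B_1|)^{2/d}},
\end{equation*}
and the analogous bound for $M^{R_{ij},\eta_j,\ast}$ (uniform in $i,t$ via \eqref{ec:Second_moment_uniformly_bounded_above} and Fatou), control $v \gg 1$. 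The identification $\tfrac{\partial M^{\infty,\eta_j}}{\partial v} = \changevariables_\infty[\rho^{\infty,\eta_j}]$ in $C_{loc}([0,\infty); L^2_{loc}[0,\infty))$ is then immediate from \Cref{lem:change of variables map} applied in the limit.

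For the nonlocal drift with $\eta_j$ fixed, since $K_{\eta_j}(x,y) = \eta_j(x) W(x-y) \eta_j(y)$ is supported in a compact set of $\mathbb{R}^d\times\mathbb{R}^d$ independent of $i$, the functional $\rho \mapsto \int K_{\eta_j}(\cdot,y)\rho(y)\,dy$ depends only on $\rho|_{\supp \eta_j}$, and \Cref{lem:radial} combined with the strong $L^2_{loc}$ convergence yields $\diffEv_{R_{ij},\eta_j}[\rho^{R_{ij},\eta_j,\ast}] \to \diffEv_{\infty,\eta_j}[\rho^{\infty,\eta_j}]$ in $C_{loc}([0,\infty)\times (0,\infty))$. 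The approximants $M^{R_{ij},\eta_j,\ast}$ from \eqref{ec:Tilde_M[n]R} are viscosity solutions on $(0,\infty)$: on $(0,R_{ij,v})$ by \Cref{th:M is a viscosity solution}, and on $(R_{ij,v},\infty)$ trivially because the function is constant and $\diffEv_{R_{ij},\eta_j}$ vanishes there. Applying the stability theorem \cite[Chapter~3, Theorem~2]{Kat14} to Steps~1--2 makes $M^{\infty,\eta_j}$ a viscosity solution of \eqref{eq:General_Mass_equation_whole_space}.

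For the second half of the proposition ($\eta_j \to 1$), the mass and stability steps carry over verbatim using \eqref{eq:Local convergence from BR to Rd 2} in place of \eqref{eq:Local convergence from BR to Rd}. The main obstacle is that $K_1 = W$ is no longer compactly supported, so the cut-off reduction used above is unavailable. The remedy is the tail-splitting strategy of Steps~4 and~5 of \Cref{th:locally_strong_solution_Rn}: for $x$ with $|x|^d |B_1| = v$ in a compact $v$-set and a truncation radius $\sigma$, decompose $\diffEv_{\infty,\eta_j}[\rho^{\infty,\eta_j}] - \diffEv_{\infty,1}[\rho^{\infty,1}]$ into an integral over $B_\sigma$, which vanishes as $j\to\infty$ because $\eta_j \nearrow 1$ uniformly on compacts and $\rho^{\infty,\eta_j} \to \rho^{\infty,1}$ in $L^2_{loc}$, and an integral over $\mathbb{R}^d\setminus B_\sigma$, which by the quadratic growth \eqref{eq:W_quadratic} of $\nabla W$ and the uniform $p$-moment bound \eqref{ec:p_moment_uniformly_bounded_above} with $p=\tfrac{2}{1-m}>2$ is controlled by $C(1+|x|)\sigma^{2-p}(\|\rho_0\|_{L^1} + \mathbf{m_p}(\rho^{\infty,\eta_j}_t))$, uniformly in $j$. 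Sending first $j\to\infty$ and then $\sigma\to\infty$ gives local uniform convergence of the drifts, and the stability of viscosity solutions concludes that $M^{\infty,1}$ is a viscosity solution of \eqref{eq:General_Mass_equation_whole_space} with $\eta\equiv 1$.
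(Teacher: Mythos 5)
Your proof is correct and mirrors the paper's strategy at the top level --- local uniform convergence of $M$ plus moment-based tail control, then stability of viscosity solutions, with a tail-splitting argument for the drift when $\eta_j \to 1$ --- but your route to the uniform convergence of $M$ on compact sets is different and somewhat more elementary: you pair the direct definition $M^{\infty,\eta_j}(t,v) = \int_{\widetilde{B_v}}\rho^{\infty,\eta_j}_t$ with Lipschitz equicontinuity from \eqref{ec:Mass_spatial_regularity} at $q=\infty$ and the $L^\infty$ bound \eqref{ec:Lp_uniform_in_Rn}, whereas the paper extracts a limit via Ascoli--Arzel\`a using the DiBenedetto-type H\"older estimate \eqref{ec:C_alpha_estimate_in_time_and_space_Mass} and only a posteriori identifies it with $\changevariables_\infty[\rho^{\infty,\eta_j}]$; both are valid, and yours avoids invoking DiBenedetto regularity at this stage. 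One point of precision: your claim that $M^{R_{ij},\eta_j,\ast}$ is a viscosity solution on all of $(0,\infty)$ is not quite right at the interface $v=R_{ij,v}$, where the extension generically has a concave kink (the left derivative of $M^{R_{ij},\eta_j,\ast}(t,\cdot)$ is $\rho^{R_{ij},\eta_j}_t(R_{ij})\ge 0$ while the right derivative is $0$), at which the subsolution test can fail; the clean argument --- and all that stability actually requires --- is compact exhaustion: any compact $K\Subset(0,\infty)$ lies inside $(0,R_{ij,v})$ for $i$ large, so \Cref{th:M is a viscosity solution} applies directly on $K$, which is what the paper does implicitly and what you are tacitly using. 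Finally, the tail exponent $\sigma^{2-p}$ in your last estimate should be $\sigma^{1-p}$ (or one can use the paper's simpler bound $\frac{1+\sigma}{1+\sigma^2}$, which only needs the second moment); either way the expression vanishes as $\sigma\to\infty$ for $p>2$, so the conclusion stands.
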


\begin{proof}
  	 \textbf{Step 1. Convergence on $R$.} Let us fix $\eta_j$. Due to the $C^{\alpha}$ regularity \eqref{ec:C_alpha_estimate_in_time_and_space_Mass} and the uniform boundedness of $\|  M^{R_{ij}, \eta_j, \ast} \|_{L^{\infty}([0,1] \times \mathbb{R})}$ we know that, up to a further subsequence on $i$, by the Ascoli-Arzela Theorem, for any $T>0$ and $v_1 , v_2 > 0$,
	\begin{equation}\label{eq:M loc convergence R}
		M^{R_{ij}, \eta_j, \ast} \rightarrow  M^{\infty, \eta_j} \quad \text{in }  C([0,T] \times [v_1, v_2]).
	\end{equation}
    On \Cref{th:locally_strong_solution_Rn} we show that, up to a subsequence in $i$, we have that,
	\begin{equation*}
	    \rho^{R_{ij}, \eta_j, \ast} \rightarrow \rho^{\infty , \eta_j} \quad \text{in } C_{loc}([0,\infty); L^2_{loc} (\Rd)).
	\end{equation*}
	From here it follows immediately that,
        \begin{equation}\label{ec:E[rho] loc convergence R}
            \diffEv_{R_{ij}, \eta_j}[\rho^{R_{ij}, \eta_j, \ast}] \rightarrow \diffEv_{\infty, \eta_j} [\rho^{\infty, \eta_j}] \quad \text{in }  C_{loc} ( [0,\infty) \times \mathbb{R}^d).
        \end{equation}
	From \eqref{eq:M loc convergence R} and \eqref{ec:E[rho] loc convergence R}, applying the stability result \cite[Chapter 3 Theorem 2]{Kat14} we recover that $M^{\infty, \eta_j}$ is a viscosity solution of the mass equation \eqref{eq:General_Mass_equation_whole_space} for every cut-off $\eta_j \in C_c^{\infty} (\mathbb{R}^d)$.

	Furthermore, from \Cref{th:locally_strong_solution_Rn} we know that
	\begin{equation*}
	    \rho^{R_{ij} , \eta_j , \ast} \rightarrow \rho^{\infty, \eta_j} \quad \text{in } C_{loc}([0,\infty); L^2_{loc} (\Rd)).
	\end{equation*}
	Then, since $\changevariables_{\infty} [\rho_t^{R_{ij}, \eta_j , \ast}] = \frac{\partial M^{R_{ij}, \eta_j, \ast} (t, \cdot )}{\partial v}$ and $\changevariables_{\infty}$ is an isometry from $L^p_{rad}(\mathbb{R}^d)$ to $L^p (0, \infty )$ we also get that,
	\begin{equation*}
	    \frac{\partial M^{R_{ij}, \eta_j, \ast} (t, \cdot )}{\partial v} \rightarrow \frac{\partial M^{\infty, \eta_j} (t, \cdot )}{\partial v} \quad \text{in } C_{loc}([0,\infty) ;L^2_{loc}( \mathbb{R}^d)),
	\end{equation*}
	and that $\changevariables_{\infty} [\rho_t^{\infty, \eta_j}] = \frac{\partial M^{\infty, \eta_j} (t, \cdot )}{\partial v}$ in $C_{loc} ([0, \infty); L^2_{loc} [0,\infty))$.
	
	\textbf{Step 2. Asymptotics in space.} With all the theory we have already developed we can study the asymptotic behaviour in space. We compute the following,
	\begin{align*}
	    \sup_{[0,T] \times [0, \infty)} &| M^{R_{ij}, \eta_{j}, \ast} - M^{\infty, 1} |  = \sup_{[0,T] \times [0, \infty)} \left| \int_{\widetilde{B_v}} \rho_t^{R_{ij}, \eta_{j}, \ast} - \rho^{\infty ,\eta_j} \right| \\
	    & \leq \sup_{[0,T]} \int_{B_{\sigma}} \left| \rho_t^{R_{ij}, \eta_{j}, \ast} - \rho^{\infty ,\eta_j} \right| +  \frac{1}{\sigma^2}  \int_{\Rd \backslash B_{\sigma}} |x|^2  \rho_t^{R_{ij}, \eta_{j}, \ast} +  \frac{1}{\sigma^2}  \int_{\Rd \backslash B_{\sigma}} |x|^2 \rho^{\infty ,\eta_j} .
	\end{align*}
	Due to the \eqref{eq:Local convergence from BR to Rd}, the first term in the RHS converges to $0$ for every $B_{\sigma}$ when we take the limit in $i$. For the second term we are taking advantage of \eqref{ec:Second_moment_uniformly_bounded_above}. Since $\mathbf{m_2} (\rho^{R_{ij}, \eta_j, \ast})$ is uniformly bounded the second term converges to $0$ when $\sigma \rightarrow \infty$. For the third term we use again the same argument combined with Fatou's Lemma.
	
	\textbf{Step 3. Extension $\eta \rightarrow 1$.} All the bounds discussed in the previous step are uniform on $\mathrm{supp} \, \eta_j$. Then, in the same way we have just done, there exists $M^{\infty , 1}$ such that, up to a subsequence on $j$, 
	\begin{equation*}
		M^{\infty , \eta_j} \rightarrow M^{\infty , 1} \quad \text{in }  C_{loc}([0,T] \times (0, \infty )).
	\end{equation*}
	We now claim that
	\begin{equation*}
	    \diffEv_{\infty , \eta_j}[\rho^{\infty, \eta_j}] \rightarrow \diffEv_{\infty , 1}[\rho^{\infty, 1}] \quad \text{in } C_{loc} ([0,T] \times [0,\infty)).
	\end{equation*}
	In order to prove this, it is enough to show that for a fixed $r>0$, when $j \rightarrow \infty$, we have that,
	\begin{equation*}
	    \eta_j (r e_1) \frac{\partial}{\partial r} \int_{\Rd} \rho_t^{\infty, \eta_j} (y) W( r e_1 - y) \eta_j (y) \, dy \longrightarrow \frac{\partial}{\partial r}  \int_{\Rd} \rho_t^{\infty, 1} (y) W(re_1 - y) \, dy.
	\end{equation*}
	Let us take $j$ big enough so that $\eta_j (r e_1)=1$ and compute,
	\begin{align*}
	    & \left| \int_{\Rd} \nabla W (r e_1 - y) \left[ \rho^{\infty,1}_t (y) -  \rho^{\infty, \eta_j}_t (y) \eta_j (y) \right] \, dy \right| \\
	    & \qquad \leq \left| \int_{\Rd} \nabla W (r e_1 - y) \rho^{\infty, \eta_j}_t (y) \left[ 1 -   \eta_j (y) \right] \, dy \right| + \left| \int_{\Rd} \nabla W (r e_1 - y) \left[ \rho^{\infty,1}_t (y) -  \rho^{\infty, \eta_j}_t (y) \right] \, dy \right|.
	\end{align*}
	Let us control the first term,
	\begin{align*}
	    \left| \int_{\Rd} \nabla W (r e_1 - y) \rho^{\infty, \eta_j}_t (y) \left[ 1 -   \eta_j (y) \right] \, dy \right| & \leq \left| \int_{\Rd \backslash B_{\frac{j}{2}}} \nabla W(r e_1-y) \rho^{\infty , \eta_j}_t (y) \, dy \right| \\
	    & \leq (1+r) \int_{\Rd \backslash B_{\frac{j}{2}}}  (1+|y|) \rho^{\infty , \eta_j}_t (y) \, dy\\
	    & \leq (1+r) \frac{1+ j / 2 }{1+( j / 2)^2} \int_{\Rd \backslash B_{\frac{j}{2}}} (1+|y|^2) \rho^{\infty, \eta_j}_t (y) \, dy.
	\end{align*}
	For the second one, we again introduce an intermediate ball of radius $B_\sigma$
	\begin{align*}
	    & \limsup_{j \rightarrow \infty} \sup_{t \in [0,T]} \left| \int_{\Rd} \nabla W (r e_1 - y) \left[ \rho^{\infty,1}_t (y) -  \rho^{\infty, \eta_j}_t (y) \right] \, dy \right| \\
	    & \qquad \leq (1+r) \limsup_{j \rightarrow \infty} \sup_{t \in [0,T]} \Big( \int_{B_{\sigma}} (1+|y|) \left| \rho_t^{\infty,1} (y) - \rho_t^{\infty, \eta_j} (y) \right| \, dy \\
	    & \qquad \quad + \frac{1+\sigma}{1+ \sigma^2} \int_{\Rd \backslash B_{\sigma}} |y|^2 \rho^{\infty,1}_t (y) \, dy + \frac{1+\sigma}{1+ \sigma^2} \int_{\Rd \backslash B_{\sigma}} (1+|y|^2) \rho^{\infty, \eta_j}_t (y) \, dy \Big) \\
	    & \qquad \leq (1+r) \limsup_{j \rightarrow \infty} \sup_{t \in [0,T]}  \int_{B_{\sigma}} (1+|y|) \left| \rho_t^{\infty,1} (y) - \rho_t^{\infty, \eta_j} (y) \right| \, dy + 2 \omega_T (\sigma^{-1}) \to 0 
	\end{align*}
    as $\sigma \to \infty$ since,
	as above, the first term is zero by local convergence, and the second one converges to zero as $\sigma \to \infty$.

	Then, due to the stability result \cite[Chapter 3 Theorem 2]{Kat14}, for any $T>0$ we have that $M^{\infty ,1}$ is a viscosity solution of the problem \eqref{eq:General_Mass_equation_whole_space} in $[0,T]\times \mathbb{R}^d$ for $\eta \equiv 1$.

	We also know from the previous step that $\changevariables_{\infty} [\rho_t^{\infty, \eta_j}] = \frac{\partial M^{\infty, \eta_j} (t, \cdot )}{\partial v}$. 
	Then, since $\changevariables_{\infty}$ is an isometry, from \eqref{eq:Local convergence from BR to Rd 2} we also get that,
	\begin{equation*}
	    \frac{\partial M^{\infty, \eta_j } (t, \cdot )}{\partial v} \rightarrow \frac{\partial M^{\infty, 1} (t, \cdot )}{\partial v} \quad \text{in } C_{loc}([0,\infty ) ; L^2_{loc}(0,\infty)),
	\end{equation*}
	and that $\changevariables_{\infty} [\rho_t^{\infty, 1}] = \frac{\partial M^{\infty, 1} (t, \cdot )}{\partial v}$ in $C_{loc}([0,\infty); L^2_{loc}([0,\infty))$. 
    Since $M(0,v) = M_0(v)$, with a diagonal argument we can show that, up to a further subsequence, the limit $M^{\infty , 1}$ is such that $M^{\infty , 1} \in  C_{loc}([0, \infty) \times [0, \infty ))$.
	
	Furthermore, through the control of the tails we have uniform convergence in $v$ since, introducing balls of radius $B_\sigma$, we have
	\begin{align*}
	    \limsup_j \sup_{[0,T] \times [0, \infty)}& | M^{\infty, \eta_{j}} - M^{\infty, 1} |  = \sup_{[0,T] \times [0, \infty)} \left| \int_{\widetilde{B_v}} \rho_t^{\infty, \eta_{j}} - \rho^{\infty ,1} \right| \\
	    & \leq \limsup_j \sup_{[0,T]} \int_{B_{\sigma}} \left| \rho_t^{\infty, \eta_{j}} - \rho^{\infty ,1} \right| +  \frac{1}{\sigma^2}  \int_{\Rd \backslash B_{\sigma}} |x|^2  \rho_t^{\infty, \eta_{j}} + \frac{1}{\sigma^2}  \int_{\Rd \backslash B_{\sigma}} |x|^2 \rho^{\infty ,1} \to 0
	\end{align*}
    as $\sigma \to \infty$.
	and we use the local convergence and the boundedness of the second moment in $[0,T]$.
\end{proof}

\subsection{Convergence of the mass equation as \texorpdfstring{$t \rightarrow \infty$}{t to infty} in the whole space}

Let us recall $\rho^{\infty,1, [n]}$ defined in \eqref{def:rho infty 1 n}. Now we define,
\begin{equation}\label{def:M infty 1 n}
    M^{[n]} (t,v) = \int_{\widetilde{B_v}} \rho_t^{\infty,1,[n]} (x) \, dx,
\end{equation}
which are solutions to the mass equation \eqref{eq:General_Mass_equation_whole_space} for the time interval $[t_n, t_n + 1]$ after a translation to $[0,1]$. 

On the assumptions of \Cref{th:locally_strong_solution_Rn} we already know that the asymptotic in time of $\rho^{\infty, 1, [n]}$ is a distribution $\widehat\mu \in W^{-1,1}_{loc} (\mathbb{R}^d)$. We can study the limit $M^{[n]} \rightarrow \widehat M$ and discuss its relationship with the limit $\widehat\mu$.

\begin{proposition}\label{prop:limit Mn properties}
    Assume all the hypothesis from \Cref{th:locally_strong_solution_Rn}. Assume furthermore that $\rho_0$, $V$ and $W$ are radially symmetric, \eqref{ec:Free_energy_bounded_below}, \eqref{eq:Growth of V compare to W}, and \eqref{eq:Control of the tails}. Let the sequence $M^{[n]}$ be defined as in \eqref{def:M infty 1 n}, then there exists $\widehat M \in C([0,1] \times (0, \infty))$, such that, if we define $\widehat M(t,0)=0$ for all $t \in [0,1]$, then the following properties hold:
    \begin{itemize}
        \item The function $\widehat M$ is non-decreasing,
        \item up to a subsequence, $M^{[n]} \rightarrow \widehat M$ in $C_{loc}([0,1] \times (0, \infty))$ and point-wise in $[0,1]\times [0, \infty)$,
        \item $\widehat M$ does not depend on time,
        \item $\frac{\partial \widehat M}{\partial v} = \changevariables_{\infty} [\widehat\mu ]$ in $L^{\infty} (\mathcal{M}([0, \infty)))$,
        \item $\diffEv_{\infty,1}[\rho^{\infty, 1, [n]}] \rightarrow \diffEv_{\infty, 1}[\widehat\mu]$ in $C_{loc} ([0,1] \times [0, \infty))$,
        \item $\widehat M$ is a viscosity solution of \eqref{eq:General_Mass_equation_whole_space} for $\eta \equiv 1$.
    \end{itemize}
    Furthermore, if we also assume that $\inf_{x \in \Rd \backslash B_{\sigma}} V(x) \rightarrow \infty$ when $\sigma \rightarrow \infty$. Then,
    \begin{equation*}\label{eq:no mass escapes through infinty}
        \| \rho_0 \|_{L^1 (\Rd)} - \widehat M(t, v) = \int_{\Rd \backslash \widetilde{B_v}} \widehat\mu = \omega (v^{-1}) \rightarrow 0.
    \end{equation*}
\end{proposition}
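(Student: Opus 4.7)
The plan is to follow the same blueprint as in the proof of \Cref{prop:M^[n]_converge_uniformly} for the ball, but now carefully handling the unboundedness of the domain via a diagonal argument on a sequence of compacts and using the tail controls provided by \eqref{eq:Growth of V compare to W}-\eqref{eq:Control of the tails} together with the moment estimates of \Cref{lem:p-order moment}. The building blocks are already in place: DiBenedetto-type interior regularity for $M^{[n]}$ on strips $[0,1]\times[\varepsilon,K]$ (\Cref{th:viscosity_solutions} applies here because $\rho^{\infty,1}$ is a locally-strong solution with bounded $L^\infty$ norm on each such strip), the $W^{-1,1}_{loc}$ and weak-$\ast$ convergence of $\rho^{\infty,1,[n]}$ to the time-independent $\widehat\mu$ (\Cref{th:mu infty Rd}), and the continuity of $\diffEv$ (\Cref{lem:radial}).

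First I would apply the DiBenedetto estimate \eqref{ec:C_alpha_estimate_in_time_and_space_Mass} on the strip $[0,1]\times[\tfrac{1}{k},k]$ for each $k\in\mathbb{Z}_{>0}$; up to extracting successive subsequences $n(k,j)$ and taking the diagonal, Ascoli-Arzelá yields $M^{[n]}\to\widehat{M}$ in $C_{loc}([0,1]\times(0,\infty))$. Monotonicity in $v$ of each $M^{[n]}$ is preserved in the limit, and the bound $M^{[n]}(t,v)\le\|\rho_0\|_{L^1(\mathbb{R}^d)}$ gives uniform BV control in $v$, so by Banach-Alaoglu, up to a further subsequence,
\begin{equation*}
\frac{\partial M^{[n]}}{\partial v}\rightharpoonup\frac{\partial \widehat{M}}{\partial v}\quad\text{weak-}\ast\text{ in }L^\infty(0,1;\mathcal{M}_{loc}([0,\infty))).
\end{equation*}
Since $\changevariables_\infty[\rho^{\infty,1,[n]}_t]=\frac{\partial M^{[n]}}{\partial v}(t,\cdot)$ and $\changevariables_\infty$ is an isometry, combining with the weak-$\ast$ convergence of $\rho^{\infty,1,[n]}$ from \Cref{th:mu infty Rd} identifies $\frac{\partial\widehat{M}}{\partial v}=\changevariables_\infty[\widehat{\mu}]$. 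Time-independence of $\widehat{M}$ is inherited directly from the time-independence of $\widehat\mu$ via the isometry, or alternatively passed to the limit from the $W^{-1,1}_{loc}$ estimate analogous to \eqref{ec:asymptotics_equicontinuity}. Extending by $\widehat{M}(t,0)=0$ is consistent with $M^{[n]}(t,0)=0$, and the convergence is point-wise on $[0,1]\times[0,\infty)$.

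The main technical step is the convergence $\diffEv_{\infty,1}[\rho^{\infty,1,[n]}]\to\diffEv_{\infty,1}[\widehat\mu]$ in $C_{loc}([0,1]\times[0,\infty))$, since this is a non-local term over the whole space. Fixing $r>0$ and applying \Cref{lem:radial}, the question reduces to passing to the limit in $\int_{\mathbb{R}^d}\nabla W(re_1-y)\,\rho^{\infty,1,[n]}_t(y)\,dy$. Splitting the integral with a ball $B_\sigma$, the inside part converges by the $W^{-1,1}_{loc}$ (and weak-$\ast$) convergence combined with the $W^{1,\infty}$-regularity of $W$ on compacts from \eqref{eq:W_quadratic}. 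For the tail $\mathbb{R}^d\setminus B_\sigma$, the growth assumption $|\nabla W(x)|\le C(1+|x|)$ combined with \eqref{eq:Growth of V compare to W} and the boundedness of $\int V\,d\widehat\mu$ (coming from \eqref{ec:sup_V_bounded} and \eqref{ec:Free_energy_bounded_below}) together with the uniform $p$-moment control in \Cref{lem:p-order moment} with $p=2/(1-m)$ make the tail small uniformly in $n$ and $t$. Once this convergence is established, applying the stability result \cite[Chapter 3 Theorem 2]{Kat14} to the uniformly continuous $M^{[n]}$ and the uniformly continuous drifts $\diffEv_{\infty,1}[\rho^{\infty,1,[n]}]$ shows that $\widehat{M}$ is a viscosity solution of \eqref{eq:General_Mass_equation_whole_space} for $\eta\equiv 1$.

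Finally, for the claim that no mass escapes through infinity under the additional assumption $\inf_{\mathbb{R}^d\setminus B_\sigma}V\to\infty$, I would write, for any $v$ associated to radius $r=(v/|B_1|)^{1/d}$,
\begin{equation*}
\|\rho_0\|_{L^1(\mathbb{R}^d)}-\widehat{M}(t,v)=\int_{\mathbb{R}^d\setminus\widetilde{B_v}}d\widehat\mu\le\frac{1}{\inf_{\mathbb{R}^d\setminus\widetilde{B_v}}V}\int_{\mathbb{R}^d}V\,d\widehat\mu\le\frac{C}{\inf_{|x|>r}V(x)},
\end{equation*}
where the $L^1$-control of $V\,d\widehat\mu$ follows exactly as in the last step of the proof of \Cref{th:mu infty Rd} using \eqref{ec:sup_V_bounded}. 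The RHS vanishes as $v\to\infty$, giving the desired modulus $\omega(v^{-1})\to 0$. The main subtle point in the entire argument is the tail control in the convergence of the non-local drift; everything else is a straightforward transposition of the ball arguments.
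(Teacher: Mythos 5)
Your overall architecture matches the paper's: diagonal Ascoli--Arzel\'a on the strips $[0,1]\times[\tfrac 1 k,k]$ via DiBenedetto, BV-in-$v$ plus Banach--Alaoglu to get weak-$\ast$ convergence of $\partial M^{[n]}/\partial v$, identification through the isometry $\changevariables_\infty$, and then the stability theorem for viscosity solutions. The identification of $\widehat M$ being time-independent, the extension at $v=0$, the no-mass-escape estimate using $\int V\,d\widehat\mu\le C$ and $\inf_{|x|>r}V\to\infty$ --- all of that is as in the paper and is fine.

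The genuine gap is in the tail control for $\diffEv_{\infty,1}[\rho^{\infty,1,[n]}]\to\diffEv_{\infty,1}[\widehat\mu]$, where you invoke the $p$-moment bound from \Cref{lem:p-order moment} and claim it gives smallness ``uniformly in $n$ and $t$.'' That bound reads $\mathbf{m_p}(\rho_t^{R,\eta_j,\ast})\le A e^{Bt}$, so it degenerates exponentially in time; it cannot be used uniformly over the shifted windows $[t_n,t_n+1]$ as $t_n\to\infty$. (The same is true of the $L^\infty$ bound \eqref{ec:Lp_uniform_in_Rn}, though the DiBenedetto constant depends on $\|\rho_0\|_{L^\infty}$, so that part survives.) What actually closes the tail, and what the paper does, is the product decomposition
\[
|\nabla W(re_1-y)| = \frac{|\nabla W(re_1-y)|}{V(re_1-y)}\cdot\frac{V(re_1-y)}{1+V(y)}\cdot(1+V(y)),
\]
where the first factor tends to $0$ as $\sigma\to\infty$ by \eqref{eq:Growth of V compare to W}, the second factor is bounded by $C(K)$ from \eqref{eq:Control of the tails} --- this is the step you do not spell out, and it is precisely what converts $V$ evaluated at $re_1-y$ into $V$ evaluated at $y$ --- and the third is integrated against $\rho^{\infty,1,[n]}_t$ using $\int(1+V)\rho_t^{\infty,1}\le \|\rho_0\|_{L^1}+C(1+\mathcal{F}_{\infty,1}[\rho_0])$ via \eqref{ec:sup_V_bounded} and the monotonicity of the free energy; this last bound is uniform in both $n$ and $t$ because the free energy is non-increasing and bounded below, with no moment hypothesis needed. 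You should drop the $p$-moment argument here entirely, make the role of \eqref{eq:Control of the tails} explicit, and make explicit that the uniform-in-$n$ bound on $\int V\rho^{\infty,1,[n]}_t$ (not just on the limit $\int V\,d\widehat\mu$) is what is used against the decaying factor.
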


The proof is very similar to the one presented in \Cref{prop:M^[n]_converge_uniformly} for the case of the ball $B_R$.

\begin{proof}
    \textbf{Step 1. Ascoli-Arzela over compacts of $[0,1]\times(0, \infty)$.} For any $k \in \mathbb{Z}_{>0}$ we know from \eqref{ec:C_alpha_estimate_in_time_and_space_Mass} that for all $v_1, v_2 \in [\frac{1}{k}, k]$ and $t_1 , t_2 \in [0,1]$ we have the estimate,
    \begin{equation}\label{ec:Uniform_convergence_Mn_to_Minfty Rd}
		\left|M^{[n]} (t_1, v_1) - M^{[n]} (t_2, v_2) \right| \leq C_{k} \left( |v_1 - v_2 | + |t_1- t_2 |^{\frac{1}{m+1}} \right)^{\alpha}.
	\end{equation}
	Furthermore, taking successive subsequences,
	\begin{equation*}
	    M^{[n(k,j)]} \rightarrow \widehat M \quad \text{in }  C\left ([0,1] \times \left[\tfrac 1 k , k \right] \right) \text{ as } j \to \infty,
	\end{equation*}
	and the diagonal satisfies
	\begin{equation}\label{eq:Strong convergence mass Rd}
		M^{[n(k,k)]} \rightarrow \widehat M \quad \text{in }  C_{loc}([0,1] \times (0 , \infty )).
	\end{equation}
	From here on we re-label this sequence again as $M^{[n]}$.
	
	\textbf{Step 2. Extension of $\widehat M$ and some properties.} Since $M^{[n]}(t,0)=0$ for all $n$, the extension of $ \widehat M$ so that $\widehat M (t,0)=0$ is natural. Therefore, due to \eqref{ec:Uniform_convergence_Mn_to_Minfty Rd} we have that
	\begin{equation*}
	    M^{[n]} (t,v) \rightarrow \widehat M (t,v) \quad \text{point-wise in } [0,1] \times [0, \infty ).
	\end{equation*}
	Since $M^{[n]}$ are non-decreasing functions bounded from below by $0$ and from above by $\| \rho_0 \|_{L^1(\mathbb{R}^d)}$, they all have bounded variation $\| \rho_0 \|_{L^1(\mathbb{R}^d)}$. From Banach-Alaoglu Theorem and \eqref{ec:Uniform_convergence_Mn_to_Minfty Rd}, up to a subsequence, we have that,
	\begin{equation}\label{eq:weak star convergence of the derivatives}
	    \frac{\partial M^{[n]}}{\partial v}  \rightharpoonup \frac{\partial \widehat M}{\partial v} \quad \text{weak}-\ast \text{ in } L^{\infty}(0,1; \mathcal{M} ([0, \infty))).
	\end{equation}
	
	\textbf{Step 3. Identification between $\frac{\partial \widehat M}{\partial v}$ and $\changevariables_{\infty} [\widehat\mu ]$.} From \Cref{th:mu infty Rd} we have that,
	\begin{equation*}
	    \rho_t^{\infty , 1, [n]} \rightharpoonup \widehat\mu \quad \text{weak}-\ast \text{ in } L^{\infty}(0,1; \mathcal{M}(\mathbb{R}^d)). 
	\end{equation*}
	Since $\changevariables_{\infty}$ is a linear isometry we also obtain that,
	\begin{equation}\label{eq:I[rho] to I[mu] Rd}
	    \changevariables_{\infty}[ \rho_t^{\infty , 1, [n]} ] \rightharpoonup \changevariables_{\infty}[ \widehat\mu ] \quad \text{weak}-\ast \text{ in } L^{\infty}(0,1; \mathcal{M}([0, \infty )).
	\end{equation}
	Furthermore, since $\changevariables_{\infty} [ \rho_t^{\infty, 1, [n]} ] = \frac{\partial M^{[n]}}{\partial v} (t, \cdot )$, if we combine \eqref{eq:weak star convergence of the derivatives} and \eqref{eq:I[rho] to I[mu] Rd} we obtain that $\frac{\partial \widehat M}{\partial v} = \changevariables_{\infty} [\widehat\mu ]$ in $L^{\infty} (\mathcal{M}([0, \infty)))$. We claim that
	\begin{equation*}
	    \diffEv_{\infty, 1}[\rho^{\infty, 1, [n]}] \rightarrow \diffEv_{\infty,1}[\widehat\mu] \quad \text{in } C_{loc} ([0,1] \times [0, \infty)).
	\end{equation*}
	Let us proceed to prove it. Let us take a cut-off function $\psi \in C_c^{\infty} (\Rd)$ such that $\psi (x) = 1$ if $x \in B_{\sigma}$ and $\psi(x)=0$ if $x \in \Rd \backslash B_{2 \sigma}$.
	\begin{align*}
	    & \limsup_{n \rightarrow \infty} \left| \int_{\Rd} \nabla W (r e_1 - y ) \left[ \rho^{\infty, 1, [n]}_t (y) - \widehat\mu (y)  \right] \, dy \right| \\
	    & \qquad \leq \limsup_{n \rightarrow \infty} \left| \int_{\Rd} \psi(y) \nabla W (r e_1 - y ) \left[ \rho^{\infty, 1, [n]}_t (y) - \widehat\mu (y)  \right] \, dy \right| \\
	    & \qquad \quad + \limsup_{n \rightarrow \infty} \left| \int_{\Rd} (1- \psi (y)) \nabla W (r e_1 - y ) \left[ \rho^{\infty, 1, [n]}_t (y) - \widehat\mu (y)  \right] \, dy \right|.
	\end{align*}
	The first term of the RHS converges to zero since $\psi(\cdot) \nabla W (r e_1 - \cdot ) \in W^{1, \infty}_0 (B_{2 \sigma})$ and $\rho^{\infty, 1, [n]} \rightarrow \widehat \mu $ in $C([0,1]; W^{-1,1}_{loc} (\Rd))$ due to \Cref{th:mu infty Rd}. We focus now on the second term,
	\begin{align*}
	    & \limsup_{n \rightarrow \infty} \left| \int_{\Rd} (1- \psi (y)) \nabla W (r e_1 - y ) \left[ \rho^{\infty, 1, [n]}_t (y) - \widehat\mu (y)  \right] \, dy \right| \\
	    & \qquad \leq \limsup_{n \rightarrow \infty}  \int_{\Rd \backslash B_{\sigma}} |\nabla W (r e_1 - y )| \left[ \rho^{\infty, 1, [n]}_t (y) + \widehat\mu (y)  \right] \, dy \\
	    & \qquad = \limsup_{n \rightarrow \infty}  \int_{\Rd \backslash B_{\sigma}} \frac{|\nabla W (r e_1 - y )|}{V(re_1 - y)} \frac{V(re_1 - y)}{1+V(y)} (1+ V(y))  \left[ \rho^{\infty, 1, [n]}_t (y) + \widehat\mu (y)  \right]  \, dy \\
	    & \qquad \leq C \left\| \frac{|\nabla W|}{V} \right\|_{L^{\infty} (\Rd \backslash re_1 - B_{\sigma})} \left( \limsup_{n \rightarrow \infty} \int_{\Rd \backslash B_{\sigma}}  (1 + V(y)) \rho^{\infty , 1, [n]}_t (y) \, dy + \int_{\Rd \backslash B_{\sigma}}  (1+V(y)) \widehat\mu (y) \, dy  \right) \\
	    & \qquad \rightarrow 0,
	\end{align*}
	as $\sigma \rightarrow \infty$ due to \eqref{eq:Growth of V compare to W}, \eqref{eq:Control of the tails} and \eqref{ec:sup_V_bounded}. 
	
	Finally, using the stability results \cite[Chapter 3 Theorem 2]{Kat14} we get that $\widehat M$ is a viscosity solution of \eqref{eq:General_Mass_equation_whole_space}.
\end{proof}

Let us state a case in which \Cref{rem:Potential concentration Rd} holds and we can characterize $\widehat \mu$.

\begin{proposition}\label{prop:R=infty eta= 1 t=infty regularity in terms of flux}
    Assume all the hypothesis from \Cref{th:locally_strong_solution_Rn}. Assume furthermore that $\rho_0$, $V$ and $W$ are radially symmetric and the technical assumptions \eqref{ec:Free_energy_bounded_below}, \eqref{eq:Growth of V compare to W} and \eqref{eq:Control of the tails}. If $\diffEv_{\infty , 1}[\widehat\mu]$ is such that
    \begin{equation*}\label{ec:Assumption for regularity}
        \diffEv_{\infty,1}[\widehat\mu] (v) \geq 0.
    \end{equation*}
    Then $\widehat M$, the limit in time obtained in \Cref{prop:limit Mn properties}, is $C^2 ((0, \infty))$.
\end{proposition}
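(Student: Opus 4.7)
The strategy is to transfer the proof of Proposition~\ref{th:Concentration_Phenomena_ball_viscosity_solutions} from the ball to $\Rd$ verbatim, since the argument is purely local in the radial variable $v$ and Proposition~\ref{prop:limit Mn properties} provides exactly the analogous starting point: $\widehat M$ is time-independent, non-decreasing, continuous on $(0,\infty)$, and a viscosity solution of \eqref{eq:General_Mass_equation_whole_space} for $\eta \equiv 1$. Throughout I would work on an arbitrary compact subinterval $[\overline\ee, K] \subset (0,\infty)$, so the fact that the domain is unbounded plays no role.

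First I would use the viscosity supersolution condition with time-independent test functions. At any $v_0 \in (0,\infty)$ at which a smooth $\varphi$ touches $\widehat M$ from below in the admissible sense of the definition, the supersolution inequality (with $\partial_t \varphi \equiv 0$) reduces to
$$
-\frac{\partial^2 \varphi}{\partial v^2}(v_0) \;\geq\; \tfrac{1}{m}\Bigl(\tfrac{\partial \varphi}{\partial v}(v_0)\Bigr)^{2-m} \diffEv_{\infty,1}[\widehat \mu](v_0) \;\geq\; 0,
$$
where the final inequality uses the hypothesis $\diffEv_{\infty,1}[\widehat\mu] \geq 0$. Hence $\widehat M$ is a viscosity supersolution of $-\partial^2_{vv} M = 0$ locally in $(0,\infty)$. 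By \cite{Ish95} this gives a distributional supersolution, which in one space dimension means that $\widehat M$ is concave on each such subinterval. In particular $\widehat M \in W^{1,\infty}_{loc}((0,\infty))$ and $\partial_v \widehat M \in L^\infty_{loc}((0,\infty))$.

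Next I would bootstrap. The map $\diffEv_{\infty,1}[\widehat\mu]$ is bounded and locally Lipschitz in $v$: indeed, by the analogue of Lemma~\ref{lem:radial} in $\Rd$, it equals
$$
\frac{v^{\frac{d-1}{d}}}{d|B_1|^{\frac{1}{d}}} e_1 \cdot \Bigl( (\nabla V)(r e_1) + \int_{\Rd} (\nabla_x W)(r e_1 - y)\,d\widehat\mu(y) \Bigr),
$$
with $r = (v/|B_1|)^{1/d}$; the hypotheses \eqref{eq:V_quadratic}--\eqref{eq:Delta V and W bounded}, together with $\widehat\mu(\Rd) \leq \|\rho_0\|_{L^1}$ and the moment control inherited from Proposition~\ref{prop:limit Mn properties}, guarantee that this function lies in $W^{1,\infty}_{loc}((0,\infty))$. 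Therefore
$$
f(v) \;:=\; \tfrac{1}{m}\Bigl(\tfrac{\partial \widehat M}{\partial v}(v)\Bigr)^{2-m} \diffEv_{\infty,1}[\widehat\mu](v) \;\in\; L^\infty_{loc}((0,\infty)),
$$
and $\widehat M$ is a viscosity solution of $-\partial^2_{vv} M = f$ with bounded right-hand side. Applying the Caffarelli--Cabr\'e interior regularity \cite{CC95} gives $\widehat M \in C^{1,\alpha}_{loc}((0,\infty))$ for some $\alpha \in (0,1)$. Then $\partial_v \widehat M \in C^{0,\alpha}_{loc}$, which, combined with $\diffEv_{\infty,1}[\widehat\mu] \in C^{0,1}_{loc}$, upgrades $f$ to $C^{0,\beta}_{loc}$ for some $\beta \in (0,1]$. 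A second application of the Schauder-type estimate then produces $\widehat M \in C^{2,\beta}_{loc}((0,\infty))$, and in particular $\widehat M \in C^2((0,\infty))$.

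The only place where the $\Rd$ setting could differ from the $B_R$ setting is the verification of the local Lipschitz regularity of $\diffEv_{\infty,1}[\widehat\mu]$, because the non-local term now involves an integral over all of $\Rd$ against a measure $\widehat\mu$ that need not be compactly supported. This is the main obstacle, but it is resolved by the tail control in Proposition~\ref{prop:limit Mn properties} (no mass escapes to infinity) together with \eqref{eq:W_quadratic}--\eqref{eq:Delta V and W bounded}, which ensure that $\int_{\Rd} (\nabla_x W)(re_1 - y)\,d\widehat\mu(y)$ and its derivative in $r$ are continuous and locally bounded in $r > 0$. Once this regularity is secured, the remainder of the proof is purely local and identical to the argument given in Section~\ref{sec:Regularity M 1}.
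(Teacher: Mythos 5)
Your proposal reproduces the ball-case argument of Proposition~\ref{th:Concentration_Phenomena_ball_viscosity_solutions} — concavity via the supersolution property and \cite{Ish95}, then a Caffarelli--Cabr\'e bootstrap — which is precisely what the paper's (one-line) proof of this proposition intends when it says the argument is analogous. The only minor caveat is that your claimed $W^{1,\infty}_{loc}$ regularity of $\diffEv_{\infty,1}[\widehat\mu]$ requires a bit more than \eqref{eq:W_quadratic}--\eqref{eq:Delta V and W bounded} nominally give (they control $\Delta W$ rather than the full Hessian); however, the continuity of $\diffEv_{\infty,1}[\widehat\mu]$ already established in Proposition~\ref{prop:limit Mn properties} is enough to conclude $\widehat M \in C^2$ once one has $\widehat M \in W^{1,\infty}_{loc}$, since $-\partial^2_{vv}\widehat M = f$ with $f$ continuous then forces $\widehat M$ to be classically $C^2$.
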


The proof is analogous to the one from \Cref{th:Concentration_Phenomena_ball_viscosity_solutions}.

Let us assume that $V$ is a radially increasing and symmetric potential that satisfies the hypothesis presented on \Cref{th:locally_strong_solution_Rn},  \eqref{ec:Free_energy_bounded_below}, \eqref{eq:Growth of V compare to W} and \eqref{eq:Control of the tails}. If we take $V(x) \geq 2 |x|^2 + V_0 (x)$, with $V(0)=0$ and we choose $W(x)=\frac{|x|^2}{2}$. Then $\diffEv_{\infty, 1}[\widehat\mu] \geq 0$ for every $\widehat\mu$ positive and radially symmetric. On \cref{sec:Example of concentration}, we show that there exists concentration for this example.

\section{Examples of concentration phenomena}\label{sec:Example of concentration}

In this section we prove that if $\rho$ solves the Euler-Lagrange equation
\begin{equation}\label{eq:Euler-Lagrange}
	- \frac{m}{1-m} \rho^{m-1} (x) + V(x) + \int_{B_R} \eta (x) W(x-y) \eta (y)  \rho (y) \, dy = -h,
\end{equation}
then, for certain choices of $V$ and $W$, we have concentration phenomena for both: The problem in $B_R$, and the problem in the whole space $\mathbb{R}^d$ with $\eta \equiv 1$. Through this section, we will assume $\eta (x) = 1$ if $|x| \leq \frac{R}{\sqrt{2}}$ whenever we are studying the problem on the ball $B_R$.

\begin{theorem}
	Assume $\rho$ solves the Euler-Lagrange equation \eqref{eq:Euler-Lagrange}, $\rho$ is radial and $\int \rho (x) \, dx \leq 1$. Let us assume that $V$ is a  radially increasing and symmetric potential that satisfies the hypothesis presented on \Cref{th:locally_strong_solution_Rn}, \eqref{ec:Free_energy_bounded_below}, \eqref{eq:Growth of V compare to W} and \eqref{eq:Control of the tails}. If $V(x) \geq 2 |x|^2 + V_0 (x)$, $V(0)=0$ and $W(x) = \frac{|x|^2}{2}$, then
	\begin{equation}\label{eq:Estimate_for_rho_aggregation_example}
		\rho (x) \leq \left( \frac{1-m}{m} V_0 (x) \right)^{- \frac{1}{1-m}}.
	\end{equation}
	In particular, if $\int_{B_R} \left( \frac{1-m}{m} V_0 (x) \right)^{- \frac{1}{1-m}} < 1$ then so is that of $\rho$.
\end{theorem}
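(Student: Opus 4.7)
The plan is to exploit the very special algebraic structure of the quadratic kernel $W(x) = \tfrac{1}{2}|x|^2$: the convolution $W\ast(\eta\rho)$ is an explicit quadratic polynomial in $x$ that pairs perfectly with the quadratic lower bound on $V$. Expanding $|x-y|^2 = |x|^2 - 2x\cdot y + |y|^2$ and using that both $\eta$ and $\rho$ are radial (so the linear term vanishes by oddness), I would write, for any $x$ in the region $\{\eta = 1\}$,
\[
  \int_{B_R} W(x-y)\,\eta(y)\rho(y)\diff y = \frac{|x|^2}{2}\,M_\eta + I,
\]
where $M_\eta := \int \eta\rho \le 1$ and $I := \tfrac{1}{2}\int |y|^2 \eta(y)\rho(y)\diff y \ge 0$. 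Substituting into \eqref{eq:Euler-Lagrange} then gives, on $\{\eta = 1\}$,
\[
  \frac{m}{1-m}\,\rho^{m-1}(x) = V(x) + \frac{|x|^2}{2}\,M_\eta + I + h.
\]

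The crucial step is to evaluate this identity at $x = 0$: since $V(0) = 0$ and $\eta(0) = 1$, this yields $\tfrac{m}{1-m}\rho^{m-1}(0) = I + h$, which pins down the combination $I+h$ in terms of $\rho(0)$ alone. Subtracting from the identity at a general $x\in\{\eta = 1\}$ cancels the constants $I+h$ exactly, and combining the resulting equality with $\rho^{m-1}(0) \ge 0$, the hypothesis $V(x) \ge 2|x|^2 + V_0(x)$, and $M_\eta \ge 0$, I arrive at
\[
  \frac{m}{1-m}\,\rho^{m-1}(x) \ge V(x) + \frac{|x|^2}{2}\,M_\eta \ge V_0(x),
\]
which, since $s\mapsto s^{m-1}$ is decreasing on $(0,\infty)$ for $m \in (0,1)$, is exactly the pointwise bound \eqref{eq:Estimate_for_rho_aggregation_example}. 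Integrating this inequality then yields $\|\rho\|_{L^1(B_R)} < 1$ as soon as the right-hand side has $L^1$-norm strictly below one.

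The main obstacle I foresee lies in the transition region $\{0 < \eta(x) < 1\}$ in the bounded-ball setting: there the factor $\eta(x)$ in front of the convolution in \eqref{eq:Euler-Lagrange} breaks the exact cancellation of $I$ with $h$, yielding only the weaker estimate $\tfrac{m}{1-m}\rho^{m-1}(x) \ge V(x) - (1-\eta(x))I$. Closing the argument in that annulus requires either an a priori upper bound on the second moment $I$ extracted from the constraint $\int \rho \le 1$ and the quadratic growth of $V$, or the observation that $V(x)$ already dominates $(1-\eta(x))I$ there once $R$ is chosen so that $\eta = 1$ on a large enough ball around the origin; in the whole-space case $\eta \equiv 1$ no such issue arises and the argument above is complete.
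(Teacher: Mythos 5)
Your argument on $\{\eta=1\}$ is correct and is exactly the paper's: expand the quadratic kernel (the linear term vanishes by radial symmetry), evaluate the Euler--Lagrange identity at $x=0$ using $V(0)=0$ to deduce $I+h=\tfrac{m}{1-m}\rho^{m-1}(0)\ge 0$, then drop the non-negative terms $\tfrac{|x|^2}{2}M_\eta$ and $I+h$ to obtain $\tfrac{m}{1-m}\rho^{m-1}(x)\ge V(x)\ge V_0(x)$. For the transition region $\{0<\eta<1\}$, which in the paper's normalization is $|x|>R/\sqrt{2}$, the two ingredients you list as alternatives are in fact both needed together, and both go through: since $\eta$ is supported in $B_R$ and $\int\rho\le 1$, one has $I=\tfrac12\int|y|^2\eta\rho\,dy\le\tfrac{R^2}{2}$, hence $h\ge -I\ge -\tfrac{R^2}{2}$; and since $V(x)\ge 2|x|^2+V_0(x)>R^2+V_0(x)$ on that annulus, one gets $\tfrac{m}{1-m}\rho^{m-1}(x)\ge V(x)-\tfrac{R^2}{2}>V_0(x)$. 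This is precisely the paper's Case 2, so your proposal is complete once you combine the two observations rather than treat them as alternatives.
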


\begin{proof}
	Since $W(x) = \frac{|x|^2}{2}$, we can rewrite \eqref{eq:Euler-Lagrange} like,
	\begin{equation*}
		- \frac{m}{1-m} \rho^{m-1} + V + \frac{\eta (x)}{2} |x|^2 A = - h - \frac{\eta (x)}{2} B,
	\end{equation*}
	where
	\begin{equation*}
		A = \int_{B_R} \eta (y) \rho(y) \, dy
	\end{equation*}
	and
	\begin{equation*}
		B = \int_{B_R} \eta(y) |y|^2 \rho (y) \, dy.
	\end{equation*}
	Since $V(0)=0$, if we evaluate on $x=0$ we get that,
	\begin{equation*}
		- \frac{m}{1-m} \rho^{m-1}(0) = - h - \frac{B}{2},
	\end{equation*}
	from where we deduce that 
	\begin{equation*}
		h+ \frac{B}{2} \geq 0.
	\end{equation*}
	Then, we get that $\rho$ is of the form,
	\begin{equation*}
		\rho (x) = \left[ \frac{1-m}{m} \left( h + \frac{\eta (x)}{2} B + \frac{\eta (x)}{2} |x|^2 A + V(x) \right) \right]^{- \frac{1}{1-m}}.
	\end{equation*}
	We distinguish between two cases. First, if $|x| \leq \frac{R}{\sqrt{2}}$, we have that $\eta (x) = 1$ and $h + \frac{\eta(x)}{2} B \geq 0$. Therefore,
	\begin{equation*}
		\rho (x) \leq \left( \frac{1-m}{m} V (x) \right)^{- \frac{1}{1-m}} \leq \left( \frac{1-m}{m} V_0 (x) \right)^{- \frac{1}{1-m}}
	\end{equation*}
	and we recover \eqref{eq:Estimate_for_rho_aggregation_example}. For the second case, if $|x| > \frac{R}{\sqrt{2}}$,
	\begin{equation*}
		0 \leq B = \int_{B_R} \eta (y) |y|^2 \rho (y) \, dy \leq R^2 \int_{B_R} \rho (y) \, dy \leq R^2.
	\end{equation*}
	Thus, $h \geq - \frac{B}{2} \geq - \frac{R^2}{2}$, and, in particular,
	\begin{equation*}
		\rho (x) \leq \left[ \frac{1-m}{m} \left( - \frac{R^2}{2} + V(x) \right)  \right]^{- \frac{1}{1-m}}.
	\end{equation*}
	$V(x) \geq |x|^2 + V_0 (x) \geq \frac{R^2}{2} + V_0 (x)$ when $|x| \geq \frac{R}{\sqrt{2}}$. Then,
	\begin{equation*}
		\rho (x) \leq \left( \frac{1-m}{m}   V_0(x)  \right)^{- \frac{1}{1-m}},
	\end{equation*}
	and we recover \eqref{eq:Estimate_for_rho_aggregation_example} again, finishing with the proof.
\end{proof}

\begin{remark}
	The proof works in the same way when we are in the whole space $\mathbb{R}^d$ and $\eta \equiv 1$. In this case, we just need $V(x) \geq V_0(x)$. In the same way as before, we also obtain examples where aggregation happens.
\end{remark}

\subsection*{Acknowledgements}
The authors were supported by the Advanced Grant Nonlocal-CPD (Nonlocal PDEs for Complex Particle Dynamics: Phase Transitions, Patterns and Synchronization) of the European Research Council Executive Agency (ERC) under the European Union’s Horizon 2020 research and innovation programme (grant agreement No. 883363). JAC was also partially supported by the EPSRC grant numbers EP/T022132/1 and EP/V051121/1. DGC was partially supported by PID2021-127105NB-I00 from the Spanish Government.

\appendix

\section{A maximum principle for viscosity solutions}\label{ap:Maximum principle viscosity solutions}
In \cite{KK98}, Kawohl and Kutev study the subsolutions of the general elliptic equation
\begin{equation}\label{ec:KK98_Elliptic_general_form}
	F(x, u, Du, D^2 u) = 0 \quad \mathrm{in} \, \Omega ,
\end{equation}
where $\Omega$ is a connected domain and $F$ is a Caratheodory function. They assume,
\begin{enumerate}
	\item For every $(x, p, X) \in \Omega \backslash N \times \mathbb{R}^d \times S^n$, where $N \subseteq \Omega$ denotes a nowhere dense set, $F$ satisfies,
	\begin{equation}\label{ec:Maximum_Principle_viscosity_Tech_assumption_(5)}
		r \geq s \quad \Rightarrow \quad 0 \leq F( x, r, p, X ) - F (x, s, p, X).
	\end{equation} 
	\item Equation \eqref{ec:KK98_Elliptic_general_form} is strictly elliptic with modulus of ellipticity $\omega_1$
	\begin{equation}\label{ec:Maximum_Principle_viscosity_Tech_assumption_(8)}
		X \geq Y \quad \Rightarrow \quad \omega_1 (\lambda_K \trace (X-Y) ) \leq F( x, r, p, Y) - F(x, r, p, X),
	\end{equation}
	for every $x \in \Omega \backslash N$, and $|r|, \, |p|, \, \|X \|, \, \| Y \| \leq K$, and for some positive constant $\lambda_K$ depending on $K$. Furthermore, as a modulus of continuity the real function $s \mapsto \omega_1 (s)$ satisfies the usual conditions,
	\begin{equation}\label{ec:Maximum_Principle_viscosity_Tech_assumption_(9)}
		\omega (s) \in C [0, \infty), \quad \omega (s) > \omega (t) \, \, \text{for} \, s > t > 0 \quad \text{and} \quad \omega (0) = 0.
	\end{equation}
	\item For every $x \in \Omega \backslash N$, and $|r|, |p|, |q|, \| X \|, \| Y \| \leq K$ there exists some positive constants $A_K$ and $B_K$ depending on $K$ such that
	\begin{equation}\label{ec:Maximum_Principle_viscosity_Tech_assumption_(10)}
		|F(x, r, p, X) - F(x, r, q, Y)| \leq \omega_2 (A_K \| X-Y\| + B_K |p-q| )
	\end{equation}
	for $\omega_2$ some modulus of ellipticity satisfying \eqref{ec:Maximum_Principle_viscosity_Tech_assumption_(9)}.
	\item $\Omega$ is connected and there exists a positive $s_0$ such that 
	\begin{equation}\label{ec:Maximum_Principle_viscosity_Tech_assumption_(extra)}
		\omega_1 (s) \geq \omega_2 (s) \quad s \in [0, s_0].
	\end{equation}
	\item $F$ is such that 
	\begin{equation}\label{ec:Maximum_Principle_viscosity_Tech_assumption_(11)}
		F(x,0,0,0) \geq 0 \quad \text{for every} \, x \in \Omega \backslash N .
	\end{equation}
\end{enumerate}
Then, we have the following strong interior maximum principle.
\begin{theorem}[Kawohl-Kutev, \cite{KK98}]\label{th:KK98}
	Assume $F(x,z,p,X)$ satisfies the hypothesis \eqref{ec:Maximum_Principle_viscosity_Tech_assumption_(5)}-\eqref{ec:Maximum_Principle_viscosity_Tech_assumption_(11)}. Then, if $u \in USC(\Omega )$ is a viscosity supersolution of \eqref{ec:KK98_Elliptic_general_form} and if $u$ has a maximum $\Theta$ at some interior point $x_0 \in \Omega$, then $u (x)$ must be constant in $\Omega$, $F(x,0,0,0) \equiv 0$ everywhere in $\Omega \backslash N$, and $F(x, z, 0, 0)$ must be independent of $z$ for $z \in [0, \Theta]$ and $x \in \Omega \backslash N$.
\end{theorem}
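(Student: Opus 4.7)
The statement is the classical strong interior maximum principle for viscosity solutions of fully nonlinear, strictly elliptic equations, attributed to Kawohl--Kutev \cite{KK98}. Since there is no new argument to invent, my plan is to outline how one reconstructs that proof, which follows the standard Hopf-lemma barrier strategy adapted to the viscosity framework.

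I would argue by contradiction. Set $M_\Theta = \{x \in \Omega : u(x) = \Theta\}$, which is closed in $\Omega$ by upper semicontinuity and contains $x_0$. Assume $M_\Theta \neq \Omega$. Since $\Omega$ is connected, one selects a point $y \in \partial M_\Theta \cap \Omega$ and a closed ball $\overline{B_R(z)} \subset \Omega$ lying in $\Omega\setminus M_\Theta$ and tangent to $M_\Theta$ at $y$, so that $u < \Theta$ throughout $B_R(z)$, $u(y)=\Theta$, and $y \in \partial B_R(z)$. Next I would introduce the classical Hopf barrier
\[
w(x) = e^{-\alpha|x-z|^2} - e^{-\alpha R^2}, \qquad \varphi(x) = \Theta - \delta w(x),
\]
with parameters $\alpha, \delta > 0$ to be tuned. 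Then $\varphi(y) = \Theta = u(y)$, $D\varphi(y) \neq 0$, and a direct computation shows that on the annulus $A = B_R(z) \setminus B_{R/2}(z)$ the Hessian $D^2 w$ has a negative eigenvalue of order $\alpha^2$ in the radial direction when $\alpha$ is large.

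The core step is to verify, using the strict ellipticity \eqref{ec:Maximum_Principle_viscosity_Tech_assumption_(8)} and the modulus estimate \eqref{ec:Maximum_Principle_viscosity_Tech_assumption_(10)}, that for $\alpha$ large and $\delta$ small one has, in $A$,
\[
F(x, \varphi(x), D\varphi(x), D^2\varphi(x)) < 0,
\]
so that $\varphi$ is a strict classical subsolution of $F=0$ in $A$. Quantitatively, the contribution $\omega_1(\lambda_K \alpha^2 \delta e^{-\alpha R^2})$ from the radial eigenvalue of $D^2\varphi$ is balanced against the first-order error $\omega_2(A_K \|D^2\varphi\| + B_K |D\varphi|)$ using compatibility \eqref{ec:Maximum_Principle_viscosity_Tech_assumption_(extra)}; monotonicity \eqref{ec:Maximum_Principle_viscosity_Tech_assumption_(5)} and the sign condition \eqref{ec:Maximum_Principle_viscosity_Tech_assumption_(11)} take care of the dependence on $\varphi$ near $\Theta$. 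This step is the main obstacle, since it must be done uniformly in the nonsmooth, merely Carathéodory structure of $F$, and it is the reason the two-sided modulus hypothesis \eqref{ec:Maximum_Principle_viscosity_Tech_assumption_(extra)} appears.

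Finally I would boundary-compare on $\partial A$. On $\partial B_R(z)$ we have $w\equiv 0$, hence $\varphi = \Theta \geq u$; on $\partial B_{R/2}(z)$ upper semicontinuity gives a uniform gap $u \leq \Theta - \eta$, and one picks $\delta$ so small that still $\varphi \leq u$ there as well. The viscosity comparison principle applied in $A$, together with the strict subsolution property of $\varphi$, then yields $\varphi \leq u$ in $A$, with equality at $y \in \partial A$. Hence $\varphi$ is an admissible $C^2$ test function touching $u$ from below at the interior point $y$, and the supersolution property of $u$ at $y$ forces $F(y,\Theta, D\varphi(y), D^2\varphi(y)) \geq 0$, contradicting the strict inequality unless the structure degenerates in precisely the way described by the theorem. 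Running this contradiction for every such interior boundary point $y$ forces $u \equiv \Theta$ on $\Omega$, and then evaluating the supersolution inequality at $x \in \Omega \setminus N$ with $Du = D^2 u = 0$ yields the extra rigidity $F(x,0,0,0) \equiv 0$ and $z$-independence of $F(x,z,0,0)$ on $[0,\Theta]$.
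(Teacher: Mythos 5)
The paper quotes this result directly from [KK98] without reproducing the proof, so there is no in-paper argument to compare against; your sketch is a reconstruction of Kawohl and Kutev's original proof. The general strategy you describe --- contact set $M_\Theta$, interior tangent ball, the exponential barrier $w(x)=e^{-\alpha|x-z|^2}-e^{-\alpha R^2}$, balancing the ellipticity modulus in \eqref{ec:Maximum_Principle_viscosity_Tech_assumption_(8)} against the first-order modulus in \eqref{ec:Maximum_Principle_viscosity_Tech_assumption_(10)} via the compatibility condition \eqref{ec:Maximum_Principle_viscosity_Tech_assumption_(extra)} --- is indeed the backbone of that argument.

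However, there is a sign error in the boundary comparison step that breaks the chain of implications. On $\partial B_{R/2}(z)$ upper semicontinuity gives $u\le\Theta-\eta$, while $\varphi=\Theta-\delta w\ge\Theta-\delta\sup w>\Theta-\eta$ once $\delta$ is small, so you obtain $\varphi>u$ there, not $\varphi\le u$ as you wrote. Combined with $\varphi=\Theta\ge u$ on $\partial B_R(z)$, the correct boundary inequality is $u\le\varphi$ on $\partial A$ with equality only at $y$, which means $\varphi$ should ultimately touch $u$ \emph{from above} at $y$, not from below. For the USC-supersolution convention that [KK98] uses (and under which the theorem as stated is meaningful), this is exactly the touching direction that triggers the viscosity inequality, so fixing the sign resurrects the contradiction --- but as written the argument is inconsistent. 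A second, smaller objection: invoking ``the viscosity comparison principle applied in $A$'' is not available as a black box for a Carath\'eodory $F$ under only \eqref{ec:Maximum_Principle_viscosity_Tech_assumption_(5)}--\eqref{ec:Maximum_Principle_viscosity_Tech_assumption_(11)}; the actual argument examines where $u-\varphi$ achieves its maximum over $\overline A$ (it cannot be on $\partial B_{R/2}(z)$, and an interior maximum point produces the contradiction directly from the strict-subsolution computation for $\varphi$ together with the supersolution inequality for $u$ and monotonicity \eqref{ec:Maximum_Principle_viscosity_Tech_assumption_(5)}). Finally, the closing sentence deriving $F(x,0,0,0)\equiv 0$ and $z$-independence of $F(x,z,0,0)$ from ``evaluating the supersolution inequality'' is stated too quickly: once $u\equiv\Theta$, testing yields only one-sided inequalities, and one must combine them with \eqref{ec:Maximum_Principle_viscosity_Tech_assumption_(5)} and \eqref{ec:Maximum_Principle_viscosity_Tech_assumption_(11)} to obtain the claimed identities.
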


\section{Uniform estimates for \texorpdfstring{$\rho$}{rho} as \texorpdfstring{$R \to \infty$}{R to infty}}\label{ap:a priori estimates Rd}

In this appendix we include some of the \textit{a priori} estimates that we need in \cref{sec:Rd} to be able to prove \Cref{th:locally_strong_solution_Rn}.

First, we realise that using the estimate for the second order moment \eqref{ec:Second_moment_uniformly_bounded_above} we are able to obtain the \textit{a priori} estimate for a $p$-order moment with $p=\frac{2}{1-m} > 2$.

\begin{proof}[Proof of \Cref{lem:p-order moment}]
	In the same way we did before for the second moment, we take the time derivative and compute,
	\begin{equation} 
    \label{eq:d/dt mp}
    \begin{aligned}
		\frac{1}{p}\frac{d}{dt} \mathbf{m_p} (\rho^{R, \eta_j, \ast}_t ) & = -  \int_{\mathbb{R}^d} |x|^{p-2}x \cdot \nabla (\rho^{R, \eta_j, \ast}_t)^m \\
        &\qquad -  \int_{\mathbb{R}^d} |x|^{p-2} x \cdot \rho^{R, \eta_j, \ast} \nabla \left(V_R + \int_{\Rd} K_{\eta}(x,y) \rho^{R, \eta_j, \ast}_t (y) \, dy \right) \\
		&\le  (p-1) \mathbf{m_{p-2}} ((\rho_t^{R, \eta_j, \ast})^m) \\
        &\qquad +    \left\| \frac{\nabla V_R + \nabla \int_{\Rd} K_{\eta}(\cdot,y) \rho^{R, \eta_j, \ast}_t (y) \, dy}{1+|\cdot|} \right\| _{L^\infty(\mathbb{R}^d)} \int_{\Rd} |x|^{p-1}  (1 + |x|) \rho^{R, \eta_j, \ast}_t \, dx. 
	\end{aligned}
    \end{equation}
Now we bound each of the terms. For the first term
we have that
	\begin{align*}
		\mathbf{m_{p-2}} \left((\rho_t^{R, \eta_j, \ast})^m\right)      & \le  \left( \mathbf{m_{\frac{p-2}m}}  (\rho^{R, \eta_j, \ast}_t)\right)^m  \| \rho^{R, \eta_j, \ast}_t\|_{L^{\frac{1}{1-m}}} \leq m \, \mathbf{m_{p}}  (\rho^{R, \eta_j, \ast}_t) + (1-m)  \| \rho^{R, \eta_j, \ast}_t\|_{L^{\frac{1}{1-m}}}^{\frac{1}{1-m}}.
	\end{align*}
    Notice that we have chosen $p$ such that $p = \frac{p-2}{m}$. The second term is bounded due to the $L^p$ estimate \eqref{ec:Lp_estimate}. 
    The term $\nabla V_R / (1+ |x|) $ is uniformly bounded by hypothesis. We expand the derivative
	\begin{align*}
		\nabla \int_{\Rd} K(x,y) \rho^{R, \eta_j, \ast}_t (y) \, dy&= \int_{B_R} \eta_j (x) \nabla W(x-y) \eta_j (y) \rho^{R, \eta_j, \ast}_t (y) \, dy  \\
  &\qquad +  \int_{\Rd} \nabla \eta_j (x) W(x-y) \eta_j (y) \rho_t^{R, \eta_j, \ast}(y) \, dy. 
    \end{align*}
    Since we assume $|\nabla W(x)| \leq C (1+|x|)$ from \eqref{eq:W_quadratic} and from there we also deduce $W(x-y) \leq C (1+|x|^2) (1 + |y|^2)$ at \eqref{ec:Assumptions_on_W_whole_space}, we obtain that 
    \begin{align*}
        \left| \int_{B_R} \eta_j (x) \nabla W(x-y) \eta_j (y) \rho^{R, \eta_j, \ast}_t (y) \, dy \right| & \le  C (W)  \int_{\mathbb{R}^d} \eta_j (x) (1+ |x| + |y| ) \eta_j (y) \,  \rho_t^{R, \eta_j, \ast} (y)  \, dy \\
		& \leq C (W) \left(  (1+|x|) \| \rho_t^{R, \eta_j, \ast} \|_{L^1 (\mathbb{R}^d)} +   \mathbf{m_1}  (\rho_t^{R, \eta_j, \ast})\right)  \\
  		& \leq C (W) \left( (1+|x|) \| \rho_0 \|_{L^1 (\mathbb{R}^d)} +  \| \rho_0 \|_{L^1 (\mathbb{R}^d)}^{\frac{1}{2}}  +  \mathbf{m_2} ( \rho^{R, \eta_j, \ast}_t )^{\frac{1}{2}} \right) 
    \end{align*} 
    For the next term, we take into account that $ |\nabla \eta_j(x)| \le j^{-1}  \|\nabla \eta_1\|_{L^\infty(\Rd)} \chi_{B_j} (x)  $, and we can therefore estimate 
    \begin{align*}
        \left| \int_{\Rd} \nabla \eta_j (x) W(x-y) \eta_j (y) \rho_t^{R, \eta_j, \ast}(y) \, dy \right| 
        &   \leq  C(W) | \nabla \eta_j (x) |  ( 1 + |x|^2) \left( \| \rho_0 \|_{L^1 (\Rd)} + \mathbf{m_2} (\rho_t^{R, \eta_j, \ast})  \right) \\
      &   \leq  C(W) \| \nabla \eta_1  \|_{L^\infty(\Rd) }  ( j^{-1} + |x|) \left( \| \rho_0 \|_{L^1 (\Rd)} + \mathbf{m_2} (\rho_t^{R, \eta_j, \ast})  \right)
	\end{align*}
	where $C(W)$ is the combination of the constants appearing in \eqref{eq:W_quadratic} and \eqref{ec:Assumptions_on_W_whole_space},  
    and we control $\mathbf{m_2}$ due to \eqref{ec:Second_moment_uniformly_bounded_above}.
    Lastly, using Hölder inequality on the $p-1$ moment and Young's inequality afterwards we get,
	\begin{align*}
		\int |x|^{p-1}  (1 + |x|) \rho^{R, \eta_j, \ast}_t &=  \mathbf{m_{p-1}} (\rho^{R, \eta, \ast}_t) + \mathbf{m_p} (\rho^{R, \eta_j, \ast}_t) \le      \| \rho^{R, \eta_j, \ast}_t \|_{L^1 (\mathbb{R}^d)}^{\frac{1}{p}}    \mathbf{m_p} (\rho^{R, \eta_j, \ast}_t)^{\frac{p-1}{p}} + \mathbf{m_p} (\rho^{R, \eta_j, \ast}_t)     \\
		& \le \frac{ \| \rho^{R, \eta_j, \ast}_t \|_{L^1 (\mathbb{R}^d)}}{p} + \frac{2p-1}{p}    \mathbf{m_p} (\rho^{R, \eta_j, \ast}_t)   .
	\end{align*}
	Integrating \eqref{eq:d/dt mp} in time from $0$ to $t$ and using the previous computations we get that
	\begin{align*}
	    \mathbf{m_p} (\rho^{R, \eta_j, \ast}_t) - \mathbf{m_p} (\rho^{R, \eta_j, \ast}_0) & \leq C \left( 1  +   \int_0^t   \mathbf{m_p} (\rho_s^{R, \eta_j, \ast} ) \, ds \right)
	\end{align*}
	where $C$ is a constant with the same dependencies as $A$ and $B$ in the statement.
	Therefore, by Gronwall's inequality, we recover \eqref{ec:p_moment_uniformly_bounded_above}.
\end{proof}

Once we have obtain an estimate for the $p=\frac{2}{1-m}$ order moment we focus on obtaining a collection of \textit{a priori} results. In order to do that, let us take $\varphi \in C^{\infty}_c (\Rd)$ such that $\varphi (x) = 1$ for all $x \in B_{\omega_1}$ and $\varphi (x) = 0$ for all $x \in \Rd \backslash B_{\omega_2}$. We fix a cut-off function $\varsigma$ such that $\varsigma (x) = \varphi^2(x)$. Take $R$ large enough so that ${\omega_1} < {\omega_2} < {R}$. We present here the following collection of \textit{a priori} results.

\begin{lemma}[\textit{a priori} estimates on $\nabla \rho^{R, \eta}$]\label{lem:rho H1 Rd}
    Let $\rho^{R, \eta}$ be the unique strong solution of \eqref{eq:Fast-Diffusion_Problem_BR} in the bounded domain $(0,T) \times B_R$ for the kernel $K (x,y)= \eta(x) W(x-y)\eta(y)$. We have that
    \begin{equation}
    	\label{eq:rho H1 Rd}
    	\begin{aligned} 
	    \int_0^T& \int_{B_{R}} | \nabla \rho_t^{R, \eta}|^2 \varsigma \\
	    & \leq C \left( m, \| \rho^{R, \eta} \|_{L^{3-m}((0,T) \times \Rd)}, \| \rho^{R, \eta} \|_{L^{\infty}((0,T) \times \Rd)}, \| \varsigma^{\frac{1}{2}}\|_{W^{1, \infty} (\Rd) } , \| E^{R, \eta} \|_{L^2 ((0,T) \times B_{\omega_2})} \right).
	    \end{aligned} 
	\end{equation}
\end{lemma}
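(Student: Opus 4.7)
The approach mirrors the argument of \Cref{cor:a_priori_estimates_rho} but tests the equation against $G'_\Phi(\rho_t^{R,\eta})\,\varsigma$ rather than against $G'_\Phi(\rho_t^{R,\eta})$ alone, where $G_\Phi$ is the primitive defined in \eqref{def:G_Phi} (so $G'_\Phi(s) = s^{2-m}/[m(2-m)]$ and $G_\Phi(s) = s^{3-m}/[m(2-m)(3-m)]$), and $\varsigma = \varphi^2$ is compactly supported inside $B_R$ (since $\omega_2 < R$). First I would observe that since $\varsigma$ vanishes on $\partial B_R$, no boundary terms arise when integrating by parts, giving
\begin{equation*}
	\frac{d}{dt}\int_{B_R} G_\Phi(\rho_t^{R,\eta})\,\varsigma
	\;=\; -\!\int_{B_R} \nabla\!\bigl(G'_\Phi(\rho_t^{R,\eta})\,\varsigma\bigr) \cdot \bigl( \nabla (\rho_t^{R,\eta})^m + \rho_t^{R,\eta}\,E_t^{R,\eta}\bigr).
\end{equation*}
The product rule combined with the identity $G''_\Phi(s)\,\nabla s^m = \nabla s$ singles out the good dissipation term $-\int|\nabla \rho_t^{R,\eta}|^2\varsigma$, and produces three cross-terms corresponding to the diffusion--drift, diffusion--cutoff and drift--cutoff couplings, respectively proportional to $\int \rho^{2-m}\nabla\rho\cdot E\,\varsigma$, $\int \rho\,\nabla\rho\cdot\nabla\varsigma$ and $\int \rho^{3-m}\nabla\varsigma\cdot E$.

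Second, I would apply Young's inequality to each cross-term in order to absorb a small fraction of $\int|\nabla\rho|^2\varsigma$ into the left-hand side. Exploiting the fact that $\nabla\varsigma = 2\varphi\,\nabla\varphi$, the diffusion--cutoff coupling splits as
\begin{equation*}
	\Bigl|\tfrac{2}{2-m}\!\int \rho\,\varphi\,\nabla\rho\cdot\nabla\varphi\Bigr|
	\leq \tfrac 1 8 \!\int|\nabla\rho|^2\,\varphi^2 + C(m) \|\nabla\varphi\|_{L^\infty}^2\!\int_{B_{\omega_2}}\!\rho^2,
\end{equation*}
so that only $\|\varphi\|_{W^{1,\infty}} = \|\varsigma^{1/2}\|_{W^{1,\infty}}$ enters. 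The diffusion--drift coupling is split analogously, leaving a remainder controlled by $\|\rho\|_{L^\infty}^{2(2-m)}\|E^{R,\eta}\|_{L^2((0,T)\times B_{\omega_2})}^2$, using that $\varsigma$ is supported in $B_{\omega_2}$ so only the localised $L^2$ norm of $E^{R,\eta}$ is needed. The drift--cutoff term contains no $\nabla\rho$ and is handled by Hölder's inequality directly, giving a bound in terms of $\|\rho\|_{L^\infty}^{3-m}$, $\|\nabla\varsigma\|_{L^\infty}$ and $\|E^{R,\eta}\|_{L^2((0,T)\times B_{\omega_2})}$.

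Integrating in time on $[0,T]$, discarding the non-negative term $\int G_\Phi(\rho_T^{R,\eta})\varsigma$, and bounding the initial contribution by $C(m)\|\rho_0\|_{L^{3-m}(B_{\omega_2})}^{3-m}$, one arrives at \eqref{eq:rho H1 Rd}. To see where the $L^{3-m}$ dependence of the statement enters from the remainders, note that $\int_0^T\!\int_{B_{\omega_2}}\rho^2$ is controlled by Hölder's inequality with $p=2$, $q=3-m$ (using $2<3-m$ for $m<1$) as
\begin{equation*}
	\int_0^T\!\!\int_{B_{\omega_2}}\!\rho^2 \;\leq\; \|\rho^{R,\eta}\|_{L^{3-m}((0,T)\times\Rd)}^{2}\,(T\,|B_{\omega_2}|)^{1-\frac{2}{3-m}},
\end{equation*}
and $|B_{\omega_2}|$ is determined by $\varsigma$.

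The main subtle point is that the estimate must avoid any dependence on second derivatives of $\varsigma$, since only $\|\varsigma^{1/2}\|_{W^{1,\infty}}$ is available. This dictates the choice of the weighted test function $G'_\Phi(\rho)\varsigma$ rather than, for example, $\rho\,\varsigma$: performing the integration by parts a single time in the dissipation identity leaves the remainder in the form $|\nabla\varphi|^2$ (which is in $L^\infty$) instead of the problematic $|\nabla\varsigma|^2/\varsigma$ that would require either stronger assumptions on $\varsigma$ or positivity to make sense of. The remainder of the argument is bookkeeping to collect the constants in the form stated in \eqref{eq:rho H1 Rd}.
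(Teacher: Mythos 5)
Your proposal is correct and follows essentially the same route as the paper: test the equation against $G'_\Phi(\rho^{R,\eta}_t)\,\varsigma$, integrate by parts to isolate $-\int|\nabla\rho^{R,\eta}_t|^2\varsigma$, and control the three cross-terms with Hölder and Young's inequalities before integrating in time. The paper states this very tersely, so your filling in of the three couplings and the role of $\nabla\varsigma = 2\varphi\nabla\varphi$ in keeping only $\|\varsigma^{1/2}\|_{W^{1,\infty}}$ in the constant matches the intended argument.
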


\begin{proof}
     We use a similar strategy to the one followed for the \textit{a priori} estimate \eqref{ec:a_priori_estimate_rho}. In this case, the function $G_{\Phi}$ appearing in \eqref{ec:a_priori_estimate_rho} corresponds to $G(s) = \frac{1}{m(2-m)(3-m)} s^{3-m}$, where we define $G$ using \eqref{def:G_Phi} for the function $\Phi(s) = s^m$.  For $R$ large enough we get that,
	\begin{align*}
		\partial_t \int_{B_{R}} ( G (\rho^{R, \eta }_t) ) \varsigma & = - \int_{B_{R}} \nabla (G' (\rho_t^{R, \eta}) \varsigma ) \cdot \left( \nabla (\rho_t^{R, \eta})^m + \rho_t^{R, \eta} E_t^{R, \eta} \right)   \\
		& = - \int_{B_{R}} | \nabla \rho_t^{R, \eta} |^2 \varsigma - \int_{B_{R}} \frac{1}{m} (\rho_t^{R, \eta})^{2-m} \nabla \rho_t^{R, \eta} \cdot E_t^{R, \eta} \varsigma \\
		& \quad - \int_{B_{R}} \frac{1}{m (2-m)} (\rho_t^{R, \eta})^{2-m} \Big ( \nabla (\rho_t^{R, \eta} )^m + \rho_t^{R, \eta} E_t^{R, \eta} \Big) \nabla \varsigma . 
	\end{align*}
	Thus, using Hölder, Young's inequality and integrating in time we get \eqref{eq:rho H1 Rd}.
\end{proof}

\begin{lemma}[\textit{a priori} estimates on $\nabla (\rho^{R, \eta})^m$]
    Let $\rho^{R, \eta}$ be the unique strong solution of \eqref{eq:Fast-Diffusion_Problem_BR} in the bounded domain $(0,T) \times B_R$ for the kernel $K (x,y)= \eta(x) W(x-y)\eta(y)$. We have that
    \begin{equation}\label{eq:rhom H1 Rd}
    	\begin{aligned} 
	    &\int_0^T  \int_{B_{R}}  | \nabla (\rho_t^{R , \eta})^m |^2 \varsigma \\
	    & \leq  C\left(m, \| \rho^{R, \eta} \|_{L^{m+1}((0,T) \times \Rd)}, \|\rho^{R, \eta}\|_{L^\infty((0,T) \times \Rd)} , \| \varsigma^{\frac 1 2} \|_{W^{1,\infty}(\Rd)} , \|E^{R, \eta}\|_{L^\infty ((0,T) \times B_{\omega_2})} \right) .
	    \end{aligned} 
	\end{equation}
\end{lemma}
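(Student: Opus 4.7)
The strategy mirrors that of \Cref{lem:a_priori_estimates_Phi}, but in its localised version. Namely, I would test the equation
$$\frac{\partial \rho^{R,\eta}}{\partial t} = \Delta (\rho^{R,\eta})^m + \nabla \cdot (\rho^{R,\eta} E^{R,\eta})$$
against $(\rho_t^{R,\eta})^m \varsigma$, where $\varsigma = \varphi^2$ is the cut-off fixed before the statement. Since the no-flux boundary condition on $\partial B_R$ kills all boundary terms, and $\varsigma$ vanishes outside $B_{\omega_2} \subset B_R$, no boundary contribution appears.

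The LHS becomes $\frac{1}{m+1}\frac{d}{dt}\int_{B_R} (\rho_t^{R,\eta})^{m+1}\varsigma$. Integrating by parts the diffusive term produces the main quantity $\int_{B_R} |\nabla(\rho_t^{R,\eta})^m|^2 \varsigma$ together with a commutator $-\int_{B_R} (\rho_t^{R,\eta})^m \nabla(\rho_t^{R,\eta})^m \cdot \nabla\varsigma$; the drift term produces $-\int_{B_R} \rho_t^{R,\eta} E_t^{R,\eta} \cdot \nabla(\rho_t^{R,\eta})^m \, \varsigma - \int_{B_R}(\rho_t^{R,\eta})^{m+1} E_t^{R,\eta}\cdot \nabla\varsigma$. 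Putting things together,
\begin{align*}
    \frac{1}{m+1}\frac{d}{dt}\int_{B_R} (\rho_t^{R,\eta})^{m+1}\varsigma &+ \int_{B_R}|\nabla(\rho_t^{R,\eta})^m|^2\varsigma \\
    &= -\int_{B_R}(\rho_t^{R,\eta})^m \nabla(\rho_t^{R,\eta})^m \cdot \nabla\varsigma \\
    &\quad - \int_{B_R} \rho_t^{R,\eta}E_t^{R,\eta}\cdot \nabla(\rho_t^{R,\eta})^m\, \varsigma - \int_{B_R}(\rho_t^{R,\eta})^{m+1}E_t^{R,\eta}\cdot \nabla\varsigma.
\end{align*}

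Next I would absorb the two cross terms involving $\nabla (\rho_t^{R,\eta})^m$ using Young's inequality with a small weight $\delta$, and critically use the identity $|\nabla\varsigma|^2/\varsigma = 4|\nabla\varphi|^2 \le 4\|\varsigma^{1/2}\|_{W^{1,\infty}}^2$, which is the standard trick that makes the cut-off cost controllable. This yields, after integrating in time over $[0,T]$ and throwing the initial term on the RHS,
$$\int_0^T\!\!\int_{B_R}|\nabla(\rho_t^{R,\eta})^m|^2 \varsigma \le C\Big( \|\rho_0\|_{L^\infty}^{m+1}\|\varphi\|_{L^2}^2 + \|\rho^{R,\eta}\|_{L^\infty}^{2m}\|\nabla\varphi\|_{L^2}^2 + T\|E^{R,\eta}\|_{L^\infty(B_{\omega_2})}^2 \|\rho^{R,\eta}\|_{L^{m+1}}^{m+1}\Big),$$
whose dependencies match the ones stated. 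Note that $E^{R,\eta}$ is only tested against quantities supported in $\mathrm{supp}\,\varsigma\subseteq B_{\omega_2}$, so only $\|E^{R,\eta}\|_{L^\infty((0,T)\times B_{\omega_2})}$ enters the bound.

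I expect no substantial obstacle here: the argument is a direct localisation of the standard energy estimate used to prove \eqref{ec:a_priori_estimate_Phi}, and every step has an analogue in \Cref{lem:a_priori_estimates_Phi} and in \Cref{lem:rho H1 Rd}. The only minor care is the handling of $(\rho^{R,\eta})^m\nabla(\rho^{R,\eta})^m\cdot\nabla\varsigma$: one writes it as $\frac{1}{2}\nabla((\rho^{R,\eta})^{2m})\cdot\nabla\varsigma$ and integrates by parts again, producing $\frac{1}{2}\int (\rho^{R,\eta})^{2m} \Delta\varsigma$, which is also controlled by $\|\rho^{R,\eta}\|_{L^\infty}^{2m}\|\varsigma^{1/2}\|_{W^{1,\infty}}^2$ after expanding $\Delta\varsigma = 2|\nabla\varphi|^2 + 2\varphi\Delta\varphi$. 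Either route gives the claimed bound.
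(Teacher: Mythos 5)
Your proof matches the paper's own argument: both test the equation against $(\rho_t^{R,\eta})^m\varsigma$, integrate by parts to produce the main term $\int|\nabla(\rho_t^{R,\eta})^m|^2\varsigma$ plus the cut-off commutator and the two drift contributions, and then absorb the cross terms via Young's inequality using $|\nabla\varsigma|^2/\varsigma = 4|\nabla\varphi|^2$. The dependencies you identify in the final bound agree with the stated constants, so the argument is correct and takes essentially the same route as the paper.
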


\begin{proof}
      With a similar strategy to the one followed for the \textit{a priori} estimate \eqref{ec:a_priori_estimate_Phi} we obtain that,
	\begin{align*}
	\begin{split}
		\partial_t \int_{B_{R}} \frac{1}{m+1} (\rho_t^{R , \eta})^{m+1} \varsigma & = \int_{B_{R}} (\rho_t^{R , \eta})^m \partial_t \rho_t^{R , \eta} \varsigma \\
		& = \int_{B_{R}} (\rho_t^{R , \eta})^m \nabla \cdot ( \nabla (\rho_t^{R , \eta})^m + \rho_t^{R , \eta} E_t^{R, \eta}) \varsigma \\
		& = - \int_{B_{R}} | \nabla (\rho_t^{R , \eta})^m |^2 \varsigma - \int_{B_{R}} \rho_t^{R , \eta} \nabla (\rho_t^{R , \eta})^m E_t^{R, \eta} \varsigma \\
		& \quad - \int_{B_{R}} (\rho_t^{R , \eta})^m \nabla (\rho_t^{R , \eta})^m \nabla \varsigma - \int_{B_{R}} (\rho_t^{R , \eta})^{m+1} E_t^{R, \eta} \nabla \varsigma .
		\end{split}
	\end{align*}
	Thus, using Hölder, Young's inequality and integrating in time we get \eqref{eq:rhom H1 Rd}.
\end{proof}

\begin{lemma}[\textit{a priori} estimates on $\partial_t \rho^{R, \eta}$]\label{lem:d/dt rho L2 Rd}
    Let $\rho^{R, \eta}$ be the unique strong solution of \eqref{eq:Fast-Diffusion_Problem_BR} in the bounded domain $(0,T) \times B_R$ for the kernel $K (x,y)= \eta(x) W(x-y)\eta(y)$. We have that
    \begin{equation}\label{eq:d/dt rho L2 Rd}
    	\begin{aligned} 
	    &\int_0^T \int_{B_{R}} \left| \frac{\partial \rho_t^{R, \eta}}{\partial t} \right|^2 \varsigma \\
	    &\leq C \left(m, \| \rho^{R, \eta} \|_{L^{\infty}((0,T) \times \Rd)}, \| (\rho^{R, \eta})^m \|_{L^{\infty}((0,T); H^1(B_{\omega_2}))} \| \varsigma^{\frac{1}{2}} \|_{W^{1, \infty} (\Rd)}, \| E^{R, \eta} \|_{W^{1,2}((0,T); L^2 (B_{\omega_2})} \right).
	    \end{aligned} 
	\end{equation}
\end{lemma}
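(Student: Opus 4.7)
The proof will follow closely the strategy of Lemma \ref{lem:a_priori_d/dt_rho_t_is_L2}, adapted to the cut-off setting, with the extra feature that the support of $\varsigma$ lies strictly inside $B_R$ so no spatial boundary terms arise. Write $w := (\rho^{R,\eta})^m$ and $\Phi(s) = s^m$, so that $\partial_t w = \Phi'(\rho)\partial_t \rho$. I would test the equation $\partial_t \rho = \Delta w + \nabla\cdot(\rho\, E)$, with $E = E^{R,\eta}$, against $\varsigma\, \partial_t w$ and integrate over $B_R\times(0,T)$. Since $\varsigma \equiv 0$ on $\partial B_R$, after integration by parts in $x$ the only new terms (compared with the unweighted case) are those produced by $\nabla \varsigma = 2\varphi\nabla\varphi$, which factor through $\|\varsigma^{1/2}\|_{W^{1,\infty}}$ and the pointwise bound $|\nabla \varsigma| \le 2\|\varsigma^{1/2}\|_{W^{1,\infty}}\,\varsigma^{1/2}$.

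After integration by parts in space one gets
\[
\int_0^T\!\!\int_{B_R}\!\! \varsigma\, \Phi'(\rho)\,|\partial_t \rho|^2 + \tfrac{1}{2}\!\int_{B_R}\!\! \varsigma\,|\nabla w_T|^2 = \tfrac{1}{2}\!\int_{B_R}\!\! \varsigma\,|\nabla w_0|^2 + \mathcal R_1 + \mathcal R_2,
\]
where $\mathcal R_1$ gathers the cross-terms coming from $\nabla \varsigma$ (namely $\int\!\!\int \partial_t w\, \nabla w\cdot \nabla\varsigma$ and $\int\!\!\int \partial_t w\, \rho E\cdot \nabla\varsigma$), while $\mathcal R_2 = -\int_0^T\!\!\int_{B_R} \varsigma\, \rho E\cdot \nabla\partial_t w$ must be handled by integrating by parts in time. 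This second step produces the time-boundary contributions $\pm[\int \varsigma\, \rho E\cdot \nabla w]_0^T$ together with $\int_0^T\!\!\int \varsigma\,\nabla w\cdot(\partial_t\rho\cdot E + \rho\,\partial_t E)$.

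To close the estimate I would bound each term by Young's and Hölder's inequalities. The uniform bound $0\le \rho \le \Lambda_T := \|\rho^{R,\eta}\|_{L^\infty((0,T)\times \R^d)}$ gives $\Phi'(\rho)\ge m\Lambda_T^{m-1} =: \Gamma_T > 0$, which will both convert the LHS into the desired $\int\!\!\int \varsigma\,|\partial_t\rho|^2$ via division by $\Gamma_T$ and allow absorption of the delicate cross-term $\int\!\!\int \varsigma\, \nabla w\cdot(\partial_t\rho) E$. For the latter, Young's inequality gives
\[
\Bigl|\int_0^T\!\!\int_{B_R}\!\! \varsigma\, \nabla w\cdot (\partial_t\rho)\, E\Bigr| \le \epsilon \int_0^T\!\!\int_{B_R}\!\! \varsigma\,\Phi'(\rho)|\partial_t \rho|^2 + \frac{C_\epsilon}{\Gamma_T}\int_0^T\!\!\int_{B_{\omega_2}}\!\! \varsigma\,|\nabla w|^2\,|E|^2,
\]
the first term being absorbed into the LHS and the second one controlled by the product $\|(\rho^{R,\eta})^m\|_{L^\infty((0,T);H^1(B_{\omega_2}))}^2 \cdot \|E\|_{L^2((0,T);L^2(B_{\omega_2}))}^2$ (with an additional constant depending on $\|\varsigma\|_\infty$). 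The remaining pieces of $\mathcal R_1$ and $\mathcal R_2$ are handled analogously: the term $\int\!\!\int \varsigma\,\rho\,\nabla w\cdot \partial_t E$ is estimated directly by Cauchy--Schwarz using $\Lambda_T$ and the $L^2L^2$ norm of $\partial_t E$; the time-boundary terms are bounded at $t=0,T$ by the same combination $\Lambda_T\,\|\nabla w_t\|_{L^2}\|E_t\|_{L^2}$ together with Young's inequality to absorb the $T$-endpoint into $\frac{1}{2}\int \varsigma|\nabla w_T|^2$.

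The main obstacle is the mismatched regularity in the drift cross-term $\int\!\!\int \varsigma\,|\nabla w|^2|E|^2$: with only $E\in L^2_tL^2_x(B_{\omega_2})$ one cannot bound it by pulling $E$ out in $L^\infty_x$. The trick is to treat the spatial weight $\varsigma$ together with the \emph{supremum in $t$} of $\|\nabla w_t\|_{L^2(B_{\omega_2})}$, so that $\int_0^T\!\!\int \varsigma|\nabla w|^2|E|^2 \le \|\varsigma\|_\infty \|\nabla w\|_{L^\infty_tL^2_x(B_{\omega_2})}^2 \|E\|_{L^2_tL^2_x(B_{\omega_2})}^2$ after Hölder in time, which is precisely the combination of norms appearing on the right of \eqref{eq:d/dt rho L2 Rd}. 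Once these bounds are assembled and one divides by $\Gamma_T$, the stated estimate follows.
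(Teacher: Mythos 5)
Your overall strategy mirrors the paper's: test the equation against $\varsigma\,\partial_t w$ with $w=\rho^m$, integrate by parts in space (using $\supp\varsigma\Subset B_R$ to kill spatial boundary terms), integrate by parts in time on the term $\int\!\!\int \varsigma\,\rho E\cdot\nabla\partial_t w$, and absorb the $|\partial_t\rho|^2$ contribution on the left using $\Phi'(\rho)\ge m\Lambda_T^{m-1}=:\Gamma_T>0$. That part is right and is exactly what the paper does.

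The step you flag as ``the main obstacle'' and then resolve with a ``trick'' is, however, where your argument breaks down. You claim
\[
\int_0^T\!\!\int_{B_{\omega_2}} \varsigma\,|\nabla w|^2\,|E|^2 \;\le\; \|\varsigma\|_\infty\,\|\nabla w\|_{L^\infty_t L^2_x(B_{\omega_2})}^2\,\|E\|_{L^2_t L^2_x(B_{\omega_2})}^2
\]
``after H\"older in time.'' This inequality is false. H\"older in time cannot help with the pointwise product $|\nabla w(t,x)|^2|E(t,x)|^2$ inside the spatial integral: for fixed $t$, the only general bound for $\int_{B_{\omega_2}}|\nabla w_t|^2|E_t|^2\,dx$ via H\"older is through $\|\nabla w_t\|_{L^4}^2\|E_t\|_{L^4}^2$ (or any conjugate pair), never $\|\nabla w_t\|_{L^2}^2\|E_t\|_{L^2}^2$. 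Taking $\nabla w_t$ and $E_t$ both equal to a spike of height $\varepsilon^{-d/2}$ supported on a ball of radius $\varepsilon$ keeps $\|\nabla w_t\|_{L^2}$ and $\|E_t\|_{L^2}$ of order one but sends $\int|\nabla w_t|^2|E_t|^2\,dx\sim\varepsilon^{-d}\to\infty$. So as written, the cross term is simply not controlled by the norms you invoke, and the estimate does not close.

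The missing ingredient is the local $L^\infty_x$ bound on the drift $E^{R,\eta}$, which is available in the present setting: $E^{R,\eta}=\nabla V_R + \nabla\!\int_{B_R}K_\eta(\cdot,y)\rho^{R,\eta}(y)\,dy$, where $\nabla V_R$ is locally bounded by \eqref{eq:V_quadratic} and its $V_R$-variant, and the convolution term is in $L^\infty_{loc}$ because $\nabla W$ has at most linear growth and $\rho^{R,\eta}$ has bounded mass and second moment. With $E\in L^\infty((0,T)\times B_{\omega_2})$ the cross term becomes harmless,
\[
\int_0^T\!\!\int_{B_{\omega_2}}\varsigma\,|\nabla w|^2\,|E|^2 \;\le\; \|E\|_{L^\infty((0,T)\times B_{\omega_2})}^2\int_0^T\!\!\int_{B_{\omega_2}}\varsigma\,|\nabla w|^2,
\]
which is controlled by $\|(\rho^{R,\eta})^m\|_{L^\infty(0,T;H^1(B_{\omega_2}))}$. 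You should replace the false time-H\"older by this $L^\infty$ bound on $E$; the rest of your argument (the $\nabla\varsigma$ cross terms, the time-boundary terms and the final division by $\Gamma_T$) then goes through as in the paper.
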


\begin{proof}
      This time we use a similar strategy to the one from the \textit{a priori} estimate \eqref{ec:a_priori_estimate_drho/dt_L2}. Applying Young's inequality we have that,
	\begin{align*}\label{ec:a_priori_estimate_drho/dt_L2_Cacciopoli}
		\begin{split}
			\int_{B_{R}} m (\rho_t^{R, \eta})^{m-1}  \left| \frac{\partial \rho_t^{R, \eta}}{\partial t} \right|^2 \varsigma & = \int_{B_{R}} \frac{\partial}{\partial t} (\rho_t^{R, \eta} )^m \cdot \left( \Delta (\rho_t^{R, \eta})^m + \nabla \cdot ( \rho_t^{R, \eta} E_t^{R, \eta} )\right) \varsigma \\
			& = - \int_{B_{R}} \nabla \left( \frac{\partial}{\partial t} (\rho_t^{R, \eta})^m \right) \nabla (\rho_t^{R, \eta})^m \varsigma -  \int_{B_{R}} \frac{\partial}{\partial t} (\rho_t^{R, \eta})^m \nabla (\rho_t^{R, \eta})^m \nabla \varsigma \\
			& \quad - \int_{B_{R}} \rho_t^{R, \eta} E_t^{R, \eta} \nabla \left( \frac{\partial}{\partial t} (\rho_t^{R, \eta})^m \right) \varsigma - \int_{B_{R}} \rho_t^{R, \eta} E_t^{R, \eta} \frac{\partial}{\partial t} (\rho_t^{R, \eta})^m \nabla \varsigma.
		\end{split}
	\end{align*}
	Integrating in time from $0$ to $T$ we get that,
	\begin{align*}
	    \begin{split}
	        	\int_0^T \int_{B_{R}} m (\rho_t^{R, \eta})^{m-1}  \left| \frac{\partial \rho_t^{R, \eta}}{\partial t} \right|^2 \varsigma & - \frac{1}{2} \int_{B_{R}} | \nabla \rho_0^m |^2 \varsigma + \frac{1}{2} \int_{B_{R}} | \nabla (\rho_T^{R, \eta})^m |^2 \varsigma \\
	        	& = \int_0^T \int_{B_{R}} \left( \nabla (\rho_t^{R, \eta})^m + (\rho_t^{R, \eta})^m \right) \left( E_t^{R, \eta} \frac{\partial \rho_t^{R, \eta}}{\partial t} + \rho_t^{R, \eta} \frac{\partial E_t^{R, \eta}}{\partial t} \right) \varsigma \\
	        	& \quad - \int_0^T \int_{B_{R}} \nabla (\rho_t^{R, \eta})^m \frac{\partial}{\partial t}  (\rho_t^{R, \eta})^m \nabla \varsigma .
	    \end{split}
	\end{align*}
	Let us bound each one of the terms. The first one can be bounded by,
	\begin{align*}
	    & \int_0^T \int_{B_{R}} \left( \nabla (\rho_t^{R, \eta})^m + (\rho_t^{R, \eta})^m \right) \left( E_t^{R, \eta} \frac{\partial \rho_t^{R, \eta}}{\partial t} \right) \varsigma \\
	        & \qquad \leq \frac{\| \rho_t^{R, \eta} \|_{L^{\infty}((0,T) \times \Rd)}^{1-m}}{ m } \int_0^T  \|  (\rho_t^{R, \eta})^m \|^2_{H^1(B_{\omega_1})} \|E_t^{R, \eta}  \varsigma^{\frac{1}{2}} \|_{L^2(B_{\omega_1})} \\
	        & \qquad \quad + \frac{m}{4 \| \rho_t^{R, \eta} \|_{L^{\infty}((0,T) \times \Rd)}^{1-m}} \int_0^T \int_{B_{R}} \left| \frac{\partial}{\partial t} \rho_t^{R, \eta} \right|^2 \varsigma. 
	    \end{align*}
	    The second one by,
	    \begin{align*}
	    &\int_0^T \int_{B_{R}} \left( \nabla (\rho_t^{R, \eta})^m + (\rho_t^{R, \eta})^m \right) \left(  \rho_t^{R, \eta} \frac{\partial E_t^{R, \eta}}{\partial t} \right) \varsigma \\
	        & \qquad \leq  \int_0^T \|  (\rho_t^{R, \eta})^m \|_{H^1 (B_{\omega_1})} \| \rho_t^{R, \eta} \|_{L^{\infty}(B_{\omega_1})} \left \| \frac{\partial E_t^{R, \eta}}{\partial t} \varsigma  \right\|_{L^2 (B_{R})}. 
	   \end{align*}
	   And the last one by,
	   \begin{align*}
	    & - \int_0^T \int_{B_{R}} \nabla (\rho_t^{R, \eta})^m \frac{\partial}{\partial t}  (\rho_t^{R, \eta})^m \nabla \varsigma \\
	    & \qquad  \leq \int_0^T \| \nabla (\rho_t^{R, \eta})^m   \|_{L^2( B_{\omega_2})} \| \varsigma^{- \frac{1}{2}} \nabla \varsigma \|_{L^{\infty} (B_{\omega_2})}  \Big\| \frac{\partial (\rho_t^{R, \eta})^m}{\partial t}  \varsigma^{\frac{1}{2}} \Big\|_{L^2 (B_{\omega_2})} \\
	        	& \qquad \quad +  \frac{1}{  m } \int_0^T \| \nabla (\rho_t^{R, \eta_j})^m   \|_{L^2( B_{\omega_2})}^2 \| \varsigma^{- \frac{1}{2}} \nabla \varsigma \|_{L^{\infty} (B_{\omega_2})}^2 + \frac{m}{4 \| \rho_t^{R, \eta} \|_{L^{\infty}((0,T) \times \Rd)}^{1-m}} \int_0^T \int_{B_{R}}  \left| \frac{\partial }{\partial t} \rho_t^{R, \eta} \right|^2  \varsigma.
	\end{align*}
	Thus, putting everything together, we obtain \eqref{eq:d/dt rho L2 Rd}.
\end{proof}

\begin{lemma}[\textit{a priori} estimates on $\Delta ( \rho^{R, \eta} )^m$]
    Let $\rho^{R, \eta}$ be the unique strong solution of \eqref{eq:Fast-Diffusion_Problem_BR} in the bounded domain $(0,T) \times B_R$ for the kernel $K (x,y)= \eta(x) W(x-y)\eta(y)$. We have that
    \begin{equation}\label{eq:Delta rhom L2 Rd}
	    	 \int_0^T \int_{B_{R}} \left| \Delta ((\rho^{R, \eta}_t)^m)  \right|^2 \, \varsigma    \leq C,
	\end{equation}
	where $C$ depend on the constants \eqref{eq:rho H1 Rd} and \eqref{eq:d/dt rho L2 Rd}.
\end{lemma}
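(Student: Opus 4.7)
The plan is to use the PDE itself to express $\Delta(\rho^{R,\eta})^m$ in terms of quantities already controlled in the previous lemmas. Since $\rho^{R,\eta}$ is a strong solution of \eqref{eq:Fast-Diffusion_Problem_BR}, the identity
\begin{equation*}
    \Delta (\rho_t^{R,\eta})^m = \frac{\partial \rho_t^{R,\eta}}{\partial t} - \nabla \cdot (\rho_t^{R,\eta} E_t^{R,\eta})
\end{equation*}
holds a.e. in $(0,T)\times B_R$, where $E_t^{R,\eta} = \nabla V_R + \nabla \int_{B_R} K(\cdot,y)\rho_t^{R,\eta}(y)\,dy$. First I would square both sides, multiply by the cut-off $\varsigma$, integrate in space and time, and use $(a+b)^2 \le 2a^2 + 2b^2$ to get
\begin{equation*}
    \int_0^T\!\! \int_{B_R} |\Delta(\rho_t^{R,\eta})^m|^2 \varsigma \le 2\int_0^T\!\!\int_{B_R} \Big|\tfrac{\partial \rho_t^{R,\eta}}{\partial t}\Big|^2 \varsigma + 2\int_0^T\!\!\int_{B_R} |\nabla\cdot(\rho_t^{R,\eta} E_t^{R,\eta})|^2 \varsigma.
\end{equation*}
The first term on the right is immediately controlled by \eqref{eq:d/dt rho L2 Rd}.

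For the second term, I would expand the divergence as $\nabla \cdot (\rho E) = \nabla \rho \cdot E + \rho\, \nabla \cdot E$ and apply $(a+b)^2 \le 2a^2 + 2b^2$ once more to obtain
\begin{equation*}
    \int_0^T\!\!\int_{B_R} |\nabla\cdot(\rho_t^{R,\eta} E_t^{R,\eta})|^2 \varsigma \le 2\|E^{R,\eta}\|_{L^\infty((0,T)\times B_{\omega_2})}^2\!\!\int_0^T\!\!\int_{B_R}|\nabla\rho_t^{R,\eta}|^2\varsigma + 2\|\rho^{R,\eta}\|_{L^\infty}^2\!\!\int_0^T\!\!\int_{B_{\omega_2}} |\nabla\cdot E_t^{R,\eta}|^2\varsigma.
\end{equation*}
The first summand is bounded using \eqref{eq:rho H1 Rd} together with the $L^\infty$ bound on $E^{R,\eta}$ (which follows from \eqref{eq:V_quadratic}, \eqref{eq:W_quadratic}, the $L^1$ bound on $\rho$, and the control of $\mathbf{m_2}(\rho_t^{R,\eta})$ in \eqref{ec:Second_moment_uniformly_bounded_above}, since the cut-off $\varsigma$ is supported on $B_{\omega_2}$).

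For the second summand the key point is that $\nabla \cdot E_t^{R,\eta} = \Delta V_R + \Delta \int_{B_R} K(\cdot,y)\rho_t^{R,\eta}(y)\,dy$; since $\Delta V_R \in L^\infty(B_R)$ uniformly in $R$ by \eqref{eq:Delta V and W bounded} and \Cref{rem:V_R unif bdd}, and $\Delta_x K(\cdot,y)$ is bounded in $L^\infty(B_R\times B_R)$, this term can be bounded by a constant depending on the $L^1$ norm of $\rho_0$ only. I expect no real obstacle here: all three contributions reduce to already-controlled quantities, and the cut-off $\varsigma$ localises everything to $B_{\omega_2}$ so no further tail estimates are required. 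Combining the three bounds yields \eqref{eq:Delta rhom L2 Rd} with a constant depending only on the data appearing in the previous lemmas. \qed
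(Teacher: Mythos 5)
Your proof is correct and follows essentially the same route as the paper: both use the equation to write $\Delta(\rho^{R,\eta})^m = \partial_t\rho^{R,\eta} - \nabla\cdot(\rho^{R,\eta}E^{R,\eta})$, square, multiply by the cut-off, and reduce to the two previous lemmas together with bounds on $E^{R,\eta}$ and $\nabla\cdot E^{R,\eta}$. The only cosmetic difference is that you apply Young's inequality $(a+b)^2\le 2a^2+2b^2$ directly, whereas the paper expands the square exactly and then invokes H\"older on the cross term.
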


\begin{proof}
      We have that,
      \begin{align*}
          \int_0^T \int_{B_R} \left| \Delta ((\rho_t^{R, \eta})^m)  \right|^2 \, \varsigma & = \int_0^T \int_{B_R} \left| \frac{\partial \rho^{R, \eta}_t}{\partial t} \right|^2 \varsigma + \int_0^T \int_{B_R} \left| \diver \left( \rho_t^{R, \eta} E_t^{R, \eta} \right) \right|^2 \varsigma\\
          & \quad - 2 \int_0^T \int_{B_R} \frac{\partial \rho_t^{R, \eta}}{\partial t} \diver \left( \rho_t^{R, \eta} E_t^{R, \eta} \right) \varsigma.
      \end{align*}
      From here, using the results from \Cref{lem:rho H1 Rd} and \Cref{lem:d/dt rho L2 Rd} combined with Hölder inequality we recover \eqref{eq:Delta rhom L2 Rd}.
\end{proof}

\bibliography{content/bibliography.bib}

\begin{thebibliography}{10}

\bibitem{Ama90}
H.~Amann.
\newblock Dynamic theory of quasilinear parabolic equations. {II}.
  {R}eaction-diffusion systems.
\newblock {\em Differential Integral Equations}, 3(1):13--75, 1990.

\bibitem{AMS08}
L.~Ambrosio, N.~Gigli, and G.~Savar\'{e}.
\newblock {\em Gradient flows in metric spaces and in the space of probability
  measures}.
\newblock Lectures in Mathematics ETH Z\"{u}rich. Birkh\"{a}user Verlag, Basel,
  second edition, 2008.

\bibitem{AMTU01}
A.~Arnold, P.~Markowich, G.~Toscani, and A.~Unterreiter.
\newblock On convex {S}obolev inequalities and the rate of convergence to
  equilibrium for {F}okker-{P}lanck type equations.
\newblock {\em Comm. Partial Differential Equations}, 26(1-2):43--100, 2001.

\bibitem{ARS04}
J.~M. Arrieta, A.~Rodriguez-Bernal, and P.~Souplet.
\newblock Boundedness of global solutions for nonlinear parabolic equations
  involving gradient blow-up phenomena.
\newblock {\em Ann. Sc. Norm. Super. Pisa Cl. Sci. (5)}, 3(1):1--15, 2004.

\bibitem{BCLR13}
D.~Balagu\'{e}, J.~A. Carrillo, T.~Laurent, and G.~Raoul.
\newblock Nonlocal interactions by repulsive-attractive potentials: radial
  ins/stability.
\newblock {\em Phys. D}, 260:5--25, 2013.

\bibitem{BBCV20}
D.~Balagu\'{e}~Guardia, A.~Barbaro, J.~A. Carrillo, and R.~Volkin.
\newblock Analysis of spherical shell solutions for the radially symmetric
  aggregation equation.
\newblock {\em SIAM J. Appl. Dyn. Syst.}, 19(4):2628--2657, 2020.

\bibitem{BdL04}
G.~Barles and F.~Da~Lio.
\newblock On the generalized {D}irichlet problem for viscous
  {H}amilton-{J}acobi equations.
\newblock {\em J. Math. Pures Appl. (9)}, 83(1):53--75, 2004.

\bibitem{BBDGV09}
A.~Blanchet, M.~Bonforte, J.~Dolbeault, G.~Grillo, and J.~L. V\'{a}zquez.
\newblock Asymptotics of the fast diffusion equation via entropy estimates.
\newblock {\em Arch. Ration. Mech. Anal.}, 191(2):347--385, 2009.

\bibitem{BCC12}
A.~Blanchet, E.~A. Carlen, and J.~A. Carrillo.
\newblock Functional inequalities, thick tails and asymptotics for the critical
  mass {P}atlak-{K}eller-{S}egel model.
\newblock {\em J. Funct. Anal.}, 262(5):2142--2230, 2012.

\bibitem{BCL09}
A.~Blanchet, J.~A. Carrillo, and P.~Lauren\c{c}ot.
\newblock Critical mass for a {P}atlak-{K}eller-{S}egel model with degenerate
  diffusion in higher dimensions.
\newblock {\em Calc. Var. Partial Differential Equations}, 35(2):133--168,
  2009.

\bibitem{BCM08}
A.~Blanchet, J.~A. Carrillo, and N.~Masmoudi.
\newblock Infinite time aggregation for the critical {P}atlak-{K}eller-{S}egel
  model in {$\mathbb R^2$}.
\newblock {\em Comm. Pure Appl. Math.}, 61(10):1449--1481, 2008.

\bibitem{BDP06}
A.~Blanchet, J.~Dolbeault, and B.~Perthame.
\newblock Two-dimensional {K}eller-{S}egel model: optimal critical mass and
  qualitative properties of the solutions.
\newblock {\em Electron. J. Differential Equations}, pages No. 44, 32, 2006.

\bibitem{BF81}
H.~Br\'{e}zis and A.~Friedman.
\newblock Nonlinear parabolic equations involving measures as initial
  conditions.
\newblock {\em J. Math. Pures Appl. (9)}, 62(1):73--97, 1983.

\bibitem{CC95}
L.~A. Caffarelli and X.~Cabr{\'e}.
\newblock {\em Fully nonlinear elliptic equations}, volume~43.
\newblock American Mathematical Society Providence, RI, 1995.

\bibitem{CC06}
V.~Calvez and J.~A. Carrillo.
\newblock Volume effects in the {K}eller-{S}egel model: energy estimates
  preventing blow-up.
\newblock {\em J. Math. Pures Appl. (9)}, 86(2):155--175, 2006.

\bibitem{CD14}
J.~F. Campos and J.~Dolbeault.
\newblock Asymptotic estimates for the parabolic-elliptic {K}eller-{S}egel
  model in the plane.
\newblock {\em Comm. Partial Differential Equations}, 39(5):806--841, 2014.

\bibitem{Car34}
F.~Carlson.
\newblock Une inégalité.
\newblock {\em Ark. Mat. Astron. Fys.}, 25(1):1--5, 1934.

\bibitem{CCV15}
J.~A. Carrillo, D.~Castorina, and B.~Volzone.
\newblock Ground states for diffusion dominated free energies with logarithmic
  interaction.
\newblock {\em SIAM J. Math. Anal.}, 47(1):1--25, 2015.

\bibitem{CCY19}
J.~A. Carrillo, K.~Craig, and Y.~Yao.
\newblock Aggregation-diffusion equations: dynamics, asymptotics, and singular
  limits.
\newblock In {\em Active particles. {V}ol. 2. {A}dvances in theory, models, and
  applications}, Model. Simul. Sci. Eng. Technol., pages 65--108.
  Birkh\"{a}user/Springer, Cham, 2019.

\bibitem{CD18}
J.~A. Carrillo and M.~G. Delgadino.
\newblock Free energies and the reversed {{HLS}} inequality.
\newblock Unpublished.

\bibitem{CDDFH19}
J.~A. Carrillo, M.~G. Delgadino, J.~Dolbeault, R.~L. Frank, and F.~Hoffmann.
\newblock Reverse {H}ardy-{L}ittlewood-{S}obolev inequalities.
\newblock {\em J. Math. Pures Appl. (9)}, 132:133--165, 2019.

\bibitem{CDFL22}
J.~A. Carrillo, M.~G. Delgadino, R.~L. Frank, and M.~Lewin.
\newblock Fast diffusion leads to partial mass concentration in
  {K}eller-{S}egel type stationary solutions.
\newblock {\em Math. Models Methods Appl. Sci.}, 32(4):831--850, 2022.

\bibitem{CGV22}
J.~A. Carrillo, D.~G\'{o}mez-Castro, and J.~L. V\'{a}zquez.
\newblock Infinite-time concentration in aggregation-diffusion equations with a
  given potential.
\newblock {\em J. Math. Pures Appl. (9)}, 157:346--398, 2022.

\bibitem{CHVY19}
J.~A. Carrillo, S.~Hittmeir, B.~Volzone, and Y.~Yao.
\newblock Nonlinear aggregation-diffusion equations: radial symmetry and long
  time asymptotics.
\newblock {\em Invent. Math.}, 218(3):889--977, 2019.

\bibitem{CHMV18}
J.~A. Carrillo, F.~Hoffmann, E.~Mainini, and B.~Volzone.
\newblock Ground states in the diffusion-dominated regime.
\newblock {\em Calc. Var. Partial Differential Equations}, 57(5):Paper No. 127,
  28, 2018.

\bibitem{CJMTU01}
J.~A. Carrillo, A.~J\"{u}ngel, P.~A. Markowich, G.~Toscani, and A.~Unterreiter.
\newblock Entropy dissipation methods for degenerate parabolic problems and
  generalized {S}obolev inequalities.
\newblock {\em Monatsh. Math.}, 133(1):1--82, 2001.

\bibitem{CMV03}
J.~A. Carrillo, R.~J. McCann, and C.~Villani.
\newblock Kinetic equilibration rates for granular media and related equations:
  entropy dissipation and mass transportation estimates.
\newblock {\em Rev. Mat. Iberoamericana}, 19(3):971--1018, 2003.

\bibitem{CIL92}
M.~G. Crandall, H.~Ishii, and P.-L. Lions.
\newblock User's guide to viscosity solutions of second order partial
  differential equations.
\newblock {\em Bull. Amer. Math. Soc. (N.S.)}, 27(1):1--67, 1992.

\bibitem{DT84}
F.~Demengel and R.~Temam.
\newblock Convex functions of a measure and applications.
\newblock {\em Indiana Univ. Math. J.}, 33(5):673--709, 1984.

\bibitem{DiB93}
E.~DiBenedetto.
\newblock {\em Degenerate parabolic equations}.
\newblock Springer Science \& Business Media, 1993.

\bibitem{DP04}
J.~Dolbeault and B.~Perthame.
\newblock Optimal critical mass in the two-dimensional {K}eller-{S}egel model
  in {$\mathbb R^2$}.
\newblock {\em C. R. Math. Acad. Sci. Paris}, 339(9):611--616, 2004.

\bibitem{EG18}
L.~C. Evans and R.~F. Gariepy.
\newblock {\em Measure theory and fine properties of functions}.
\newblock Textbooks in Mathematics. CRC Press, Boca Raton, FL, revised edition,
  2015.

\bibitem{GT01}
D.~Gilbarg and N.~S. Trudinger.
\newblock {\em Elliptic partial differential equations of second order}.
\newblock Classics in Mathematics. Springer-Verlag, Berlin, 2001.
\newblock Reprint of the 1998 edition.

\bibitem{HV96}
M.~A. Herrero and J.~J.~L. Vel\'{a}zquez.
\newblock Singularity patterns in a chemotaxis model.
\newblock {\em Math. Ann.}, 306(3):583--623, 1996.

\bibitem{HP09}
T.~Hillen and K.~J. Painter.
\newblock A user's guide to {PDE} models for chemotaxis.
\newblock {\em J. Math. Biol.}, 58(1-2):183--217, 2009.

\bibitem{Hor03}
D.~Horstmann.
\newblock From 1970 until present: the {K}eller-{S}egel model in chemotaxis and
  its consequences. {I}.
\newblock {\em Jahresber. Deutsch. Math.-Verein.}, 105(3):103--165, 2003.

\bibitem{Ish95}
H.~Ishii.
\newblock On the equivalence of two notions of weak solutions, viscosity
  solutions and distribution solutions.
\newblock {\em Funkcial. Ekvac.}, 38(1):101--120, 1995.

\bibitem{JL92}
W.~J\"{a}ger and S.~Luckhaus.
\newblock On explosions of solutions to a system of partial differential
  equations modelling chemotaxis.
\newblock {\em Trans. Amer. Math. Soc.}, 329(2):819--824, 1992.

\bibitem{JJ12}
V.~Julin and P.~Juutinen.
\newblock A new proof for the equivalence of weak and viscosity solutions for
  the {$p$}-{L}aplace equation.
\newblock {\em Comm. Partial Differential Equations}, 37(5):934--946, 2012.

\bibitem{Kat14}
N.~Katzourakis.
\newblock {\em An introduction to viscosity solutions for fully nonlinear PDE
  with applications to calculus of variations in $L^\infty$}.
\newblock Springer, 2014.

\bibitem{KK98}
B.~Kawohl and N.~Kutev.
\newblock Strong maximum principle for semicontinuous viscosity solutions of
  nonlinear partial differential equations.
\newblock {\em Arch. Math. (Basel)}, 70(6):470--478, 1998.

\bibitem{KS70}
E.~F. Keller and L.~A. Segel.
\newblock Initiation of slime mold aggregation viewed as an instability.
\newblock {\em J. Theoret. Biol.}, 26(3):399--415, 1970.

\bibitem{KL10}
I.~C. Kim and H.~K. Lei.
\newblock Degenerate diffusion with a drift potential: a viscosity solutions
  approach.
\newblock {\em Discrete Contin. Dyn. Syst.}, 27(2):767--786, 2010.

\bibitem{LSU68}
O.~A. Lady\v{z}enskaja, V.~A. Solonnikov, and N.~N. Uralceva.
\newblock {\em Linear and quasilinear equations of parabolic type}.
\newblock Translations of Mathematical Monographs, Vol. 23. American
  Mathematical Society, Providence, R.I., 1968.
\newblock Translated from the Russian by S. Smith.

\bibitem{Lev48}
V.~Levin.
\newblock Exact constants in inequalities of the carlson type.
\newblock In {\em Dokl. Akad. Nauk SSSR}, volume~59, pages 635--638, 1948.

\bibitem{MV00}
P.~A. Markowich and C.~Villani.
\newblock On the trend to equilibrium for the {F}okker-{P}lanck equation: an
  interplay between physics and functional analysis.
\newblock {\em Mat. Contemp.}, 19:1--29, 2000.
\newblock VI Workshop on Partial Differential Equations, Part II (Rio de
  Janeiro, 1999).

\bibitem{MO19}
M.~Medina and P.~Ochoa.
\newblock On viscosity and weak solutions for non-homogeneous {$p$}-{L}aplace
  equations.
\newblock {\em Adv. Nonlinear Anal.}, 8(1):468--481, 2019.

\bibitem{MS20}
N.~Mizoguchi and P.~Souplet.
\newblock Singularity formation and regularization at multiple times in the
  viscous hamilton-jacobi equation.
\newblock {\em arXiv preprint arXiv:2007.12114}, 2020.

\bibitem{O01}
F.~Otto.
\newblock The geometry of dissipative evolution equations: the porous medium
  equation.
\newblock {\em Comm. Partial Differential Equations}, 26(1-2):101--174, 2001.

\bibitem{OV00}
F.~Otto and C.~Villani.
\newblock Generalization of an inequality by {T}alagrand and links with the
  logarithmic {S}obolev inequality.
\newblock {\em J. Funct. Anal.}, 173(2):361--400, 2000.

\bibitem{Pat53}
C.~S. Patlak.
\newblock Random walk with persistence and external bias.
\newblock {\em Bull. Math. Biophys.}, 15:311--338, 1953.

\bibitem{PS17}
A.~Porretta and P.~Souplet.
\newblock Analysis of the loss of boundary conditions for the diffusive
  {H}amilton-{J}acobi equation.
\newblock {\em Ann. Inst. H. Poincar\'{e} C Anal. Non Lin\'{e}aire},
  34(7):1913--1923, 2017.

\bibitem{PS20}
A.~Porretta and P.~Souplet.
\newblock Blow-up and regularization rates, loss and recovery of boundary
  conditions for the superquadratic viscous {H}amilton-{J}acobi equation.
\newblock {\em J. Math. Pures Appl. (9)}, 133:66--117, 2020.

\bibitem{San15}
F.~Santambrogio.
\newblock {\em Optimal transport for applied mathematicians}, volume~87 of {\em
  Progress in Nonlinear Differential Equations and their Applications}.
\newblock Birkh\"{a}user/Springer, Cham, 2015.
\newblock Calculus of variations, PDEs, and modeling.

\bibitem{Sil21}
J.~Siltakoski.
\newblock Equivalence of viscosity and weak solutions for a {$p$}-parabolic
  equation.
\newblock {\em J. Evol. Equ.}, 21(2):2047--2080, 2021.

\bibitem{SC02}
C.~Sire and P.-H. Chavanis.
\newblock Thermodynamics and collapse of self-gravitating {B}rownian particles
  in {$D$} dimensions.
\newblock {\em Phys. Rev. E (3)}, 66(4):046133, 23, 2002.

\bibitem{SC08}
C.~Sire and P.-H. Chavanis.
\newblock Critical dynamics of self-gravitating {L}angevin particles and
  bacterial populations.
\newblock {\em Phys. Rev. E (3)}, 78(6):061111, 22, 2008.

\bibitem{SV06}
P.~Souplet and J.~L. V\'{a}zquez.
\newblock Stabilization towards a singular steady state with gradient blow-up
  for a diffusion-convection problem.
\newblock {\em Discrete Contin. Dyn. Syst.}, 14(1):221--234, 2006.

\bibitem{TBL06}
C.~M. Topaz, A.~L. Bertozzi, and M.~A. Lewis.
\newblock A nonlocal continuum model for biological aggregation.
\newblock {\em Bull. Math. Biol.}, 68(7):1601--1623, 2006.

\bibitem{Vaz06}
J.~L. V\'azquez.
\newblock {\em The Porous Medium Equation}.
\newblock Oxford University Press, 2006.

\bibitem{Vil09}
C.~Villani.
\newblock {\em Optimal transport: old and new}, volume 338.
\newblock Springer, 2009.

\bibitem{Yin05}
Z.~Yin.
\newblock On the global existence of solutions to quasilinear parabolic
  equations with homogeneous {N}eumann boundary conditions.
\newblock {\em Glasg. Math. J.}, 47(2):237--248, 2005.

\end{thebibliography}
\bibliographystyle{abbrv}

\end{document}